\title{Vectorization of a thread-parallel\\Jacobi singular value decomposition method%
\thanks{This work has been supported in part by Croatian Science
  Foundation under the project IP--2014--09--3670
  ``Matrix Factorizations and Block Diagonalization Algorithms''
  (\href{https://web.math.pmf.unizg.hr/mfbda}{MFBDA}).  The prototype
  implementation is available in a GitHub repository
  \url{https://github.com/venovako/VecJac}.}}
\author{Vedran Novakovi\'{c}%
\thanks{ORCID:~\url{https://orcid.org/0000-0003-2964-9674};
  completed a part of this research as an independent collaborator on the MFBDA project,
  10000 Zagreb, Croatia (\href{mailto:venovako@venovako.eu}{venovako@venovako.eu})}}
\begin{document}
\lstloadlanguages{C}
\lstset{language=C,extendedchars=false,numbers=left,numberstyle=\tiny,frame=lines,basicstyle=\small\ttfamily,columns=fullflexible,texcl=false,mathescape=true,lineskip=.25\baselineskip}
\maketitle
\begin{abstract}\looseness=-1
  The eigenvalue decomposition (EVD) of (a batch of) Hermitian
  matrices of order two has a role in many numerical algorithms, of
  which the one-sided Jacobi method for the singular value
  decomposition (SVD) is the prime example.  In this paper the batched
  EVD is vectorized, with a vector-friendly data layout and the
  AVX-512 SIMD instructions of Intel CPUs, alongside other key
  components of a real and a complex OpenMP-parallel Jacobi-type SVD
  method, inspired by the sequential \texttt{xGESVJ} routines from
  LAPACK\@.  These vectorized building blocks should be portable to
  other platforms that support similar vector operations.
  Unconditional numerical reproducibility is guaranteed for the
  batched EVD, sequential or threaded, and for the column
  transformations, that are, like the scaled dot-products, presently
  sequential but can be threaded if nested parallelism is desired.  No
  avoidable overflow of the results can occur with the proposed EVD or
  the whole SVD\@.  The measured accuracy of the proposed EVD often
  surpasses that of the \texttt{xLAEV2} routines from LAPACK\@.  While
  the batched EVD outperforms the matching sequence of \texttt{xLAEV2}
  calls, speedup of the parallel SVD is modest but can be improved and
  is already beneficial with enough threads.  Regardless of their
  number, the proposed SVD method gives identical results, but of
  somewhat lower accuracy than \texttt{xGESVJ}.
\end{abstract}
\begin{keywords}
  batched eigendecomposition of Hermitian matrices of order two,
  SIMD vectorization,
  singular value decomposition,
  parallel one-sided Jacobi-type SVD method
\end{keywords}
\begin{AMS}
  65F15, 65F25, 65Y05, 65Y10
\end{AMS}
\section{Introduction}\label{s:1}
\looseness=-1
The eigenvalue decomposition (EVD) of a Hermitian or a symmetric
matrix of order two~\cite{Jacobi-1846} is a part of many numerical
algorithms, some of which are implemented in the
LAPACK~\cite{Anderson-et-al-99} library, like the one-sided
Jacobi-type
algorithm~\cite{Drmac-97,Drmac-Veselic-08a,Drmac-Veselic-08b} for the
singular value decomposition (SVD) of general matrices.  It also
handles the $2\times 2$ terminal cases in the
QR-based~\cite{Francis-61,Francis-62} and the
MRRR-based~\cite{Dhillon-Parlett-03,Dhillon-Parlett-04} algorithms for
the eigendecomposition of Hermitian/symmetric matrices.  Its direct
application, the two-sided Jacobi-type EVD method for
symmetric~\cite{Jacobi-1846} and Hermitian~\cite{Hari-BegovicKovac-21}
matrices, has not been included in LAPACK but is widely known.

The first part of the paper aims to show that reliability of the
$2\times 2$ symmetric/Hermitian EVD, produced by the LAPACK routines
\texttt{xLAEV2}, can be improved by scaling the input matrix $A$ by an
easily computable power of two.  This preprocessing not only prevents
the scaled eigenvalues from overflowing (and underflowing if possible),
but also preserves the eigenvectors of a complex $A$ from becoming
inaccurate and non-orthogonal in certain cases of subnormal components
of $a_{12}=\bar{a}_{21}$.  The proposed EVD formulas are branch-free,
implying no \texttt{if-else} statements, but relying instead on the
standard-conformant~\cite{IEEE-754-2019} handling of the special
floating-point values by the $\min\!/\!\max$ functions.  If the
formulas are implemented in the SIMD fashion, they can compute several
independent EVDs in one go, instead of one by one in a sequence.

An algorithm that performs the same operation on a collection (a
``batch'') of inputs of the same or similar (usually small) dimensions
is known as batched (see, e.g.,~\cite[Sect.~10]{Abdelfattah-et-al-16}
and~\cite{Abdelfattah-et-al-21}).  A vectorized, batched EVD of
Hermitian matrices of order two is thus proposed, in a similar vein as
the batched $2\times 2$ SVD from~\cite{Novakovic-20}.  Recall that a
SIMD vector instruction performs an operation on groups (called
vectors) of scalars, packed into the lanes of vector registers, at
roughly the cost of one (or few) scalar operation.  For example, two
vectors, each with $\mathtt{s}$ lanes, can be multiplied, each lane of
the first vector by the corresponding lane of the second one, at a
fraction of the time of $\mathtt{s}$ scalar multiplications.  If a
batch of input and output matrices is represented as a set of separate
data streams, each containing the same-indexed elements of a
particular sequence of matrices (e.g., all $a_{11}^{(\ell)}$), then a
stream can be handled as one or more vectors, depending on the
hardware's vector width $\mathtt{s}$, such that, e.g.,
$a_{11}^{(\ell)}$ is held in the $(\ell\bmod\mathtt{s})$-th lane of
the $\lfloor\ell/\mathtt{s}\rfloor$-th vector.  Every major CPU
architecture, like those from Intel, AMD, IBM, and ARM, offers vector
operations with varying, but ever expanding\footnote{A reader
interested in porting the proposed method to another platform is
advised to consult the up-to-date architecture manuals for the
vectorization support of a particular generation of CPUs.} vector
widths and instruction subsets.  Also, there are specialized vector
engines, like NEC SX-Aurora TSUBASA with 2048-bit-wide registers.
Thus, it is not a question should an algorithm be vectorized, but how
to do that, if possible.

The second part of the paper focuses on vectorization of the one-sided
Jacobi-type SVD for general matrices (but with at least as many rows
as there are columns), in its basic form~\cite{Drmac-97}, as
implemented in the \texttt{xGESVJ} LAPACK routines, without the
rank-revealing QR factorization and other
preprocessing~\cite{Drmac-Veselic-08a,Drmac-Veselic-08b} of the more
advanced \texttt{xGEJSV} routines.  Both sets of routines are
sequential by design, even though the used BLAS/LAPACK subroutines
might be parallelized.  The Jacobi-type method is not the fastest SVD
algorithm available, but it provides the superior accuracy of the
singular vectors and high relative accuracy of the singular
values~\cite{Demmel-Veselic-92}.  It is shown how the whole method
(but with a parallel pivot strategy), i.e., all of its components, can
be vectorized with the Intel AVX-512 instruction subset.  Those
BLAS-like components that are strategy-agnostic can be retrofitted to
the \texttt{xGESVJ} routines.

\looseness=-1
Certain time-consuming components of the Jacobi-type SVD method, like
the column pair updates, have for long been considered for
vectorization~\cite{deRijk-89}.  In fact, any component realized with
calls of optimized BLAS or LAPACK routines is automatically vectorized
if the routines themselves are, such as dot-products and
postmultiplications of a column pair by a rotation-like
transformation.  In the sequential method, however, only one
transformation is generated per a method's step, in essence by the EVD
of a pivot Grammian matrix of order two, while a parallel method
generates up to $\left\lfloor n/2\right\rfloor$ transformations per
step, with $n$ being the number of columns of the iteration matrix.
This sequence of independent EVDs is a natural candidate for
vectorization that computes several EVDs at once, and for its further
parallelization, applicable for larger $n$.  Also, some vector
platforms provide no direct support for complex arithmetic with
vectors of complex values in the typical, interleaved
$z=(\Re{z},\Im{z})$ representation, and thus the split
one~\cite{VanZee-Smith-17}, with an accompanying set of BLAS-like
operations, is convenient.  In the split representation, as explained
in \cref{sss:2.4.1}, a complex array is kept as two real arrays, such
that the $i$th complex element $z_i$ is represented by its real
($\Re{z_i}$) and imaginary ($\Im{z_i}$) parts, stored separately in
the corresponding real arrays.

\looseness=-1
Batched matrix factorizations (like the tall-and-skinny QR) and
decompositions (like a small-order Jacobi-type SVD) in the context of
the blocked Jacobi-type SVD on GPUs have been developed
in~\cite{Novakovic-15,Boukaram-et-al-18}, while the batched
bidiagonalization on GPUs has been considered in~\cite{Dong-et-al-18}.
Efficient batched kernels are ideal for acceleration of the blocked
Jacobi-type SVD methods, but the non-blocked (pointwise) ones induce
only batches of the smallest, $2\times 2$ EVD problems for the
one-sided, and the SVD ones for the
two-sided~\cite{Kogbetliantz-55,Novakovic-20} methods, what motivated
developing of the proposed batched EVD\@.

The two mentioned parts of the paper further subdivide as follows.  In
\cref{s:2} the vectorized batched EVD of Hermitian matrices of order
two is developed and its numerical properties are assessed.  In
\cref{s:3} the robust principles of (re-)scaling of the iteration
matrix in the SVD method are laid out, such that the matrix can never
overflow under inexact transformations, and a majority of the
remaining components of the method is vectorized.  In \cref{s:4} the
developed building blocks are put together to form a vectorized
one-sided Jacobi-type SVD method (with a parallel strategy), that can
be executed single- or multi-threaded, with identical outputs
guaranteed in either case.  Numerical testing is presented in
\cref{s:5}.  The paper concludes with a summary and some directions
for further research in \cref{s:6}.  Appendix contains most proofs and
additional algorithms, code, and numerical results.
\section{Vectorization of eigendecompositions of order two}\label{s:2}
Let $A$ be a symmetric (Hermitian) matrix of order two, $U$ an
orthogonal (unitary) matrix of its eigenvectors, i.e., its
diagonalizing Jacobi rotation, real~\cite{Jacobi-1846} or
complex~\cite{Hari-BegovicKovac-21}, and $\Lambda$ a real diagonal
matrix of its eigenvalues.  In the complex case,
\begin{equation}
  A=U\Lambda U^{\ast},\quad
  U=\begin{bmatrix}
  \cos\varphi & -\mathrm{e}^{-\mathrm{i}\alpha}\sin\varphi\\
  \mathrm{e}^{\mathrm{i}\alpha}\sin\varphi & \cos\varphi
  \end{bmatrix},\quad
  \Lambda=\begin{bmatrix}
  \lambda_1 & 0\\
  0 & \lambda_2
  \end{bmatrix},
  \label{e:1}
\end{equation}
with $\varphi\in\left[-\pi/4,\pi/4\right]$ and
$\alpha\in\left\langle-\pi,\pi\right]$, while $\alpha=0$ in the real
case.  The angles $\varphi$ and $\alpha$ are defined in terms of the
elements of $A$, as detailed in \cref{ss:2.1}.  From~\cref{e:1} it
follows by two matrix multiplications that
\begin{equation}
  A=\cos^2\varphi
  \begin{bmatrix}
    \lambda_1+\lambda_2\tan^2\varphi & \mathrm{e}^{-\mathrm{i}\alpha}\tan\varphi(\lambda_1-\lambda_2)\\
    \mathrm{e}^{\mathrm{i}\alpha}\tan\varphi(\lambda_1-\lambda_2) & \lambda_1\tan^2\varphi+\lambda_2
  \end{bmatrix}.
  \label{e:2}
\end{equation}

Should $A$ be constructed from~\cref{e:2}, e.g., for testing purposes,
with its eigenvalues prescribed, then it suffices to ensure that
$|\lambda_1|+|\lambda_2|\le\nu/8$, where $\nu$ is the largest
finite floating-point value, to get
$\max|a_{ij}|\le\nu/(4\sqrt{2})$.  In \cref{p:s2} it is shown that
such a bound on the magnitudes of the elements of $A$ guarantees that
its eigenvalues will not overflow.  If the bound does not hold, $A$
has to be downscaled to compute $\Lambda$.

One applicable power-of-two scaling algorithm was proposed
in~\cite[subsection~2.1]{Novakovic-20} in the context of the SVD of a
general real or complex $2\times 2$ matrix, and is adapted for the EVD
of a symmetric or Hermitian matrix $A$ of order two in \cref{ss:2.3}.

Contrary to the standalone EVD, for orthogonalization of a pivot
column pair in the Jacobi SVD algorithm it is sufficient to find the
eigenvectors $U$ of their pivot Grammian matrix $A$, while its
eigenvalues in $\Lambda$ are of no importance.  If the orthogonalized
pivot columns are also to be ordered in the iteration matrix
non-increasingly with respect to their Frobenius norms, a permutation
$P$ (as in, e.g.,~\cite{Novakovic-15}) has to be found such that
$\lambda_1'\ge\lambda_2'\ge 0$, where $\Lambda'=P^TU^{\ast}AUP$, i.e.,
the pivot columns have to be transformed by multiplying them from the
right by $UP$ instead of by $U$.  For a comparison of the eigenvalues
of $A$ to be made, it is only required that the eigenvalue with a
smaller magnitude is finite, while the other one may be allowed to
overflow.
\subsection{Branch-free computation of the Jacobi rotations}\label{ss:2.1}
Assume that $A$, represented by its lower triangle elements $a_{11}$,
$a_{21}$, and $a_{22}$, has already been scaled.  From the
annihilation condition $U^{\ast}AU=\Lambda$, as shown in
\cref{ss:SM1.1} similarly to~\cite{Hari-BegovicKovac-21}, but using
the fused multiply-add operation with a single rounding of its
result~\cite{IEEE-754-2019} (i.e.,
$\mathop{\mathrm{fma}}(a,b,c)=\mathop{\circ}(a\cdot b+c)$, where
$\circ$ denotes the chosen rounding method), and assuming $\arg{0}=0$
for determinacy, it follows
\begin{equation}
  \begin{aligned}
    \lambda_1/\cos^2\varphi&=(a_{22}\cdot\tan\varphi+2|a_{21}|)\cdot\tan\varphi+a_{11},\\
    \lambda_2/\cos^2\varphi&=(a_{11}\cdot\tan\varphi-2|a_{21}|)\cdot\tan\varphi+a_{22},
  \end{aligned}
  \label{e:jaclam}
\end{equation}
and for $\alpha$ from~\cref{e:1}, in the complex case computed as
in~\cref{e:zpolar},
\begin{equation}
  \alpha=\arg{a_{21}},\quad
  \mathrm{e}^{\mathrm{i}\alpha}=a_{21}/|a_{21}|.
  \label{e:alpha}
\end{equation}
In the real case~\cref{e:alpha} leads to
$\mathrm{e}^{\mathrm{i}\alpha}=\mathop{\mathrm{sign}}{a_{21}}=\pm 1$
(note, $\mathop{\mathrm{sign}}{\pm 0}=\pm 1$).  Also,
\begin{equation}
  \tan\varphi=\frac{\tan(2\varphi)}{1+\sqrt{\tan(2\varphi)\cdot\tan(2\varphi)+1}},\qquad
  \cos\varphi=\frac{1}{\sqrt{\tan\varphi\cdot\tan\varphi+1}}.
  \label{e:jactan}
\end{equation}
With the methods from~\cite{Novakovic-20}, and denoting by
$\mathop{\mathrm{fl}}(x)=\mathop{\circ}(x)$ the rounded,
floating-point representation of the value of the expression $x$,
$\tan(2\varphi)$ is computed branch-free, as
\begin{equation}
  \tan(2\varphi)=\min\left(\mathop{\mathrm{fmax}}\left(\frac{2|a_{21}|}{|a|},0\right),\mathop{\mathrm{fl}}(\sqrt{\nu})\right)\cdot\mathop{\mathrm{sign}}{a},\qquad
  a=a_{11}-a_{22}.
  \label{e:tan2}
\end{equation}
The $\mathrm{fmax}$ call returns its second argument if the first one
is $\mathtt{NaN}$ (when $|a_{21}|=|a|=0$).

The upper bound of $\mathop{\mathrm{fl}}(\sqrt{\nu})$ on
$|\tan(2\varphi)|$ enables simplifying~\cref{e:jactan}, since, when
rounding to nearest, tie to even,
$\mathop{\mathrm{fma}}(\mathop{\mathrm{fl}}(\sqrt{\nu}),\mathop{\mathrm{fl}}(\sqrt{\nu}),1)$
cannot overflow in single or double precision, so its square root
in~\cref{e:jactan} can be computed faster than resorting to the
$\mathop{\mathrm{hypot}}(x,y)=\sqrt{x^2+y^2}$ function.  It can be
verified (by a C program, e.g.) that the same bound ensures
that~\cref{e:jactan} gives the correct answer for $\tan\varphi$ (i.e.,
$\pm 1$) instead of $\mathtt{NaN}$ when the unbounded $\tan(2\varphi)$
would be $\pm\infty$ due to $|a|=0$ and $|a_{21}|>0$.

\looseness=-1
By factoring out $\cos\varphi$ from $U$ in~\cref{e:1},
$\sin\varphi=\tan\varphi\cdot\cos\varphi$ does not have to be
computed, unless the eigenvectors are not required to be explicitly
represented, since they can postmultiply a pivot column pair in the
Jacobi SVD algorithm in this factored form, with $\cos\varphi$ and
$\tan\varphi$, as in~\cref{e:transf}.  In the Jacobi SVD algorithm,
the ordering of the (always positive) eigenvalues of a pivot matrix is
important for ordering the transformed pivot columns of the iteration
matrix, i.e., for swapping them if $\lambda_1<\lambda_2$, while the
actual eigenvalues have to be computed for the standalone, generic EVD
only.
\subsubsection{On $\mathtt{invsqrt}$}\label{sss:2.1.1}
The
$\mathop{\mathtt{invsqrt}}(\mathsf{x})=\mathop{\mathrm{fl}}(\mathsf{1}/\sqrt{\mathsf{x}})$
intrinsic function (a vectorized, presently not correctly rounded
implementation of the standard's recommended $\mathrm{rSqrt}$
function~\cite{IEEE-754-2019}) is appropriate for computation of the
cosines directly from the squares of the secants ($\tan^2\varphi+1$)
in~\cref{e:jactan}, but it may not be available on another platform.
A slower but unconditionally reproducible
alternative, as explained in~\cite{Novakovic-Singer-22}, is to
compute
\begin{equation}
  \begin{gathered}
    \mathop{\mathrm{fl}}(\sec^2\varphi)=\mathop{\mathrm{fma}}(\mathop{\mathrm{fl}}(\tan\varphi),\mathop{\mathrm{fl}}(\tan\varphi),1),\quad
    \mathop{\mathrm{fl}}(\sec\varphi)=\mathop{\mathrm{fl}}(\sqrt{\mathop{\mathrm{fl}}(\sec^2\varphi)}),\\
    \mathop{\mathrm{fl}}(\cos\varphi)=\mathop{\mathrm{fl}}(1/\mathop{\mathrm{fl}}(\sec\varphi)),\quad
    \mathop{\mathrm{fl}}(\sin\varphi)=\mathop{\mathrm{fl}}(\mathop{\mathrm{fl}}(\tan\varphi)/\mathop{\mathrm{fl}}(\sec\varphi)),\quad
  \end{gathered}
  \label{e:jacsec}
\end{equation}
and optionally replace~\cref{e:jaclam} by the potentially more
accurately evaluated expressions
\begin{equation}
  \begin{aligned}
    \lambda_1&=((a_{22}\cdot\tan\varphi+2|a_{21}|)\cdot\tan\varphi+a_{11})/\sec^2\varphi,\\
    \lambda_2&=((a_{11}\cdot\tan\varphi-2|a_{21}|)\cdot\tan\varphi+a_{22})/\sec^2\varphi.
  \end{aligned}
  \label{e:lamsec}
\end{equation}

\looseness=-1
Let $\delta_1$ be the relative rounding error accumulated in the
process of computing $\mathop{\mathrm{fl}}(\cos\varphi)$ from
$x=\mathop{\mathrm{fl}}(\sec^2\varphi)$.  From~\cref{e:jacsec},
$\mathop{\mathrm{fl}}(\sec\varphi)=\sqrt{x}(1+\varepsilon_0)=y$, and
\begin{equation}
  \mathop{\mathrm{fl}}(\cos\varphi)=\frac{1+\varepsilon_1}{y}=\frac{1+\varepsilon_1}{1+\varepsilon_0}\cdot\frac{1}{\sqrt{x}}=\delta_1\cdot\frac{1}{\sqrt{x}},\quad
  0\le\max\{|\varepsilon_0|,|\varepsilon_1|\}\le\varepsilon,
  \label{e:dcos}
\end{equation}
where $\varepsilon=2^{-(p+1)}=0.5\mathop{\mathrm{ulp}}(1)$ is the
machine precision when rounding to nearest, $p$ is the number of bits
of significand ($23$, $52$, or $112$ for single, double, or quadruple
precision), and
$\mathop{\mathrm{ulp}}(x)=2^{\lfloor\lg|x|\rfloor-p}$.  By minimizing
and maximizing $\delta_1$ in~\cref{e:dcos} it follows
\begin{equation}
  \delta_1^-=\frac{1-\varepsilon}{1+\varepsilon}\le\delta_1^{}\le\frac{1+\varepsilon}{1-\varepsilon}=\delta_1^+.
  \label{e:d1}
\end{equation}
\subsection{Implementation of complex arithmetic}\label{ss:2.2}
\looseness=-1
Let $a$, $b$, $c$, and $d$ be complex values such that $d=a\cdot b+c$.
By analogy with the real fused multiply-add operation, let such a
ternary complex function and its value in floating-point be denoted as
$\tilde{d}=\mathop{\mathrm{fl}}(d)=\mathop{\mathrm{fma}}(a,b,c)$.
Then, $\tilde{d}$ can be computed with only two roundings for
each of its components (as suggested in the \texttt{cuComplex.h}
header in the CUDA toolkit\footnote{The CUDA toolkit is available at
\url{https://developer.nvidia.com/cuda-toolkit}.}), as
\begin{equation}
  \Re\tilde{d}=\mathop{\mathrm{fma}}(\Re{a},\Re{b},\mathop{\mathrm{fma}}(-\Im{a},\Im{b},\Re{c})),\quad
  \Im\tilde{d}=\mathop{\mathrm{fma}}(\Re{a},\Im{b},\mathop{\mathrm{fma}}(\Im{a},\Re{b},\Im{c})).
  \label{e:zfma}
\end{equation}

A branch-free way of computing the polar form of a complex value
$z$ as $|z|\mathrm{e}^{\mathrm{i}\beta}$, with $\beta=\arg{z}$ and
$\mathop{\mathrm{fl}}(|z|)=\mathop{\mathrm{hypot}}(\Re{z},\Im{z})\le\nu$,
was given in~\cite[Eq.~(1)]{Novakovic-20} as
\begin{equation}
  \mathop{\mathrm{fl}}(\cos\beta)=\mathop{\mathrm{fmin}}(|\Re{z}|/\mathop{\mathrm{fl}}(|z|),1)\cdot\mathop{\mathrm{sign}}{\Re{z}},\quad
  \mathop{\mathrm{fl}}(\sin\beta)=\Im{z}/\max\{\mathop{\mathrm{fl}}(|z|),\check{\mu}\},
  \label{e:zpolar}
\end{equation}
where $\check{\mu}$ is the smallest subnormal positive non-zero real
value.  Floating-point operations must not trap on exceptions and
subnormal numbers have to be supported as both inputs and outputs.
The $\mathop{\mathrm{fmin}}$ and $\mathop{\mathrm{fmax}}$ functions
have to return their second argument if the first one is
$\mathtt{NaN}$, but the full C language~\cite{C-18} semantics is not
required.
\subsubsection{On $\mathtt{hypot}$}\label{sss:2.2.1}
\looseness=-1
The $\mathtt{hypot}$ vector intrinsic function may not be available in
another environment.  It can be substituted~\cite{Novakovic-20} by a
na\"{\i}ve yet fully vectorized and unconditionally reproducible
\cref{a:hypot} (using the notation from \cref{ss:2.4}), based on the
well-known relations $\mathop{\mathrm{hypot}}(0,0)=0$ and (abusing the
symbols $m$ and $M$)
\begin{equation}
  \begin{gathered}
    \sqrt{x^2+y^2}=M\sqrt{\left(\frac{x}{M}\right)^2+\left(\frac{y}{M}\right)^2}=M\sqrt{\left(\frac{m}{M}\right)^2+1}=M\sqrt{q^2+1},\\
    q=m/M,\quad m=\min\{|x|,|y|\},\quad\nu\ge M=\max\{|x|,|y|\}>0.
  \end{gathered}
  \label{e:hypot}
\end{equation}

\begin{algorithm}[hbtp]
  \caption{A na\"{\i}ve vectorized $\mathtt{hypot}$.}
  \label{a:hypot}
  \begin{algorithmic}[1]
    \REQUIRE{vectors $\mathsf{x}$ and $\mathsf{y}$ with finite values in each lane}
    \ENSURE{$\mskip-5mu\approx\mskip-4mu\sqrt{\mathsf{x}^{\mathsf{2}}+\mathsf{y}^{\mathsf{2}}}$, not correctly rounded but without undue overflow in each lane}
    \STATE{$\mathsf{0}=\mathop{\mathtt{setzero}}();\quad\mathsf{1}=\mathop{\mathtt{set1}}(1.0);$}
    \COMMENT{all vector lanes set to a constant}
    \STATE{$-\mathsf{0}=\mathop{\mathtt{set1}}(-0.0);\quad\mathsf{x}=\mathop{\mathtt{andnot}}(-\mathsf{0},\mathsf{x});\quad\mathsf{y}=\mathop{\mathtt{andnot}}(-\mathsf{0},\mathsf{y});$}
    \COMMENT{$\mathsf{x}=|\mathsf{x}|$, $\mathsf{y}=|\mathsf{y}|$}
    \STATE{$\mathsf{m}=\mathop{\mathtt{min}}(\mathsf{x},\mathsf{y});\quad\mathsf{M}=\mathop{\mathtt{max}}(\mathsf{x},\mathsf{y});$}
    \STATE{$\mathsf{q}=\mathop{\mathtt{max}}(\mathop{\mathtt{div}}(\mathsf{m},\mathsf{M}),\mathsf{0});$}
    \COMMENT{$\mathtt{max}$ replaces $0/0\to\mathtt{NaN}$ with $0$}
    \RETURN{$\mathop{\mathtt{mul}}(\mathop{\mathtt{sqrt}}(\mathop{\mathtt{fmadd}}(\mathsf{q},\mathsf{q},\mathsf{1})),\mathsf{M});$}
  \end{algorithmic}
\end{algorithm}

\Cref{l:hypot} gives the relative error bounds for
$\mathop{\mathrm{hypot}}(x,y)$ if computed in the
standard~\cite{IEEE-754-2019} floating-point arithmetic as in
\cref{a:hypot}, with (exactly representable) scalars $x$ and $y$
instead of vectors.  In~\cref{e:hypot} and in the scalar
\cref{a:hypot}, $M>0$ implies
$\mathop{\mathrm{fl}}(\mathop{\mathrm{hypot}}(x,y))>0$, while $M=0$
implies $q=0=\mathop{\mathrm{fl}}(\mathop{\mathrm{hypot}}(x,y))$ in
the latter.

\begin{lemma}\label{l:hypot}
  Let $\mathop{\mathrm{hypot}}(x,y)$ be defined by~\cref{e:hypot}, and
  assume that neither overflow nor underflow occurs in the
  final multiplication of $M$ by
  $\mathop{\mathrm{fma}}(\mathop{\mathrm{fl}}(q),\mathop{\mathrm{fl}}(q),1)$.
  Then,
  \begin{equation}
    \begin{gathered}
      \mathop{\mathrm{fl}}(\mathop{\mathrm{hypot}}(x,y))=\delta_2\mathop{\mathrm{hypot}}(x,y),\\
      \delta_2^-=(1-\varepsilon)^{5/2}\sqrt{1-\varepsilon(2-\varepsilon)/2}<\delta_2^{}<(1+\varepsilon)^{5/2}\sqrt{1+\varepsilon(2+\varepsilon)/2}=\delta_2^+.
    \end{gathered}
    \label{e:rehyp}
  \end{equation}
\end{lemma}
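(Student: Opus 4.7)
I would propagate rounding errors through the four rounded operations in \cref{a:hypot}---division, fused multiply--add, square root, and the final multiplication by $M$---and exploit the constraint $q=m/M\in[0,1]$ (which holds because $m\le M$ after line~4) to sharpen the contribution of the division's error. Since $M>0$ under the hypothesis, the $\mathop{\mathtt{max}}$ with $\mathsf{0}$ in line~5 is inactive on $q$, which is therefore the exact mathematical quotient; the degenerate case $M=0$ returns $0$ exactly and can be dispensed with up front.

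Writing $\mathop{\mathrm{fl}}(q)=q(1+\delta_q)$ with $|\delta_q|\le\varepsilon$, the essential algebraic step is the identity
\[
  \frac{\mathop{\mathrm{fl}}(q)^2+1}{q^2+1}=1+\bigl((1+\delta_q)^2-1\bigr)\,\frac{q^2}{q^2+1},
\]
combined with the elementary bound $q^2/(q^2+1)\le 1/2$, valid because $q\le 1$. Substituting $(1+\delta_q)^2-1\in[-\varepsilon(2-\varepsilon),\varepsilon(2+\varepsilon)]$ confines the ratio above to $[1-\tfrac12\varepsilon(2-\varepsilon),\,1+\tfrac12\varepsilon(2+\varepsilon)]$. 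I would then absorb the FMA rounding factor $(1+\delta_f)$, $|\delta_f|\le\varepsilon$, into the square root, producing an extra factor $\sqrt{1+\delta_f}$ bounded by $(1\pm\varepsilon)^{1/2}$ in the respective direction, and append the sqrt and final-product rounding factors $(1+\delta_s)(1+\delta_m)$, each bounded by $(1\pm\varepsilon)$ (the second legitimate by the no-overflow/underflow hypothesis). In each extremal direction these contributions compose to $(1\pm\varepsilon)^{5/2}\sqrt{1\pm\varepsilon(2\pm\varepsilon)/2}$, which is exactly $\delta_2^\pm$ in~\cref{e:rehyp}.

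The main obstacle is producing the tight $\sqrt{1\pm\varepsilon(2\pm\varepsilon)/2}$ factor instead of a crude $(1\pm\varepsilon)^{7/2}$: this is where the $q\le 1$ bound halves the impact of $\delta_q$ and keeps it under a single square root, without which the whole argument would lose the essential sharpness. The strict inequalities in~\cref{e:rehyp} can be argued by observing that equality $q^2/(q^2+1)=1/2$ forces $q=1$, at which point $\delta_q=0$ and the FMA returns $2$ exactly, so the sharpness of the middle factor cannot coexist with simultaneous saturation of the remaining rounding-error bounds.
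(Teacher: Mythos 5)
Your proposal is correct and follows essentially the same route as the paper's proof: write $\mathop{\mathrm{fl}}(q)=q(1+\delta_q)$, factor the fma argument's error as $\frac{q^2}{q^2+1}\,\delta_q(2+\delta_q)$ with the $q\le 1$ bound giving the factor $1/2$, push it under the square root together with the fma rounding, compose with the sqrt and final-multiplication roundoffs to get $(1\pm\varepsilon)^{5/2}\sqrt{1\pm\varepsilon(2\pm\varepsilon)/2}$, and obtain strictness from the fact that $q=1$ forces an exact division. The only point the paper adds that you should too is a one-line dispatch of a possibly subnormal (underflowed) $\mathop{\mathrm{fl}}(q)$, where $|\delta_q|\le\varepsilon$ need not hold but $\mathop{\mathrm{fma}}(\mathop{\mathrm{fl}}(q),\mathop{\mathrm{fl}}(q),1)$ then agrees with the exact-$q$ value, so the bound is unaffected.
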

\begin{proof}
  See \cref{sss:SM1.1.1}.
\end{proof}

\begin{remark}\label{r:ufl}\looseness=-1
  For all complex, representable values $z\ne 0$,
  $|\mathop{\mathrm{fl}}(z/\mathop{\mathrm{fl}}(|z|))|\le\sqrt{2}$ in
  the standard rounding modes, since, from~\cref{e:hypot},
  $\mathop{\mathrm{fl}}(|z|)\ge\max\{|\Re{z}|,|\Im{z}|\}$, what,
  together with $z=|z|\mathrm{e}^{\mathrm{i}\beta}$
  and~\cref{e:zpolar}, gives
  $1\ge|\mathop{\mathrm{fl}}(\cos\beta)|\ge|\mathop{\mathrm{fl}}(\sin\beta)|$
  if $|\Re{z}|\ge|\Im{z}|$, or
  $1\ge|\mathop{\mathrm{fl}}(\sin\beta)|\ge|\mathop{\mathrm{fl}}(\cos\beta)|$
  otherwise.  In certain cases that do not satisfy the assumption of
  \cref{l:hypot},
  $\mathop{\mathrm{fl}}(\mathrm{e}^{\mathrm{i}\alpha})$
  from~\cref{e:alpha}, computed by~\cref{e:hypot} or by another
  $\mathrm{hypot}$, might be quite inaccurate, with
  $|\mathop{\mathrm{fl}}(\mathrm{e}^{\mathrm{i}\alpha})|\le\sqrt{2}$.
  \Cref{r:hypot} shows that this bound is sharp.
\end{remark}

\begin{remark}\label{r:hypot}
  For all representable $x=\mathop{\mathrm{fl}}(x)$,
  $\mathop{\mathrm{fl}}(\mathop{\mathrm{hypot}}(x,x))=\mathop{\mathrm{fl}}(\mathop{\mathrm{fl}}(\sqrt{2})|x|)$
  if computed as in~\cref{e:hypot}.  Let $\Psi_{\mathtt{T}}^{\circ}$
  be a set of subnormal non-zero $x$, that depends on the
  implementation of $\mathrm{hypot}$, the datatype $\mathtt{T}$, and
  the rounding mode $\circ$ in effect, on which
  $\mathop{\mathrm{fl}}(\mathop{\mathrm{hypot}}(x,x))=|x|$.  This set
  is non-empty with the default rounding (e.g., it contains
  $x=\check{\mu}$ for \cref{a:hypot} and for the tested math library's
  $\mathrm{hypot}$), but it should be empty when rounding to
  $+\infty$.  For $y=x\in\Psi_{\mathtt{T}}^{\circ}$,
  $\delta_2^{}=1/\sqrt{2}$ in~\cref{e:rehyp}.  If for a complex $z$
  holds $|\Re{z}|=|\Im{z}|=|x|\in\Psi_{\mathtt{T}}^{\circ}$, then
  $\mathop{\mathrm{fl}}(|z|)=|x|$, and so
  $|\mathop{\mathrm{fl}}(\cos\beta)|=|\mathop{\mathrm{fl}}(\sin\beta)|=1$
  in~\cref{e:zpolar}.  This has serious consequences for accuracy of
  the eigenvectors computed by the complex LAPACK
  routines\footnote{\url{https://github.com/Reference-LAPACK/lapack/blob/master/INSTALL/test_zcomplexabs.f}
  and its history contain further comments on accuracy of the absolute
  value of a complex number.}, as shown in \cref{ss:5.2}.  If
  $\mathop{\mathrm{fl}}(U)$ comes from \cref{a:z8jac2} or its single
  precision version, an inaccurate
  $\mathop{\mathrm{fl}}(\mathrm{e}^{\mathrm{i}\alpha})$
  from~\cref{e:alpha}, for which
  $|\mathop{\mathrm{fl}}(\mathrm{e}^{\mathrm{i}\alpha})|=\sqrt{2}$, is
  avoided in many cases if~\cref{e:z} implies a large enough upscaling.
\end{remark}
\subsection{Almost exact scaling of a Hermitian matrix of order two}\label{ss:2.3}
A scaling of $A$ sufficient for finiteness of $\Lambda$ in
floating-point is given in \cref{p:s2}.

\begin{proposition}\label{p:s2}
  If for a Hermitian matrix $A$ of order two holds
  \begin{equation}
    \hat{a}=\max_{1\le j\le i\le 2}|a_{ij}|\le\nu/(4\sqrt{2})=\tilde{\nu},
    \label{e:scla}
  \end{equation}
  then no output from any computation
  in~\cref{e:alpha,e:jactan,e:tan2,e:jacsec,e:lamsec}, including the
  resulting eigenvalues, can overflow, assuming the
  standard~\cite{IEEE-754-2019} \emph{non-stop} floating-point
  arithmetic in at least single precision ($p\ge 23$) and rounding to
  the nearest, tie to even.

  Moreover, \emph{barring any underflow} of the results of those
  computations, the following relative error bounds hold for the
  quantities computed as in~\cref{e:alpha,e:jactan,e:tan2,e:jacsec}:
  \begin{equation}
    \begin{gathered}
      \begin{aligned}
        \mathop{\mathrm{fl}}(\cos\alpha)&=\delta_{\alpha}'\cos\alpha\\
        \mathop{\mathrm{fl}}(\sin\alpha)&=\delta_{\alpha}''\sin\alpha
      \end{aligned}\,,\quad
      1-4.000000\,\varepsilon<\left\{\begin{gathered}
      \delta_{\alpha}',\delta_{\alpha}''\\
      |\mathop{\mathrm{fl}}(\mathrm{e}^{\pm\mathrm{i}\alpha})|\end{gathered}\right\}
      <1+4.000001\,\varepsilon,\\
      \mathop{\mathrm{fl}}(\tan\varphi)=\delta_{\varphi}^{\mathbb{F}}\tan\varphi,\quad
      \begin{rcases}
        1-\hphantom{0}5.500000\,\varepsilon\\
        1-11.500000\,\varepsilon
      \end{rcases}<\delta_{\varphi}^{\mathbb{F}}<
      \begin{cases}
        1+\hphantom{0}5.500001\,\varepsilon,&\mathbb{F}=\mathbb{R},\\
        1+11.500004\,\varepsilon,&\mathbb{F}=\mathbb{C},
      \end{cases}\\
      \mathop{\mathrm{fl}}(\cos\varphi)=\delta_c^{\mathbb{F}}\cos\varphi,\quad
      \begin{rcases}
        1-\hphantom{0}8.000000\,\varepsilon\\
        1-14.000000\,\varepsilon
      \end{rcases}<\delta_c^{\mathbb{F}}<
      \begin{cases}
        1+\hphantom{0}8.000002\,\varepsilon,&\mathbb{F}=\mathbb{R},\\
        1+14.000006\,\varepsilon,&\mathbb{F}=\mathbb{C},
      \end{cases}
    \end{gathered}
    \label{e:prop}
  \end{equation}
  where $\mathbb{F}$ indicates whether $A$ is real or complex.  For
  different ranges of $\delta_1$ or $\delta_2$ the bounds can be
  recalculated as described in \cref{ss:SM1.3}, as well as for a
  different rounding mode (while lowering the upper bound of
  $\mathop{\mathrm{fl}}(\sqrt{\nu})$ for $\mathop{\mathrm{fl}}(\tan 2\varphi)$
  if required).
\end{proposition}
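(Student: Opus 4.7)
The plan is to prove \cref{p:s2} in two passes along the computational chain \cref{e:tan2,e:jactan,e:jacsec,e:lamsec,e:alpha}. In the first pass I verify finiteness of every intermediate under the hypothesis \cref{e:scla}, and in the second pass I propagate elementary rounding errors to obtain the constants in \cref{e:prop}.

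For the no-overflow pass, I proceed as follows. The subtraction $a=a_{11}-a_{22}$ yields $|a|\le 2\tilde{\omega}=\omega/(2\sqrt{2})$, so it cannot overflow. The quotient $2|a_{21}|/|a|$ in \cref{e:tan2} is then capped to $[0,\mathop{\mathrm{fl}}(\sqrt{\omega})]$ by the $\mathop{\mathrm{fmax}}$ and $\min$, and the sign multiplication is exact, hence $|\mathop{\mathrm{fl}}(\tan(2\varphi))|\le\mathop{\mathrm{fl}}(\sqrt{\omega})$. Because $\mathop{\mathrm{fma}}(\mathop{\mathrm{fl}}(\sqrt{\omega}),\mathop{\mathrm{fl}}(\sqrt{\omega}),1)$ is finite for $p\ge 23$ (as already observed after \cref{e:tan2}), \cref{e:jactan} yields $|\mathop{\mathrm{fl}}(\tan\varphi)|\le 1$ since the denominator is at least $1$; consequently $\mathop{\mathrm{fl}}(\sec^2\varphi)\in[1,2]$ and $\mathop{\mathrm{fl}}(\cos\varphi)\in[1/\sqrt{2},1]$ throughout \cref{e:jacsec}. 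For \cref{e:lamsec}, the parenthesised expression is bounded in magnitude by $4\tilde{\omega}=\omega/\sqrt{2}$ using $|a_{ij}|\le\tilde{\omega}$ and $|\tan\varphi|\le 1$, and division by $\sec^2\varphi\ge 1$ keeps the eigenvalues strictly below $\omega$; finally, $|a_{21}|\le\tilde{\omega}<\omega$ puts all arguments of \cref{e:zpolar} safely within range for \cref{e:alpha}.

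For the error pass, I track every rounded step with the standard $(1\pm\varepsilon)$ model (valid by the stated no-underflow assumption) and credit $\mathop{\mathrm{fma}}$ with a single rounding wherever it is invoked. In the real case, $|a_{21}|$ is exact, $a_{11}-a_{22}$ is exact by the Sterbenz lemma under no underflow, and multiplication by $2$ and by $\mathop{\mathrm{sign}}\,a$ is error-free; hence \cref{e:tan2} contributes only the division rounding, while \cref{e:jactan} adds one fma, one square root, one addition, and one division, whose combined factor collapses to the window $\pm 5.5\varepsilon$ for $\delta_\varphi^{\mathbb{R}}$ after linearisation in $\varepsilon$. Appending the $\mathop{\mathrm{fl}}(\cos\varphi)$ pass of \cref{e:jacsec} contributes the factor $\delta_1$ with the bounds of \cref{e:d1}, inflating the window to $\pm 8\varepsilon$ for $\delta_c^{\mathbb{R}}$. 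In the complex case, $|a_{21}|$ is produced by $\mathop{\mathrm{hypot}}$, so \cref{l:hypot} inserts the factor $\delta_2$ into the quotient of \cref{e:tan2}; carrying this through \cref{e:jactan,e:jacsec} roughly doubles both windows, yielding $\pm 11.5\varepsilon$ and $\pm 14\varepsilon$. The tiny excess in the upper constants (e.g., $5.500001$ versus $5.500000$) absorbs the quadratic correction from $(1+\varepsilon)^k>1+k\varepsilon$. The bounds on $\delta_\alpha',\delta_\alpha''$ and on $|\mathop{\mathrm{fl}}(\mathrm{e}^{\pm\mathrm{i}\alpha})|$ follow directly from \cref{e:zpolar} combined with \cref{l:hypot} and \cref{r:ufl}.

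The main obstacle is the careful constant bookkeeping: each bound must be assembled as a product of factors $(1\pm\varepsilon)^{n_i}$ and $\delta_2^{\pm}$, expanded, and then rounded up to the cleanest decimal that still dominates the exact maximum, all while being careful to apply \cref{l:hypot} only where its no-underflow hypothesis is met and to charge $\mathop{\mathrm{fma}}$ with exactly one rounding where it is actually used (in \cref{e:tan2}'s cap, in the $\tan^2(2\varphi)+1$ of \cref{e:jactan}, and in the $\tan^2\varphi+1$ of \cref{e:jacsec}). Once this template is in place for round-to-nearest, the proposition's closing remark about other rounding modes follows by substituting the appropriate one-step bounds without altering the structure of the argument.
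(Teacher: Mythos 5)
Your two-pass structure mirrors the paper's proof in \cref{ss:SM1.2}: a boundedness argument under \cref{e:scla}, followed by propagation of per-operation $(1\pm\varepsilon)$ factors through \cref{e:tan2,e:jactan,e:jacsec}, with $\delta_1$ from \cref{e:d1} and $\delta_2$ from \cref{l:hypot} producing the real/complex split. The concrete gap is your claim that, in the real case, $a_{11}-a_{22}$ is exact \emph{by the Sterbenz lemma}. Sterbenz applies only when the two operands agree in sign and lie within a factor of two of each other; the diagonal entries of $A$ satisfy no such condition (they may have opposite signs or vastly different exponents), so the subtraction in \cref{e:tan2} generally rounds. The paper carries this error explicitly as $\mathop{\mathrm{fl}}(|a|)=|a_{11}-a_{22}|(1+\varepsilon_3)$, and it is one of the contributions from which the $5.5\varepsilon$ and $11.5\varepsilon$ windows are assembled. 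Since you also charge a rounding for the quotient in \cref{e:tan2}, reinstating the subtraction error in your ledger pushes your real-case window above the claimed $5.500001\,\varepsilon$, so the proposal as written does not arrive at \cref{e:prop}; the budget for the $\tan 2\varphi$ stage has to be set up as in the paper, where $\delta_{2\varphi}^{\mathbb{F}}$ collects that stage's error and everything downstream is extremized against it.

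A secondary weakness is that you assert the products of elementary factors ``collapse'' to $\pm 5.5\varepsilon$, $\pm 11.5\varepsilon$, $\pm 8\varepsilon$, $\pm 14\varepsilon$ after linearisation, but those decimals are only reachable because several contributions are attenuated: the fma rounding and the $\delta_{2\varphi}^{\mathbb{F}}$ perturbation sit under a square root (half weight), the perturbation of $\tan^2 2\varphi$ is damped by $y^2/(y^2+1)<1$, and the exact addition of $1$ damps the denominator error by $r/(r+1)<1$; the paper shows the worst case is attained in the limit $\tan 2\varphi\to\pm\infty$ and then evaluates the resulting closed-form bounds symbolically (\cref{ss:SM1.3}) to certify the six-decimal constants, none of which your sketch actually performs. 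Your overflow pass is essentially the paper's, though your bound of exactly $4\tilde{\omega}$ on the parenthesised expression in \cref{e:lamsec} ignores the rounding inflation and, in the complex case, the possible overestimate of $|a_{21}|$ by the factor $\delta_2^+$; the paper's bound $\tilde{\omega}((1+2\hat{\delta}_o^{\mathbb{F}})(1+\varepsilon)+1)(1+\varepsilon)<4\sqrt{2}\,\tilde{\omega}$ is what closes that step, and the paper additionally argues that earlier underflows cannot break the no-overflow claim (which is not excluded by the hypothesis of the first part), a case your pass leaves untouched.
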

\begin{proof}
  The proof is presented in \cref{ss:SM1.2}.
\end{proof}

\Cref{c:s2} gives a practical scaling bound in the terms of the
magnitudes of the components of the elements of $A$ and an exactly
representable value based on $\nu$.

\begin{corollary}\label{c:s2}
  If a Hermitian matrix $A$ of order two is scaled such that
  \begin{equation}
    \max_{1\le j\le i\le 2}\max\{|\Re{a_{ij}}|,|\Im{a_{ij}}|\}\le\tilde{\nu}/\sqrt{2}=\nu/8,
    \label{e:sclac}
  \end{equation}
  then the assumption~\cref{e:scla} of \cref{p:s2} holds for $A$.
\end{corollary}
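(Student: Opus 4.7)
The plan is to derive the inequality $\hat{a}\le\tilde{\omega}$ by considering the diagonal and off-diagonal entries of $A$ separately, using only the Hermitian structure and elementary bounds on $\sqrt{x^2+y^2}$.

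First I would observe that, since $A$ is Hermitian of order two, the diagonal entries $a_{11}$ and $a_{22}$ are real, so for $i\in\{1,2\}$ one has $|a_{ii}|=|\Re a_{ii}|\le\omega/8<\tilde{\omega}$ directly from~\cref{e:sclac}. Thus the diagonal poses no obstruction.

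Next I would treat the off-diagonal element $a_{21}$ (the only other entry entering the maximum in~\cref{e:scla}, since $1\le j\le i\le 2$). Writing $|a_{21}|=\sqrt{(\Re a_{21})^2+(\Im a_{21})^2}$ and denoting $b=\max\{|\Re a_{21}|,|\Im a_{21}|\}$, the assumption~\cref{e:sclac} gives $b\le\omega/8$. Since $\sqrt{x^2+y^2}\le\sqrt{2}\,\max\{|x|,|y|\}$, this yields
\begin{equation*}
  |a_{21}|\le\sqrt{2}\,b\le\sqrt{2}\cdot\frac{\omega}{8}=\frac{\omega}{4\sqrt{2}}=\tilde{\omega}.
\end{equation*}
Combining both bounds, $\hat{a}=\max\{|a_{11}|,|a_{21}|,|a_{22}|\}\le\tilde{\omega}$, which is precisely~\cref{e:scla}, so \cref{p:s2} applies.

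There is essentially no main obstacle here: the corollary is a direct algebraic consequence of the definition of the modulus and the Hermitian constraint. The only points to mention for completeness are that the real case ($\Im a_{ij}=0$) is trivially subsumed, and that the identity $\tilde{\omega}/\sqrt{2}=\omega/8$ in~\cref{e:sclac} is what makes the factor of $\sqrt{2}$ absorbed exactly into $\tilde{\omega}$.
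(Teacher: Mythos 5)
Your proof is correct and follows essentially the same route as the paper: the key step is $|a_{ij}|\le\sqrt{2}\max\{|\Re a_{ij}|,|\Im a_{ij}|\}\le\sqrt{2}\,\omega/8=\tilde{\omega}$, which the paper applies uniformly to all entries while you additionally note the diagonal case is trivial since those entries are real. This extra observation is fine but not needed.
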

\begin{proof}
  From $\max\{|\Re{a_{ij}}|,|\Im{a_{ij}}|\}\le\nu/8$ it follows
  $|a_{ij}|\le\nu\sqrt{2}/8=\nu/(4\sqrt{2})=\tilde{\nu}$.
\end{proof}

With $\eta=\left\lfloor\lg(\nu/8)\right\rfloor$,~\cref{e:sclac}
implies the scaling factor $2^{\zeta}$ of a $A$ ($A'=2^{\zeta}A$),
where
\begin{equation}
  \begin{gathered}
    \zeta=\min\{\nu,\zeta_{11}^{},\zeta_{22}^{},\zeta_{21}^{\Re},\zeta_{21}^{\Im}\}\in\mathbb{Z},\\
    \zeta_{ii}^{}=\eta-\left\lfloor\lg|a_{ii}^{}|\right\rfloor,\quad
    \zeta_{21}^{\Re}=\eta-\left\lfloor\lg|\Re{a_{21}^{}}|\right\rfloor,\quad
    \zeta_{21}^{\Im}=\eta-\left\lfloor\lg|\Im{a_{21}^{}}|\right\rfloor,
  \end{gathered}
  \label{e:z}
\end{equation}
while $\lg 0=-\infty$ and $\zeta$ is an integer exactly representable
as a floating-point value\footnote{For all standard floating-point
datatypes, the value represented by $\nu$ is an integer.}.

The eigenvalues of $A'$ in $\Lambda'$ cannot overflow, but in
$\Lambda\approx 2^{-\zeta}\Lambda'$ might, where the approximate sign
warns of a possibility that the values in $A$ small enough in
magnitude become subnormal and lose their least significant bits when
downscaling it with $\zeta<0$.  Otherwise, the scaling of $A$ is
exact, and some subnormal values might be raised into the normal range
when $\zeta>0$.  The scaled eigenvalues in $\Lambda'$ could be kept
alongside $\zeta$ in the cases where $\Lambda$ is expected to overflow
or underflow, and compatibility with the LAPACK's \texttt{xLAEV2}
routines is not required.  Else, $\Lambda$ is returned by backscaling.
\subsubsection{A serial eigendecomposition algorithm for Hermitian
  matrices of order two}\label{sss:2.3.1}
Listing~\ref{l:1} shows a serial C implementation of the described
eigendecomposition algorithm for a Hermitian matrix of order two.  All
untyped variables are in double precision.  The computed values are
equivalent to those from the vectorized \cref{a:z8jac2}.  The three
branches in lines~\ref{ll:9} and \ref{ll:10} are avoided in the
vectorized code.  Since the $\mathtt{frexp}$ function returns $0$ for
the exponent of zero instead of a huge negative value, the
lines~\ref{ll:5} to~\ref{ll:8} take care of this exception as
in~\cite[subsection~2.1.1]{Novakovic-Singer-22}.

\begin{lstlisting}[float=hbtp,caption=\texttt{zsjac2}: a serial eigendecomposition of a double precision Hermitian matrix $A$ of order two in C,label=l:1]
// input: $\!a_{11}$, $\!a_{22}$, $\!\Re{a_{21}}$, $\!\Im{a_{21}}$; output: $\!\cos\varphi$, $\!\cos\alpha\tan\varphi$, $\!\sin\alpha\tan\varphi$; $\!\lambda_1$, $\!\lambda_2$; $\!p$[,$\zeta'$]
int $\zeta_{11}^{}$, $\zeta_{22}^{}$, $\zeta_{21}^{\Re}$, $\zeta_{21}^{\Im}$, $\zeta_{\mathbb{R}}^{}$, $\zeta_{\mathbb{C}}^{}$, $\zeta$, $\zeta'$, $\eta'$ = DBL_MAX_EXP - 3; // $\eta'=\eta+1=1021$
double $\check{\mu}$ = DBL_TRUE_MIN, $\sqrt{\nu}$ = 1.34078079299425956E+154 /* sqrt(DBL_MAX) */;
// determine $\zeta$ assuming all inputs are finite; avoid taking the exponent of $0$
frexp(fmax(fabs($a_{11}$), $\check{\mu}$), &$\zeta_{11}$); $\zeta_{11}$ = $\eta'$ - $\zeta_{11}$;$\label{ll:5}$
frexp(fmax(fabs($a_{22}$), $\check{\mu}$), &$\zeta_{22}$); $\zeta_{22}$ = $\eta'$ - $\zeta_{22}$;$\label{ll:6}$
frexp(fmax(fabs($\Re{a_{21}^{}}$), $\check{\mu}$), &$\zeta_{21}^{\Re}$); $\zeta_{21}^{\Re}$ = $\eta'$ - $\zeta_{21}^{\Re}$;$\label{ll:7}$
frexp(fmax(fabs($\Im{a_{21}^{}}$), $\check{\mu}$), &$\zeta_{21}^{\Im}$); $\zeta_{21}^{\Im}$ = $\eta'$ - $\zeta_{21}^{\Im}$;$\label{ll:8}$
$\zeta_{\mathbb{R}}$ = (($\zeta_{11}$ <= $\zeta_{22}$) ? $\zeta_{11}$ : $\zeta_{22}$); $\zeta_{\mathbb{C}}^{}$ = (($\zeta_{21}^{\Re}$ <= $\zeta_{21}^{\Im}$) ? $\zeta_{21}^{\Re}$ : $\zeta_{21}^{\Im}$);$\label{ll:9}$
$\zeta$ = (($\zeta_{\mathbb{R}}$ <= $\zeta_{\mathbb{C}}$) ? $\zeta_{\mathbb{R}}$ : $\zeta_{\mathbb{C}}$); $\zeta'$ = -$\zeta$; // $\text{\cref{e:z}}\label{ll:10}$
// scale the input matrix $A$ by $2^{\zeta}$: $A'=2^{\zeta}A$
$\Re{a_{21}'}$ = scalbn($\Re{a_{21}^{}}$, $\zeta$); $\Im{a_{21}'}$ = scalbn($\Im{a_{21}^{}}$, $\zeta$);$\label{ll:12}$
$a_{11}'$ = scalbn($a_{11}^{}$, $\zeta$); $a_{22}'$ = scalbn($a_{22}^{}$, $\zeta$);$\label{ll:13}$
// find the polar form of $a_{21}'=2^{\zeta}a_{21}^{}$ as $|a_{21}'|\mathrm{e}^{\mathrm{i}\alpha}$ using $\text{\cref{e:zpolar,e:hypot,e:alpha}}$
$|\Re{a_{21}'}|$ = fabs($\Re{a_{21}'}$); $|\Im{a_{21}'}|$ = fabs($\Im{a_{21}'}$);$\label{ll:15}$
am = fmin($|\Re{a_{21}'}|$, $|\Im{a_{21}'}|$); aM = fmax($|\Re{a_{21}'}|$, $|\Im{a_{21}'}|$); mM = fmax(am / aM, 0.0);$\label{ll:16}$
$|a_{21}'|$ = sqrt(fma(mM, mM, 1.0)) * aM; // $\mathop{\mathrm{hypot}}(|\Re{a_{21}'}|,|\Im{a_{21}'}|)$ as in $\text{\cref{e:hypot}}\label{ll:17}$
$\cos\alpha$ = copysign(fmin($|\Re{a_{21}'}|$ / $|a_{21}'|$, 1.0), $\Re{a_{21}'}$);$\label{ll:18}$
$\sin\alpha$ = $\Im{a_{21}'}$ / fmax($|a_{21}'|$, $\check{\mu}$);$\label{ll:19}$
// compute the Jacobi rotation as in $\text{\cref{ss:2.1}}$
$o$ = $|a_{21}'|$ * 2.0; $a$ = $a_{11}'$ - $a_{22}'$;$\label{ll:21}$
$\tan{2\varphi}$ = copysign(fmin(fmax($o$ / fabs($a$), 0.0), $\sqrt{\nu}$), $a$); // $\text{\cref{e:tan2}}\label{ll:22}$
$\tan\varphi$ = $\tan{2\varphi}$ / (1.0 + sqrt(fma($\tan{2\varphi}$, $\tan{2\varphi}$, 1.0))); // $\text{\cref{e:jactan}}\label{ll:23}$
$\sec^2\varphi$ = fma($\tan\varphi$, $\tan\varphi$, 1.0); $\sec\varphi$ = sqrt($\sec^2\varphi$); $\cos\varphi$ = 1.0 / $\sec\varphi$; // $\text{\cref{e:jacsec}}\label{ll:24}$
$\cos\alpha\tan\varphi$ = $\cos\alpha$ * $\tan\varphi$; // optionally, $\cos\alpha\sin\varphi$ = $\cos\alpha\tan\varphi$ / $\sec\varphi$;$\label{ll:25}$
$\sin\alpha\tan\varphi$ = $\sin\alpha$ * $\tan\varphi$; // optionally, $\sin\alpha\sin\varphi$ = $\sin\alpha\tan\varphi$ / $\sec\varphi$;$\label{ll:26}$
// compute the (backscaled) eigenvalues as in $\text{\cref{e:lamsec}}$
$\lambda_1'$ = fma($\tan\varphi$, fma($a_{22}'$, $\tan\varphi$, $\hphantom{\text{-}}o$), $a_{11}'$) / $\sec^2\varphi$;$\label{ll:28}$
$\lambda_2'$ = fma($\tan\varphi$, fma($a_{11}'$, $\tan\varphi$, -$o$), $a_{22}'$) / $\sec^2\varphi$;$\label{ll:29}$
$\lambda_1^{}$ = scalbn($\lambda_1'$, $\zeta'$); $\lambda_2^{}$ = scalbn($\lambda_2'$, $\zeta'$); // optionally$\label{ll:30}$
return (($\zeta'$ << 1) | ($\lambda_1'$ < $\lambda_2'$)); // pack $\zeta'$ and the permutation bit $p\label{ll:31}$
\end{lstlisting}
\subsection{Intel AVX-512 vectorization}\label{ss:2.4}
In the following, a $\mathtt{function}$ in the teletype font is as
shorthand for the Intel AVX-512 C
intrinsic\footnote{\url{https://www.intel.com/content/www/us/en/docs/intrinsics-guide/index.html}}
$\mathtt{\_mm512\_function\_pd}$.  A $\mathsf{variable}$ in the
sans-serif font or a $\bm{bold}$ Greek letter without an explicitly
specified type is assumed to be a vector of the type
$\mathtt{\_\_m512d}$, with $\mathtt{s}=8$ double precision values
(lanes).  A variable named with a letter from the Fraktur font (e.g.,
$\mathfrak{m}$) represents an eight-bit lane mask of the type
$\mathtt{\_\_mmask8}$ (see~\cite{Intel-21} for the datatypes and the
vector instructions).  Most, but not all, intrinsics correspond to a
single instruction.

The chosen vectorization approach is Intel-specific for convenience,
but it is meant to determine the minimal set of vector operations
required (hardware-supported or emulated) on any platform that is
considered to be targeted, either by that platform's native
programming facilities, or via some multi-platform interface in the
spirit of the SLEEF~\cite{Shibata-Petrogalli-20} vectorized C math
library, e.g., or, if possible, by the compiler's auto-vectorizer, as
it has partially been done with the Intel Fortran compiler for the
batched generalized eigendecomposition of pairs of Hermitian matrices
of order two, one of them being positive definite, by the
Hari--Zimmermann
method~\cite[subsection~6.2]{Singer-DiNapoli-Novakovic-Caklovic-20}.
\subsubsection{Data storage and the split complex representation}\label{sss:2.4.1}
For simplicity and performance, a real vector $\mathbf{x}$ of length
$m$ is assumed to be contiguous in memory, aligned to a multiple of
the larger of the vector size and the cache-line size (on Intel CPUs
they are both $64\,\mathrm{B}$), and zero-padded at its end to the
optimal length $\tilde{m}$, where
\begin{equation}
  \tilde{m}=\begin{cases}
  m,&\text{if }m\bmod\mathtt{s}=0,\\
  m+(\mathtt{s}-(m\bmod\mathtt{s})),&\text{otherwise}.
  \end{cases}
  \label{e:pad0}
\end{equation}

If $\mathbf{z}$ is a complex array, it is kept as two real
non-overlapping ones, $\Re{\tilde{\mathbf{z}}}$ and
$\Im{\tilde{\mathbf{z}}}$ (for the real and the imaginary parts,
respectively), laid out in this split form as
\begin{equation}
  \Re{\tilde{\mathbf{z}}}=\begin{bmatrix}\Re{\mathbf{z}}\\\mathbf{0}_{\tilde{m}-m}\end{bmatrix},\qquad
  \Im{\tilde{\mathbf{z}}}=\begin{bmatrix}\Im{\mathbf{z}}\\\mathbf{0}_{\tilde{m}-m}\end{bmatrix},
  \label{e:layout1}
\end{equation}
where the blocks $\mathbf{0}_{\tilde{m}-m}$ of zeros are the minimal
padding that makes the length of the real as well as of the imaginary
block a multiple of the number of SIMD lanes ($\mathtt{s}$), with
$\tilde{m}$ from~\cref{e:pad0}.  A complex $m\times n$ matrix $G$ is
kept as two real $\tilde{m}\times n$ matrices,
\begin{displaymath}
  \addtolength{\arraycolsep}{-3pt}
  \Re{G}=\begin{bmatrix}
  \Re{g_1} & \Re{g_2} & \cdots & \Re{g_n}\\
  \mathbf{0}_{\tilde{m}-m} & \mathbf{0}_{\tilde{m}-m} & \cdots & \mathbf{0}_{\tilde{m}-m}
  \end{bmatrix},\quad
  \Im{G}=\begin{bmatrix}
  \Im{g_1} & \Im{g_2} & \cdots & \Im{g_n}\\
  \mathbf{0}_{\tilde{m}-m} & \mathbf{0}_{\tilde{m}-m} & \cdots & \mathbf{0}_{\tilde{m}-m}
  \end{bmatrix},
\end{displaymath}
i.e., each column $g_j$ of $G$, where $1\le j\le n$, is split into the
real ($\Re{g_j}$) and the imaginary ($\Im{g_j}$) part, as
in~\cref{e:layout1}.  The leading dimensions of $\Re{G}$ and $\Im{G}$
may differ, but each has to be a multiple of $\mathtt{s}$ to keep all
padded real columns properly aligned.

The split form has been used for efficient complex matrix
multiplication kernels~\cite{VanZee-Smith-17} and for the generalized
SVD computation by the implicit Hari--Zimmermann algorithm on
GPUs~\cite{Novakovic-Singer-21}.  Intel CPUs have no native
complex-specific arithmetic instructions operating on vectors of at
least single precision complex numbers in the customary, interleaved
representation, so a manual implementation of the vectorized complex
arithmetic is inevitable, for what the split representation is more
convenient.

A conversion of the customary representation of complex arrays to and
back from the split form, in both vectorized and parallel fashion, is
described in \cref{s:SM5}.

An input batch $(A^{(\ell)})_{\ell=1}^r$ of Hermitian matrices of
order two is kept as a collection of one-dimensional real arrays
$\tilde{\mathbf{a}}_{11}^{}$, $\tilde{\mathbf{a}}_{22}^{}$,
$\Re{\tilde{\mathbf{a}}_{21}^{}}$, $\Im{\tilde{\mathbf{a}}_{21}^{}}$,
of length $\tilde{r}$ and with layout~\cref{e:layout1}, where
$\tilde{r}$ is calculated from $r$ as in~\cref{e:pad0},
$(\tilde{\mathbf{a}}_{11}^{})_{\ell}^{}=a_{11}^{(\ell)}$,
$(\tilde{\mathbf{a}}_{22}^{})_{\ell}^{}=a_{22}^{(\ell)}$,
$(\Re{\tilde{\mathbf{a}}_{21}^{}})_{\ell}^{}=\Re{a_{21}^{(\ell)}}$,
$(\Im{\tilde{\mathbf{a}}_{21}^{}})_{\ell}^{}=\Im{a_{21}^{(\ell)}}$.
The output unpermuted eigenvalue matrices
$(\Lambda^{(\ell)})_{\ell=1}^r$ are stored as the arrays
$\tilde{\bm{\lambda}}_1^{}=(\lambda_1^{(\ell)})_{\ell=1}^{\tilde{r}}$
and
$\tilde{\bm{\lambda}}_2^{}=(\lambda_2^{(\ell)})_{\ell=1}^{\tilde{r}}$.
The corresponding Jacobi rotations' parameters are kept in the arrays
$\cos\tilde{\bm{\varphi}}$,
$\cos\tilde{\bm{\alpha}}\tan\tilde{\bm{\varphi}}$, and
$\sin\tilde{\bm{\alpha}}\tan\tilde{\bm{\varphi}}$.  If
$\lambda_1^{(\ell)}<\lambda_2^{(\ell)}$ then the permutation
$P^{(\ell)}=\left[\begin{smallmatrix}0&1\\1&0\end{smallmatrix}\right]$,
else
$P^{(\ell)}=\left[\begin{smallmatrix}1&0\\0&1\end{smallmatrix}\right]$,
what is compactly encoded by setting the
$((\ell-1)\bmod\mathtt{s})$-th bit of a bitmask $\mathfrak{p}$ to $1$
or $0$, respectively.
\subsubsection{Vectorized eigendecomposition of a batch of Hermitian matrices of order two}\label{sss:2.4.2}
\looseness=-1
Based on \cref{ss:2.1,ss:2.2,ss:2.3}, \cref{a:z8jac2} implements a
vectorized, branch-free, unconditionally reproducible
eigendecomposition method for at most $\mathtt{s}$ Hermitian matrices
of order two in double precision.  Although presented with the
AVX512DQ instruction subset, only the basic, AVX512F subset is
required\footnote{With the AVX512F instruction subset only, the
bitwise operations require reinterpreting casts to and from 64-bit
integer vectors (no values are changed, converted, or otherwise acted
upon); e.g.,\\
$\mathop{\mathtt{and}}(\mathsf{x},\mathsf{y})\mskip-2mu=\mskip-2mu\mathop{\mathtt{castsi512}}(\mathop{\mathtt{\_mm512\_and\_epi64}}(\mathop{\mathtt{\_mm512\_castpd\_si512}}(\mathsf{x}),\mathop{\mathtt{\_mm512\_castpd\_si512}}(\mathsf{y})))$.}.
The order of the statements slightly differs from the one in the
actual code, for legibility.  For easier understanding of the
vectorization, the lines of \cref{a:z8jac2} are suffixed by the
corresponding line numbers of the serial algorithm from
Listing~\ref{l:1} in brackets.

\Cref{a:zbjac2} builds on \cref{a:z8jac2} and computes the
eigendecomposition of a batch of Hermitian matrices of order two in
parallel, where each OpenMP thread executes \cref{a:z8jac2} in
sequence over a (not necessarily contiguous) subset of the input's
vectors.  No dependencies exist between the iterations of the
parallel-for loop of \cref{a:zbjac2}, so it can be generalized by
distributing its input over several machines.

\Cref{a:d8jac2,a:dbjac2}, detailed in \cref{s:SM2}, are the
specializations of \cref{a:z8jac2,a:zbjac2}, respectively, for real
symmetric matrices.  There,
$\mathrm{e}^{\mathrm{i}\tilde{\bm{\alpha}}}\tan\tilde{\bm{\varphi}}=\cos\tilde{\bm{\alpha}}\tan\tilde{\bm{\varphi}}=\mathop{\mathrm{sign}}\tilde{\mathbf{a}}_{21}\cdot\tan\tilde{\bm{\varphi}}$
elementwise.

Converting \cref{a:z8jac2,a:zbjac2} and \cref{a:d8jac2,a:dbjac2} to
single precision, with $\mathtt{s}=16$, requires redefining
$\nu=\mathtt{FLT\_MAX}$,
$\mathop{\mathrm{fl}}(\sqrt{\nu})=\text{\texttt{1.844674297E+19}}$,
$\check{\mu}=\mathtt{FLT\_TRUE\_MIN}$,
$\eta=\mathtt{FLT\_MAX\_EXP}-4$ and switching to
$\mathtt{\_mm512\_\cdots\_ps}$ intrinsics, $\mathtt{\_\_m512}$
vectors, and $\mathtt{\_\_mmask16}$ bitmasks, while the relative error
bounds in \cref{p:s2} still hold.  The serial code from
Listing~\ref{l:1} can be similarly converted.

\looseness=-1
The algorithms can be implemented in a scalar fashion
($\mathtt{s}=1$), as in Listing~\ref{l:1}, thus enabling access to the
higher-precision, scalar-only datatypes, such as extended precision,
or in a pseudo-scalar way of, e.g., GPU programming models, where each
thread executes the scalar code over a different data in the same
layout as proposed here.  An implementation in CUDA is discussed in
\cref{s:SM10}.

\begin{algorithm}[hbtp]
  \caption{$\mathtt{z8jac2}$: a vectorized eigendecomposition of at most $\mathtt{s}$ double precision Hermitian matrices of order two with the Intel's AVX-512 intrinsics.}
  \label{a:z8jac2}
  \begin{algorithmic}[1]
    \REQUIRE{$\mathtt{i}$; addresses of $\tilde{\mathbf{a}}_{11},\tilde{\mathbf{a}}_{22},\Re{\tilde{\mathbf{a}}_{21}},\Im{\tilde{\mathbf{a}}_{21}},\cos\tilde{\bm{\varphi}},\cos\tilde{\bm{\alpha}}\tan\tilde{\bm{\varphi}},\sin\tilde{\bm{\alpha}}\tan\tilde{\bm{\varphi}},\tilde{\bm{\lambda}}_1,\tilde{\bm{\lambda}}_2,\mathtt{p}$}
    \ENSURE{$\mathop{\mathsf{cos}}\varphi,\mathop{\mathsf{cos}}\alpha\mathop{\mathsf{tan}}\varphi,\mathop{\mathsf{sin}}\alpha\mathop{\mathsf{tan}}\varphi;\bm{\lambda}_1,\bm{\lambda}_2;\mathfrak{p}$}
    \COMMENT{a permutation-indicating bitmask\\[3pt]}
    \COMMENT{vectors with all lanes set to a compile-time constant\hfill}
    \STATE{$-\mathsf{0}=\mathop{\mathtt{set1}}(-0.0);\quad\mathsf{0}=\mathop{\mathtt{setzero}}();\quad\mathsf{1}=\mathop{\mathtt{set1}}(1.0);\quad\bm{\nu}=\mathop{\mathtt{set1}}(\mathtt{DBL\_MAX});$}
    \STATE{$\sqrt{\bm{\nu}}=\mathop{\mathtt{set1}}(\text{\texttt{1.34078079299425956E+154}});$}
    \COMMENT{$\mathop{\mathrm{fl}}(\sqrt{\nu})$ from~\cref{e:tan2}}
    \STATE{$\check{\bm{\mu}}=\mathop{\mathtt{set1}}(\mathtt{DBL\_TRUE\_MIN});\quad\bm{\eta}=\mathop{\mathtt{set1}}(1020.0);$}
    \COMMENT{$\eta=(\mathtt{DBL\_MAX\_EXP}-1)-3$\\[3pt]}
    \COMMENT{aligned loads of the $\mathtt{i}$th input vectors, e.g., $\mathsf{a}_{11}=\mathop{\mathtt{load}}(\tilde{\mathbf{a}}_{11}+\mathtt{i})$, happen here\hfill\null\\[3pt]}
    \COMMENT{determine the scaling exponents $\bm{\zeta}$ from~\cref{e:z} and scale $\mathsf{A}\to\mathsf{2}^{\bm{\zeta}}\mathsf{A}$\hfill}
    \STATE{$\bm{\zeta}_{11}=\mathop{\mathtt{sub}}(\bm{\eta},\mathop{\mathtt{getexp}}(\mathsf{a}_{11}));\quad\bm{\zeta}_{22}=\mathop{\mathtt{sub}}(\bm{\eta},\mathop{\mathtt{getexp}}(\mathsf{a}_{22}));$}
    \COMMENT{$[\ref{ll:5},\ref{ll:6}]$}
    \STATE{$\bm{\zeta}_{21}^{\Re}=\mathop{\mathtt{sub}}(\bm{\eta},\mathop{\mathtt{getexp}}(\Re{\mathsf{a}_{21}^{}}));\quad\bm{\zeta}_{21}^{\Im}=\mathop{\mathtt{sub}}(\bm{\eta},\mathop{\mathtt{getexp}}(\Im{\mathsf{a}_{21}^{}}));$}
    \COMMENT{$[\ref{ll:7},\ref{ll:8}]$}
    \STATE{$\bm{\zeta}=\mathop{\mathtt{min}}(\bm{\nu},\mathop{\mathtt{min}}(\mathop{\mathtt{min}}(\bm{\zeta}_{11}^{},\bm{\zeta}_{22}^{}),\mathop{\mathtt{min}}(\bm{\zeta}_{21}^{\Re},\bm{\zeta}_{21}^{\Im})));$}
    \COMMENT{$[\ref{ll:9},\ref{ll:10}]$}
    \STATE{$-\bm{\zeta}=\mathop{\mathtt{xor}}(\bm{\zeta},-\mathsf{0});$}
    \COMMENT{$\mathop{\mathtt{xor}}(\mathsf{x},-\mathsf{0})$ flips the sign bits in $\mathsf{x}$; optionally, store $-\bm{\zeta}\quad[\ref{ll:10}]$}
    \STATE{$\Re{\mathsf{a}_{21}}=\mathop{\mathtt{scalef}}(\Re{\mathsf{a}_{21}},\bm{\zeta});\quad\Im{\mathsf{a}_{21}}=\mathop{\mathtt{scalef}}(\Im{\mathsf{a}_{21}},\bm{\zeta});$}
    \COMMENT{$\mathsf{a}_{21}=\mathsf{2}^{\bm{\zeta}}\mathsf{a}_{21}\quad[\ref{ll:12}]$\label{al:8}}
    \STATE{$\mathsf{a}_{11}=\mathop{\mathtt{scalef}}(\mathsf{a}_{11},\bm{\zeta});\quad\mathsf{a}_{22}=\mathop{\mathtt{scalef}}(\mathsf{a}_{22},\bm{\zeta});$}
    \COMMENT{$\mathsf{a}_{ii}=\mathsf{2}^{\bm{\zeta}}\mathsf{a}_{ii}\quad[\ref{ll:13}]$\\[3pt]}
    \COMMENT{find the polar form of $\mathsf{a}_{21}$ using~\cref{e:alpha,e:zpolar}\hfill}
    \STATE{$|\Re{\mathsf{a}_{21}}|=\mathop{\mathtt{andnot}}(-\mathsf{0},\Re{\mathsf{a}_{21}});$}
    \COMMENT{$\mathop{\mathtt{andnot}}(-\mathsf{0},\mathsf{x})=\mathsf{x}\wedge\neg{-\mathsf{0}}\quad[\ref{ll:15}]$}
    \STATE{$|\Im{\mathsf{a}_{21}}|=\mathop{\mathtt{andnot}}(-\mathsf{0},\Im{\mathsf{a}_{21}});$}
    \COMMENT{$\mathtt{abs}$ could also be used to clear the sign bits$\quad[\ref{ll:15}]$}
    \STATE{$\mathop{\mathsf{sgn}}(\Re{\mathsf{a}_{21}})=\mathop{\mathtt{and}}(\Re{\mathsf{a}_{21}},-\mathsf{0});$}
    \COMMENT{$\mathop{\mathtt{and}}(\mathsf{x},-\mathsf{0})$ extracts the sign bits$\quad[\ref{ll:18}]$}
    \STATE{$|\mathsf{a}_{21}|=\mathop{\mathtt{hypot}}(|\Re{\mathsf{a}_{21}}|,|\Im{\mathsf{a}_{21}}|);$}
    \COMMENT{inline \cref{a:hypot}$\quad[\ref{ll:16},\ref{ll:17}]$}
    \STATE{$|\mathop{\mathsf{cos}}\alpha|=\mathop{\mathtt{min}}(\mathop{\mathtt{div}}(|\Re{\mathsf{a}_{21}}|,|\mathsf{a}_{21}|),\mathsf{1});$}
    \COMMENT{$\mathtt{min}$ replaces $0/0\to\mathtt{NaN}$ with $1\quad[\ref{ll:18}]$}
    \STATE{$\mathop{\mathsf{cos}}\alpha=\mathop{\mathtt{or}}(|\mathop{\mathsf{cos}}\alpha|,\mathop{\mathsf{sgn}}(\Re{\mathsf{a}_{21}}));$}
    \COMMENT{$\mathtt{or}$ transfers the sign bits to positive values$\ [\ref{ll:18}]$}
    \STATE{$\mathop{\mathsf{sin}}\alpha=\mathop{\mathtt{div}}(\Im{\mathsf{a}_{21}},\mathop{\mathtt{max}}(|\mathsf{a}_{21}|,\check{\bm{\mu}}));$}
    \COMMENT{$\mathtt{max}$ replaces $0$ with $\check{\mu}\quad[\ref{ll:19}]$\\[3pt]}
    \COMMENT{compute $\mathop{\mathsf{cos}}\varphi$ and $\mathsf{e}^{\mathsf{i}\alpha}\mathop{\mathsf{tan}}\varphi$ (or $\mathsf{e}^{\mathsf{i}\alpha}\mathop{\mathsf{sin}}\varphi$)\hfill}
    \STATE{$\mathsf{o}=\mathop{\mathtt{scalef}}(|\mathsf{a}_{21}|,\mathsf{1});\quad\mathsf{a}=\mathop{\mathtt{sub}}(\mathsf{a}_{11},\mathsf{a}_{22});$}
    \COMMENT{\cref{e:tan2}$\quad[\ref{ll:21}]$}
    \STATE{$|\mathsf{a}|=\mathop{\mathtt{andnot}}(-\mathsf{0},\mathsf{a});\quad\mathop{\mathsf{sgn}}(\mathsf{a})=\mathop{\mathtt{and}}(\mathsf{a},-\mathsf{0});$}
    \COMMENT{\cref{e:tan2}$\quad[\ref{ll:22}]$}
    \STATE{$\mathop{\mathsf{tan}}2\varphi=\mathop{\mathtt{or}}(\mathop{\mathtt{min}}(\mathop{\mathtt{max}}(\mathop{\mathtt{div}}(\mathsf{o},|\mathsf{a}|),\mathsf{0}),\sqrt{\bm{\nu}}),\mathop{\mathsf{sgn}}(\mathsf{a}));$}
    \COMMENT{\cref{e:tan2}$\quad[\ref{ll:22}]$}
    \STATE{$\mathop{\mathsf{sec}^{\mathsf{2}}}2\varphi=\mathop{\mathtt{fmadd}}(\mathop{\mathsf{tan}}2\varphi,\mathop{\mathsf{tan}}2\varphi,\mathsf{1});$}
    \COMMENT{$\sec^2{2\varphi}<\infty\quad[\ref{ll:23}]$}
    \STATE{$\mathop{\mathsf{tan}}\varphi=\mathop{\mathtt{div}}(\mathop{\mathsf{tan}}2\varphi,\mathop{\mathtt{add}}(\mathsf{1},\mathop{\mathtt{sqrt}}(\mathop{\mathsf{sec}^{\mathsf{2}}}2\varphi)));$}
    \COMMENT{\cref{e:jactan}$\quad[\ref{ll:23}]$}
    \STATE{$\mathop{\mathsf{sec}^\mathsf{2}}\varphi=\mathop{\mathtt{fmadd}}(\mathop{\mathsf{tan}}\varphi,\mathop{\mathsf{tan}}\varphi,1);$}
    \COMMENT{\cref{e:jacsec}$\quad[\ref{ll:24}]$}
    \STATE{$\mathop{\mathsf{sec}}\varphi=\mathop{\mathtt{sqrt}}(\mathop{\mathsf{sec}^{\mathsf{2}}}\varphi);\quad\mathop{\mathsf{cos}}\varphi=\mathop{\mathtt{div}}(\mathsf{1},\mathop{\mathsf{sec}}\varphi);$}
    \COMMENT{\cref{e:jacsec}$\quad[\ref{ll:24}]$}
    \STATE{$\mathop{\mathsf{cos}}\alpha\mathop{\mathsf{tan}}\varphi=\mathop{\mathtt{mul}}(\mathop{\mathsf{cos}}\alpha,\mathop{\mathtt{tan}}\varphi);$}
    \COMMENT{$\mathop{\mathsf{cos}}\alpha\mathop{\mathsf{sin}}\varphi=\mathop{\mathtt{div}}(\mathop{\mathsf{cos}}\alpha\mathop{\mathsf{tan}}\varphi,\mathop{\mathsf{sec}}\varphi);\quad[\ref{ll:25}]$\label{al:24}}
    \STATE{$\mathop{\mathsf{sin}}\alpha\mathop{\mathsf{tan}}\varphi=\mathop{\mathtt{mul}}(\mathop{\mathsf{sin}}\alpha,\mathop{\mathtt{tan}}\varphi);$}
    \COMMENT{$\mathop{\mathsf{sin}}\alpha\mathop{\mathsf{sin}}\varphi=\mathop{\mathtt{div}}(\mathop{\mathsf{sin}}\alpha\mathop{\mathsf{tan}}\varphi,\mathop{\mathsf{sec}}\varphi);\quad[\ref{ll:26}]$\label{al:25}\\[3pt]}
    \COMMENT{compute the eigenvalues (aligned stores of the results also happen here)\hfill}
    \STATE{$\bm{\lambda}_1'=\mathop{\mathtt{div}}(\mathop{\mathtt{fmadd}}(\mathop{\mathsf{tan}}\varphi,\mathop{\mathtt{fmadd}}(\mathsf{a}_{22}^{},\mathop{\mathsf{tan}}\varphi,\mathsf{o}),\mathsf{a}_{11}^{}),\mathop{\mathsf{sec}^{\mathsf{2}}}\varphi);$}
    \COMMENT{\cref{e:lamsec}$\quad[\ref{ll:28}]$}
    \STATE{$\bm{\lambda}_2'=\mathop{\mathtt{div}}(\mathop{\mathtt{fmadd}}(\mathop{\mathsf{tan}}\varphi,\mathop{\mathtt{fmsub}}(\mathsf{a}_{11}^{},\mathop{\mathsf{tan}}\varphi,\mathsf{o}),\mathsf{a}_{22}^{}),\mathop{\mathsf{sec}^{\mathsf{2}}}\varphi);$}
    \COMMENT{\cref{e:lamsec}$\quad[\ref{ll:29}]$}
    \STATE{$\bm{\lambda}_1^{}=\mathop{\mathtt{scalef}}(\bm{\lambda}_1',-\bm{\zeta});\quad\bm{\lambda}_2^{}=\mathop{\mathtt{scalef}}(\bm{\lambda}_2',-\bm{\zeta});$}
    \COMMENT{backscale $\bm{\lambda}_1^{}$ and $\bm{\lambda}_2^{}\quad[\ref{ll:30}]$}
    \STATE{$\mathfrak{p}=\mathop{\mathtt{\_mm512\_cmplt\_pd\_mask}}(\bm{\lambda}_1',\bm{\lambda}_2');$}
    \COMMENT{lane-wise check if $\lambda_1'<\lambda_2'\quad[\ref{ll:31}]$}
    \STATE{$\mathtt{p}[\mathtt{i}/\mathtt{s}]=\mathop{\mathtt{\_cvtmaskX\_u32}}(\mathfrak{p});$}
    \COMMENT{store $\mathfrak{p}$ to $\mathtt{p}$, $\mathtt{X}=\mathtt{8}$ ($\mathtt{16}$ for AVX512F)}
  \end{algorithmic}
\end{algorithm}

\begin{algorithm}[hbtp]
  \caption{$\mathtt{zbjac2}$: an OpenMP-parallel, AVX-512-vectorized eigendecomposition of a batch of $\tilde{r}$ double precision Hermitian matrices of order two.}
  \label{a:zbjac2}
  \begin{algorithmic}[1]
    \REQUIRE{$\tilde{r};\tilde{\mathbf{a}}_{11},\tilde{\mathbf{a}}_{22},\Re{\tilde{\mathbf{a}}_{21}},\Im{\tilde{\mathbf{a}}_{21}}$; $\mathtt{p}$ also, in the context of \cref{a:zvjsvd} only.}
    \ENSURE{$\cos\tilde{\bm{\varphi}},\cos\tilde{\bm{\alpha}}\tan\tilde{\bm{\varphi}},\sin\tilde{\bm{\alpha}}\tan\tilde{\bm{\varphi}};\ \ \tilde{\bm{\lambda}}_1,\tilde{\bm{\lambda}}_2;\ \ \mathtt{p}$}
    \COMMENT{\texttt{unsigned} array of length $\tilde{r}/\mathtt{s}$}
    \STATE{\texttt{\#pragma omp parallel for default(shared)}}
    \COMMENT{optional}
    \FOR[$\mathtt{i}=\ell-1$]{$\mathtt{i}=0$ \TO $\tilde{r}-1$ \textbf{step} $\mathtt{s}$}
    \STATE{\textbf{if} $\mathtt{p}[\mathtt{i}/\mathtt{s}]=0$ \textbf{then continue};}
    \COMMENT{skip this vector on request of \cref{a:zvjsvd}}
    \STATE{$\mathop{\mathtt{z8jac2}}(\mathtt{i},\tilde{\mathbf{a}}_{11},\tilde{\mathbf{a}}_{22},\Re{\tilde{\mathbf{a}}_{21}},\Im{\tilde{\mathbf{a}}_{21}},\cos\tilde{\bm{\alpha}}\tan\tilde{\bm{\varphi}},\sin\tilde{\bm{\alpha}}\tan\tilde{\bm{\varphi}},\cos\tilde{\bm{\varphi}},\tilde{\bm{\lambda}}_1,\tilde{\bm{\lambda}}_2,\mathtt{p}[,-\tilde{\bm{\zeta}}]);$}
    \COMMENT{\Cref{a:z8jac2} should be inlined above to avoid a function-call overhead}
    \ENDFOR\COMMENT{$-\tilde{\bm{\zeta}}$ has to be returned without the optional backscaling of $\tilde{\bm{\lambda}}_1'$ and $\tilde{\bm{\lambda}}_2'$}
  \end{algorithmic}
\end{algorithm}
\section{Column transformations, matrix scaling, and dot-products}\label{s:3}
A na\"{\i}ve formation of the Grammian pivot matrices by computing the
three required dot-products without prescaling the columns is
susceptible to overflow and underflow~\cite{Drmac-97} and thus
severely restricts the admissible exponent range of the elements of
the iteration matrix, as the analysis in \cref{s:SM3} further
demonstrates.  From here onwards a robust though not as performant
implementation of the Jacobi SVD is considered.
\subsection{Effects of the Jacobi rotations on the elements' magnitudes}\label{ss:3.1}
Transforming a pair of columns as
{$\addtolength{\arraycolsep}{-3pt}\begin{bmatrix}g_p^{}&g_q^{}\end{bmatrix}U=\begin{bmatrix}g_p'&g_q'\end{bmatrix}$},
by any Jacobi rotation $U$, cannot raise the larger magnitude of the
elements from any row $i$ of the pair by more than $\sqrt{2}$ in
\emph{exact} arithmetic.  For any $\varphi$,
$|\cos\varphi|+|\sin\varphi|\le\sqrt{2}$, and for any $\alpha$,
$|\pm\mathrm{e}^{\pm\mathrm{i}\alpha}|=1$, and for any index pair
$(p,q)$, where $p<q$ and $g_p'$ and $g_q'$ are not to be swapped,
\begin{displaymath}
  \begin{gathered}
    g_{ip}'=g_{ip}^{}\cos\varphi+g_{iq}^{}\mathrm{e}^{\mathrm{i}\alpha}\sin\varphi,\qquad
    g_{iq}'=g_{iq}^{}\cos\varphi-g_{ip}^{}\mathrm{e}^{-\mathrm{i}\alpha}\sin\varphi,\\
    \max\{|g_{ip}'|,|g_{iq}'|\}\le\max\{|g_{ip}^{}|,|g_{iq}^{}|\}(|\cos\varphi|+|\sin\varphi|)\le\sqrt{2}\max\{|g_{ip}^{}|,|g_{iq}^{}|\}.
  \end{gathered}
\end{displaymath}

\looseness=-1
All quantities involving $\alpha$ and $\varphi$ are computed, and
therefore may not be exact.  Applying a computed Jacobi rotation,
where
$\mathrm{e}'=\mathop{\mathrm{fl}}(\mathop{\mathrm{fl}}(\mathrm{e}^{\mathrm{i}\alpha})\mathop{\mathrm{fl}}(\tan\varphi))$,
is in fact done as
\begin{equation}
  \begin{gathered}
    g_{ip}'=(g_{ip}^{}+g_{iq}^{}\mathrm{e}')\mathop{\mathrm{fl}}(\cos\varphi),\quad
    g_{iq}'=(g_{iq}^{}-g_{ip}^{}\bar{\mathrm{e}}')\mathop{\mathrm{fl}}(\cos\varphi),\\
    \mathop{\mathrm{fl}}(g_{ip}')=\mathop{\mathrm{fl}}(\mathop{\mathrm{fma}}(g_{iq}^{},\mathrm{e}',g_{ip}^{})\mathop{\mathrm{fl}}(\cos\varphi)),\quad
    \mathop{\mathrm{fl}}(g_{iq}')=\mathop{\mathrm{fl}}(\mathop{\mathrm{fma}}(g_{ip}^{},-\bar{\mathrm{e}}',g_{iq}^{})\mathop{\mathrm{fl}}(\cos\varphi)),
  \end{gathered}
  \label{e:transf}
\end{equation}
using the complex fused multiply-add from~\cref{e:zfma}.  However, the
components of the result of either $\mathrm{fma}$ from~\cref{e:transf}
are larger in magnitude by the factor
$\approx 1/\mathop{\mathrm{fl}}(\cos\varphi)$ than those of the final
result.  Overflow in those intermediate computations is avoided by
rescaling the iteration matrix as in \cref{ss:3.2}.  Similar holds for
real transformations, with $\mathrm{e}^{\mathrm{i}\alpha}=\pm 1$ and
the real fused multiply-add in~\cref{e:transf}.  \Cref{l:rt,l:ct},
proven in \cref{s:SM4}, bound, in the real and the complex case,
respectively, relative growth of all, or the ``important'',
transformed element's magnitudes in~\cref{e:transf}, caused by the
rounding errors, by a modest multiple of $\varepsilon$.

\begin{lemma}\label{l:rt}
  Assume that all input floating-point values in~\cref{e:transf} are
  real and finite, and neither overflow nor underflow occurs in any
  rounding of those computations.  Let
  $\mathop{\mathrm{fl}}(g_{ip}')=g_{ip}'(1+\varepsilon_p')$ and
  $\mathop{\mathrm{fl}}(g_{iq}')=g_{iq}'(1+\varepsilon_q')$.  Then,
  \begin{displaymath}
    (1-\varepsilon)^2\le 1+\varepsilon_p'\le(1+\varepsilon)^2,\qquad(1-\varepsilon)^2\le 1+\varepsilon_q'\le(1+\varepsilon)^2.
  \end{displaymath}
  With no other assumptions than computing with the
  standard~\cite{IEEE-754-2019} floating-point arithmetic, if
  $\max\{|g_{ip}^{}|,|g_{iq}^{}|\}\le\nu/2$ then
  $\max\{|\mathop{\mathrm{fl}}(g_{ip}')|,|\mathop{\mathrm{fl}}(g_{iq}')|\}\le\nu$.
\end{lemma}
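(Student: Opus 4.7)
In the real case, $\mathrm{e}'$ in \cref{e:transf} is $\pm\mathop{\mathrm{fl}}(\tan\varphi)$, a sign-flipped copy of the already-rounded tangent, so forming it requires no additional rounding. Under this simplification, each of $\mathop{\mathrm{fl}}(g_{ip}')$ and $\mathop{\mathrm{fl}}(g_{iq}')$ in \cref{e:transf} is the result of exactly two roundings: the fused multiply--add followed by a single multiplication by $\mathop{\mathrm{fl}}(\cos\varphi)$. Both assertions of the lemma follow by separate analyses of these two roundings.

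For the relative-error bound, the hypothesis of no overflow and no underflow makes every rounding faithful, with a multiplicative factor in $[1-\varepsilon, 1+\varepsilon]$. Composing the two factors gives $(1-\varepsilon)^2 \le 1+\varepsilon_p' \le (1+\varepsilon)^2$ immediately, and the same bounds apply to $1+\varepsilon_q'$, since the only difference between the two computations is a sign bit flip on the fma's multiplier, which does not introduce a rounding.

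For the unconditional no-overflow claim I would rely on two structural facts derived from \cref{e:jactan,e:jacsec}: $|\mathop{\mathrm{fl}}(\cos\varphi)| \le 1$, which is immediate because $\mathop{\mathrm{fl}}(\cos\varphi)$ is the reciprocal of $\mathop{\mathrm{fl}}(\sec\varphi)\ge 1$; and $|\mathop{\mathrm{fl}}(\tan\varphi)| \le 1$, whose denominator $\mathop{\mathrm{fl}}(1+\sqrt{\mathop{\mathrm{fl}}(\sec^2 2\varphi)})$ is $\ge 2$ and, more importantly, dominates $|\mathop{\mathrm{fl}}(\tan 2\varphi)|$ in the standard rounding modes. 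Granted these, $|\mathrm{e}'|\le 1$ and the exact argument of the fma satisfies $|g_{iq}\mathrm{e}' + g_{ip}| \le |g_{iq}| + |g_{ip}| \le \omega$, so the fma rounds to a finite value of magnitude at most $\omega$; the subsequent multiplication by a factor of magnitude $\le 1$ preserves this bound, yielding $|\mathop{\mathrm{fl}}(g_{ip}')| \le \omega$ and, symmetrically, $|\mathop{\mathrm{fl}}(g_{iq}')| \le \omega$.

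The main obstacle is justifying $|\mathop{\mathrm{fl}}(\tan\varphi)| \le 1$ rigorously rather than merely $\le 1+O(\varepsilon)$, because even a slack of a few units of $\varepsilon$ in $|\mathrm{e}'|$ could push the exact fma argument past $\omega$ and trigger rounding to $+\infty$ at the boundary $|g_{ip}|=|g_{iq}|=\omega/2$. I would dispatch this by a rounding-by-rounding walk through \cref{e:jactan}: since $\mathop{\mathrm{fma}}(\mathop{\mathrm{fl}}(\tan 2\varphi), \mathop{\mathrm{fl}}(\tan 2\varphi), 1) \ge 1$, its $\mathop{\mathrm{sqrt}}$ is $\ge 1$, $\mathop{\mathrm{fl}}(1+\sqrt{\cdot}) \ge 2$, and the final quotient cannot exceed $1$ after the last rounding in any standard mode, by an argument in the same spirit as the one sketched in the text immediately following \cref{e:tan2} for the saturation regime $|\tan(2\varphi)| = \mathop{\mathrm{fl}}(\sqrt{\omega})$.
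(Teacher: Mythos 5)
Your overall route is the same as the paper's: in the real case $\mathrm{e}'=\pm\mathop{\mathrm{fl}}(\tan\varphi)$ involves no extra rounding, so each transformed element is produced by exactly two roundings (the fma and the multiplication by $\mathop{\mathrm{fl}}(\cos\varphi)$), giving the $(1\pm\varepsilon)^2$ bounds; and the no-overflow claim follows from $|\mathop{\mathrm{fl}}(\tan\varphi)|\le 1$, $\mathop{\mathrm{fl}}(\cos\varphi)\le 1$, the exact bound $|g_{ip}\pm g_{iq}\mathop{\mathrm{fl}}(\tan\varphi)|\le\omega/2+\omega/2=\omega$, and monotonicity of the roundings. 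That part of your argument is correct and matches the paper, which however simply imports $|\mathop{\mathrm{fl}}(\tan\varphi)|\le 1$ and $0\le\mathop{\mathrm{fl}}(\cos\varphi)\le 1$ from the analysis behind \cref{p:s2} rather than re-deriving them inside the lemma.

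The genuine gap is in your inline justification of $|\mathop{\mathrm{fl}}(\tan\varphi)|\le 1$, which you yourself flag as the main obstacle. Showing the computed denominator $\mathop{\mathrm{fl}}(1+\sqrt{\mathop{\mathrm{fl}}(\sec^2 2\varphi)})\ge 2$ is easy but useless: what you need is that it dominates $|\mathop{\mathrm{fl}}(\tan 2\varphi)|$, which by \cref{e:tan2} can be as large as $\mathop{\mathrm{fl}}(\sqrt{\omega})\approx 2^{512}$, so a constant lower bound proves nothing, and the remark after \cref{e:tan2} (about the $\mathtt{NaN}$/saturation case) does not supply the domination either. A correct dispatch needs something substantive, e.g.\ that in round-to-nearest $\mathop{\mathrm{fl}}(\sqrt{\mathop{\mathrm{fma}}(t,t,1)})\ge\mathop{\mathrm{fl}}(\sqrt{\mathop{\mathrm{fl}}(t\cdot t)})=|t|$ for representable $t$, whence the rounded denominator is $\ge|t|$ and the exact quotient is $\le 1$; or the paper's own argument in the proof of \cref{p:s2}, based on monotonicity of the rounded operations together with the clamp at $\mathop{\mathrm{fl}}(\sqrt{\omega})$. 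Note also that your claim that the quotient cannot exceed one ``in any standard mode'' overreaches: for directed rounding the bound is delicate, and \cref{p:s2} is stated for rounding to nearest with explicit caveats for other modes. The simplest repair is to cite the bound $|\mathop{\mathrm{fl}}(\tan\varphi)|\le 1$ (and $\mathop{\mathrm{fl}}(\cos\varphi)\le 1$) as established in the proof of \cref{p:s2}; with that, the rest of your proof stands.
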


\begin{lemma}\label{l:ct}
  Assume that the complex $\mathrm{fma}$ from~\cref{e:zfma} is used
  in~\cref{e:transf}, no input value has a non-finite component, and
  neither overflow nor underflow occurs in any rounding of those
  computations.  With $p\ge 23$,
  $\tilde{\epsilon}=3.000001\,\varepsilon$, and
  $\tilde{\epsilon}''=5.656856\,\varepsilon$,
  \begin{compactenum}[1.]
  \item if $|g_{ip}'|>0$ and
    $|g_{ip}'|/\mathop{\mathrm{fl}}(\cos\varphi)\ge|\Re{g_{iq}^{}}|$,
    then
    $|\mathop{\mathrm{fl}}(g_{ip}')-g_{ip}'|/|g_{ip}'|\le\tilde{\epsilon}''$,
    else, if
    $|g_{iq}^{}|\le\nu/(1+\sqrt{2}\tilde{\epsilon})$,
    then $|\mathop{\mathrm{fl}}(g_{ip}')|<\nu$;
  \item if $|g_{iq}'|>0$ and
    $|g_{iq}'|/\mathop{\mathrm{fl}}(\cos\varphi)\ge|\Re{g_{ip}^{}}|$,
    then
    $|\mathop{\mathrm{fl}}(g_{iq}')-g_{iq}'|/|g_{iq}'|\le\tilde{\epsilon}''$,
    else, if
    $|g_{ip}^{}|\le\nu/(1+\sqrt{2}\tilde{\epsilon})$, then
    $|\mathop{\mathrm{fl}}(g_{iq}')|<\nu$.
  \end{compactenum}
  With no other assumptions than computing with the
  standard~\cite{IEEE-754-2019} floating-point arithmetic, if
  $\max\{|g_{ip}^{}|,|g_{iq}^{}|\}\le\nu/4$ then
  $\max\{|\mathop{\mathrm{fl}}(g_{ip}')|,|\mathop{\mathrm{fl}}(g_{iq}')|\}\le\nu$.
\end{lemma}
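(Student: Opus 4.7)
The plan is as follows. The lemma's two numbered assertions are symmetric under the interchange $(g_{ip}^{},g_{iq}^{},\mathrm{e}')\leftrightarrow(g_{iq}^{},g_{ip}^{},-\bar{\mathrm{e}}')$, so I would carry out the proof for case~1 in detail and invoke the symmetry for case~2. Write $s=g_{ip}^{}+g_{iq}^{}\mathrm{e}'$, so that $g_{ip}'=s\cdot\mathop{\mathrm{fl}}(\cos\varphi)$ exactly, and let $\tilde{s}=\mathop{\mathrm{fma}}(g_{iq}^{},\mathrm{e}',g_{ip}^{})$ denote the complex fma of~\cref{e:zfma}; then $\mathop{\mathrm{fl}}(g_{ip}')=\mathop{\mathrm{fl}}(\tilde{s}\cdot\mathop{\mathrm{fl}}(\cos\varphi))$, whose computation adds one more rounding per component (a real multiply) on top of the four real fma's inside~\cref{e:zfma}.

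Next, expand the two real fma's that produce $\Re\tilde{s}$, writing the inner one as $t_{\Re}=(\Re g_{ip}^{}-\Im g_{iq}^{}\Im\mathrm{e}')(1+\delta_1)$ and the outer one as $\Re\tilde{s}=(\Re g_{iq}^{}\Re\mathrm{e}'+t_{\Re})(1+\delta_2)$ with $|\delta_i|\le\varepsilon$, and likewise for $\Im\tilde{s}$; this gives $\Re\tilde{s}-\Re s=\delta_2\Re s+\delta_1(1+\delta_2)(\Re g_{ip}^{}-\Im g_{iq}^{}\Im\mathrm{e}')$ and an analogous expression in the imaginary part. Using Remark~\ref{r:ufl} and the bound $|\mathop{\mathrm{fl}}(\tan\varphi)|\lesssim 1$ from~\cref{p:s2}, conclude $|\mathrm{e}'|\le\sqrt{2}(1+\varepsilon)$ and in particular $|\Re\mathrm{e}'|,|\Im\mathrm{e}'|\le\sqrt{2}(1+\varepsilon)$.

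Under the cancellation hypothesis $|s|\ge|\Re g_{iq}^{}|$, bound the ``safe'' products directly by $|\Re g_{iq}^{}\Re\mathrm{e}'|,|\Re g_{iq}^{}\Im\mathrm{e}'|\le\sqrt{2}|s|$. The ``cross'' terms involving $\Im g_{iq}^{}$ cannot be controlled by the hypothesis in isolation, but they appear only in the combinations that equal $\Re s-\Re g_{ip}^{}$ and $\Im s-\Im g_{ip}^{}$; the still-missing piece $|\Re g_{ip}^{}|,|\Im g_{ip}^{}|\le|g_{ip}^{}|\le|s|+|g_{iq}^{}||\mathrm{e}'|$ is treated by a triangle-inequality telescope that leaves only $|s|$-terms on the right. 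Assemble the componentwise bounds, then convert to a complex bound via $|\tilde{s}-s|\le\sqrt{2}\max\{|\Re\tilde{s}-\Re s|,|\Im\tilde{s}-\Im s|\}$, and chain through the final real multiplication by $\mathop{\mathrm{fl}}(\cos\varphi)$ (which scales both the error and $|g_{ip}'|$ by the same factor, plus one rounding): the result is $|\mathop{\mathrm{fl}}(g_{ip}')-g_{ip}'|/|g_{ip}'|\le 4\sqrt{2}\,\varepsilon(1+O(\varepsilon))$, with the stated safety margin yielding $\tilde{\epsilon}''=5.656856\,\varepsilon$.

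For the negated hypothesis in the ``else'' branch, abandon the relative-error route and bound $|\tilde{s}|\le(|g_{ip}^{}|+\sqrt{2}|g_{iq}^{}|)(1+\varepsilon)^2$; with $|\mathop{\mathrm{fl}}(\cos\varphi)|\le 1$ and the stated bound $|g_{iq}^{}|\le\omega/(1+\sqrt{2}\tilde{\epsilon})$, the final rounding keeps $|\mathop{\mathrm{fl}}(g_{ip}')|<\omega$. For the final hypothesis-free overflow bound, use \cref{ss:3.1}: in exact arithmetic $|g_{ip}'|\le\sqrt{2}\max\{|g_{ip}^{}|,|g_{iq}^{}|\}$, and the floating-point realization amplifies this by at most another $\sqrt{2}$ factor (inherited from $|\mathop{\mathrm{fl}}(\mathrm{e}^{\mathrm{i}\alpha})|\le\sqrt{2}$), so the safety margin $\max\{|g_{ip}^{}|,|g_{iq}^{}|\}\le\omega/4$ guarantees $\max\{|\mathop{\mathrm{fl}}(g_{ip}')|,|\mathop{\mathrm{fl}}(g_{iq}')|\}\le\omega$. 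The main obstacle is the delicate handling of the $\Im g_{iq}^{}$ products in the relative-error step: the hypothesis $|s|\ge|\Re g_{iq}^{}|$ alone does \emph{not} bound $|\Im g_{iq}^{}|$, and only by repeatedly substituting the cancellation identities $\Re g_{iq}^{}\Re\mathrm{e}'-\Im g_{iq}^{}\Im\mathrm{e}'=\Re s-\Re g_{ip}^{}$ (and the imaginary-part analogue) does the error bound reduce to a constant multiple of $|s|$; this is also where the asymmetric form of the hypothesis (only $|\Re g_{iq}^{}|$) turns out to be exactly what is needed.
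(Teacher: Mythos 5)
Your overall strategy for the main branch (expand the two real fma's per component, use the cancellation identity to rewrite the $\delta_1$-multiplied quantity as $\Re s-\Re g_{iq}^{}\Re\mathrm{e}'$, divide by $|s|$, and invoke the hypothesis $|s|\ge|\Re g_{iq}^{}|$) is essentially the paper's, but two of your steps do not deliver the statement as claimed. First, the constants: you bound the components of $\mathrm{e}'$ by $\sqrt{2}(1+\varepsilon)$, inherited from $|\mathrm{e}'|\le\sqrt{2}$. The sharper fact, which the paper uses and which is what makes $\tilde{\epsilon}\approx3\varepsilon$ and $\tilde{\epsilon}''\approx4\sqrt{2}\,\varepsilon$ come out, is that \emph{each component} of $\mathrm{e}'$ is at most (essentially) unity, because $|\mathop{\mathrm{fl}}(\cos\alpha)|\le1$ and $|\mathop{\mathrm{fl}}(\sin\alpha)|\le1$ individually by~\cref{e:zpolar} and $|\mathop{\mathrm{fl}}(\tan\varphi)|\le1$; only the modulus of a pathological $\mathrm{e}'$ can reach $\sqrt{2}$. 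With your $\sqrt{2}$ per-component bound the componentwise error is about $(2+\sqrt{2})\varepsilon|s|$, which after the cosine multiplication and the component-to-modulus factor $\sqrt{2}$ gives roughly $(3\sqrt{2}+2)\varepsilon\approx6.24\,\varepsilon$, exceeding the stated $\tilde{\epsilon}''=5.656856\,\varepsilon$; so your claimed $4\sqrt{2}\,\varepsilon$ does not follow from your own intermediate bounds. (Your ``telescope'' through $|g_{ip}^{}|\le|s|+|g_{iq}^{}||\mathrm{e}'|$ is also a detour: $|g_{iq}^{}|$, unlike $|\Re g_{iq}^{}|$, is not controlled by the hypothesis; the identity $\Re g_{ip}^{}-\Im g_{iq}^{}\Im\mathrm{e}'=\Re s-\Re g_{iq}^{}\Re\mathrm{e}'$ already eliminates $g_{ip}^{}$ exactly and is all that is needed.)

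Second, and more seriously, your ``else'' branch fails. There you bound $|\tilde{s}|\le(|g_{ip}^{}|+\sqrt{2}|g_{iq}^{}|)(1+\varepsilon)^2$; but the else-hypothesis only says $|g_{iq}^{}|\le\omega/(1+\sqrt{2}\tilde{\epsilon})$, which permits $|g_{iq}^{}|$ (and hence, via $|g_{ip}^{}|\le|s|+|g_{iq}^{}||\mathrm{e}'|\le(1+\sqrt{2})|g_{iq}^{}|$, also $|g_{ip}^{}|$) to be close to $\omega$, so this crude triangle-inequality bound gives only something like $3.8\,\omega$ and cannot yield $|\mathop{\mathrm{fl}}(g_{ip}')|<\omega$. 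The point of the else branch is precisely the massive cancellation $|s|<|\Re g_{iq}^{}|$: one keeps the componentwise error bound but normalizes it by $|g_{iq}^{}|$ instead of $|s|$ (legitimate because $|\Re s|,|\Im s|\le|s|<|\Re g_{iq}^{}|\le|g_{iq}^{}|$), obtaining $|\tilde{s}-s|<\sqrt{2}\tilde{\epsilon}|g_{iq}^{}|$ and hence $|\tilde{s}|<|g_{iq}^{}|(1+\sqrt{2}\tilde{\epsilon})\le\omega$, after which multiplication by $\mathop{\mathrm{fl}}(\cos\varphi)\le1$ and monotone rounding finish the claim. Relatedly, in the final hypothesis-free statement your heuristic ``amplifies by at most another $\sqrt{2}$'' is not a valid relative-error statement (in the cancellation case the relative error of $\mathop{\mathrm{fl}}(g_{ip}')$ versus $g_{ip}'$ is unbounded); the paper instead verifies that the inner and outer real fma's in~\cref{e:zfma} cannot overflow under $\max\{|g_{ip}^{}|,|g_{iq}^{}|\}\le\omega/4$, disposes of underflow, and then reuses the two branches of claim~1, the else branch applying because $\omega/4\le\omega/(1+\sqrt{2}\tilde{\epsilon})$.
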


\Cref{p:t} follows directly from the last statements of
\cref{l:rt,l:ct}.  At the start of each step $k\ge 1$ it is assumed
anew that $G_k=\mathop{\mathrm{fl}}(G_k)$, i.e., the current
floating-point representation of the iteration matrix is taken as
exact.

\begin{proposition}\label{p:t}
  Assume $G_k=\mathop{\mathrm{fl}}(G_k)$.  If
  $\|G_k\|_{\max}\le\nu/\varsigma^{\mathbb{F}}$, where
  $\varsigma^{\mathbb{R}}=2$ for $G_k$ real and
  $\varsigma^{\mathbb{C}}=4$ for $G_k$ complex, then
  $\|\mathop{\mathrm{fl}}(G_{k+1})\|_{\max}\le\nu$.
\end{proposition}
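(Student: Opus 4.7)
The plan is to reduce the claim to an entry-by-entry invocation of the \emph{unconditional} overflow bounds stated at the end of Lemmas~\ref{l:rt} and~\ref{l:ct}. In a single step $k\to k+1$ of the (parallel) one-sided Jacobi method, the iteration matrix $G_{k+1}$ is obtained from $G_k$ by postmultiplication with a block-diagonal Jacobi rotation built from the pivot pairs selected by the strategy. Consequently, each entry of $\mathop{\mathrm{fl}}(G_{k+1})$ is either inherited unchanged from $G_k$ (for columns not paired in this step), or it equals $\mathop{\mathrm{fl}}(g_{ip}')$ or $\mathop{\mathrm{fl}}(g_{iq}')$ as defined in~\cref{e:transf} for some pivot pair $(p,q)$ and some row index $i$.

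Next, I would invoke the hypothesis $\|G_k\|_{\max}\le\omega/\varsigma^{\mathbb{F}}$ locally: for every pivot pair $(p,q)$ and every row $i$, the pair of inputs to~\cref{e:transf} satisfies $\max\{|g_{ip}^{}|,|g_{iq}^{}|\}\le\omega/\varsigma^{\mathbb{F}}$. In the real case ($\varsigma^{\mathbb{R}}=2$), the unconditional part of \cref{l:rt} then gives $\max\{|\mathop{\mathrm{fl}}(g_{ip}')|,|\mathop{\mathrm{fl}}(g_{iq}')|\}\le\omega$; in the complex case ($\varsigma^{\mathbb{C}}=4$), the analogous unconditional part of \cref{l:ct} gives the same bound. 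Entries that are not touched by the rotation still satisfy $|g_{ij}|\le\omega/\varsigma^{\mathbb{F}}\le\omega$, so taking the maximum over all rows and all columns of $\mathop{\mathrm{fl}}(G_{k+1})$ yields $\|\mathop{\mathrm{fl}}(G_{k+1})\|_{\max}\le\omega$.

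There is no real obstacle: the hard work is already done in \cref{l:rt,l:ct}, whose constants $\omega/2$ and $\omega/4$ were precisely chosen so that the worst-case amplification of a rotation combined with the subsequent rounding cannot push a single transformed component past $\omega$. The proof of \cref{p:t} is thus only a matter of observing that (i) the parallel pivot strategy partitions the columns into disjoint pairs, so that the per-pair bounds compose trivially into a global $\|\cdot\|_{\max}$ bound, and (ii) the hypothesis is uniform across all column pairs chosen in the step, so that the per-pair precondition of each lemma is met automatically.
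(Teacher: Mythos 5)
Your proposal is correct and follows essentially the same route as the paper, which proves \cref{p:t} directly from the final (unconditional) statements of \cref{l:rt,l:ct}, applied entrywise to each transformed pivot pair. Your added remarks about disjoint pivot pairs and untouched columns being trivially bounded are just the implicit details the paper leaves to the reader.
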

\subsection{Periodic rescaling of the iteration matrix in the robust SVD}\label{ss:3.2}
\looseness=-1
The state-of-the-art construction of the diagonalizing Jacobi
rotation for a pivot Grammian matrix, prescaled by the inverse of the
product of the Frobenius pivot column norms, from~\cite[Eq.~(2.13) and
  Algorithm~2.5]{Drmac-97} and LAPACK, overcomes all range limitations
except a possible overflow/underflow of a quotient of those norms.
It requires a procedure for computing the column norms without undue
overflow, like the ones from the reference BLAS routines
(\texttt{S}/\texttt{D})\texttt{NRM2}~\cite[Algorithm~2]{Anderson-17}
and (\texttt{SC}/\texttt{DZ})\texttt{NRM2}.

There are several options for the Frobenius norm computation in the
robust Jacobi SVD, as shown in \cref{s:SM6}.  Three of them are:
\begin{compactenum}[1.]
\item rely on the BLAS routines, while ensuring that the column norms
  cannot overflow by an adequate rescaling of the iteration matrix
  (those routines are \emph{conditionally} reproducible on any fixed
  platform, depending on the instruction subset, e.g.), or;
\item compute the norms as the square roots of dot-products in a
  data\-type with a wider exponent range, e.g., in the Intel's
  extended 80-bit datatype or in the standard~\cite{IEEE-754-2019}
  quadruple 128-bit datatype for double precision inputs, with the
  latter choice being portably vectorizable with the SLEEF's
  quad-precision math library, or;
\item extend a typical vectorized dot-product procedure such that it
  computes the norms with the intermediate and the output data
  represented as having the same precision (i.e., the significands'
  width) as the input elements, but almost unbounded exponents, as
  proposed in~\cite[Appendix~A]{Novakovic-15} for the Jacobi-type SVD
  on GPUs.
\end{compactenum}

In \cref{s:SM6} the third option above is described in the context of
SIMD processing and all options are evaluated, with the conclusion
that on Intel's platforms the first one, but with the MKL's routines
instead of the reference ones, is the most performant and quite
accurate.  Also, for all options the following representation of the
computed norms is proposed.  Let $\|\mathbf{x}\|_F=(e,f)$, where $e$
and $f$ are quantities of the input's datatype $D$, such that $(e,f)$
represents the value $2^ef$.  Let for a non-zero finite value hold
$-\infty<e<\infty$ and $1\le f<2$, while $0=(-\infty,1)$.  All such
values from $D$ can thus be represented exactly, while the results
(but only those computed with wider exponents) that would overflow if
rounded to $D$ are preserved as finite.

Since $\|G\|_F\le\sqrt{mn}\|G\|_{\max}$, to ensure
$\|G\|_F\le\nu/\varsigma^{\mathbb{F}}$ it suffices to scale $G$
such that $\|G\|_{\max}\le\nu/(\varsigma^{\mathbb{F}}\sqrt{mn})$.
With $G$ complex,
$\|G\|_{\max}\le\sqrt{2}\max\{\|\Re{G}\|_{\max},\|\Im{G}\|_{\max}\}$,
so it suffices to have
$\max\{\|\Re{G}\|_{\max},\|\Im{G}\|_{\max}\}\le\nu/(\varsigma^{\mathbb{C}}\sqrt{2mn})$.
The Jacobi transformations are unitary, and the Frobenius norm is
unitarily invariant, so $\|G_k\|_F=\|G\|_F$ for all $k\ge 1$, if the
rounding errors are ignored.  No overflow can occur in~\cref{e:transf}
with $G$ scaled as described, due to \cref{p:t} and
$\|G_k\|_{\max}\le\|G_k\|_F=\|G\|_F\le\nu/\varsigma^{\mathbb{F}}$.

Without rounding errors, the previous paragraph would define the
required initial scaling of $G$ in terms of $\|G\|_{\max}$ (or
$\|\Re{G}\|_{\max}$ and $\|\Im{G}\|_{\max}$), and no overflow checks
should be needed, since $\|(G_k)_j\|_F\le\|G_k\|_F$ for any column
$j$.  However, as discussed, some algorithms for the Frobenius norm
might overflow even when the result should be finite, and the rounding
within the transformations could further raise the magnitudes of the
elements.  In the absence of another theoretical bound for a
particular Frobenius norm routine, assume\footnote{This assumption can
be turned into a user-provided parameter to the Jacobi SVD routine
that holds a theoretically or empirically established upper bound on
the array elements' magnitudes for non-overflowing execution of the
chosen Frobenius norm procedure, with~\cref{e:skn} adjusted
accordingly.} that for any $k\ge 1$ and $1\le j\le n$,
$\|(G_k)_j\|_{\max}\le\nu/(\varsigma^{\mathbb{F}}m)$, and so
$\|G_k\|_{\max}\le\nu/(\varsigma^{\mathbb{F}}m)$, implies
$\mathop{\mathrm{fl}}(\|(G_k)_j\|_F)\le\nu$.  Note that
$m\ge\sqrt{mn}$ since $m\ge n$.  At the start of each step $k$ it then
suffices to have $\|G_k\|_{\max}\le\nu/(\varsigma^{\mathbb{R}}m)$
in the real, and
$\max\{\|\Re{G}\|_{\max},\|\Im{G}\|_{\max}\}\le\nu/(\varsigma^{\mathbb{C}}m\sqrt{2})$
in the complex case, to avoid any overflow in that step.  Since
$\varsigma^{\mathbb{F}}$ is a power of two,
$\nu/\varsigma^{\mathbb{F}}$ is an exactly representable quantity
that, for its exponent, has the largest significand possible (all
ones), while this property might not hold for the other upper bounds
for the max-norms.

Let $2^{s_{[k]}^{\mathbb{F}}}G_k^{}$ be a scaling of $G_k^{}$, where
the exponent $s_{[k]}^{\mathbb{F}}\in\mathbb{Z}$ is
\begin{equation}
  s_{[k]}^{\mathbb{R}}=\lfloor\lg(\nu/(\varsigma^{\mathbb{R}}m))\rfloor-\lfloor\lg{\widetilde{M}_k^{\mathbb{R}}}\rfloor-1,\quad
  s_{[k]}^{\mathbb{C}}=\lfloor\lg(\nu/(\varsigma^{\mathbb{C}}m\sqrt{2}))\rfloor-\lfloor\lg{\widetilde{M}_k^{\mathbb{C}}}\rfloor-1,
  \label{e:skn}
\end{equation}
with $\widetilde{M}_k^{\mathbb{R}}=\|G_k^{}\|_{\max}^{}$ and
$\widetilde{M}_k^{\mathbb{C}}=\max\{\|\Re{G}\|_{\max}^{},\|\Im{G}\|_{\max}^{}\}$.
Instead of comparing the significand of the upper bound (that should
have then been rounded downwards) with that of
$\widetilde{M}_k^{\mathbb{F}}$ and deciding whether to subtract unity
from the scaling exponent if the former is smaller than the latter,
the easiest but potentially suboptimal way to build the scaling
exponents in~\cref{e:skn} and~\cref{e:skr} below are the unconditional
subtractions of unity when the upper bound is not
$\nu/\varsigma^{\mathbb{F}}$.  If a computation of the column norms
cannot overflow, then, due to \cref{p:t}, a more relaxed scaling
$2^{s_{(k)}^{\mathbb{F}}}G_k^{}$, where
\begin{equation}
  s_{(k)}^{\mathbb{R}}=\lfloor\lg(\nu/\varsigma^{\mathbb{R}})\rfloor-\lfloor\lg{\widetilde{M}_k^{\mathbb{R}}}\rfloor,\quad
  s_{(k)}^{\mathbb{C}}=\lfloor\lg(\nu/(\varsigma^{\mathbb{C}}\sqrt{2}))\rfloor-\lfloor\lg{\widetilde{M}_k^{\mathbb{C}}}\rfloor-1,
  \label{e:skr}
\end{equation}
is sufficient to protect the transformations from overflowing when
forming $G_{k+1}$.

Observe that~\cref{e:skr} protects from a \emph{destructive} action,
i.e., from overflowing while replacing a pivot column pair of the
iteration matrix with its transformed counterpart.  No recovery is
possible from such an event without either keeping a copy of the
original column pair or checking the magnitudes of the transformed
elements before storing them, both of which slow down the execution.
In contrast, overflow of a computed norm is \emph{non-destructive},
and can be recovered (and protected) from by downscaling $G_k$
according to~\cref{e:skn} and computing the norms of the scaled
columns.

It is expensive to rescale $G_k$ at the start of every step.  The
following rescaling heuristic is thus proposed, that delays scaling
unless a destructive operation is possible:
\begin{compactenum}[1.]
\item
  Let $G_0^{}=G$ and $s_0^{}=s_{[0]}^{\mathbb{F}}$, before the
  iterative part of the Jacobi SVD\@.  Then, let
  $G_1^{}=2^{s_0^{}}G_0^{}$ be the initial iteration matrix.  If all
  elements of $G$ are small enough by magnitude, this can imply
  upscaling ($s_0^{}>0$) and as many subnormal values as safely
  possible, if they exist in $G$, become normal\footnote{This
  upscaling, i.e., raising of the magnitudes tries to keep the
  elements of the rotated column pairs from falling into the subnormal
  range if a huge but not total cancellation in~\cref{e:transf}
  occurs.} in $G_1^{}$.  Otherwise, $s_0^{}\le 0$.  Also, let
  $\widetilde{M}_1^{\mathbb{F}}=2^{s_0^{}}\widetilde{M}_0^{\mathbb{F}}$,
  where $\widetilde{M}_0^{\mathbb{F}}$ has been found by a method
  described below.
\item At the start of each step $k\ge 1$, compute
  $s_{(k)}^{\mathbb{F}}$ from~\cref{e:skr} \emph{and}
  $s_{[k]}^{\mathbb{F}}$ from~\cref{e:skn}, using
  $\widetilde{M}_{k}^{\mathbb{F}}$ determined at the end of previous
  step.  If $s_{(k)}^{\mathbb{F}}<0$, $G_k^{}$ has to be downscaled to
  $G_k'$ and $\widetilde{M}_k^{\mathbb{F}}$ updated.  For that, take
  the lower exponent $s_k'=s_{[k]}^{\mathbb{F}}$, since it will
  protect the subsequent computation of the column norms as well.
  Otherwise, let $s_k'=0$ and $G_k'=G_k^{}$.  Define
  $s_k^{}=s_{k-1}^{}+s_k'$ as the effective scaling exponent of the
  initial $G$, i.e., $2^{-s_k^{}}G_k'$ is what the iteration matrix
  would be without any scaling.
\item If any column norm of $G_k'$ overflows, rescale $G_k'$ to
  $G_k''$ using $s_k''=s_{[k]}^{\mathbb{F}\prime}-s_k'$, where
  $s_{[k]}^{\mathbb{F}\prime}\le s_{[k]}^{\mathbb{F}}$, let
  $s_k^{}=s_k^{}+s_k''$, recompute the norms, and update
  $\widetilde{M}_k^{\mathbb{F}}$.  Else, $G_k''=G_k'$ and $s_k''=0$.
  In \cref{ss:4.3} a robust procedure for determining
  $s_{[k]}^{\mathbb{F}\prime}$ is described.
\item While applying the Jacobi rotations to transform $G_k''$ to
  $G_{k+1}^{}$, compute $\widetilde{M}_{k+1}^{\mathbb{F}}$.  This can
  be done efficiently by reusing portions of a transformed pivot
  column pair already present in the CPU registers, but at the expense
  of implementing the custom rotation kernels instead of relying on
  the BLAS routines \texttt{xROTM} and \texttt{xSCAL}.
\end{compactenum}
If $\widetilde{M}_0^{\mathbb{F}}=\infty$, i.e., if $G$ contains a
non-finite value, the Jacobi SVD algorithm fails.  It is assumed
that $G$ is otherwise of full column rank, so
$\widetilde{M}_0^{\mathbb{F}}>0$ (else, the routine stops).

When the Jacobi process has numerically converged after $K$ steps, for
some $K$, the scaled singular values $\Sigma'$ of $G$ have to be
scaled back by $2^{-s}$, $s=s_K^{}$.  If $\sigma_j'$ were represented
as an ordinary floating-point value, such a backscaling could have
caused the result's overflow or an undesired
underflow~\cite{Novakovic-20}.  However, $\sigma_j'$ is computed as
the Frobenius norm of the $j$th column of the final iteration matrix
and is thus represented as $\sigma_j'=(e_j',f_j')$, making any
overflow or underflow of the backscaled
$\sigma_j^{}=2^{-s}\sigma_j'=(e_j'-s,f_j')$ impossible, unless
$\sigma_j^{}$ is converted to a floating-point value.

Rescaling of each column of $G_k$ is trivially vectorizable by the
$\mathtt{scalef}$ intrinsic, and the columns can be processed
concurrently.  The $\max$-norm of a real matrix is computed as a
parallel $\max$-reduction of the columns' $\max$-norms.  For a column
$\mathbf{x}$ let $\mathsf{x}$ be a vector of zeros, and load
consecutive vector-sized chunks $\mathsf{y}$ of $\mathbf{x}$ in a
loop.  For each loaded chunk, update the partial maximums in
$\mathsf{x}$ as
$\mathsf{x}=\mathop{\mathtt{max}}(\mathsf{x},\mathop{\mathtt{min}}(\mathop{\mathtt{abs}}(\mathsf{y}),\mathsf{\infty}))$,
converting any encountered $\mathtt{NaN}$ in $\mathsf{y}$ into
$\infty$.  After the loop, let
$\|\mathbf{x}\|_{\max}=\mathop{\mathtt{reduce\_max}}(\mathsf{x})$.

The $\max$-norm of a complex matrix $G_k$ is approximated, as
described, by the maximum of two real $\max$-norms,
$\|G_k\|_{\max}\le\sqrt{2}\max\{\|\Re{G_k}\|_{\max},\|\Im{G_k}\|_{\max}\}$.
The Frobenius norm of a complex column $\mathbf{x}$ is obtained as
$\|\mathbf{x}\|_F=\mathop{\mathrm{hypot}}(\|\Re{\mathbf{x}}\|_F,\|\Im{\mathbf{x}}\|_F)$.
\subsection{The scaled dot-products}\label{ss:3.3}
\looseness=-1
Let $\begin{bmatrix}g_p\!\!&\!\!g_q\end{bmatrix}$, $p<q$, be a
pivot column pair from $G_k''$, and $0<\|g_j\|_F=(e_j,f_j)$,
$\check{g}_j=g_j/\|g_j\|_F$, for $j\in\{p,q\}$.  The scaled complex
dot-product $\check{g}_q^{\ast}\check{g}_p^{}$ is computed as in
\cref{a:zdpscl}, with a single division operation.  In the loop of
\cref{a:zdpscl}, $g_j^{}$ is prescaled to $g_j^{}/2^{e_j^{}}$, with
its Frobenius norm $\approx f_j^{}$.  The components of the resulting
$\hat{z}=\mathop{\mathrm{fl}}((g_q^{}/2^{e_q^{}})^{\ast}(g_p^{}/2^{e_p^{}}))$
and
$z=\mathop{\mathrm{fl}}(\check{g}_q^{\ast}\check{g}_p^{})=\mathop{\mathrm{fl}}(\hat{z}/(f_q^{}\cdot f_p^{}))$
thus cannot overflow.  A slower but possibly more accurate routine due
to the compensated summation of the partial scaled dot-products,
$\mathtt{zdpscl}'$, is given as \cref{a:zdpsclcs} and was used in the
testing from \cref{s:5}.

\begin{algorithm}[hbtp]
  \caption{$\mathtt{zdpscl}$: a vectorized complex scaled dot-product routine.}
  \label{a:zdpscl}
  \begin{algorithmic}[1]
    \REQUIRE{$g_q=(\Re{g_q},\Im{g_q}),0<\|g_q\|_F=(e_q,f_q);g_p=(\Re{g_p},\Im{g_p}),0<\|g_p\|_F=(e_p,f_p)$.}
    \ENSURE{$z=(\Re{z},\Im{z})=\mathop{\mathrm{fl}}(\check{g}_q^{\ast}\check{g}_p^{}=g_q^{\ast}g_p^{}/(\|g_q^{}\|_F^{}\|g_p^{}\|_F^{}))$.}
    \STATE{$\Re{\hat{\mathsf{z}}}=\mathop{\mathtt{setzero}}();\quad\Im{\hat{\mathsf{z}}}=\mathop{\mathtt{setzero}}();\quad-\mathsf{e}_j=\mathop{\mathtt{set1}}(-e_j);$}
    \COMMENT{$j\in\{p,q\}$}
    \FOR[sequentially]{$\mathtt{i}=0$ \TO $\tilde{m}-1$ \textbf{step} $\mathtt{s}$}
    \STATE{$\Re{\mathsf{g}_{\mathtt{i}j}}=\mathop{\mathtt{load}}(\Re{g_j}+\mathtt{i});\quad\Im{\mathsf{g}_{\mathtt{i}j}}=\mathop{\mathtt{load}}(\Im{g_j}+\mathtt{i});$}
    \COMMENT{$j\in\{p,q\}$ here and below}
    \STATE{$\Re{\check{\mathsf{g}}_{\mathtt{i}j}}=\mathop{\mathtt{scalef}}(\Re{\mathsf{g}_{\mathtt{i}j}},-\mathsf{e}_j);\quad\Im{\check{\mathsf{g}}_{\mathtt{i}j}}=\mathop{\mathtt{scalef}}(\Im{\mathsf{g}_{\mathtt{i}j}},-\mathsf{e}_j);$}
    \COMMENT{division by $\mathsf{2}^{\mathsf{e}_j}$\\}
    \COMMENT{$\Re{\hat{\mathsf{z}}}=\Re{\hat{\mathsf{z}}}+\Re{\check{\mathsf{g}}_{\mathtt{i}q}}\cdot\Re{\check{\mathsf{g}}_{\mathtt{i}p}}+\Im{\check{\mathsf{g}}_{\mathtt{i}q}}\cdot\Im{\check{\mathsf{g}}_{\mathtt{i}p}};\quad\Im{\hat{\mathsf{z}}}=\Im{\hat{\mathsf{z}}}+\Re{\check{\mathsf{g}}_{\mathtt{i}q}}\cdot\Im{\check{\mathsf{g}}_{\mathtt{i}p}}-\Im{\check{\mathsf{g}}_{\mathtt{i}q}}\cdot\Re{\check{\mathsf{g}}_{\mathtt{i}p}}$\hfill}
    \STATE{$\Re{\hat{\mathsf{z}}}=\mathop{\mathtt{fmadd}}(\Re{\check{\mathsf{g}}_{\mathtt{i}q}},\Re{\check{\mathsf{g}}_{\mathtt{i}p}},\Re{\hat{\mathsf{z}}});\quad\Im{\hat{\mathsf{z}}}=\mathop{\mathtt{fmadd}}(\Re{\check{\mathsf{g}}_{\mathtt{i}q}},\Im{\check{\mathsf{g}}_{\mathtt{i}p}},\Im{\hat{\mathsf{z}}});$}
    \STATE{$\Re{\hat{\mathsf{z}}}=\mathop{\mathtt{fmadd}}(\Im{\check{\mathsf{g}}_{\mathtt{i}q}},\Im{\check{\mathsf{g}}_{\mathtt{i}p}},\Re{\hat{\mathsf{z}}});\quad\Im{\hat{\mathsf{z}}}=\mathop{\mathtt{fnmadd}}(\Im{\check{\mathsf{g}}_{\mathtt{i}q}},\Re{\check{\mathsf{g}}_{\mathtt{i}p}},\Im{\hat{\mathsf{z}}});$}
    \ENDFOR\COMMENT{$g_q$ divided by $2^{e_q}$, $g_p$ by $2^{e_p}$}
    \STATE{$\Re{\hat{z}}=\mathop{\mathtt{reduce\_add}}(\Re{\hat{\mathsf{z}}});\quad\Im{\hat{z}}=\mathop{\mathtt{reduce\_add}}(\Im{\hat{\mathsf{z}}});$}
    \COMMENT{reduce the partial sums}
    \STATE{$\mskip-7mu\mathop{\mathtt{\_mm\_store\_pd}}(\&z,\!\mathop{\mathtt{\_mm\_div\_pd}}(\mathop{\mathtt{\_mm\_set\_pd}}(\Im{\hat{z}},\!\Re{\hat{z}}),\!\mathop{\mathtt{\_mm\_set1\_pd}}(f_q\!\cdot\!f_p)\mskip-2mu)\mskip-2mu);$}
    \RETURN{$z;$}
    \COMMENT{above: single $2\times 64$-bit division of $\hat{z}=(\Re{\hat{z}},\Im{\hat{z}})$ by $1\le f_q\cdot f_p<4$}
  \end{algorithmic}
\end{algorithm}
\subsubsection{The convergence criterion}\label{sss:3.3.1}
Following~\cite{Drmac-97} and the LAPACK's \texttt{xGESVJ} routines, a
pivot column pair $(p,q)$ of the iteration matrix is not transformed
(but the columns and their norms might be swapped) if it is
numerically orthogonal, i.e., if
\begin{equation}
  \mathop{\mathrm{fl}}(|\check{g}_q^{\ast}\check{g}_p^{}|)<\mathop{\mathrm{fl}}(\varepsilon\sqrt{m})=\upsilon.
  \label{e:tol}
\end{equation}

The Jacobi process stops successfully if no transformations in a sweep
over all pivot pairs have been performed (and thus the convergence has
been detected), or unsuccessfully if the convergence has not been
detected in the prescribed number of sweeps $\mathtt{S}$.  In LAPACK,
$\mathtt{S}=30$, but this might be insufficient, as shown in
\cref{s:5}.

Assume that, for a chosen sequence of $\mathtt{s}$ pivot pair indices
$(p_1,q_1),\cdots,(p_{\mathtt{s}},q_{\mathtt{s}})$, the respective scaled
dot-products have already been computed and packed into vectors
\begin{displaymath}
  \Re{\mathsf{a}_{21}'}=(\mathop{\mathrm{fl}}(\Re(\check{g}_{q_{\ell}^{}}^{\ast}\check{g}_{p_{\ell}^{}}^{})))_{\ell}^{},\quad
  \Im{\mathsf{a}_{21}'}=(\mathop{\mathrm{fl}}(\Im(\check{g}_{q_{\ell}^{}}^{\ast}\check{g}_{p_{\ell}^{}}^{})))_{\ell}^{},\quad
  1\le\ell\le\mathtt{s}.
\end{displaymath}
\Cref{a:cvg} checks the convergence criterion over all vectors' lanes,
encodes the result as a bitmask, and counts how many transformations
should be performed.

\begin{algorithm}[hbtp]
  \caption{Vectorized checking of the convergence criterion.}
  \label{a:cvg}
  \begin{algorithmic}[1]
    \REQUIRE{The vectors $\Re{\mathsf{a}_{21}'}$ and $\Im{\mathsf{a}_{21}'}$; $\upsilon$.}
    \ENSURE{The bitmask $\mathfrak{c}$, $\mathfrak{c}_{\ell-1}^{}=1\iff(p_{\ell}^{},q_{\ell}^{})$ should be transformed; $\sum_{\ell=1}^{\mathtt{s}}\mathfrak{c}_{\ell-1}^{}$.}
    \STATE{$\bm{\upsilon}=\mathop{\mathtt{set1}}(\upsilon);\qquad|\mathsf{a}_{21}'|=\mathop{\mathtt{hypot}}(\Re{\mathsf{a}_{21}'},\Im{\mathsf{a}_{21}'});$}
    \COMMENT{$\mathop{\mathrm{fl}}(|\mathsf{a}_{21}'|)$}
    \STATE{$\mathfrak{c}=\mathop{\mathtt{\_mm512\_cmple\_pd\_mask}}(\bm{\upsilon},|\mathsf{a}_{21}'|);$}
    \COMMENT{$\neg$\cref{e:tol}}
    \RETURN{$\mathop{\mathtt{\_mm\_popcnt\_u32}}(\mathop{\mathtt{\_cvtmaskX\_u32}}(\mathfrak{c}));$}
    \COMMENT{$\mathtt{X}=\mathtt{8}$ ($\mathtt{16}$ for AVX512F)}
  \end{algorithmic}
\end{algorithm}
\subsection{Formation of the scaled Grammians}\label{ss:3.4}
Let $\begin{bmatrix}g_p\!\!&\!\!g_q\end{bmatrix}$, $p<q$, be a
pivot column pair from $G_k''$, and $0<\|g_j\|_F=(e_j,f_j)$ for
$j\in\{p,q\}$.  The scaled Grammian
\begin{displaymath}
  A_{[pq]}'=\frac{1}{\|g_p^{}\|_F^{}\|g_q^{}\|_F^{}}\begin{bmatrix}g_p^{}\!\!&\!\!g_q^{}\end{bmatrix}^{\ast}\begin{bmatrix}g_p^{}\!\!&\!\!g_q^{}\end{bmatrix}=
  \begin{bmatrix}
    a_{11}'&\bar{a}_{21}'\\
    a_{21}'&a_{22}'
  \end{bmatrix}
\end{displaymath}
can be expressed in the terms of $\|g_j\|_F=2^{e_j}f_j$ as
\begin{equation}
  a_{11}'=\frac{\|g_p^{}\|_F^{}}{\|g_q^{}\|_F^{}}=2^{e_p^{}-e_q^{}}\frac{f_p^{}}{f_q^{}},\quad
  a_{22}'=\frac{\|g_q^{}\|_F^{}}{\|g_p^{}\|_F^{}}=2^{e_q^{}-e_p^{}}\frac{f_q^{}}{f_p^{}},\quad
  a_{21}'=\frac{g_q^{\ast}}{\|g_q^{}\|_F^{}}\frac{g_p^{}}{\|g_p^{}\|_F^{}}.
  \label{e:sGramEF}
\end{equation}

\looseness=-1
\Cref{a:zdpscl} computes $a_{21}'$ without overflow, and neither
$a_{11}'=(e_p^{}-e_q^{},f_p^{}/f_q^{})$ nor
$a_{22}'=(e_q^{}-e_p^{},f_q^{}/f_p^{})$ can overflow in this
``non-normalized'' representation (but can as floating-point values).
To call the batched EVD routine with scaled Grammians as inputs, they
have to be scaled further to the representable range by
$2^{s_{[pq]}^{}}$, $s_{[pq]}^{}\le 0$, i.e., to
$A_{[pq]}''=2^{s_{[pq]}^{}}A_{[pq]}'$.  Let
$e'=\left\lfloor\lg(f_p^{}/f_q^{})\right\rfloor$ and
$e''=\left\lfloor\lg(f_q^{}/f_p^{})\right\rfloor$.  Then, define
\begin{equation}
  e_{p/q}^{}=e_p^{}-e_q^{}+e',\quad
  e_{q/p}^{}=e_q^{}-e_p^{}+e'',\quad
  f_{p/q}^{}=2^{-e'}\frac{f_p^{}}{f_q^{}},\quad
  f_{q/p}^{}=2^{-e''}\frac{f_q^{}}{f_p^{}}.
  \label{e:efpq}
\end{equation}
The normalized representations of the diagonal elements of $A_{[pq]}'$
are $a_{11}'=(e_{p/q}^{},f_{p/q}^{})$ and
$a_{22}'=(e_{q/p}^{},f_{q/p}^{})$, where the fractional parts lie in
$[1,2\rangle$ and the exponents are still finite.  Let
$\hat{\eta}=\mathtt{DBL\_MAX\_EXP}-1$ be the largest exponent of a
finite floating-point value, and
$e_{\ge}^{}=\max\{e_{p/q}^{},e_{q/p}^{}\}$.  Then,
$s_{[pq]}^{}=\min\{\hat{\eta}-e_{\ge}^{},0\}$.  With a shorthand
\begin{equation}
  \mathop{\mathtt{F}}(\mathsf{x})=\mathop{\mathtt{getmant}}(\mathsf{x},\mathtt{\_MM\_MANT\_NORM\_1\_2},\mathtt{\_MM\_MANT\_SIGN\_zero})
  \label{e:F}
\end{equation}
for extraction of the fractional parts of $\mathsf{x}$ in
$[1,2\rangle$, this formation procedure\footnote{slightly simplified
  and with a different instruction order than in the prototype
  implementation} is vectorized in \cref{a:Gram} for complex
  Grammians, while the real ones do not require $\Im{a_{21}''}$.

\begin{algorithm}[hbtp]
  \caption{Vectorized formation of non-overflowing scaled Grammians.}
  \label{a:Gram}
  \begin{algorithmic}[1]
    \REQUIRE{The vectors $\Re{\mathsf{a}_{21}'},\Im{\mathsf{a}_{21}'};\mathsf{e}_1,\mathsf{f}_1;\mathsf{e}_2,\mathsf{f}_2$ of the scaled dot-products and the first and the second column norms, resp., of the chosen pivot pairs $(p_1,q_1),\cdots,(p_{\mathtt{s}},q_{\mathtt{s}})$.}
    \ENSURE{$\mathsf{a}_{11}'',\mathsf{a}_{22}'';\Re{\mathsf{a}_{21}''},\Im{\mathsf{a}_{21}''}$, where the $(\ell-1)$-th lane of $\mathsf{a}_{ij}''$ is $(A_{[p_{\ell}^{}q_{\ell}^{}]}'')_{ij}^{}$.}
    \STATE{$\mathsf{f}_{12}=\mathop{\mathtt{div}}(\mathsf{f}_1,\mathsf{f}_2);\ \mathsf{e}_{12}=\mathop{\mathtt{sub}}(\mathsf{e}_1,\mathsf{e}_2);\ \ \ \mathsf{f}_{21}=\mathop{\mathtt{div}}(\mathsf{f}_2,\mathsf{f}_1);\ \mathsf{e}_{21}=\mathop{\mathtt{sub}}(\mathsf{e}_2,\mathsf{e}_1);$}
    \COMMENT{\cref{e:sGramEF}}
    \STATE{$\mathsf{e}_{12}=\mathop{\mathtt{add}}(\mathsf{e}_{12},\mathop{\mathtt{getexp}}(\mathsf{f}_{12}));\quad\mathsf{f}_{12}=\mathop{\mathtt{F}}(\mathsf{f}_{12});$}
    \COMMENT{$(e_{p/q},f_{p/q})$ from \cref{e:efpq} with \cref{e:F}}
    \STATE{$\mathsf{e}_{21}=\mathop{\mathtt{add}}(\mathsf{e}_{21},\mathop{\mathtt{getexp}}(\mathsf{f}_{21}));\quad\mathsf{f}_{21}=\mathop{\mathtt{F}}(\mathsf{f}_{21});$}
    \COMMENT{$(e_{q/p},f_{q/p})$ from \cref{e:efpq} with \cref{e:F}}
    \STATE{$\mathsf{s}_A=\mathop{\mathtt{min}}(\mathop{\mathtt{sub}}(\mathop{\mathtt{set1}}(\mathtt{DBL\_MAX\_EXP}-1),\mathop{\mathtt{max}}(\mathsf{e}_{12},\mathsf{e}_{21})),\mathop{\mathtt{setzero}}());$}
    \COMMENT{$s_{[p_{\ell}^{}q_{\ell}^{}]}^{}$}
    \STATE{$\mathsf{e}_{12}=\mathop{\mathtt{add}}(\mathsf{e}_{12},\mathsf{s}_A);\quad\mathsf{e}_{21}=\mathop{\mathtt{add}}(\mathsf{e}_{21},\mathsf{s}_A);$}
    \COMMENT{each lane $\le\hat{\eta}$}
    \STATE{$\mathsf{a}_{11}''=\mathop{\mathtt{scalef}}(\mathsf{f}_{12},\mathsf{e}_{12});\quad\mathsf{a}_{22}''=\mathop{\mathtt{scalef}}(\mathsf{f}_{21},\mathsf{e}_{21});$}
    \COMMENT{scale the diagonal}
    \STATE{$\Re{\mathsf{a}_{21}''}=\mathop{\mathtt{scalef}}(\Re{\mathsf{a}_{21}'},\mathsf{s}_A^{});\quad\Im{\mathsf{a}_{21}''}=\mathop{\mathtt{scalef}}(\Im{\mathsf{a}_{21}'},\mathsf{s}_A^{});$}
    \COMMENT{scale the off-diagonal}
  \end{algorithmic}
\end{algorithm}
\subsection{The Jacobi transformations}\label{ss:3.5}
In the $k$th step, the postmultiplication of a pivot column pair
$\begin{bmatrix}x_p^{(k)}\!\!&\!\!x_q^{(k)}\end{bmatrix}$, by the
Jacobi rotation
$\mathop{U}\left(\alpha_{pq}^{(k)},\varphi_{pq}^{(k)}\right)$ is
performed as in \cref{a:zjrot}, with $l=m$ for $X=G_k''$ and $l=n$ for
$X=V_k^{}$, where $V_0^{}=I$ if the right singular vectors are to be
accumulated in the final $V$; otherwise, $V_0^{}$ can be set to any
matrix that is to be multiplied by them.  In both cases, $V_1^{}$ can
optionally be scaled similarly as $G_1^{}$ would be if there were no
concern for overflowing of its column norms, but then the protection
from overflow in the course of subsequent transformations of $V_k^{}$
has to be maintained.  If $V_1^{}=I$, the $\max$-norm approximation in
\cref{a:zjrot} is not needed for the columns of $V_k^{}$, and a
slightly faster routine, $\mathtt{zjrotf}$, is sufficient.

\begin{algorithm}[hbtp]
  \caption{$\mathtt{zjrot}$: a vectorized postmultiplication of a pair
    of complex columns $\begin{bmatrix}x_p\!\!&\!\!x_q\end{bmatrix}$
    of length $l$, in the split form, by the Jacobi transformation
    $\Phi=U(\alpha_{pq},\varphi_{pq})P$.}
  \label{a:zjrot}
  \begin{algorithmic}[1]
    \REQUIRE{$(\Re{x_p},\Im{x_p}),(\Re{x_q},\Im{x_q});c=\cos\varphi_{pq};C=\cos\alpha_{pq}\tan\varphi_{pq},S=\sin\alpha_{pq}\tan\varphi_{pq}$.}
    \ENSURE{$\begin{bmatrix}x_p'\!\!&\!\!x_q'\end{bmatrix}=\begin{bmatrix}x_p^{}\!\!&\!\!x_q^{}\end{bmatrix}\Phi$ and, optionally, an approximation of its $\max$-norm.}
    \STATE{$\mathsf{C}=\mathop{\mathtt{set1}}(C);\ \ \mskip-2mu-\mathsf{C}=\mathop{\mathtt{set1}}(-C);\ \ \mskip-1mu\mathsf{S}=\mathop{\mathtt{set1}}(S);\ \ \mskip-1mu\mathsf{c}=\mathop{\mathtt{set1}}(c);\ \ \mskip-1mu-\mathsf{0}=\mathop{\mathtt{set1}}(-0.0);$}
    \STATE{$\mathsf{M}=\mathop{\mathtt{setzero}}();$}
    \COMMENT{a vector of partial $\max$-norm approximations}
    \FOR[$l$ optimal in the sense of~\cref{e:pad0}]{$\mathtt{i}=0$ \TO $l-1$ \textbf{step} $\mathtt{s}$}
    \STATE{$\Re{\mathsf{x}_{\mathtt{i}j}}=\mathop{\mathtt{load}}(\Re{x_j}+\mathtt{i});\quad\Im{\mathsf{x}_{\mathtt{i}j}}=\mathop{\mathtt{load}}(\Im{x_j}+\mathtt{i});$}
    \COMMENT{$j\in\{p,q\}$\\}
    \COMMENT{\cref{e:zfma,e:transf}\hfill}
    \STATE{$\Re{\mathsf{x}_{\mathtt{i}p}'}=\mathop{\mathtt{mul}}(\mathop{\mathtt{fmadd}}(\Re{\mathsf{x}_{\mathtt{i}q}^{}},\mathsf{C},\mathop{\mathtt{fnmadd}}(\Im{\mathsf{x}_{\mathtt{i}q}^{}},\mathsf{S},\Re{\mathsf{x}_{\mathtt{i}p}^{}})),\mathsf{c});$}
    \COMMENT{$\mathop{\mathrm{fma}}(x_q,\mathrm{e}',x_p)c$}
    \STATE{$\Im{\mathsf{x}_{\mathtt{i}p}'}=\mathop{\mathtt{mul}}(\mathop{\mathtt{fmadd}}(\Re{\mathsf{x}_{\mathtt{i}q}^{}},\mathsf{S},\mathop{\mathtt{fmadd}}(\Im{\mathsf{x}_{\mathtt{i}q}^{}},\mathsf{C},\Im{\mathsf{x}_{\mathtt{i}p}^{}})),\mathsf{c});$}
    \COMMENT{$\mathrm{e}'=C+\mathrm{i}S$}
    \STATE{$\mathop{\mathtt{store}}(\Re{x_j^{}}+\mathtt{i},\Re{\mathsf{x}_{\mathtt{i}p}'});\quad\mathop{\mathtt{store}}(\Im{x_j^{}}+\mathtt{i},\Im{\mathsf{x}_{\mathtt{i}p}'});$}
    \COMMENT{$j=p$ if $P=I$, else $j=q$}
    \STATE{$\mathsf{M}=\mathop{\mathtt{max}}(\mathsf{M},\mathop{\mathtt{max}}(\mathop{\mathtt{andnot}}(-\mathsf{0},\Re{\mathsf{x}_{\mathtt{i}p}'}),\mathop{\mathtt{andnot}}(-\mathsf{0},\Im{\mathsf{x}_{\mathtt{i}p}'})));$}
    \COMMENT{update $\mathsf{M}$}
    \STATE{$\Re{\mathsf{x}_{\mathtt{i}q}'}=\mathop{\mathtt{mul}}(\mathop{\mathtt{fmadd}}(\Re{\mathsf{x}_{\mathtt{i}p}^{}},-\mathsf{C},\mathop{\mathtt{fnmadd}}(\Im{\mathsf{x}_{\mathtt{i}p}^{}},\mathsf{S},\Re{\mathsf{x}_{\mathtt{i}q}^{}})),\mathsf{c});$}
    \COMMENT{$\mathop{\mathrm{fma}}(x_p,-\bar{\mathrm{e}}',x_q)c$}
    \STATE{$\Im{\mathsf{x}_{\mathtt{i}q}'}=\mathop{\mathtt{mul}}(\mathop{\mathtt{fmadd}}(\Re{\mathsf{x}_{\mathtt{i}p}^{}},\mathsf{S},\mathop{\mathtt{fmadd}}(\Im{\mathsf{x}_{\mathtt{i}p}^{}},-\mathsf{C},\Im{\mathsf{x}_{\mathtt{i}q}^{}})),\mathsf{c});$}
    \COMMENT{$-\bar{\mathrm{e}}'=-C+\mathrm{i}S$}
    \STATE{$\mathop{\mathtt{store}}(\Re{x_j^{}}+\mathtt{i},\Re{\mathsf{x}_{\mathtt{i}q}'});\quad\mathop{\mathtt{store}}(\Im{x_j^{}}+\mathtt{i},\Im{\mathsf{x}_{\mathtt{i}q}'});$}
    \COMMENT{$j=q$ if $P=I$, else $j=p$}
    \STATE{$\mathsf{M}=\mathop{\mathtt{max}}(\mathsf{M},\mathop{\mathtt{max}}(\mathop{\mathtt{andnot}}(-\mathsf{0},\Re{\mathsf{x}_{\mathtt{i}q}'}),\mathop{\mathtt{andnot}}(-\mathsf{0},\Im{\mathsf{x}_{\mathtt{i}q}'})));$}
    \COMMENT{update $\mathsf{M}$}
    \ENDFOR\COMMENT{after each iteration: $\mathsf{M}=\max\{\mathsf{M},|\Re{\mathsf{x}_{\mathtt{i}p}'}|,|\Im{\mathsf{x}_{\mathtt{i}p}'}|,|\Re{\mathsf{x}_{\mathtt{i}q}'}|,|\Im{\mathsf{x}_{\mathtt{i}q}'}|\}$}
    \RETURN{$\mathop{\mathtt{reduce\_max}}(\mathsf{M});$}
    \COMMENT{maximum of the lanes of $\mathsf{M}$}
  \end{algorithmic}
\end{algorithm}

In \cref{a:zjrot} $P$ is a permutation matrix that is not identity if
the transformed columns have to be swapped.  In the (unconditionally
reproducible) implementation there are further optimizations, like
skipping the multiplications by $c=1$ and applying two real
transformations, on $(\Re{x_p},\Re{x_q})$ and $(\Im{x_p},\Im{x_q})$,
if the rotation is real ($S=0$).  No conditionals are present in the
loop; instead, each branch has a specialized version of the loop.  The
return value is
$\max\{\|\Re{x_p'}\|_{\max}^{},\|\Im{x_p'}\|_{\max}^{},\|\Re{x_q'}\|_{\max}^{},\|\Im{x_q'}\|_{\max}^{}\}$.
\subsubsection{The Gram--Schmidt orthogonalization}\label{sss:3.5.1}
\looseness=-1
In~\cite[Definition~2.7]{Drmac-97} the conditions and the formula for
the Gram--Schmidt orthogonalization of $g_q$ against $g_p$ are given,
that replaces their Jacobi transformation when
$\|g_q\|_F\ll\|g_p\|_F$ and $\tan\varphi$ underflows.
Using~\cref{e:sGramEF}, the orthogonalization
from~\cite[Eq.~(2.20)]{Drmac-97} is defined here as
\begin{equation}
  \|g_p^{}\|_F^{}>2^{\hat{\eta}}\upsilon\|g_q^{}\|_F^{}\implies g_p'=g_p^{}\,\wedge\,
  g_q'=\left(\frac{g_q^{}}{\|g_q^{}\|_F^{}}-a_{21}^{\prime\ast}\frac{g_p^{}}{\|g_p^{}\|_F^{}}\right)\|g_q^{}\|_F^{},
  \label{e:GScond}
\end{equation}
what, by representing $\|g_j\|_F=2^{e_j}f_j$ and after moving $f_q$
within the parenthesis, gives
\begin{equation}
  g_q'=\left(\frac{g_q^{}}{2^{e_q}}-\psi\frac{g_p^{}}{2^{e_p}}\right)2^{e_q},\qquad
  \psi=a_{21}^{\prime\ast}(f_q^{}/f_p^{}),\qquad
  1/2<f_q^{}/f_p^{}<2.
  \label{e:GS}
\end{equation}
If the roles of $g_p^{}$ and $g_q^{}$ are reversed in~\cref{e:GS},
i.e., if $\|g_q^{}\|_F^{}>2^{\hat{\eta}}\upsilon\|g_p^{}\|_F^{}$,
$a_{21}'$ should be used in~\cref{e:GS} instead of
$a_{21}^{\prime\ast}$, and the resulting $g_p'$ and $g_q'=g_q^{}$
should be swapped in place to keep the column norms sorted
non-increasingly.  \Cref{a:ZGS,a:DGS} vectorize~\cref{e:GS} but have
not been extensively tested.
\section{A parallel Jacobi-type SVD method}\label{s:4}
In this section the previously developed building blocks are put
together to form a robust, OpenMP-parallel Jacobi-type SVD method.
It is applicable to any $\tilde{m}\times\tilde{n}$ input matrix $G$ of
full column rank with finite elements, where $\tilde{m}$
satisfies~\cref{e:pad0} and $\tilde{n}\bmod 2\mathtt{s}=0$.  If the
dimensions of $G$ do not satisfy these constraints, $G$ is assumed to
be bordered beforehand, as explained in~\cite{Novakovic-Singer-11},
e.g.  The workspace required is $\tilde{n}/\mathtt{s}$ integers and
$5\tilde{n}$ or $7\tilde{n}$ reals for the real or the complex
variant, respectively.  Any number of OpenMP threads can be requested,
but at most $\tilde{n}$ will be used at any time.  The single
precision variants of the method, real and complex, have also been
implemented and tested, as described in \cref{s:SM8}.  The method can
be adapted for distributed memory, but it would be inefficient without
blocking (see, e.g.,~\cite{Novakovic-Singer-21} for a conceptual
overview).
\subsection{Parallel Jacobi strategies}\label{ss:4.1}
Even a sequential but vectorized method processes $\mathtt{s}>1$ pivot
column pairs in each step.  No sequential pivot strategy (such as
de~Rijk's~\cite{deRijk-89}, $\mathtt{dR}$, in LAPACK), that selects a
single pivot pair at a time, suffices, and a parallel one has to be
chosen.  It is then natural to select the maximal number of
$\tilde{n}/2$ pivot pairs each time, where all pivot indices are
different.  Since the number of all index pairs $(p,q)$ where $p<q$ is
$\tilde{n}(\tilde{n}-1)/2$, the chosen strategy is expected to require
at least $\mathtt{K}=\tilde{n}-1$ steps in a sweep.  It may require
more (e.g., $\mathtt{K}=\tilde{n}$), and transform a subset of pivot
pairs more than once in a sweep.  An example is a quasi-cyclic
strategy called the modified modulus~\cite{Novakovic-Singer-11}
(henceforth, $\mathtt{MM}$), applicable for $\tilde{n}$ even.  A
cyclic (i.e., repeating the same pivot sequence in each sweep)
strategy, with $\tilde{n}-1$ steps, could be, e.g., the
Mantharam--Eberlein~\cite{Mantharam-Eberlein-93} one, or its
generalization $\mathtt{ME}$ beyond $\tilde{n}$ being a power of two,
from~\cite{Novakovic-15}.  Available in theory for all even
$\tilde{n}$, $\mathtt{ME}$ is restricted in practice to
$\tilde{n}=2^lo$, $l\ge 1$, $o\le 21$ odd, with a noticeably faster
convergence than
$\mathtt{MM}$~\cite{Novakovic-15,Novakovic-Singer-21}.

The method is executed on a shared-memory system, so the cost of
``communication'' could be visible only if the data spans more than
one NUMA domain; otherwise, any communication topology underlying a
strategy can be disregarded when looking for a suitable one.  More
important is to assess if a strategy is convergent (provably, as
$\mathtt{MM}$, or at least in practice, as $\mathtt{ME}$) and the cost
of its implementation (a lookup table of at most $\tilde{n}^2$
integers encoding the pivot pairs in each step of a sweep for
$\mathtt{MM}$ and $\mathtt{ME}$ is set up before the execution for a
given $\tilde{n}$ in $\mathop{\mathcal{O}}(\tilde{n}^2)$ and
$\mathop{\mathcal{O}}(\tilde{n}^2\lg\tilde{n})$ time, respectively).

A dynamic
ordering~\cite{Becka-Oksa-Vajtersic-02,Becka-Oksa-Vajtersic-15} would
be a viable alternative to cyclic parallel strategies, but it is
expensive for pointwise (i.e., non-blocked) one-sided methods (for a
two-sided, pointwise Kogbetliantz-type SVD method with a dynamic
ordering, see~\cite{Novakovic-Singer-22}).

Among other advantages of the sequential Jacobi-type methods, the
quasi-cubic convergence speedup of Mascarenhas~\cite{Mascarenhas-95}
remains elusive with a parallel strategy.  A pointwise one-sided
method is the most ungrateful Jacobi-type SVD for parallelization,
with no performance benefits of blocking but with all the issues such
a constrained choice of parallel strategies brings, as the slow
convergence and a probably excessive amount of slightly non-orthogonal
transformations in \cref{ss:5.3} show.
\subsection{Data representation}\label{ss:4.2}
Complex arrays are kept in the split form.  Splitting the input matrix
$G$ and merging the output matrices $U$ (occupying the space of $G$)
and $V$ happen before and after the method is invoked, respectively,
and take less than $1\permil$ of the method's run-time.  The resulting
singular values are kept as two properly aligned arrays, $\mathtt{e}$
and $\mathtt{f}$, such that $\sigma_j=(\mathtt{e}_j,\mathtt{f}_j)$.
The integer work arrays are $\mathtt{p}$ and $\mathtt{c}$, each with
$\tilde{n}/(2\mathtt{s})$ elements, while the real workspace
$\mathtt{w}$ is divided into several properly aligned subarrays that
are denoted by a tilde over their names in \cref{a:zvjsvd}.
\subsubsection{Column norms and the singular vectors}\label{sss:4.2.1}
\looseness=-1
The Frobenius norms of the columns of the iteration matrix are held in
$(\mathtt{e},\mathtt{f})$.  If two columns of $G_k''$ (and the same
ones of $V_k^{}$) are swapped, so are their norms.  If a column is
transformed, its norm will be recomputed (not updated, as
in~\cite{Drmac-97}) at the beginning of the $(k+1)$-th step.

If the method converges, the iteration matrix, holding $U\Sigma'$, has
to be normalized to $U$.  For all $j$ in parallel, each component of
every element of the $j$th column of the iteration matrix is scaled by
$2^{-\mathtt{e}_j}$ and divided by $\mathtt{f}_j$. Finally,
$\mathtt{e}_j=\mathtt{e}_j-s$.  If $V_k$ has been scaled by a power of
two, the final $V_k$ has to be backscaled to $V$ in a similar way.
\subsection{The method}\label{ss:4.3}
\Cref{a:zvjsvd} shows a simplified implementation of the complex
double precision method.  The real variant, $\mathtt{dvjsvd}$, is
derived straightforwardly.

If the assumption from \cref{ss:3.2} on a safe upper bound of the
magnitudes of the columns' elements for the Frobenius norm computation
is adequate, line~\ref{bl:11} in \cref{a:zvjsvd} cannot cause an
infinite loop, but an inadequate assumption can.  In the testing from
\cref{s:5}, after the initial scaling of $G_0\to G_1$, no rescaling of
the iteration matrix was ever triggered, except of $U\Sigma'\to U$, so
it should be a rare event.

\looseness=-1
Even an inadequate assumption can be incrementally improved.  If a
norm overflow is detected in line~\ref{bl:11} more than once in
succession, the assumed upper bound can be divided by two each time,
until the iteration matrix is downscaled enough to prevent overflow
and break this \texttt{goto}-loop.  \Cref{s:SM3} gives the
\emph{lower} bounds on the assumption that would eventually be
reached, when the column norms could be computed by ordinary
dot-products as $\sqrt{\mathbf{x}^{\ast}\mathbf{x}}$.  If this
happens, the flawed norm-computing routine can be replaced by a
wrapper around the dot-product by a function pointer swap, without
stopping the execution.  This safeguard has not yet been implemented.

All \emph{innermost} loops of \cref{a:zvjsvd} are (but do not have to
be) \emph{parallel} and, at least in the first few sweeps over a
general matrix, have a balanced workload across all threads (i.e.,
most bitmasks $\mathfrak{c}$ in line~\ref{bl:19} are all-ones or close
to that).  Near the end of the execution, in the last sweeps, the
number of pivot pairs that have to be processed should diminish,
depending on the asymptotic convergence rate of the pivot strategy.

\begin{algorithm}[hbtp]
  \caption{$\mathtt{zvjsvd}$: a vectorized, OpenMP-parallel Jacobi-type SVD method.}
  \label{a:zvjsvd}
  \begin{algorithmic}[1]
    \REQUIRE{$(\Re{G},\Im{G});\mathtt{S},\mathtt{K};\mathtt{w},\mathtt{p},\mathtt{c}$; a strategy lookup table $\mathop{\mathtt{J}}((k-1)\!\bmod\!\mathtt{K},\ell)\to(p_{\ell}^{(k)},q_{\ell}^{(k)})$.}
    \ENSURE{$(\Re{U},\Im{U}),(\Re{V},\Im{V}),(\mathtt{e},\mathtt{f})$; the actual number of sweeps performed $\mathtt{C}\le\mathtt{S}$.}
    \STATE{determine $\widetilde{M}_0^{\mathbb{C}},s_0^{\mathbb{C}}$ from $G_0^{}$ and scale $G_0^{},\widetilde{M}_0^{\mathbb{C}}$ to $G_1^{},\widetilde{M}_1^{\mathbb{C}}$ as in \cref{ss:3.2};}
    \STATE{set $V_1=I$ in parallel over its columns and let $k=1$;}
    \COMMENT{$k$ is the step counter}
    \FOR[sweep loop]{$\mathtt{C}=0$ \TO $\mathtt{S}-1$}
    \STATE{$T=0;$}
    \COMMENT{$T$ counts the number of transformations in the whole sweep}
    \FOR[step loop, $k'=(k-1)\bmod\mathtt{K}$]{$k'=0$ \TO $\mathtt{K}-1$}
    \STATE{find $s_{(k)}^{\mathbb{C}},s_{[k]}^{\mathbb{C}}$ from $\widetilde{M}_k^{\mathbb{C}}$, \cref{e:skn,e:skr}, and downscale $G_k^{}\to G_k'$ if $s_{(k)}^{\mathbb{C}}<0$;}
    \FOR[$+$-reduce $\mathtt{o}$ (initially $0$); $(p_{\ell},q_{\ell})=\mathop{\mathtt{J}}(k',\ell)$]{$\ell=1$ \TO $\tilde{n}/2$ \textbf{parallel}\label{bl:7}}
    \STATE{$\|g_{\jmath_{\ell}}\|_F=\mathop{\mathrm{hypot}}(\|\Re{g_{\jmath_{\ell}}}\|_F,\|\Im{g_{\jmath_{\ell}}}\|_F);$}
    \COMMENT{compute $\|g_{\jmath_{\ell}}\|_F$ for $\jmath\in\{p,q\}$}
    \STATE{\textbf{if} $\|g_{\jmath_{\ell}}\|_F=\infty$ \textbf{then} $\mathtt{o}=\mathtt{o}+1$ \textbf{else} represent $\|g_{\jmath_{\ell}}\|_F$ as $(\mathtt{e}_{\jmath_{\ell}},\mathtt{f}_{\jmath_{\ell}});$}
    \ENDFOR\COMMENT{$\mathtt{o}$ holds the number of overflowed column norms\label{bl:10}}
    \STATE{\textbf{if} $\mathtt{o}>0$ \textbf{then} downscale $G_k'\to G_k''$ as in \cref{ss:3.2} and \textbf{goto} line~\ref{bl:7};\label{bl:11}}
    \FOR[the scaled dot-products as in \cref{ss:3.3}]{$\ell=1$ \TO $\tilde{n}/2$ \textbf{parallel}\label{bl:12}}
    \STATE{$(a_{21}')_{\ell}^{}\mskip-1mu=\mskip-1mu\check{g}_{q_{\ell}}^{\ast}\check{g}_{p_{\ell}}^{}\mskip-3mu=\mskip-1mu\mathop{\mathtt{zdpscl}^{[\prime]}}(\Re{g_{q_{\ell}}^{}},\Im{g_{q_{\ell}}^{}},\Re{g_{p_{\ell}}^{}},\Im{g_{p_{\ell}}^{}},\mathtt{e}_{q_{\ell}}^{},\mathtt{e}_{p_{\ell}}^{},\mathtt{f}_{q_{\ell}}^{},\mathtt{f}_{p_{\ell}}^{});$\label{bl:13}}
    \COMMENT{\cref{e:sGramEF}}
    \STATE{$\Re{\tilde{\mathtt{a}}_{21}^{}}[\ell-1]=(\Re{a_{21}'})_{\ell}^{},\ \Im{\tilde{\mathtt{a}}_{21}^{}}[\ell-1]=(\Im{a_{21}'})_{\ell}^{};$}
    \COMMENT{pack contiguously in $\mathtt{w}$\label{bl:14}}
    \STATE{$\tilde{\mathtt{e}}_1[\ell-1]=\mathtt{e}_{p_{\ell}},\ \tilde{\mathtt{f}}_1[\ell-1]=\mathtt{f}_{p_{\ell}},\quad\tilde{\mathtt{e}}_2[\ell-1]=\mathtt{e}_{q_{\ell}},\ \tilde{\mathtt{f}}_2[\ell-1]=\mathtt{f}_{q_{\ell}};$\label{bl:15}}
    \ENDFOR\COMMENT{use either \cref{a:zdpsclcs} or \cref{a:zdpscl} in line~\ref{bl:13}}
    \FOR[$+$-reduce $t$ (initially $0$)\\\hfill/\hskip-2pt/ check the convergence criterion, assemble $\mathtt{s}$ scaled Grammians $(A_{[p_{\ell}^{},q_{\ell}^{}]}'')_{\ell}^{}$]{$\mathtt{i}=0$ \TO $\tilde{n}/2-1$ \textbf{step} $\mathtt{s}$ \textbf{parallel}}
    \STATE{load $\Re{\mathsf{a}_{21}'}\mskip-1mu,\Im{\mathsf{a}_{21}'}\mskip-1mu,\mathsf{e}_l^{}\mskip-1mu,\mathsf{f}_l^{}$ from $\Re{\tilde{\mathtt{a}}_{21}^{}}\mskip-1mu+\mskip-1mu\mathtt{i}\mskip-1mu,\Im{\tilde{\mathtt{a}}_{21}^{}}\mskip-1mu+\mskip-1mu\mathtt{i}\mskip-1mu,\tilde{\mathtt{e}}_l^{}\mskip-1mu+\mskip-1mu\mathtt{i}\mskip-1mu,\tilde{\mathtt{f}}_l^{}\mskip-1mu+\mskip-1mu\mathtt{i}$, resp.;}
    \COMMENT{$l\mskip-3mu\in\mskip-3mu\{\mskip-1mu 1,\mskip-2mu 2\}$}
    \STATE{$(\mathfrak{c}\to\mathtt{c}_j^{},\sum_{\imath}\mathfrak{c}_{\imath}^{}\to\mathtt{p}_j^{})=\mathop{\text{\cref{a:cvg}}}(\Re{\mathsf{a}_{21}'},\Im{\mathsf{a}_{21}'});$}
    \COMMENT{$j=\mathtt{i}/\mathtt{s}$\label{bl:19}}
    \STATE{\textbf{if} $\mathtt{p}_j>0$ \textbf{then} $t=t+\mathtt{p}_j$ \textbf{else} skip the lines \ref{bl:21}, \ref{bl:22}, and \ref{bl:23};}
    \STATE{check~\cref{e:GScond} if \cref{a:ZGS} has to be used in line~\ref{bl:31} instead of $\mathtt{zjrot}$;\label{bl:21}}
    \STATE{$(\mathsf{a}_{11}'',\mathsf{a}_{22}'',\Re{\mathsf{a}_{21}''},\Im{\mathsf{a}_{21}''})\!=\!\mathop{\text{\cref{a:Gram}}}(\Re{\mathsf{a}_{21}'},\Im{\mathsf{a}_{21}'},\mathsf{e}_1^{},\mathsf{f}_1^{},\mathsf{e}_2^{},\mathsf{f}_2^{});$}
    \COMMENT{$(A_{[p_{\ell}^{},q_{\ell}^{}]}'')_{\ell}^{}$\label{bl:22}}
    \STATE{store $\mathsf{a}_{11}'',\mathsf{a}_{22}'',\Re{\mathsf{a}_{21}''},\Im{\mathsf{a}_{21}''}$ to $\tilde{\mathbf{a}}_{11}^{}+\mathtt{i},\tilde{\mathbf{a}}_{22}^{}+\mathtt{i},\Re{\tilde{\mathbf{a}}_{21}^{}}+\mathtt{i},\Im{\tilde{\mathbf{a}}_{21}^{}}+\mathtt{i}$, resp.;\label{bl:23}}
    \ENDFOR\COMMENT{$t$ holds the number of transformations in the current step\label{bl:24}\\}
    \COMMENT{\Cref{a:zbjac2} computes the EVDs of the scaled Grammians in parallel:\hfill}
    \STATE{$\mathop{\mathtt{zbjac2}}(\tilde{\mathbf{a}}_{11},\tilde{\mathbf{a}}_{22},\Re{\tilde{\mathbf{a}}_{21}},\Im{\tilde{\mathbf{a}}_{21}};\cos\tilde{\bm{\varphi}},\cos\tilde{\bm{\alpha}}\tan\tilde{\bm{\varphi}},\sin\tilde{\bm{\alpha}}\tan\tilde{\bm{\varphi}},\tilde{\bm{\lambda}}_1,\tilde{\bm{\lambda}}_2;\mathtt{p});$\label{bl:25}}
    \FOR[$\max$-reduce $M$ (initially $0$); $j=(\ell-1)/\mathtt{s}$]{$\ell=1$ \TO $\tilde{n}/2$ \textbf{parallel}\label{bl:26}}
    \IF[bit index $\imath=(\ell-1)\bmod\mathtt{s}$]{$(\mathtt{c}_j)_{\imath}=0$}
    \STATE{\textbf{if} $(\mathtt{p}_j)_{\imath}=1$ \textbf{then} swap $g_{p_{\ell}}$ and $g_{q_{\ell}}$, $v_{p_{\ell}}$ and $v_{q_{\ell}}$, $(\mathtt{e}_{p_{\ell}},\mathtt{f}_{p_{\ell}})$ and $(\mathtt{e}_{q_{\ell}},\mathtt{f}_{q_{\ell}})$;}
    \ELSE[postmultiply the column pairs by $U_{\ell}P_{\ell}$ computed in line~\ref{bl:25}]
    \STATE{\textbf{if} $(\mathtt{p}_j)_{\imath}=1$ \textbf{then} $P_{\ell}=\left[\begin{smallmatrix}0&1\\1&0\end{smallmatrix}\right]$ \textbf{else} $P_{\ell}=I$;}
    \COMMENT{decode $P_{\ell}$ from $\mathtt{p}$\\}
    \COMMENT{\Cref{a:zjrot} applied to $(g_{p_{\ell}},g_{q_{\ell}})$, $\tilde{m}$ rows, and to $(v_{p_{\ell}},v_{q_{\ell}})$, $\tilde{n}$ rows:\hfill}
    \STATE{$M'\!=\!\mathop{\mathtt{zjrot}}(\Re{g_{p_{\ell}}},\mskip-2mu\Im{g_{p_{\ell}}},\mskip-2mu\Re{g_{q_{\ell}}},\mskip-2mu\Im{g_{q_{\ell}}},\mskip-2mu\cos\tilde{\bm{\varphi}}_{\ell},\mskip-2mu\cos\tilde{\bm{\alpha}}_{\ell}\tan\tilde{\bm{\varphi}}_{\ell},\mskip-2mu\sin\tilde{\bm{\alpha}}_{\ell}\tan\tilde{\bm{\varphi}}_{\ell},\mskip-2muP_{\ell})\mskip-1mu;$\label{bl:31}}
    \STATE{$\mathop{\mathtt{zjrotf}}(\Re{v_{p_{\ell}}},\Im{v_{p_{\ell}}},\Re{v_{q_{\ell}}},\Im{v_{q_{\ell}}},\cos\tilde{\bm{\varphi}}_{\ell},\cos\tilde{\bm{\alpha}}_{\ell}\tan\tilde{\bm{\varphi}}_{\ell},\sin\tilde{\bm{\alpha}}_{\ell}\tan\tilde{\bm{\varphi}}_{\ell},P_{\ell})\mskip-1mu;$}
    \STATE{$M=\max\{M,M'\};$}
    \COMMENT{$0<M'<\infty$ should hold}
    \ENDIF
    \COMMENT{$M$ unchanged if the columns are not transformed}
    \ENDFOR\COMMENT{see \cref{ss:3.5}}
    \STATE{$T=T+t;\quad\widetilde{M}_{k+1}^{\mathbb{C}}=\max\{\widetilde{M}_k^{\mathbb{C}},M\};$}
    \COMMENT{$\widetilde{M}_{k+1}^{\mathbb{C}}$ might be a safe overestimate\label{bl:36}}
    \ENDFOR\COMMENT{$k=k+1$}
    \STATE{\textbf{if} $T=0$ \textbf{then break};}
    \COMMENT{convergence if no transformations in a sweep}
    \ENDFOR\COMMENT{$\mathtt{C}=\mathtt{C}+1$}
    \STATE{\textbf{if} $\mathtt{C}<\mathtt{S}$ \textbf{then} normalize $U\Sigma'\to U$ and scale $\Sigma'\to\Sigma$ as in \cref{sss:4.2.1};}
    \RETURN{$\mathtt{C};$}
    \COMMENT{$\mathtt{C}=\mathtt{S}\iff\text{convergence not detected}$\\}
    \COMMENT{\hskip-1pt For referencing, let lines $\mskip-1mu\ref{bl:7}$--$\ref{bl:10}\!=\!\spadesuit$, $\mskip-1mu\ref{bl:12}$--$\ref{bl:14}\!=\!\clubsuit$, $\mskip-1mu\ref{bl:15}$--$\ref{bl:24}\!=\!\heartsuit$, $\mskip-1mu\ref{bl:25}\!=\!\diamondsuit$, and $\mskip-1mu\ref{bl:26}$--$\ref{bl:36}\!=\!\bigstar\!$.}
  \end{algorithmic}
\end{algorithm}
\subsubsection{Reproducibility}\label{sss:4.3.1}
Apart from the Frobenius norm computation and
\cref{a:zdpscl,a:zdpsclcs}, the results of which are reproducible in
the same environment, all other parts of the method are
unconditionally reproducible.  The method's results by design do
\emph{not} depend on the requested number of threads, as long as the
external routines (only \texttt{xNRM2}) are sequential, but do on the
choice of parallel strategy.
\section{Numerical testing}\label{s:5}
The testing was performed on the Intel DevCloud for oneAPI cluster
with two Intel Xeon Platinum 8358 CPUs per node, each with 32 cores
nominally clocked at $2.60\,\mathrm{GHz}$ but running at variable
frequencies due to TurboBoost.  Under 64-bit Linux, the Intel oneAPI C
(\texttt{icc}), C++ (\texttt{icpc}), and Fortran (\texttt{ifort})
compilers, versions 2021.6.0 (for \cref{ss:5.2,ss:SM6.4}) and
2021.7.1, and the sequential MKL libraries 2022.0.1 and 2022.0.2,
respectively, were used, with the ILP64 ABI and the Conditional
Numerical Reproducibility mode set to \texttt{MKL\_CBWR\_AVX512\_E1}.

The CPU's per-core cache sizes are: $48\,\mathrm{kB}$ for level~1
(data), $1280\,\mathrm{kB}$ for level~2, and $1536\,\mathrm{kB}$ for
level~3 (assuming that the $48\,\mathrm{MB}$ in total of the
last-level cache is equally distributed among the cores).  The OpenMP
environment was set up for all tests as
$\text{\texttt{OMP\_PROC\_BIND}}=\text{\texttt{SPREAD}}$,
$\text{\texttt{OMP\_PLACES}}=\text{\texttt{CORES}}$,
$\text{\texttt{OMP\_DYNAMIC}}=\text{\texttt{FALSE}}$, and
$\text{\texttt{KMP\_DETERMINISTIC\_REDUCTION}}=\text{\texttt{TRUE}}$,
on an exclusive-use node.  For the EVD testing $32$ threads were used,
while for the SVD testing
$\text{\texttt{OMP\_NUM\_THREADS}}\in\{\text{\texttt{16}},\text{\texttt{32}},\text{\texttt{64}}\}$.

The batched EVD \cref{a:zbjac2} (henceforth, \texttt{z}),
\cref{a:dbjac2} (\texttt{d}), and their single precision complex
(\texttt{c}) and real (\texttt{s}) counterparts were tested also in
isolation, on (huge, for more reliable timing) batches of
Hermitian/symmetric matrices of order two, comparing them to the
inlineable, manually translated C versions of the reference LAPACK
routines \texttt{xLAEV2}, for
$\text{\texttt{x}}\in\{\text{\texttt{Z}},\text{\texttt{D}},\text{\texttt{C}},\text{\texttt{S}}\}$,
respectively, since the MKL's and the reference implementations were
slower to call, with no observed difference in accuracy.

The SVD method in \cref{a:zvjsvd} and its real variant were compared
to the \texttt{ZGESVJ} and \texttt{DGESVJ} routines, respectively,
with
$\text{\texttt{JOBA}}=\text{`\texttt{G}'}$,
$\text{\texttt{JOBU}}=\text{`\texttt{U}'}$, and
$\text{\texttt{JOBV}}=\text{`\texttt{V}'}$.

All error testing was done in quadruple precision datatypes,
\texttt{\_\_float128} in C and \texttt{REAL(KIND=REAL128)} in Fortran,
including the final scaling of the singular values
\texttt{SVA(j)*WORK(1)} from \texttt{xGESVJ} and
$\sigma_j=2^{\mathtt{e}_j}\mathtt{f}_j$ from the proposed SVD method.
\subsection{Matrices under test}\label{ss:5.1}
\looseness=-1
For the EVDs, $256$ batches in single and $256$ batches in double
precision, each with $2^{28}$ Hermitian and $2^{28}$ symmetric
matrices of order two, were generated using~\cref{e:2} from random
$\lambda_1$, $\lambda_2$, $\tan\varphi$, and $\cos\alpha$, where the
random bits were provided by the \texttt{RDRAND} CPU facility.  For
the eigenvalues $\lambda_j$, a 32-bit or 64-bit quantity was
reinterpreted as a single or a double precision value, respectively,
and accepted if $|\lambda_j|\le\nu/2^4$.  Random 64-bit signed
integers were converted to quadruple precision, scaled by $2^{-63}$ to
the $[-1,1\rangle$ range, and assigned to $\tan\varphi$ and
$\cos\alpha$.  Then, $|\sin\alpha|=\sqrt{1-\cos^2\alpha}$ in quadruple
precision, and the sign of $\tan\varphi$ was absorbed into
$\mathrm{e}^{\mathrm{i}\alpha}$ (making
$\mathrm{e}^{\mathrm{i}\alpha}=\pm 1$ in the real case).  The three
required matrix elements from the lower triangle were computed in
quadruple precision and rounded to single or double precision without
overflow.  Five real values were generated in total for $\lambda_1$,
$\lambda_2$, $\tan\varphi$, and $\cos\alpha$: four of them for one
complex and two real matrix elements, and the last one for the
$(2,1)$-element of a real symmetric matrix, implicitly generated from
the same eigenvalues and $\tan\varphi$, but as if $\cos\alpha=1$
initially.  These values were stored to binary files and later read
from them into memory, one batch at a time, in the layout described in
\cref{sss:2.4.1}.  The eigenvalues were similarly preserved for
comparison.

To make $|\mathrm{e}^{\mathrm{i}\alpha}|$ as close to unity as
practicable, $\cos\alpha$ was in fact rounded from quadruple to
double precision (with $52$ bits of significand) and converted back,
before computing $\sin^2\alpha=1-\cos^2\alpha$ with $112$ bits of
significand.  Thus, $\sin^2\alpha$ was exact.

For the SVD testing, the datasets $\Xi_1^{\mathbb{F}}$ and
$\Xi_2^{\mathbb{F}}$, parametrized by $\xi=\xi_1^{}=-23$ and
$\xi=\xi_2^{}=-52$, respectively, were generated\footnote{See
\url{https://github.com/venovako/JACSD/tree/master/tgensvd} for the
implementation.}, each one with complex ($\mathbb{F}=\mathbb{C}$) and
real ($\mathbb{F}=\mathbb{R}$) square double precision matrices, from
the given singular values $\Sigma[\xi,n,P_n]$ (same for both
$\mathbb{F}$).  In $\Xi_1^{\mathbb{F}}$, $n=\tilde{n}=128i$,
$1\le i\le 42$.  In $\Xi_2^{\mathbb{F}}$, $n=\tilde{n}=512i$,
$1\le i\le 10$.

\looseness=-1
The unpermuted ($P_n=I_n$) singular values are
\emph{logarithmically equidistributed},
\begin{displaymath}
  \sigma_i=\sigma[\xi,n,I_n]_i=2^y,\quad
  y=\xi\left(1-\frac{i-1}{n-1}\right),\quad
  1\le i\le n;\quad
  \lg\sigma_{i+1}-\lg\sigma_i=\frac{\xi}{1-n}.
\end{displaymath}
For example,
$\mathop{\mathrm{diag}}(\Sigma[\xi=-3,n=4,I_4])=\begin{bmatrix}1/8\!&\!1/4\!&\!1/2\!&\!1\end{bmatrix}^T$.
The permuted singular values
$\Sigma[\xi,n,P_n^{}]=P_n^{}\Sigma[\xi,n,I_n^{}]P_n^T$ can be in
ascending, descending, or any random order.  In the former two cases,
the smallest singular values are tightly clustered.

Let an input matrix
$G[\xi,n,P_n^{}]=U_n^{}\Sigma[\xi,n,P_n^{}]V_n^{\ast}$.  For
$\Xi_1^{\mathbb{F}}$ a random $P_n^{}$, same for both $\mathbb{F}$,
was taken for each $n$.  For $\Xi_2^{\mathbb{F}}$, three input
matrices were generated for each $n$ and $\mathbb{F}$, with ascending,
descending, and a random $P_n^{}$ order of $\Sigma$, same for both
$\mathbb{F}$.  The random unitary matrices $U_n^{}$ and $V_n^{\ast}$
were implicitly generated by two applications, from the left and from
the right, of the LAPACK's testing routine \texttt{xLAROR},
$\text{\texttt{x}}\in\{\text{\texttt{D}},\text{\text{\texttt{Z}}}\}$,
converted to work in quadruple precision.  First, $U_n^{}\Sigma$, and
then $G=(U_n^{}\Sigma)V_n^{\ast}$ were obtained.  The resulting $G$
was rounded to double precision and stored, as well as $\Sigma$.
\subsection{The batched EVD results}\label{ss:5.2}
\looseness=-1
\Cref{f:5.1} shows the run-time ratio, batch by batch, of calling the
LAPACK-like routine for each matrix in a batch and invoking the
vectorized EVD for eight (\texttt{d} and \texttt{z}) or $16$
(\texttt{s} and \texttt{c}) matrices at once.  In lines~\ref{al:24}
and~\ref{al:25} of \cref{a:z8jac2}, and in line~\ref{dl:16} of
\cref{a:d8jac2},
$\mathrm{\mathop{fl}}(\mathrm{e}^{\mathrm{i}\alpha}\tan\varphi)$ was
further divided by $\mathop{\mathrm{fl}}(\sec\varphi)$ to get
$\mathop{\mathrm{fl}}(\mathrm{e}^{\mathrm{i}\alpha}\sin\varphi)$, as
in~\cref{e:jacsec}.  This was also done in \texttt{s} and \texttt{c}.
The complex LAPACK-like routines were adapted for taking the
$(2,1)$ matrix element, the complex conjugate of the $(1,2)$ element
\texttt{B}, as input, and
$\text{\texttt{SN1}}=\mathop{\mathrm{fl}}(\mathrm{e}^{\mathrm{i}\alpha}\sin\varphi)$
was kept in the split form, to eliminate any otherwise unavoidable
pre-/post-processing overhead.

\looseness=-1
A parallel OpenMP \texttt{for} loop split the work within a batch
evenly among the threads.  Each thread thus processed
$2^{28}/32=2^{23}$ matrices per batch.  Every \texttt{xLAEV2}
invocation, had it not been inlined, would have involved several
function calls, so these results are a lower bound on run-time of any
semantically unchanged library routine.  The results are satisfactory
despite their noticeable dispersion, with the single precision
versions of the batched EVD being more performant than the double
precision ones due to twice the number of single versus double
precision lanes per widest vector.

\begin{figure}[hbtp]
  \begin{center}
    \includegraphics{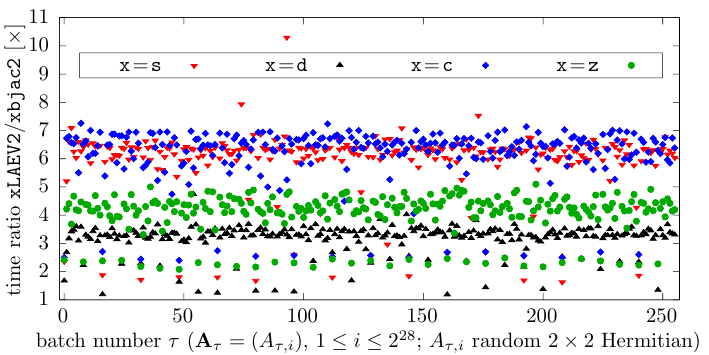}
  \end{center}
  \caption{Batch run-time ratios of the LAPACK-based and the vectorized batched EVDs.}
  \label{f:5.1}
\end{figure}

\looseness=-1
The EVD's relative residual\footnote{Here and for all other error
measures that involve division, assume $0/0=0$.} is
$\rho_A^{\text{\texttt{x}}}=\|U\Lambda U^{\ast}-A\|_F^{}/\|A\|_F^{}$,
where
$\text{\texttt{x}}\in\{\text{\texttt{d}},\text{\texttt{z}},\text{\texttt{s}},\text{\texttt{c}}\}$
for the batched EVD in the corresponding precision and datatypes, and
$\text{\texttt{x}}\in\{\text{\texttt{D}},\text{\texttt{Z}},\text{\texttt{S}},\text{\texttt{C}}\}$
for the respective LAPACK-based one.  For a batch $\tau$, let
$\rho_{\tau}^{\text{\texttt{x}}}=\max_i^{}\rho_{A_{\tau,i}}^{\text{\texttt{x}}}$,
$1\le i\le 2^{28}$.  Then, in \cref{f:5.2} the ratios
$\rho_{\tau}^{\text{\texttt{S}}}/\rho_{\tau}^{\text{\texttt{s}}}$ and
$\rho_{\tau}^{\text{\texttt{D}}}/\rho_{\tau}^{\text{\texttt{d}}}$ show
that, on average, real batched EVDs are a bit more accurate than the
LAPACK-based ones, but
$\rho_{\tau}^{\text{\texttt{C}}}/\rho_{\tau}^{\text{\texttt{c}}}$ and
$\rho_{\tau}^{\text{\texttt{Z}}}/\rho_{\tau}^{\text{\texttt{z}}}$, as
indicated in \cref{r:hypot}, demonstrate that a catastrophic loss of
accuracy of the eigenvectors (which in this case are no longer of the
unit norm) is possible when the components of \texttt{B} are of small
\emph{subnormal} and close enough magnitudes.  If they had been (close
to) normal, this issue would have been avoided.  A further explanation
is left for \cref{ss:SM7.1}, along with more EVD testing results.
Observe that \emph{the upscaling from~\cref{e:z} could have preserved
accuracy of the complex LAPACK routines in many problematic instances
by preventing $\bar{\hbox{\tt B}}/|\hbox{\tt B}|$ to be computed with
both components of similar, close to unit magnitudes}.  However,
certain pathological cases are unavoidable even with \cref{a:z8jac2}.
Consider the following matrix
\begin{equation}
  A=\begin{bmatrix}
  \nu/8&\check{\mu}\mp\mathrm{i}\check{\mu}\\
  \check{\mu}\pm\mathrm{i}\check{\mu}&\nu/8
  \end{bmatrix},\qquad
  \mathop{\mathrm{fl}}(|a_{12}|)=\check{\mu}=\mathop{\mathrm{fl}}(|a_{21}|)\implies
  \mathop{\mathrm{fl}}(\mathrm{e}^{\mathrm{i}\alpha})=1\pm\mathrm{i}.
  \label{e:evil}
\end{equation}
Then, from~\cref{e:z}, $\zeta=0$, while $\mathop{\mathrm{fl}}(\tan\varphi)=1$,
$\mathop{\mathrm{fl}}(\cos\varphi)=\mathop{\mathrm{fl}}(1/\mathop{\mathrm{fl}}(\sqrt{2}))$,
and therefore
\begin{displaymath}
  \mathop{\mathrm{fl}}(U)=\mathop{\mathrm{fl}}(\cos\varphi)
  \begin{bmatrix}
    1 & -1\pm\mathrm{i}\\
    1\pm\mathrm{i} & 1
  \end{bmatrix},\ \ \
  \det(\mathop{\mathrm{fl}}(U))\approx\frac{3}{\sqrt{2}},\ \ \
  \|\mathop{\mathrm{fl}}(u_1)\|_F=\|\mathop{\mathrm{fl}}(u_2)\|_F\approx\frac{\sqrt{3}}{\sqrt{2}}.
\end{displaymath}

\begin{figure}[hbtp]
  \begin{center}
    \includegraphics{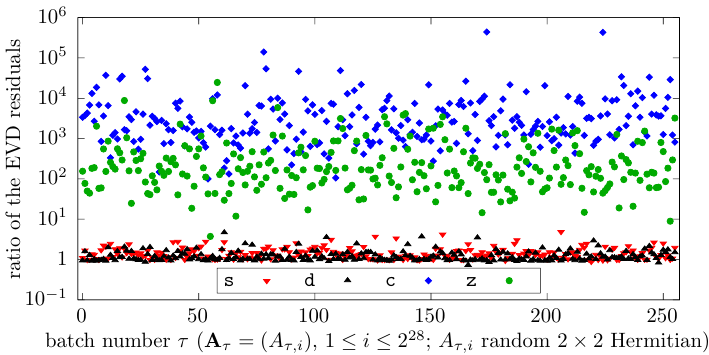}
  \end{center}
  \caption{Per-batch ratios
    $\rho_{\tau}^{\text{\texttt{S}}}/\rho_{\tau}^{\text{\texttt{s}}}$,
    $\rho_{\tau}^{\text{\texttt{D}}}/\rho_{\tau}^{\text{\texttt{d}}}$,
    $\rho_{\tau}^{\text{\texttt{C}}}/\rho_{\tau}^{\text{\texttt{c}}}$,
    and
    $\rho_{\tau}^{\text{\texttt{Z}}}/\rho_{\tau}^{\text{\texttt{z}}}$
    of the EVD's relative residuals.}
  \label{f:5.2}
\end{figure}

\looseness=-1
Neither \cref{a:z8jac2} nor \texttt{ZLAEV2} can escape this
miscomputing of the eigenvectors $U$ of $A$ from~\cref{e:evil}.  If
the strict standard conformance were not required, setting the
Denormals Are Zero (DAZ) CPU flag would convert $\pm\check{\mu}$ on
input to zero and the EVD of (now diagonal) $\widetilde{A}$ would be
correctly computed, even with \texttt{ZLAEV2}, but, e.g., matrices
with all subnormal elements would be zeroed out by both algorithms.
If only the \emph{post}-scaling subnormal values were zeroed out
(e.g., by setting the Flush To Zero (FTZ) CPU flag, but not DAZ,
before line~\ref{al:8} in \cref{a:z8jac2}), then the issues with $A$
and fully subnormal matrices would vanish, but this ``fix'' could turn
a nonsingular ill-conditioned matrix into an exactly singular one (it
depends on the context if this is an issue).  Thus, if the input data
range is too wide for the scaling to make all matrix elements normal,
it is safest to compute the EVD in a datatype with wider exponents.
\subsection{The SVD results}\label{ss:5.3}
Only a subset of the complex variant's results is shown here, with the
rest presented in \cref{ss:SM7.2}.
\subsubsection{Dataset $\Xi_1^{\mathbb{F}}$}\label{sss:5.3.1}
\Cref{f:5.3} shows the speedup of $\mathtt{zvjsvd}$ versus
\texttt{ZGESVJ} in two regimes.  The \texttt{max} values come from
comparing $\mathtt{zvjsvd}$ with the average of the corresponding
run-times of \texttt{ZGESVJ} under full machine load, i.e., when all
cores were busy running an instance of the latter on the same input at
the same time.  The \texttt{min} values are the result of a comparison
with the run-times of \texttt{ZGESVJ} when only one instance of it was
running on one core of an otherwise idle machine.  For $64$ threads,
e.g., the expected speedup for a given matrix order lies between the
corresponding \texttt{min} and \texttt{max} values.  A higher speedup
might have been expected, given that both the thread-based and the
vector parallelism were employed in $\mathtt{zvjsvd}$, but these
results can be at least partially explained by the reasons independent
of the actual hardware.

Foremost, $\mathtt{zvjsvd}$ with $\mathtt{MM}$ took $\approx\!10$
sweeps more on bigger matrices (see \cref{f:SM7.4}) than
\texttt{ZGESVJ} with $\mathtt{dR}$ ($\mathtt{ME}$, where applicable,
lowered the difference by $1$--$2$ sweeps).  This demonstrated need
for better parallel strategies for pointwise one-sided methods will
remain an issue even with the most optimized parallel
implementations.

\looseness=-1
As \cref{f:5.4} shows, $\mathtt{zvjsvd}$ with $64$ threads spent most
of its run-time on transforming the columns ($\bigstar$) and on
computing the Frobenius norms ($\spadesuit$) and the scaled
dot-products ($\clubsuit$, less so if the compensated summation was
left out).  For $\bigstar$, the $\max$-norm approximation of the
transformed columns of the iteration matrix was also computed.  For
$\spadesuit$, the Frobenius norms of those columns were recomputed,
while \texttt{ZGESVJ} updated them, with a periodic
recomputation~\cite{Drmac-97}.  The prescaling ($\heartsuit$) and the
EVD ($\diamondsuit$) of the Grammians jointly took less than $2\%$ of
the run-time on the bigger inputs.

\begin{figure}[hbtp]
  \begin{center}
    \includegraphics{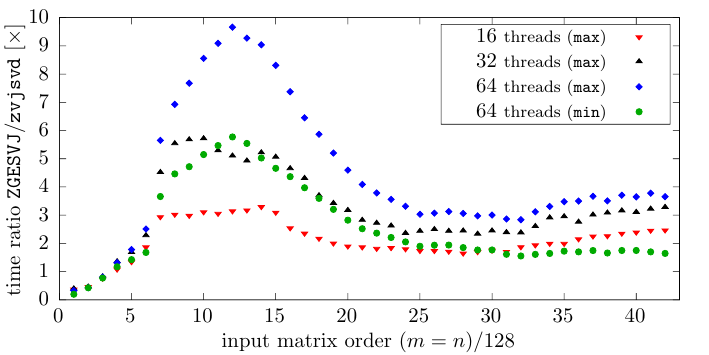}
  \end{center}
  \caption{Run-time ratios of \texttt{ZGESVJ} and $\mathtt{zvjsvd}$
    with $\mathtt{MM}$ on $\Xi_1^{\mathbb{C}}$.}
  \label{f:5.3}
\end{figure}

\begin{figure}[hbtp]
  \begin{center}
    \includegraphics{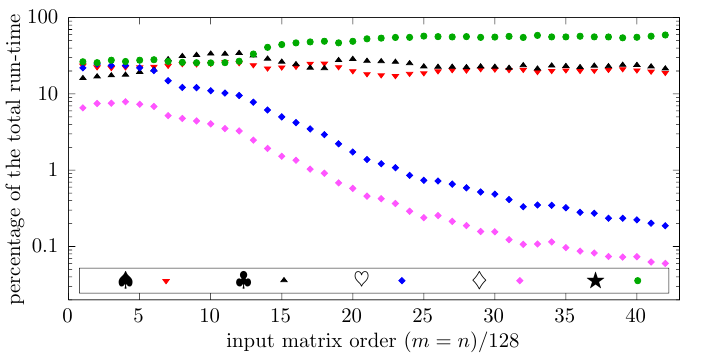}
  \end{center}
  \caption{Breakdown of the run-time of $\mathtt{zvjsvd}$
    (\cref{a:zvjsvd}) with $\mathtt{MM}$ on $\Xi_1^{\mathbb{C}}$ with
    $64$ threads.}
  \label{f:5.4}
\end{figure}

Define the relative error measures
$r_G^{}=\|U\Sigma V^{\ast}-G\|_F^{}/\|G\|_F^{}$,
$r_U^{}=\|U^{\ast}U-I\|_F^2$, $r_V^{}=\|V^{\ast}V-I\|_F^2$, and
$r_{\Sigma}^{}=\max_j^{}|\sigma_j'-\sigma_j^{}|/|\sigma_j^{}|$ for
\texttt{ZGESVJ}, where $\sigma_j'$ and $\sigma_j^{}$ are the $j$th
computed and exact singular value, respectively, and let $r_G'$,
$r_U'$, $r_V'$, and $r_{\Sigma}'$ be the same measures for
$\mathtt{zvjsvd}$.  \Cref{f:5.5} suggests that the singular values are
relatively accurate and the left singular vectors are orthogonal with
$\mathtt{zvjsvd}$ almost as with \texttt{ZGESVJ}, while the relative
SVD residuals are somewhat worse, probably due to a mild
($\max_n^{}{r_V'}<5\cdot 10^{-20}$) loss of orthogonality of the right
singular vectors.  These extra errors in the proposed method might be
caused by transforming $G$ and $V$ too many times (due to more sweeps)
compared to \texttt{ZGESVJ}, by the ``improper'' rotations that near
the end of the process lose orthogonality due to $\cos\varphi=1$ and
$|\tan\varphi|\lessapprox\sqrt{\varepsilon}$.

\begin{figure}[hbtp]
  \begin{center}
    \includegraphics{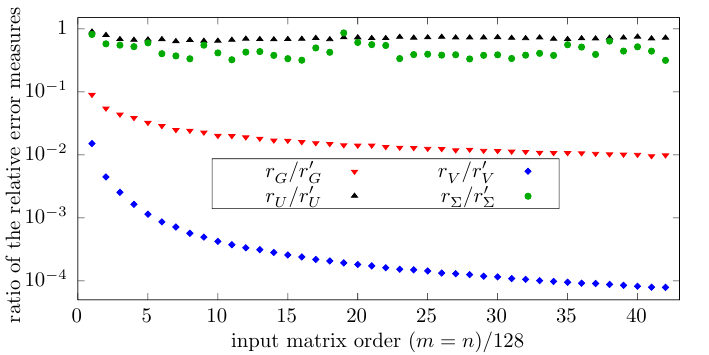}
  \end{center}
  \caption{Ratios of the relative error measures for the SVDs on
    $\Xi_1^{\mathbb{C}}$.}
  \label{f:5.5}
\end{figure}
\subsubsection{Dataset $\Xi_2^{\mathbb{F}}$}\label{sss:5.3.2}
For $\Xi_2^{\mathbb{F}}$, $\mathtt{ME}$ was used with
$\mathtt{zvjsvd}$.  The MKL routines required at least
$\text{\texttt{NSWEEP}}=30$ sweeps with a majority inputs (see
\cref{f:SM7.5}).  To circumvent that, \texttt{z}/\texttt{dgesvj.f}
source files were taken from the LAPACK repository and modified to
\texttt{z}/\texttt{dnssvj.f}, with \texttt{NSWEEP} as a function
argument, instead of being a hard-coded parameter, what could in
general benefit the users of the \texttt{xGESVJ} routines.

Both \texttt{ZGESVJ} and $\mathtt{zvjsvd}$ behaved as expected, with
$r_{\Sigma}^{}<6\cdot 10^{-2}$, $r_{\Sigma}'<4\cdot 10^{-1}$;
$r_G^{}<2\cdot 10^{-14}$, $r_G'<3\cdot 10^{-12}$;
$\max\{r_U^{},r_U'\}<2\cdot 10^{-22}$; while $r_V^{}<6\cdot 10^{-24}$
and $r_V'<9\cdot 10^{-20}$ indicate the same problem as with
$\Xi_1^{\mathbb{C}}$.  The results were similar for
$\Xi_2^{\mathbb{R}}$.
\section{Conclusions and future work}\label{s:6}
The strongest contribution of this paper is the vectorized algorithm
for the batched EVD of Hermitian matrices of order two.  It requires
no branching but only the basic bitwise and arithmetic operations,
with $\mathrm{fma}$, and $\max$ and $\min$ that filter out a single
$\mathtt{NaN}$ argument.  It is not applicable if trapping on
floating-point exceptions is enabled, and should be tuned for
non-default rounding modes, but it is faster and often more accurate
in every aspect than the matching sequence of \texttt{xLAEV2} calls.
Also, the computed scaled eigenvalues cannot overflow.

The proposed SVD method should be several times faster than
\texttt{xGESVJ} on modern CPUs and scale (sublinearly) with the number
of cores.  A fully tuned implementation, as well as ever-increasing
vector lengths on various platforms, should provide significantly
better speedups, while the stored column norms could be updated as
in~\cite{Drmac-97}.  The scaling principles from
\cref{ss:3.2,ss:3.4,ss:4.3} cost little performance-wise, do not
depend on parallelism or the pivot strategy, and thus could be
incorporated into \texttt{xGESVJ}, as well as
$\mathtt{zdpscl}^{[\prime]}$ (\cref{a:zdpscl,a:zdpsclcs}),
$\mathtt{zjrot}$ (\cref{a:zjrot}), and $\mathtt{zgsscl}$
(\cref{a:ZGS}), along with their single precision and/or real
counterparts.
\section*{Acknowledgments}
The author is thankful to Zlatko Drma\v{c} for mentioning a long time
ago that there might be room for low-level optimizations in his
Jacobi-type SVD routines in LAPACK, and to
Sanja\textsuperscript{$\dagger$}, Sa\v{s}a\textsuperscript{$\dagger$},
and Dean Singer, without whose material support this research would
never have been completed.  The author is also grateful for the
anonymous reviewers' comments that improved clarity of the paper, to
Hartwig Anzt, who was supportive in finding a modern testing machine,
and to Intel for a DevCloud account that provided a free remote access
to such machines.
\hrule\appendix
\section{Derivation and accuracy of the formulas from~\cref{ss:2.1}}\label{s:SM1}
Let $A$, $U$, and $\Lambda$ be as in~\cref{e:1}.  In \cref{ss:SM1.1}
the formulas from~\cref{ss:2.1} are derived.  In \cref{ss:SM1.2}
the relative errors induced while computing some of those formulas in
finite precision are given as a part of the proof of \cref{p:s2}.
\subsection{Derivation of the formulas from~\cref{ss:2.1}}\label{ss:SM1.1}
Equating the corresponding elements on both sides of
$U^{\ast}AU=\Lambda$ and assuming $\cos\varphi\ne 0$ it follows
\begin{equation}
  \begin{aligned}
    \lambda_{11}/\cos^2\varphi&=a_{11}+a_{22}\tan^2\varphi+2\Re\left(a_{21}\mathrm{e}^{-\mathrm{i}\alpha}\right)\tan\varphi,\\
    \lambda_{22}/\cos^2\varphi&=a_{11}\tan^2\varphi+a_{22}-2\Re\left(a_{21}\mathrm{e}^{-\mathrm{i}\alpha}\right)\tan\varphi,
  \end{aligned}
  \label{e:1.1}
\end{equation}
for the diagonal elements of $\Lambda$, and
\begin{equation}
  \frac{\lambda_{21}}{\cos^2\varphi}\mathrm{e}^{-\mathrm{i}\alpha}=a_{21}\mathrm{e}^{-\mathrm{i}\alpha}+(a_{22}-a_{11})\tan\varphi-\bar{a}_{21}\mathrm{e}^{\mathrm{i}\alpha}\tan^2\varphi=0,
  \label{e:1.2}
\end{equation}
for one of the remaining off-diagonal zeros, as well as
\begin{equation}
  \frac{\lambda_{12}}{\cos^2\varphi}\mathrm{e}^{\mathrm{i}\alpha}=\bar{a}_{21}\mathrm{e}^{\mathrm{i}\alpha}+(a_{22}-a_{11})\tan\varphi-a_{21}\mathrm{e}^{-\mathrm{i}\alpha}\tan^2\varphi=0
  \label{e:1.3}
\end{equation}
for the other.

From~\cref{e:1.2} the first two, and from~\cref{e:1.3} the last two
equations in
\begin{displaymath}
  \bar{a}_{21}\mathrm{e}^{\mathrm{i}\alpha}\tan^2\varphi-a_{21}\mathrm{e}^{-\mathrm{i}\alpha}=(a_{22}-a_{11})\tan\varphi=a_{21}\mathrm{e}^{-\mathrm{i}\alpha}\tan^2\varphi-\bar{a}_{21}\mathrm{e}^{\mathrm{i}\alpha}
\end{displaymath}
are obtained.  Ignoring the middle equation and regrouping the terms,
it follows
\begin{displaymath}
  a_{21}\mathrm{e}^{-\mathrm{i}\alpha}(1+\tan^2\varphi)=\bar{a}_{21}\mathrm{e}^{\mathrm{i}\alpha}(1+\tan^2\varphi),
\end{displaymath}
and, by canceling $1+\tan^2\varphi<\infty$ on both sides,
$a_{21}\mathrm{e}^{-\mathrm{i}\alpha}=\bar{a}_{21}\mathrm{e}^{\mathrm{i}\alpha}$,
i.e., $z=\bar{z}$, what is possible if and only if $z$ is real.  Therefore,
\begin{equation}
  \alpha=\arg(a_{21}),\qquad\mathrm{e}^{\mathrm{i}\alpha}=a_{21}/|a_{21}|.
  \label{e:1.4}
\end{equation}
If $a_{21}$ is real,
$\mathrm{e}^{\mathrm{i}\alpha}=\mathop{\mathrm{sign}}(a_{21})=\pm 1$.

Using~\cref{e:1.4} and $|\bar{a}_{21}|=|a_{21}|$,
\cref{e:1.1,e:1.2,e:1.3} become
\begin{align*}
  \lambda_{11}/\cos^2\varphi&=a_{11}+a_{22}\tan^2\varphi+2|a_{21}|\tan\varphi,\\
  \lambda_{22}/\cos^2\varphi&=a_{11}\tan^2\varphi+a_{22}-2|a_{21}|\tan\varphi,\\
  \frac{\lambda_{21}}{\cos^2\varphi}\mathrm{e}^{-\mathrm{i}\alpha}&=|a_{21}|+(a_{22}-a_{11})\tan\varphi-|a_{21}|\tan^2\varphi=0,\\
  \frac{\lambda_{12}}{\cos^2\varphi}\mathrm{e}^{\mathrm{i}\alpha}&=|a_{21}|+(a_{22}-a_{11})\tan\varphi-|a_{21}|\tan^2\varphi=0.
\end{align*}
The last two equations above are identical, so from either one it follows
\begin{equation}
  |a_{21}|(1-\tan^2\varphi)=(a_{11}-a_{22})\tan\varphi.
  \label{e:1.5}
\end{equation}

If $a_{21}=0$, and since $a_{11}$ and $a_{22}$ are arbitrary,
from~\cref{e:1.5} follows $\tan\varphi=0$, i.e., $U$ is the identity
matrix.  Else, if $a_{11}=a_{22}$, then $\tan\varphi=\pm 1$
satisfies~\cref{e:1.5}.  In all other cases,
\begin{displaymath}
  \frac{2|a_{21}|}{a_{11}-a_{22}}=\frac{2\tan\varphi}{1-\tan^2\varphi}=\tan(2\varphi),
\end{displaymath}
and, since $|\tan(2\varphi)|<\infty$, i.e., $|\varphi|<\pi/4$,
\begin{displaymath}
  \tan\varphi=\frac{\tan(2\varphi)}{1+\sqrt{1+\tan^2(2\varphi)}}.
\end{displaymath}
\subsubsection{Proof of \cref{l:hypot}}\label{sss:SM1.1.1}
See the main paper for its statement.

\begin{proof}
  Observe that $m$ and $M$ in~\cref{e:hypot} are exact, being normal
  or not, so any underflow in their formation is \emph{harmless}.  A
  possible underflow of $\mathop{\mathrm{fl}}(q)$ is harmless as well.
  If $\mathop{\mathrm{fl}}(q)$ is subnormal or zero,
  $\mathop{\mathrm{fma}}(\mathop{\mathrm{fl}}(q),\mathop{\mathrm{fl}}(q),1)=\mathop{\mathrm{fma}}(q,q,1)$,
  as if $\varepsilon_1=0$ below.

  Let $\mathop{\mathrm{fl}}(q)=q(1+\varepsilon_1)$, where
  $|\varepsilon_1|\le\varepsilon$.  Then,
  \begin{displaymath}
    \mathop{\mathrm{fl}}((\mathop{\mathrm{fl}}(q))^2+1)=\mathop{\mathrm{fma}}(\mathop{\mathrm{fl}}(q),\mathop{\mathrm{fl}}(q),1)=(q^2(1+\varepsilon_1)^2+1)(1+\varepsilon_3)=r(1+\varepsilon_3),
  \end{displaymath}
  where $|\varepsilon_3|\le\varepsilon$.  Expressing $r$ as the
  wanted, exact quantity times a relative error factor,
  $r=(q^2+1)(1+\varepsilon_2)$, and solving this equation for
  $\varepsilon_2$, it follows
  \begin{displaymath}
    \varepsilon_2=\frac{q^2}{q^2+1}\varepsilon_1(2+\varepsilon_1).
  \end{displaymath}
  Since $0\le q\le 1$, the fraction above ranges from $0$ (for $q=0$)
  to $1/2$ (for $q=1$), inclusive, irrespectively of $\varepsilon_1$.
  Therefore, $\varepsilon_2$, as a function of $\varepsilon_1$ when
  $q=1$, attains the extremal values for
  $\varepsilon_1=\mp\varepsilon$,
  \begin{displaymath}
    -\frac{\varepsilon(2-\varepsilon)}{2}\le\frac{1}{2}\varepsilon_1(2+\varepsilon_1)\le\frac{\varepsilon(2+\varepsilon)}{2}.
  \end{displaymath}
  However, if $q=1$ (equivalently, if $m=M$), then $\varepsilon_1=0$
  since the division is exact, and the two inequalities above are in
  fact strict, i.e., neither equality is possible.

  Now,
  $\mathop{\mathrm{fl}}(\sqrt{\mathop{\mathrm{fma}}(\mathop{\mathrm{fl}}(q),\mathop{\mathrm{fl}}(q),1)})=\sqrt{(q^2+1)(1+\varepsilon_2)(1+\varepsilon_3)}(1+\varepsilon_4)$,
  where $|\varepsilon_4|\le\varepsilon$.  The final multiplication by
  $M$ gives, with $|\varepsilon_5|\le\varepsilon$,
  \begin{displaymath}
    \mathop{\mathrm{fl}}(\mathop{\mathrm{hypot}}(x,y))=M\sqrt{q^2+1}\sqrt{1+\varepsilon_2}\sqrt{1+\varepsilon_3}(1+\varepsilon_4)(1+\varepsilon_5)=\delta_2\mathop{\mathrm{hypot}}(x,y),
  \end{displaymath}
  where
  $\delta_2=\sqrt{1+\varepsilon_2}\sqrt{1+\varepsilon_3}(1+\varepsilon_4)(1+\varepsilon_5)$
  is minimized for
  $\varepsilon_3=\varepsilon_4=\varepsilon_5=-\varepsilon$ and
  $\varepsilon_2=-\varepsilon(2-\varepsilon)/2$, and maximized for
  $\varepsilon_3=\varepsilon_4=\varepsilon_5=\varepsilon$ and
  $\varepsilon_2=\varepsilon(2+\varepsilon)/2$.
\end{proof}

\begin{remark}\label{r:uflexact}
  When considering relative floating-point accuracy of a computation,
  any underflow by itself, in isolation, is harmless if the result is
  \emph{exact}, since the relative error is zero.  But it is too
  cumbersome to always state this obvious exception.
\end{remark}
\subsection{Proof of~\cref{p:s2}}\label{ss:SM1.2}
See the main paper for its statement.
\begin{proof}
  If $a_{21}^{}$ is real,
  $\mathop{\mathrm{fl}}(|a_{21}^{}|)=\delta_o^{\mathbb{R}}|a_{21}^{}|$,
  $\delta_o^{\mathbb{R}}=1$, else
  $\mathop{\mathrm{fl}}(|a_{21}^{}|)=\delta_o^{\mathbb{C}}|a_{21}^{}|$,
  where $\delta_o^{\mathbb{C}}=\delta_2^{}$ (see \cref{l:hypot}).
  Thus, in the complex case, from~\cref{e:alpha} it follows
  \begin{displaymath}
    \mathop{\mathrm{fl}}(\cos\alpha)=\cos\alpha\frac{1+\varepsilon_1^{}}{\delta_2^{}}=\delta_{\alpha}'\cos\alpha,\qquad
    \mathop{\mathrm{fl}}(\sin\alpha)=\sin\alpha\frac{1+\varepsilon_2^{}}{\delta_2^{}}=\delta_{\alpha}''\sin\alpha,
  \end{displaymath}
  where $\max\{|\varepsilon_1^{}|,|\varepsilon_2^{}|\}\le\varepsilon$.
  The error factors $\delta_{\alpha}'$ and $\delta_{\alpha}''$ are
  maximized for $\varepsilon_1^{}=\varepsilon_2^{}=\varepsilon$ and
  $\delta_2^{}=\delta_2^-$, and minimized for
  $\varepsilon_1^{}=\varepsilon_2^{}=-\varepsilon$ and
  $\delta_2^{}=\delta_2^+$.  Let
  $\delta_{\alpha}^+=(1+\varepsilon)/\delta_2^-$ and
  $\delta_{\alpha}^-=(1-\varepsilon)/\delta_2^+$.  Since
  $\min\{|x|,|y|\}\le\sqrt{x^2\cos^2\alpha+y^2\sin^2\alpha}\le\max\{|x|,|y|\}$,
  \begin{equation}
    1-4.000000\,\varepsilon\lessapprox\delta_{\alpha}^-<|\mathop{\mathrm{fl}}(\mathrm{e}^{\pm\mathrm{i}\alpha})|<\delta_{\alpha}^+\lessapprox 1+4.000001\,\varepsilon.
    \label{e:alphare}
  \end{equation}
  The approximated multiples of $\varepsilon$ come
  from evaluating $(1-\delta_{\alpha}^-)/\varepsilon$ and
  $(\delta_{\alpha}^+-1)/\varepsilon$, as explained
  for~\cref{e:maxtanre,e:mintanre} below.

  From~\cref{e:tan2},
  $\mathop{\mathrm{fl}}(|a|)=|a_{11}^{}-a_{22}^{}|(1+\varepsilon_3^{})$,
  where $|\varepsilon_3^{}|\le\varepsilon$, and
  $\mathop{\mathrm{fl}}(2|a_{21}^{}|)=2\mathop{\mathrm{fl}}(|a_{21}^{}|)$.
  First, assume that
  $\mathop{\mathrm{fl}}(2|a_{21}^{}|/|a|)\le\mathop{\mathrm{fl}}(\sqrt{\nu})$.
  Then,
  \begin{displaymath}
    \mathop{\mathrm{fl}}(\tan 2\varphi)=\mathop{\mathrm{fl}}(2|a_{21}^{}|/|a|)\mathop{\mathrm{sign}}{a}=\delta_{2\varphi}^{\mathbb{F}}(2|a_{21}^{}|/|a|)\mathop{\mathrm{sign}}{a}=\delta_{2\varphi}^{\mathbb{F}}\tan 2\varphi,
  \end{displaymath}  
  where
  $\delta_{2\varphi}^{\mathbb{C}}=\delta_2^{}/(1+\varepsilon_3^{})$
  and $\delta_{2\varphi}^{\mathbb{R}}=1/(1+\varepsilon_3^{})$.
  Minimizing and maximizing these fractions, similarly as above, it
  follows
  \begin{equation}
    \begin{gathered}
      \delta_{2\varphi}^{\mathbb{C}-}=\frac{\delta_2^-}{1+\varepsilon}<\delta_{2\varphi}^{\mathbb{C}}<\frac{\delta_2^+}{1-\varepsilon}=\delta_{2\varphi}^{\mathbb{C}+},\\
      \delta_{2\varphi}^{\mathbb{R}-}=\frac{1}{1+\varepsilon}\le\delta_{2\varphi}^{\mathbb{R}}\le\frac{1}{1-\varepsilon}=\delta_{2\varphi}^{\mathbb{R}+}.
    \end{gathered}
    \label{e:tan2re}
  \end{equation}

  Else, let
  $\nu\ge\mathop{\mathrm{fl}}(2|a_{21}|/|a|)>\mathop{\mathrm{fl}}(\sqrt{\nu})$
  and assume that $|\mathop{\mathrm{fl}}(\tan 2\varphi)|$ was not
  bounded above by $\mathop{\mathrm{fl}}(\sqrt{\nu})$.  Then
  $|\mathop{\mathrm{fl}}(\tan\varphi)|$ would have been the exact
  unity if the square root in~\cref{e:jactan} was computed as finite,
  e.g., using
  $\mathop{\mathrm{hypot}}(1,\mathop{\mathrm{fl}}(\tan 2\varphi))$ and
  assuming
  \begin{displaymath}
    |x|\ge\mathop{\mathrm{fl}}(\sqrt{\nu})\implies\mathop{\mathrm{hypot}}(1,x)=|x|\
    \wedge\ 1+|x|=|x|
  \end{displaymath}
  for a representable $x$.  Therefore, regardless of the relative
  error in $\mathop{\mathrm{fl}}(\tan 2\varphi)$, the relative error
  in $\mathop{\mathrm{fl}}(\tan\varphi)$ could have only
  \emph{decreased} in magnitude from the one obtained with
  $\mathop{\mathrm{fl}}(\tan 2\varphi)=\mathop{\mathrm{fl}}(\sqrt{\nu})$,
  when also $|\mathop{\mathrm{fl}}(\tan\varphi)|=1$, since
  $\tan\varphi\to\pm 1$ monotonically for $\tan 2\varphi\to\pm\infty$.
  The Jacobi rotation is built from
  $\mathop{\mathrm{fl}}(\tan\varphi)$ (and the functions of $\alpha$
  in the complex case), while $\mathop{\mathrm{fl}}(\tan 2\varphi)$ is
  just an intermediate result and thus the relative error in it is not
  relevant as long as the one in $\mathop{\mathrm{fl}}(\tan\varphi)$
  is kept in check.

  By substituting $\mathop{\mathrm{fl}}(\tan 2\varphi)$ for
  $\tan 2\varphi$ in~\cref{e:jactan} and using the fused multiply-add
  for the argument of the square root, with
  $\max\{|\varepsilon_4|,|\varepsilon_5|,|\varepsilon_6|,|\varepsilon_7|\}\le\varepsilon$
  it follows that
  \begin{equation}
    \mathop{\mathrm{fl}}(\tan\varphi)=\frac{\delta_{2\varphi}^{\mathbb{F}}\tan 2\varphi(1+\varepsilon_7^{})}{(1+\sqrt{((\delta_{2\varphi}^{\mathbb{F}}\tan 2\varphi)^2+1)(1+\varepsilon_4^{})}(1+\varepsilon_5^{}))(1+\varepsilon_6^{})}=\delta_{\varphi}^{\mathbb{F}}\tan\varphi,
    \label{e:fltan}
  \end{equation}
  where $\varepsilon_4$, $\varepsilon_5$, $\varepsilon_6$, and
  $\varepsilon_7$ stand for the relative rounding errors of the
  $\mathrm{fma}$, the square root, the addition of one, and the
  division, respectively, and $\delta_{\varphi}^{\mathbb{F}}$ remains
  to be bounded.

  Let $y=\tan 2\varphi$.
  From~\cref{e:fltan}, by solving the equation
  \begin{displaymath}
    (\delta_{2\varphi}^{\mathbb{F}})^2 y^2+1=(y^2+1)(1+x)
  \end{displaymath}
  for $x$, it is possible to express the relative error present in the
  intermediate result of the $\mathrm{fma}$ before its rounding, as a
  function of $\varphi$ (due to $y$) and $\varepsilon$ (due to
  $(\delta_{2\varphi}^{\mathbb{F}})^2$),
  \begin{equation}
    x=\frac{y^2}{y^2+1}\varepsilon_{2\varphi}^{\mathbb{F}},\qquad
    \varepsilon_{2\varphi}^{\mathbb{F}}=(\delta_{2\varphi}^{\mathbb{F}})^2-1,\qquad
    0\le|x|<|\varepsilon_{2\varphi}^{\mathbb{F}}|,
    \label{e:x}
  \end{equation}
  since $0\le y^2/(y^2+1)<1$ for all $y$.  From~\cref{e:tan2re}
  $\varepsilon_{2\varphi}^{\mathbb{F}}$, and thus $x$, can be
  bounded as
  \begin{equation}
    (\delta_{2\varphi}^{\mathbb{F}-})^2-1=\varepsilon_{2\varphi}^{\mathbb{F}-}\le\varepsilon_{2\varphi}^{\mathbb{F}}\le\varepsilon_{2\varphi}^{\mathbb{F}+}=(\delta_{2\varphi}^{\mathbb{F}+})^2-1.
    \label{e:eps2phi}
  \end{equation}

  By rewriting the fma operation as above,~\cref{e:fltan} can be
  expressed as
  \begin{displaymath}
    \mathop{\mathrm{fl}}(\tan\varphi)=\frac{\tan 2\varphi}{1+\sqrt{\tan^2 2\varphi+1}\sqrt{1+x}\sqrt{1+\varepsilon_4^{}}(1+\varepsilon_5^{})}\frac{\delta_{2\varphi}^{\mathbb{F}}(1+\varepsilon_7^{})}{1+\varepsilon_6^{}},
  \end{displaymath}
  or, letting $r=\sqrt{\tan^2 2\varphi+1}$,
  $b=\sqrt{1+x}\sqrt{1+\varepsilon_4^{}}(1+\varepsilon_5^{})$, and
  $b'=\delta_{2\varphi}^{\mathbb{F}}(1+\varepsilon_7^{})/(1+\varepsilon_6^{})$,
  \begin{displaymath}
    \mathop{\mathrm{fl}}(\tan\varphi)=\frac{\tan 2\varphi}{1+br}b'=\frac{\tan 2\varphi}{(1+r)(1+d)}b'=\tan\varphi\frac{b'}{1+d}=\delta_{\varphi}^{\mathbb{F}}\tan\varphi,\qquad
    r\ge 1.
  \end{displaymath}
  Similarly as before, the equation $1+br=(1+r)(1+d)$ has to be solved
  for $d$ to factor out the relative error (i.e., $d$) from the
  remaining exact value (i.e., $1+r$).  Then,
  \begin{equation}
    d=\frac{r}{r+1}(b-1),\quad
    \frac{|b-1|}{2}\le|d|<|b-1|,\quad
    \delta_{\varphi}^{\mathbb{F}}=\frac{\delta_{2\varphi}^{\mathbb{F}}(1+\varepsilon_7^{})}{(1+d)(1+\varepsilon_6^{})}.
    \label{e:tanre}
  \end{equation}

  Maximizing
  $|\delta_{\varphi}^{\mathbb{F}}|=\delta_{\varphi}^{\mathbb{F}}$ is
  equivalent to maximizing
  $|\delta_{2\varphi}^{\mathbb{F}}|=\delta_{2\varphi}^{\mathbb{F}}$ as
  $\delta_{2\varphi}^{\mathbb{F}+}$ from~\cref{e:tan2re} and
  minimizing $d$, while setting $\varepsilon_6=-\varepsilon$ and
  $\varepsilon_7=\varepsilon$.  Minimizing $d$ is equivalent to
  letting $r\to\infty$ in~\cref{e:tanre} and minimizing $b$, what
  amounts to setting $\varepsilon_4^{}=\varepsilon_5^{}=-\varepsilon$
  and minimizing $x$ by letting $y^2\to\infty$ in~\cref{e:x} and
  taking $\varepsilon_{2\varphi}^{\mathbb{F}-}$ from~\cref{e:eps2phi}
  as the limiting value.  Therefore, the maximal value of
  $\delta_{\varphi}^{\mathbb{F}}$ is bounded above as
  \begin{equation}
    \max\delta_{\varphi}^{\mathbb{F}}<\hat{\delta}_{\varphi}^{\mathbb{F}}=\frac{\delta_{2\varphi}^{\mathbb{F}+}(1+\varepsilon)}{\sqrt{1+\varepsilon_{2\varphi}^{\mathbb{F}-}}(1-\varepsilon)^{5/2}}\lessapprox
    \begin{cases}
      1+\hphantom{0}5.500001\,\varepsilon,&\mathbb{F}=\mathbb{R},\\
      1+11.500004\,\varepsilon,&\mathbb{F}=\mathbb{C}.
    \end{cases}
    \label{e:maxtanre}
  \end{equation}
  First, $(\hat{\delta}_{\varphi}^{\mathbb{F}}-1)/\varepsilon$, i.e.,
  the factors multiplying $\varepsilon$ above, were expressed as
  functions of $\varepsilon$ in the scripts from \cref{ss:SM1.3}.  The
  factors were symbolically computed for
  $\varepsilon\in\{2^{-11},2^{-24},2^{-53},2^{-113}\}$, evaluated with
  $50$ digits of precision, manually rounded upwards to six
  decimal places, and the maximums over $p\in\{23,52,112\}$ were
  taken\footnote{Even though half precision ($p=10$) is not otherwise
  considered in the context of this proof, it is worth noting that the
  factors in that case differ from the presented ones by less than
  $0.1$.}.

  Minimizing $\delta_{\varphi}^{\mathbb{F}}$ is equivalent to
  minimizing $\delta_{2\varphi}^{\mathbb{F}}$ as
  $\delta_{2\varphi}^{\mathbb{F}-}$ from~\cref{e:tan2re} and
  maximizing $d$, while setting $\varepsilon_6=\varepsilon$ and
  $\varepsilon_7=-\varepsilon$.  Maximizing $d$ is equivalent to
  letting $r\to\infty$ in~\cref{e:tanre} and maximizing $b$, what
  amounts to setting $\varepsilon_4^{}=\varepsilon_5^{}=\varepsilon$
  and maximizing $x$ by letting $y^2\to\infty$ in~\cref{e:x} and
  taking $\varepsilon_{2\varphi}^{\mathbb{F}+}$ from~\cref{e:eps2phi}
  as the limiting value.  Therefore, the minimal value of
  $\delta_{\varphi}^{\mathbb{F}}$ is bounded below (the factors
  multiplying $\varepsilon$ come from
  $(1-\check{\delta}_{\varphi}^{\mathbb{F}})/\varepsilon$), as
  \begin{equation}
    \min\delta_{\varphi}^{\mathbb{F}}>\check{\delta}_{\varphi}^{\mathbb{F}}=\frac{\delta_{2\varphi}^{\mathbb{F}-}(1-\varepsilon)}{\sqrt{1+\varepsilon_{2\varphi}^{\mathbb{F}+}}(1+\varepsilon)^{5/2}}\gtrapprox
    \begin{cases}
      1-\hphantom{0}5.500000\,\varepsilon,&\quad\mathbb{F}=\mathbb{R},\\
      1-11.500000\,\varepsilon,&\quad\mathbb{F}=\mathbb{C}.
    \end{cases}
    \label{e:mintanre}
  \end{equation}

  Due to monotonicity of all arithmetic operations involved in
  computing $\mathop{\mathrm{fl}}(\tan\varphi)$, its absolute value
  cannot exceed unity, regardless of $\delta_{\varphi}^{\mathbb{F}}$.
  Therefore, the magnitude of (either component of)
  $\mathrm{e}'=\mathop{\mathrm{fl}}(\mathrm{e}^{\mathrm{i}\alpha})\mathop{\mathrm{fl}}(\tan\varphi)$
  cannot increase from that of (the corresponding component of)
  $\mathop{\mathrm{fl}}(\mathrm{e}^{\mathrm{i}\alpha})$ after its
  multiplication by $\mathop{\mathrm{fl}}(\tan\varphi)$.

  From~\cref{e:jactan,e:jacsec,e:d1} it follows
  \begin{equation}
      \mathop{\mathrm{fl}}(\cos\varphi)=\frac{\delta_1^{}}{\sqrt{((\delta_{\varphi}^{\mathbb{F}})^2\tan^2\varphi+1)(1+\varepsilon_8^{})}}.
    \label{e:flcos}
  \end{equation}
  As done previously, let $t=\tan\varphi$ and solve
  $(\delta_{\varphi}^{\mathbb{F}})^2t^2+1=(t^2+1)(1+c)$ for $c$, to
  get
  \begin{equation}
    c=\frac{t^2}{t^2+1}\varepsilon_{\varphi}^{\mathbb{F}},\qquad
    \varepsilon_{\varphi}^{\mathbb{F}}=(\delta_{\varphi}^{\mathbb{F}})^2-1,\qquad
    0\le|c|<|\varepsilon_{\varphi}^{\mathbb{F}}|,
    \label{e:c}
  \end{equation}
  and re-express~\cref{e:flcos}, with $|\varepsilon_8|\le\varepsilon$
  coming from the $\mathrm{fma}$'s rounding, as
  \begin{equation}
    \mathop{\mathrm{fl}}(\cos\varphi)=\frac{1}{\sqrt{\tan^2\varphi+1}}\frac{\delta_1^{}}{\sqrt{1+c}\sqrt{1+\varepsilon_8^{}}}=\delta_c^{\mathbb{F}}\cos\varphi.
    \label{e:cosre}
  \end{equation}
  Maximizing $|\delta_c^{\mathbb{F}}|=\delta_c^{\mathbb{F}}$ amounts
  to setting $\varepsilon_8^{}=-\varepsilon$ and
  $\delta_1^{}=\delta_1^+$, while minimizing $c$ by letting
  $t^2\to\infty$ in~\cref{e:c} and taking
  $\check{\varepsilon}_{\varphi}^{\mathbb{F}}=(\check{\delta}_{\varphi}^{\mathbb{F}})^2-1$
  as the limiting value.  Therefore,
  \begin{equation}
    \max\delta_c^{\mathbb{F}}<\hat{\delta}_c^{\mathbb{F}}=\frac{\delta_1^+}{\sqrt{1+\check{\varepsilon}_{\varphi}^{\mathbb{F}}}\sqrt{1-\varepsilon}}\lessapprox
    \begin{cases}
      1+\hphantom{0}8.000002\,\varepsilon,&\mathbb{F}=\mathbb{R},\\
      1+14.000006\,\varepsilon,&\mathbb{F}=\mathbb{C}.
    \end{cases}
    \label{e:maxcosre}
  \end{equation}
  Minimizing $\delta_c^{\mathbb{F}}$ amounts to setting
  $\varepsilon_8^{}=\varepsilon$ and $\delta_1^{}=\delta_1^-$, while
  maximizing $c$ by letting $t^2\to\infty$ in~\cref{e:c} and taking
  $\hat{\varepsilon}_{\varphi}^{\mathbb{F}}=(\hat{\delta}_{\varphi}^{\mathbb{F}})^2-1$
  as the limiting value.  Therefore,
  \begin{equation}
    \min\delta_c^{\mathbb{F}}>\check{\delta}_c^{\mathbb{F}}=\frac{\delta_1^-}{\sqrt{1+\hat{\varepsilon}_{\varphi}^{\mathbb{F}}}\sqrt{1+\varepsilon}}\gtrapprox
    \begin{cases}
      1-\hphantom{0}8.000000\,\varepsilon,&\mathbb{F}=\mathbb{R},\\
      1-14.000000\,\varepsilon,&\mathbb{F}=\mathbb{C}.
    \end{cases}
    \label{e:mincosre}
  \end{equation}
  The floating-point arithmetic operations involved in computing
  $\mathop{\mathrm{fl}}(\cos\varphi)$ are monotonic, so
  $0\le\mathop{\mathrm{fl}}(\cos\varphi)\le 1$ regardless of
  $\delta_c^{\mathbb{F}}$.

  Computation of the eigenvalues proceeds, with $o=2|a_{21}^{}|$ and
  $\tilde{o}=\delta_o^{\mathbb{F}}o$, as
  \begin{equation}
    \begin{aligned}
      \mathop{\mathrm{fl}}(\lambda_1')&=\mathop{\mathrm{fma}}(\mathop{\mathrm{fma}}(a_{22}^{},\tilde{t},\hphantom{-}\tilde{o}),\tilde{t},a_{11}^{}),\\
      \mathop{\mathrm{fl}}(\lambda_2')&=\mathop{\mathrm{fma}}(\mathop{\mathrm{fma}}(a_{11}^{},\tilde{t},-\tilde{o}),\tilde{t},a_{22}^{}),
    \end{aligned}
    \label{e:fllam}
  \end{equation}
  where $\tilde{t}=\mathop{\mathrm{fl}}(\tan\varphi)$ and
  $|\tilde{t}|\le 1$.  Then, with
  $\max\{|\varepsilon_{10}^{}|,|\varepsilon_{11}^{}|,|\varepsilon_{12}^{}|,|\varepsilon_{13}^{}|\}\le\varepsilon$,
  \begin{displaymath}
    \begin{aligned}
      \mathop{\mathrm{fl}}(\lambda_1')&=((a_{22}^{}\tilde{t}+\tilde{o})(1+\varepsilon_{10}^{})\tilde{t}+a_{11}^{})(1+\varepsilon_{12}^{}),\\
      \mathop{\mathrm{fl}}(\lambda_2')&=((a_{11}^{}\tilde{t}-\tilde{o})(1+\varepsilon_{11}^{})\tilde{t}+a_{22}^{})(1+\varepsilon_{13}^{}),
    \end{aligned}
  \end{displaymath}
  what gives, after taking the absolute values, applying the triangle
  inequality, and using~\cref{e:scla} to bound $|a_{ij}^{}|$ from
  above, with $\hat{\delta}_o^{\mathbb{R}}=1$ and
  $\hat{\delta}_o^{\mathbb{C}}=\delta_2^+$,
  \begin{equation}
    |\mathop{\mathrm{fl}}(\lambda')|\le((\hat{a}+2\hat{a}\hat{\delta}_o^{\mathbb{F}})(1+\varepsilon)+\hat{a})(1+\varepsilon)=\hat{a}((1+2\hat{\delta}_o^{\mathbb{F}})(1+\varepsilon)+1)(1+\varepsilon),
    \label{e:fll'}
  \end{equation}
  where
  $|\mathop{\mathrm{fl}}(\lambda')|=\max\{|\mathop{\mathrm{fl}}(\lambda_1')|,|\mathop{\mathrm{fl}}(\lambda_2')|\}$.
  Bounding $\hat{a}$ by $\tilde{\nu}$ as in~\cref{e:scla} gives
  \begin{displaymath}
    |\mathop{\mathrm{fl}}(\lambda')|\le\tilde{\nu}((1+2\hat{\delta}_o^{\mathbb{F}})(1+\varepsilon)+1)(1+\varepsilon)=\tilde{\nu}\delta_{\mathbb{F}}'.
  \end{displaymath}
  Evaluating the scripts from \cref{ss:SM1.3} for all $\varepsilon$
  considered above shows that
  $4<\delta_{\mathbb{F}}'\ll 4\sqrt{2}\gtrapprox 5.656854$, so
  $|\mathop{\mathrm{fl}}(\lambda')|<\nu$.

  Since $\mathop{\mathrm{fl}}(\sec^2\varphi)\ge 1$, dividing
  $\mathop{\mathrm{fl}}(\lambda_1')$ and
  $\mathop{\mathrm{fl}}(\lambda_2')$ by it therefore cannot raise
  their magnitudes, and the final computed eigenvalues cannot
  overflow---thus far, with the assumption that no final result of any
  previous computation has underflowed.

  Regardless of the consequences of any underflow leading to
  $\tilde{t}$, $|\tilde{t}|\le 1$ in~\cref{e:fllam} due
  to~\cref{e:jactan}, and $|\mathop{\mathrm{fl}}(\lambda')|$ can be
  bounded above, with
  $a_2^1=\max\{|a_{11}^{}|,|a_{22}^{}|\}\le\tilde{\nu}$, by
  \begin{equation}
    ((a_2^1+\tilde{o})(1+\varepsilon)+a_2^1)(1+\varepsilon),
    \label{e:sml}
  \end{equation}
  similarly as in~\cref{e:fll'}, letting in both inequalities
  $\tilde{t}=1$.  If $a_2^1$ and $\tilde{o}$ are small, no overflow
  occurs.  If $a_2^1$ is large enough, a small enough $\tilde{o}$, no
  matter if accurate or not, cannot affect it by addition or
  subtraction in the default rounding mode, so~\cref{e:sml} becomes
  $a_2^1(2+\varepsilon)(1+\varepsilon)$.  Vice versa, a small enough
  $a_2^1$ cannot affect a large enough $\tilde{o}$, so~\cref{e:sml}
  simplifies to $\tilde{o}(1+\varepsilon)^2$.  No overflow is possible
  in either case.
\end{proof}
\subsection{The Wolfram Language scripts used in \cref{ss:SM1.2}}\label{ss:SM1.3}
These scripts were executed by the Wolfram Language Engine, version
12.3.1 for macOS.
\subsubsection{A script computing the relative error bounds for a real $A$}\label{sss:SM1.3.1}
In \cref{f:SM1.1}, \texttt{n} is the number of digits of precision for
\texttt{N[\ldots]}, and
\begin{displaymath}
  \text{\texttt{fem}}=(1-\check{\delta}_{\varphi}^{\mathbb{R}})/\varepsilon,\ \ \
  \text{\texttt{fep}}=(\hat{\delta}_{\varphi}^{\mathbb{R}}-1)/\varepsilon,\ \ \
  \text{\texttt{cem}}=(1-\check{\delta}_c^{\mathbb{R}})/\varepsilon,\ \ \
  \text{\texttt{cep}}=(\hat{\delta}_c^{\mathbb{R}}-1)/\varepsilon,\ \ \
  \text{\texttt{pel}}=\delta_{\mathbb{R}}'.
\end{displaymath}

\begin{figure}[hbtp]
\begin{verbatim}
#!/usr/bin/env wolframscript -print all
If[Length[$ScriptCommandLine]<3,Quit[]];
p=ToExpression[$ScriptCommandLine[[2]]];
n=ToExpression[$ScriptCommandLine[[3]]];
p1=-p-1; (* change p1 to -p if rounding is not to the nearest *)
p2=2^p1;
d1m[e_]:=(1-e)/(1+e); (* \delta_1^- *)
d1p[e_]:=(1+e)/(1-e); (* \delta_1^+ *)
dam[e_]:=1;
dap[e_]:=1;
ddm[e_]:=1/(1+e);
ddp[e_]:=1/(1-e);
edm[e_]:=(ddm[e])^2-1;
edp[e_]:=(ddp[e])^2-1;
dfm[e_]:=(ddm[e]*(1-e))/(Sqrt[1+edp[e]]*((1+e)^(5/2)));
dfp[e_]:=(ddp[e]*(1+e))/(Sqrt[1+edm[e]]*((1-e)^(5/2)));
fem[e_]:=(1-dfm[e])/e;
fep[e_]:=(dfp[e]-1)/e;
"fem="<>ToString[N[FullSimplify[fem[p2]],n]]
"fep="<>ToString[N[FullSimplify[fep[p2]],n]]
efm[e_]:=dfm[e]^2-1;
efp[e_]:=dfp[e]^2-1;
dcm[e_]:=d1m[e]/(Sqrt[1+efp[e]]*Sqrt[1+e]);
dcp[e_]:=d1p[e]/(Sqrt[1+efm[e]]*Sqrt[1-e]);
cem[e_]:=(1-dcm[e])/e;
cep[e_]:=(dcp[e]-1)/e;
"cem="<>ToString[N[FullSimplify[cem[p2]],n]]
"cep="<>ToString[N[FullSimplify[cep[p2]],n]]
pel[e_]:=(3*(1+e)+1)*(1+e);
"pel="<>ToString[N[FullSimplify[pel[p2]],n]]
\end{verbatim}
\caption{A script computing the relative error bounds for a real $A$ in \cref{p:s2}.}
\label{f:SM1.1}
\end{figure}
\subsubsection{A script computing the relative error bounds for a complex $A$}\label{sss:SM1.3.2}
In \cref{f:SM1.2}, \texttt{n} is the number of digits of precision for
\texttt{N[\ldots]}, and
\begin{displaymath}
  \begin{gathered}
    \text{\texttt{fam}}=(1-\delta_{\alpha}^-)/\varepsilon,\quad\text{\texttt{fap}}=(\delta_{\alpha}^+-1)/\varepsilon,\\
    \text{\texttt{fem}}=(1-\check{\delta}_{\varphi}^{\mathbb{C}})/\varepsilon,\
    \text{\texttt{fep}}=(\hat{\delta}_{\varphi}^{\mathbb{C}}-1)/\varepsilon,\
    \text{\texttt{cem}}=(1-\check{\delta}_c^{\mathbb{C}})/\varepsilon,\
    \text{\texttt{cep}}=(\hat{\delta}_c^{\mathbb{C}}-1)/\varepsilon,\
    \text{\texttt{pel}}=\delta_{\mathbb{C}}'.
  \end{gathered}
\end{displaymath}

\begin{figure}[hbtp]
\begin{verbatim}
#!/usr/bin/env wolframscript -print all
If[Length[$ScriptCommandLine]<3,Quit[]];
p=ToExpression[$ScriptCommandLine[[2]]];
n=ToExpression[$ScriptCommandLine[[3]]];
p1=-p-1; (* change p1 to -p if rounding is not to the nearest *)
p2=2^p1;
d1m[e_]:=(1-e)/(1+e); (* \delta_1^- *)
d1p[e_]:=(1+e)/(1-e); (* \delta_1^+ *)
d2m[e_]:=((1-e)^(5/2))*Sqrt[1-(e*(2-e))/2]; (* \delta_2^- *)
d2p[e_]:=((1+e)^(5/2))*Sqrt[1+(e*(2+e))/2]; (* \delta_2^+ *)
dam[e_]:=(1-e)/d2p[e];
dap[e_]:=(1+e)/d2m[e];
fam[e_]:=(1-dam[e])/e;
fap[e_]:=(dap[e]-1)/e;
"fam="<>ToString[N[FullSimplify[fam[p2]],n]]
"fap="<>ToString[N[FullSimplify[fap[p2]],n]]
ddm[e_]:=d2m[e]/(1+e);
ddp[e_]:=d2p[e]/(1-e);
edm[e_]:=(ddm[e])^2-1;
edp[e_]:=(ddp[e])^2-1;
dfm[e_]:=(ddm[e]*(1-e))/(Sqrt[1+edp[e]]*((1+e)^(5/2)));
dfp[e_]:=(ddp[e]*(1+e))/(Sqrt[1+edm[e]]*((1-e)^(5/2)));
fem[e_]:=(1-dfm[e])/e;
fep[e_]:=(dfp[e]-1)/e;
"fem="<>ToString[N[FullSimplify[fem[p2]],n]]
"fep="<>ToString[N[FullSimplify[fep[p2]],n]]
efm[e_]:=dfm[e]^2-1;
efp[e_]:=dfp[e]^2-1;
dcm[e_]:=d1m[e]/(Sqrt[1+efp[e]]*Sqrt[1+e]);
dcp[e_]:=d1p[e]/(Sqrt[1+efm[e]]*Sqrt[1-e]);
cem[e_]:=(1-dcm[e])/e;
cep[e_]:=(dcp[e]-1)/e;
"cem="<>ToString[N[FullSimplify[cem[p2]],n]]
"cep="<>ToString[N[FullSimplify[cep[p2]],n]]
pel[e_]:=((1+2*d2p[e])*(1+e)+1)*(1+e);
"pel="<>ToString[N[FullSimplify[pel[p2]],n]]
(* for Lemma 3.2 only *)
eps[e_]:=FullSimplify[e*(2+dap[e])+(e^2)*(1+dap[e])];
"eps="<>ToString[N[FullSimplify[eps[p2]/p2],n]]
epp[e_]:=FullSimplify[Sqrt[2]*(eps[e]*(1+e)+e)];
"epp="<>ToString[N[FullSimplify[epp[p2]/p2],n]]
\end{verbatim}
\caption{A script computing the relative error bounds for a complex $A$ in \cref{p:s2}.}
\label{f:SM1.2}
\end{figure}
\section{Several vectorized routines mentioned in the main paper}\label{s:SM2}
\subsection{Vectorized eigendecomposition of a batch of real symmetric matrices of order two}\label{ss:SM2.1}
\Cref{a:d8jac2,a:dbjac2} are the real counterparts of
\cref{a:z8jac2,a:zbjac2}.  The complex algorithms work also with real
symmetric matrices on input as a special case, but the real ones are
faster.  The real algorithms return
$\pm\tan\varphi=\mathrm{e}^{\mathrm{i}\alpha}\tan\varphi=\mathop{\mathrm{sign}}a_{21}\tan\varphi$,
as the complex ones do with a real input.

\begin{algorithm}[hbtp]
  \caption{$\mathtt{d8jac2}$: a vectorized eigendecomposition of at most $\mathtt{s}$ double precision real symmetric matrices of order two with the Intel's AVX-512 intrinsics.}
  \label{a:d8jac2}
  \begin{algorithmic}[1]
    \REQUIRE{$\mathtt{i}$; addresses of $\tilde{\mathbf{a}}_{11},\tilde{\mathbf{a}}_{22},\tilde{\mathbf{a}}_{21},\pm\tan\tilde{\bm{\varphi}},\cos\tilde{\bm{\varphi}},\tilde{\bm{\lambda}}_1,\tilde{\bm{\lambda}}_2,\tilde{\mathtt{p}}$}
    \ENSURE{$\pm\mathop{\mathsf{tan}}\varphi$, $\mathop{\mathsf{cos}}\varphi$; $\bm{\lambda}_1$, $\bm{\lambda}_2$; $\mathfrak{p}$}
    \COMMENT{a permutation-indicating bitmask\\[4pt]}
    \COMMENT{vectors with all lanes set to a compile-time constant\hfill}
    \STATE{$\mathsf{0}=\mathop{\mathtt{setzero}}();\quad\mathsf{1}=\mathop{\mathtt{set1}}(1.0);\quad-\mathsf{0}=\mathop{\mathtt{set1}}(-0.0);\quad\bm{\nu}=\mathop{\mathtt{set1}}(\mathtt{DBL\_MAX});$}
    \STATE{$\sqrt{\bm{\nu}}=\mathop{\mathtt{set1}}(\text{\texttt{1.34078079299425956E+154}});\quad\bm{\eta}=\mathop{\mathtt{set1}}(1020.0);$}
    \COMMENT{$\mathop{\mathrm{fl}}(\sqrt{\nu})$:~\cref{e:tan2}\\[3pt]}
    \COMMENT{aligned loads of the $\mathtt{i}$th input vectors\hfill}
    \STATE{$\mathsf{a}_{11}=\mathop{\mathtt{load}}(\tilde{\mathbf{a}}_{11}+\mathtt{i});\quad\mathsf{a}_{22}=\mathop{\mathtt{load}}(\tilde{\mathbf{a}}_{22}+\mathtt{i});\quad\mathsf{a}_{21}=\mathop{\mathtt{load}}(\tilde{\mathbf{a}}_{21}+\mathtt{i});$}
    \COMMENT{$\Im{a_{21}}=0$\\[3pt]}
    \COMMENT{the scaling exponents $\bm{\zeta}$;\ \ \ lane-wise $\mathop{\mathrm{getexp}}(x)=\left\lfloor\lg|x|\right\rfloor,\left\lfloor\lg 0\right\rfloor=-\infty$\hfill}
    \STATE{$\bm{\zeta}_{11}\mskip-2mu=\mskip-2mu\mathop{\mathtt{sub}}(\bm{\eta},\mathop{\mathtt{getexp}}(\mathsf{a}_{11}));\mskip3.5mu\bm{\zeta}_{22}\mskip-2mu=\mskip-2mu\mathop{\mathtt{sub}}(\bm{\eta},\mathop{\mathtt{getexp}}(\mathsf{a}_{22}));\mskip3.5mu\bm{\zeta}_{21}\mskip-2mu=\mskip-2mu\mathop{\mathtt{sub}}(\bm{\eta},\mathop{\mathtt{getexp}}(\mathsf{a}_{21}));$}
    \STATE{$\bm{\zeta}=\mathop{\mathtt{min}}(\mathop{\mathtt{min}}(\bm{\zeta}_{11}^{},\bm{\zeta}_{22}^{}),\mathop{\mathtt{min}}(\bm{\zeta}_{21},\bm{\nu}));$}
    \COMMENT{finalize $\zeta$ from~\cref{e:z}}
    \STATE{$-\bm{\zeta}=\mathop{\mathtt{xor}}(\bm{\zeta},-\mathsf{0});$}
    \COMMENT{$\mathop{\mathtt{xor}}(\mathsf{x},-\mathsf{0})$ flips the sign bits in $\mathsf{x}$; optionally, store $-\bm{\zeta}$\\[3pt]}
    \COMMENT{the scaling of $\mathsf{A}\to\mathsf{2}^{\bm{\zeta}}\mathsf{A}$\hfill}
    \STATE{$\mathsf{a}_{21}=\mathop{\mathtt{scalef}}(\mathsf{a}_{21},\bm{\zeta});\ \ \ \mathsf{a}_{11}=\mathop{\mathtt{scalef}}(\mathsf{a}_{11},\bm{\zeta});\ \ \ \mathsf{a}_{22}=\mathop{\mathtt{scalef}}(\mathsf{a}_{22},\bm{\zeta});$}
    \COMMENT{$\mathsf{a}_{ij}=\mathsf{2}^{\bm{\zeta}}\mathsf{a}_{ij}$\\[3pt]}
    \COMMENT{the ``polar form'' of $\mathsf{a}_{21}$ ($\mathrm{e}^{\mathrm{i}\alpha}=\mathop{\mathrm{sign}}a_{21}$)\hfill}
    \STATE{$|\mathsf{a}_{21}|=\mathop{\mathtt{andnot}}(-\mathsf{0},\mathsf{a}_{21});$}
    \COMMENT{$\mathop{\mathtt{andnot}}(-\mathsf{0},\mathsf{x})=\mathsf{x}\wedge\neg{-\mathsf{0}}$ clears the sign bits}
    \STATE{$\mathop{\mathsf{sgn}}(\mathsf{a}_{21})=\mathop{\mathtt{and}}(\mathsf{a}_{21},-\mathsf{0});$}
    \COMMENT{$\mathop{\mathtt{and}}(\mathsf{x},-\mathsf{0})$ extracts the sign bits\\[3pt]}
    \COMMENT{$\mathop{\mathsf{cos}}\varphi$ and $\pm\mathop{\mathsf{tan}}\varphi$  (or $\mathop{\mathsf{sin}}\varphi=\mathop{\mathtt{div}}(\pm\mathop{\mathsf{tan}}\varphi,\mathop{\mathsf{sec}}\varphi)$)\hfill}
    \STATE{$\mathsf{o}\mskip-1mu=\mskip-1mu\mathop{\mathtt{scalef}}(|\mathsf{a}_{21}|,\mathsf{1});\ \mathsf{a}\mskip-1mu=\mskip-1mu\mathop{\mathtt{sub}}(\mathsf{a}_{11},\mathsf{a}_{22});\ |\mathsf{a}|\mskip-1mu=\mskip-1mu\mathop{\mathtt{andnot}}(-\mathsf{0},\mathsf{a});\ \mathop{\mathsf{sgn}}(\mathsf{a})\mskip-1mu=\mskip-1mu\mathop{\mathtt{and}}(\mathsf{a},-\mathsf{0});$}
    \STATE{$\mathop{\mathsf{tan}}2\varphi=\mathop{\mathtt{or}}(\mathop{\mathtt{min}}(\mathop{\mathtt{max}}(\mathop{\mathtt{div}}(\mathsf{o},|\mathsf{a}|),\mathsf{0}),\sqrt{\bm{\nu}}),\mathop{\mathsf{sgn}}(\mathsf{a}));$}
    \COMMENT{\cref{e:tan2}, here and above}
    \STATE{$\mathop{\mathsf{sec}^{\mathsf{2}}}2\varphi=\mathop{\mathtt{fmadd}}(\mathop{\mathsf{tan}}2\varphi,\mathop{\mathsf{tan}}2\varphi,\mathsf{1});$}
    \COMMENT{$\sec^2{2\varphi}<\infty$}
    \STATE{$\mathop{\mathsf{tan}}\varphi=\mathop{\mathtt{div}}(\mathop{\mathsf{tan}}2\varphi,\mathop{\mathtt{add}}(\mathsf{1},\mathop{\mathtt{sqrt}}(\mathop{\mathsf{sec}^{\mathsf{2}}}2\varphi)));$}
    \COMMENT{\cref{e:jactan}; $\tan\varphi$ \emph{without} $\mathrm{e}^{\mathrm{i}\alpha}$}
    \STATE{$\mathop{\mathsf{sec}^\mathsf{2}}\varphi=\mathop{\mathtt{fmadd}}(\mathop{\mathsf{tan}}\varphi,\mathop{\mathsf{tan}}\varphi,1);$}
    \COMMENT{\cref{e:jacsec}}
    \STATE{$\mathop{\mathsf{sec}}\varphi=\mathop{\mathtt{sqrt}}(\mathop{\mathsf{sec}^{\mathsf{2}}}\varphi);\quad\mathop{\mathsf{cos}}\varphi=\mathop{\mathtt{div}}(\mathsf{1},\mathop{\mathsf{sec}}\varphi);$}
    \COMMENT{\cref{e:jacsec}}
    \STATE{$\pm\mathop{\mathsf{tan}}\varphi=\mathop{\mathtt{xor}}(\mathop{\mathsf{tan}}\varphi,\mathop{\mathsf{sgn}}(\mathsf{a}_{21}));$}
    \COMMENT{$\mathtt{xor}$ multiplies the signs of its arguments\label{dl:16}}
    \STATE{$\mathop{\mathtt{store}}(\pm\tan\tilde{\bm{\varphi}}+\mathtt{i},\pm\mathop{\mathsf{tan}}\varphi);$}
    \COMMENT{$\mathrm{e}^{\mathrm{i}\alpha}\tan\varphi=\mathop{\mathrm{sign}}{a_{21}}\tan\varphi$\\[3pt]}
    \COMMENT{the eigenvalues\hfill}
    \STATE{$\bm{\lambda}_1'=\mathop{\mathtt{div}}(\mathop{\mathtt{fmadd}}(\mathop{\mathsf{tan}}\varphi,\mathop{\mathtt{fmadd}}(\mathsf{a}_{22}^{},\mathop{\mathsf{tan}}\varphi,\mathsf{o}),\mathsf{a}_{11}^{}),\mathop{\mathsf{sec}^{\mathsf{2}}}\varphi);$}
    \COMMENT{\cref{e:lamsec}}
    \STATE{$\mathop{\mathtt{store}}(\cos\tilde{\bm{\varphi}}+\mathtt{i},\mathop{\mathsf{cos}}\varphi);$}
    \COMMENT{below: $\mathop{\mathtt{fmsub}}(\mathsf{x},\mathsf{y},\mathsf{z})\equiv\mathop{\mathtt{fmadd}}(\mathsf{x},\mathsf{y},-\mathsf{z})$}
    \STATE{$\bm{\lambda}_2'=\mathop{\mathtt{div}}(\mathop{\mathtt{fmadd}}(\mathop{\mathsf{tan}}\varphi,\mathop{\mathtt{fmsub}}(\mathsf{a}_{11}^{},\mathop{\mathsf{tan}}\varphi,\mathsf{o}),\mathsf{a}_{22}^{}),\mathop{\mathsf{sec}^{\mathsf{2}}}\varphi);$}
    \COMMENT{\cref{e:lamsec}}
    \STATE{$\bm{\lambda}_1^{}=\mathop{\mathtt{scalef}}(\bm{\lambda}_1',-\bm{\zeta});\quad\mathop{\mathtt{store}}(\tilde{\bm{\lambda}}_1^{}+\mathtt{i},\bm{\lambda}_1^{});$}
    \COMMENT{backscale and store $\bm{\lambda}_1^{}$\label{dl:21}}
    \STATE{$\mathfrak{p}=\mathop{\mathtt{\_mm512\_cmplt\_pd\_mask}}(\bm{\lambda}_1',\bm{\lambda}_2');$}
    \COMMENT{lane-wise check if $\lambda_1'<\lambda_2'$}
    \STATE{$\bm{\lambda}_2^{}=\mathop{\mathtt{scalef}}(\bm{\lambda}_2',-\bm{\zeta});\quad\mathop{\mathtt{store}}(\tilde{\bm{\lambda}}_2^{}+\mathtt{i},\bm{\lambda}_2^{});$}
    \COMMENT{backscale and store $\bm{\lambda}_2^{}$\label{dl:23}}
    \STATE{$\tilde{\mathtt{p}}[\mathtt{i}/\mathtt{s}]=\mathop{\mathtt{\_cvtmaskX\_u32}}(\mathfrak{p});$}
    \COMMENT{store $\mathfrak{p}$ to $\tilde{\mathtt{p}}$, $\mathtt{X}=\mathtt{8}$ ($\mathtt{16}$ for AVX512F)\\[5pt]}
    \COMMENT{alternatively, in lines~\ref{dl:21} and~\ref{dl:23}, return $\bm{\lambda}_1'$ and $\bm{\lambda}_2'$, resp., instead of backscaling\hfill\null}
  \end{algorithmic}
\end{algorithm}

\begin{algorithm}[hbtp]
  \caption{$\mathtt{dbjac2}$: an OpenMP-parallel, AVX-512-vectorized eigendecomposition of a batch of $\tilde{r}$ double precision real symmetric matrices of order two.}
  \label{a:dbjac2}
  \begin{algorithmic}[1]
    \REQUIRE{$\tilde{r}$; $\tilde{\mathbf{a}}_{11}$, $\tilde{\mathbf{a}}_{22}$, $\tilde{\mathbf{a}}_{21}$; $\mathtt{p}$ also, in the context of \cref{a:zvjsvd}$[\mathbb{R}]$ only.}
    \ENSURE{$\pm\tan\tilde{\bm{\varphi}}$, $\cos\tilde{\bm{\varphi}}$; $\tilde{\bm{\lambda}}_1$, $\tilde{\bm{\lambda}}_2$; $\mathtt{p}$}
    \COMMENT{\texttt{unsigned} array of length $\tilde{r}/\mathtt{s}$}
    \STATE{\texttt{\#pragma omp parallel for default(shared)}}
    \COMMENT{optional}
    \FOR[$\mathtt{i}=\ell-1$]{$\mathtt{i}=0$ \TO $\tilde{r}-1$ \textbf{step} $\mathtt{s}$}
    \STATE{\textbf{if} $\mathtt{p}[\mathtt{i}/\mathtt{s}]\!=\!0$ \textbf{then continue};}
    \COMMENT{skip this vector on request of \cref{a:zvjsvd}$[\mathbb{R}]$}
    \STATE{$\mathop{\mathtt{d8jac2}}(\mathtt{i},\tilde{\mathbf{a}}_{11},\tilde{\mathbf{a}}_{22},\tilde{\mathbf{a}}_{21},\pm\tan\tilde{\bm{\varphi}},\cos\tilde{\bm{\varphi}},\tilde{\bm{\lambda}}_1,\tilde{\bm{\lambda}}_2,\mathtt{p}[,-\tilde{\bm{\zeta}}]);$}
    \COMMENT{\Cref{a:d8jac2}}
    \ENDFOR\COMMENT{$-\tilde{\bm{\zeta}}$ has to be returned without the optional backscaling of $\tilde{\bm{\lambda}}_1'$ and $\tilde{\bm{\lambda}}_2'$}
  \end{algorithmic}
\end{algorithm}
\subsection{Vectorized scaled dot-products with the compensated summation}\label{ss:SM2.2}
\Cref{a:zdpsclcs} for a vectorized scaled dot-product of two double
precision complex arrays with a possibly enhanced accuracy of the
result combines these ideas:
\begin{compactitem}
\item[G.] a trick from~\cite{Graillat-et-al-2015} to extract the
  truncated bits of a floating-point product by using one
  multiplication with rounding to $-\infty$, $\mathrm{RD}$, and one
  $\mathrm{fma}$ with rounding to nearest, as
  $c=\mathop{\mathrm{RD}}(a\cdot b)$,
  $c'=\mathop{\mathrm{fma}}(a,b,-c)\ge 0$, and $a\cdot b\approx c+c'$,
\item[M.] the 2Sum algorithm~\cite{Moller-65}, summarized
  in~\cite[Algorithm~4.4]{Muller-et-al-10} and vectorized in double
  precision in~\cref{e:d2sum}, for summation of two floating-point
  (scalar or vector) values $a$ and $b$ as $a+b=s+t$, where
  $s=\mathop{\mathrm{fl}}(a+b)$, and
\item[K.] the Kahan's compensated summation~\cite{Kahan-65} of a
  stream of floating-point values, modified as
  in~\cite[Algorithm~6.7]{Muller-et-al-10}, but with 2Sum instead of
  Fast2Sum.
\end{compactitem}
G.\ is easily implemented with the vector multiplication intrinsic
that takes the rounding mode indicator as an argument.  M., and its
combination with K., are branch-free.
\begin{equation}
  \begin{gathered}
    (\mathsf{s},\mathsf{t})=\mathop{\mathtt{d2sum}}(\mathsf{a},\mathsf{b});\\
    \mathsf{s}=\mathop{\mathtt{add}}(\mathsf{a},\mathsf{b}),\quad\mathsf{a}'=\mathop{\mathtt{sub}}(\mathsf{s},\mathsf{b}),\quad\mathsf{b}'=\mathop{\mathtt{sub}}(\mathsf{s},\mathsf{a}'),\\
    \mathsf{a}'=\mathop{\mathtt{sub}}(\mathsf{a},\mathsf{a}'),\quad\mathsf{b}'=\mathop{\mathtt{sub}}(\mathsf{b},\mathsf{b}'),\quad\mathsf{t}=\mathop{\mathtt{add}}(\mathsf{a}',\mathsf{b}').
  \end{gathered}
  \label{e:d2sum}
\end{equation}

\begin{algorithm}[hbtp]
  \caption{$\mathtt{zdpscl}'$: an enhanced vectorized complex scaled dot-product.}
  \label{a:zdpsclcs}
  \begin{algorithmic}[1]
    \REQUIRE{$g_q=(\Re{g_q},\Im{g_q}),0<\|g_q\|_F=(e_q,f_q);g_p=(\Re{g_p},\Im{g_p}),0<\|g_p\|_F=(e_p,f_p)$.}
    \ENSURE{$z=\mathop{\mathrm{fl}}(\check{g}_q^{\ast}\check{g}_p^{}=g_q^{\ast}g_p^{}/(\|g_q^{}\|_F^{}\|g_p^{}\|_F^{}))$.}
    \STATE{$-\mathsf{0}=\mathop{\mathtt{set1}}(-0.0);\quad-\mathsf{e}_p=\mathop{\mathtt{set1}}(-e_p),\ -\mathsf{e}_q=\mathop{\mathtt{set1}}(-e_q);$}
    \COMMENT{$-$exponents\\}
    \COMMENT{the $\Re$ and $\Im$ components of the partial scaled dot-product's\ldots\hfill}
    \STATE{$\Re{\mathsf{s}}=\mathop{\mathtt{setzero}}();\ \Re{\mathsf{t}}=\mathop{\mathtt{setzero}}();\ \Im{\mathsf{s}}=\mathop{\mathtt{setzero}}();\ \Im{\mathsf{t}}=\mathop{\mathtt{setzero}}();$}
    \COMMENT{values}
    \STATE{$\Re{\mathsf{s}}'=\mathop{\mathtt{setzero}}();\ \Re{\mathsf{t}}'=\mathop{\mathtt{setzero}}();\ \Im{\mathsf{s}}'=\mathop{\mathtt{setzero}}();\ \Im{\mathsf{t}}'=\mathop{\mathtt{setzero}}();$}
    \COMMENT{errors}
    \STATE{$r_{-\infty}\equiv\mathtt{\_MM\_FROUND\_TO\_NEG\_INF}|\mathtt{\_MM\_FROUND\_NO\_EXC};$}
    \COMMENT{rounding to $-\infty$}
    \FOR[sequentially]{$\mathtt{i}=0$ \TO $\tilde{m}-1$ \textbf{step} $\mathtt{s}$}
    \STATE{$\Re{\mathsf{g}_{\mathtt{i}j}}=\mathop{\mathtt{load}}(\Re{g_j}+\mathtt{i});\quad\Im{\mathsf{g}_{\mathtt{i}j}}=\mathop{\mathtt{load}}(\Im{g_j}+\mathtt{i});$}
    \COMMENT{$j\in\{p,q\}$}
    \STATE{$\Re{\check{\mathsf{g}}_{\mathtt{i}j}}=\mathop{\mathtt{scalef}}(\Re{\mathsf{g}_{\mathtt{i}j}},-\mathsf{e}_j);\quad\Im{\check{\mathsf{g}}_{\mathtt{i}j}}=\mathop{\mathtt{scalef}}(\Im{\mathsf{g}_{\mathtt{i}j}},-\mathsf{e}_j);$}
    \COMMENT{division by $\mathsf{2}^{\mathsf{e}_j}$\\}
    \COMMENT{\cref{e:CDP}, update $(\Re{\mathbf{w}})^T\Re{\mathbf{z}}$\hfill}
    \STATE{$\mathsf{c}=\mathop{\mathtt{mul\_round}}(\Re{\check{\mathsf{g}}_{\mathtt{i}p}},\Re{\check{\mathsf{g}}_{\mathtt{i}q}},r_{-\infty});$}
    \COMMENT{multiply and round down}
    \STATE{$\mathsf{c}'=\mathop{\mathtt{fmsub}}(\Re{\check{\mathsf{g}}_{\mathtt{i}p}},\Re{\check{\mathsf{g}}_{\mathtt{i}q}},\mathsf{c});$}
    \COMMENT{$\mathsf{c}+\mathsf{c}'\approx\Re{\check{\mathsf{g}}_{\mathtt{i}p}}\cdot\Re{\check{\mathsf{g}}_{\mathtt{i}q}}$, lane-wise, as per G.}
    \STATE{$(\Re{\mathsf{s}},\Re{\mathsf{t}})=\mathop{\mathtt{d2sum}}(\Re{\mathsf{s}},\mathop{\mathtt{add}}(\mathsf{c},\Re{\mathsf{t}}));$}
    \COMMENT{K.~\&~\cref{e:d2sum} on the $\Re$-stream}
    \STATE{$(\Re{\mathsf{s}}',\Re{\mathsf{t}}')=\mathop{\mathtt{d2sum}}(\Re{\mathsf{s}}',\mathop{\mathtt{add}}(\mathsf{c}',\Re{\mathsf{t}}'));$}
    \COMMENT{K.~\&~\cref{e:d2sum} on the $\Re'$-stream\\}
    \COMMENT{\cref{e:CDP}, update $(\Im{\mathbf{w}})^T\Im{\mathbf{z}}$\hfill}
    \STATE{$\mathsf{c}=\mathop{\mathtt{mul\_round}}(\Im{\check{\mathsf{g}}_{\mathtt{i}p}},\Im{\check{\mathsf{g}}_{\mathtt{i}q}},r_{-\infty});$}
    \COMMENT{multiply and round down}
    \STATE{$\mathsf{c}'=\mathop{\mathtt{fmsub}}(\Im{\check{\mathsf{g}}_{\mathtt{i}p}},\Im{\check{\mathsf{g}}_{\mathtt{i}q}},\mathsf{c});$}
    \COMMENT{$\mathsf{c}+\mathsf{c}'\approx\Im{\check{\mathsf{g}}_{\mathtt{i}p}}\cdot\Im{\check{\mathsf{g}}_{\mathtt{i}q}}$, lane-wise, as per G.}
    \STATE{$(\Re{\mathsf{s}},\Re{\mathsf{t}})=\mathop{\mathtt{d2sum}}(\Re{\mathsf{s}},\mathop{\mathtt{add}}(\mathsf{c},\Re{\mathsf{t}}));$}
    \COMMENT{K.~\&~\cref{e:d2sum} on the $\Re$-stream}
    \STATE{$(\Re{\mathsf{s}}',\Re{\mathsf{t}}')=\mathop{\mathtt{d2sum}}(\Re{\mathsf{s}}',\mathop{\mathtt{add}}(\mathsf{c}',\Re{\mathsf{t}}'));$}
    \COMMENT{K.~\&~\cref{e:d2sum} on the $\Re'$-stream\\}
    \COMMENT{\cref{e:CDP}, update $(\Re{\mathbf{w}})^T\Im{\mathbf{z}}$\hfill}
    \STATE{$\mathsf{c}=\mathop{\mathtt{mul\_round}}(\Re{\check{\mathsf{g}}_{\mathtt{i}p}},\Im{\check{\mathsf{g}}_{\mathtt{i}q}},r_{-\infty});$}
    \COMMENT{multiply and round down}
    \STATE{$\mathsf{c}'=\mathop{\mathtt{fmsub}}(\Re{\check{\mathsf{g}}_{\mathtt{i}p}},\Im{\check{\mathsf{g}}_{\mathtt{i}q}},\mathsf{c});$}
    \COMMENT{$\mathsf{c}+\mathsf{c}'\approx\Re{\check{\mathsf{g}}_{\mathtt{i}p}}\cdot\Im{\check{\mathsf{g}}_{\mathtt{i}q}}$, lane-wise, as per G.}
    \STATE{$(\Im{\mathsf{s}},\Im{\mathsf{t}})=\mathop{\mathtt{d2sum}}(\Im{\mathsf{s}},\mathop{\mathtt{add}}(\mathsf{c},\Im{\mathsf{t}}));$}
    \COMMENT{K.~\&~\cref{e:d2sum} on the $\Im$-stream}
    \STATE{$(\Im{\mathsf{s}}',\Im{\mathsf{t}}')=\mathop{\mathtt{d2sum}}(\Im{\mathsf{s}}',\mathop{\mathtt{add}}(\mathsf{c}',\Im{\mathsf{t}}'));$}
    \COMMENT{K.~\&~\cref{e:d2sum} on the $\Im'$-stream\\}
    \COMMENT{\cref{e:CDP}, update $-(\Im{\mathbf{w}})^T\Re{\mathbf{z}}$\hfill}
    \STATE{$-\Im{\check{g}_{\mathtt{i}p}}=\mathop{\mathtt{xor}}(\Im{\check{g}_{\mathtt{i}p}},-\mathsf{0});$}
    \COMMENT{flip the sign bit}
    \STATE{$\mathsf{c}=\mathop{\mathtt{mul\_round}}(-\Im{\check{\mathsf{g}}_{\mathtt{i}p}},\Re{\check{\mathsf{g}}_{\mathtt{i}q}},r_{-\infty});$}
    \COMMENT{multiply and round down}
    \STATE{$\mathsf{c}'=\mathop{\mathtt{fmsub}}(-\Im{\check{\mathsf{g}}_{\mathtt{i}p}},\Re{\check{\mathsf{g}}_{\mathtt{i}q}},\mathsf{c});$}
    \COMMENT{$\mathsf{c}+\mathsf{c}'\approx-\Im{\check{\mathsf{g}}_{\mathtt{i}p}}\cdot\Re{\check{\mathsf{g}}_{\mathtt{i}q}}$, lane-wise, as per G.}
    \STATE{$(\Im{\mathsf{s}},\Im{\mathsf{t}})=\mathop{\mathtt{d2sum}}(\Im{\mathsf{s}},\mathop{\mathtt{add}}(\mathsf{c},\Im{\mathsf{t}}));$}
    \COMMENT{K.~\&~\cref{e:d2sum} on the $\Im$-stream}
    \STATE{$(\Im{\mathsf{s}}',\Im{\mathsf{t}}')=\mathop{\mathtt{d2sum}}(\Im{\mathsf{s}}',\mathop{\mathtt{add}}(\mathsf{c}',\Im{\mathsf{t}}'));$}
    \COMMENT{K.~\&~\cref{e:d2sum} on the $\Im'$-stream}
    \ENDFOR\COMMENT{$g_p$ divided by $2^{e_p}$, $g_q$ by $2^{e_q}$}
    \STATE{$\Re{\hat{z}}=\mathop{\mathtt{reduce\_add}}(\Re{\mathsf{s}}')+\mathop{\mathtt{reduce\_add}}(\Re{\mathsf{t}}');$}
    \COMMENT{reduce the $\Re^{(\prime)}$-partial sums\ldots}
    \STATE{$\Re{\hat{z}}=\Re{\hat{z}}+\mathop{\mathtt{reduce\_add}}(\Re{\mathsf{t}});\quad\Re{\hat{z}}=\Re{\hat{z}}+\mathop{\mathtt{reduce\_add}}(\Re{\mathsf{s}});$}
    \STATE{$\Im{\hat{z}}=\mathop{\mathtt{reduce\_add}}(\Im{\mathsf{s}}')+\mathop{\mathtt{reduce\_add}}(\Im{\mathsf{t}}');$}
    \COMMENT{reduce the $\Im^{(\prime)}$-partial sums\ldots}
    \STATE{$\Im{\hat{z}}=\Im{\hat{z}}+\mathop{\mathtt{reduce\_add}}(\Im{\mathsf{t}});\quad\Im{\hat{z}}=\Im{\hat{z}}+\mathop{\mathtt{reduce\_add}}(\Im{\mathsf{s}});$}
    \STATE{$\mskip-7mu\mathop{\mathtt{\_mm\_store\_pd}}(\&z,\!\mathop{\mathtt{\_mm\_div\_pd}}(\mathop{\mathtt{\_mm\_set\_pd}}(\Im{\hat{z}},\!\Re{\hat{z}}),\!\mathop{\mathtt{\_mm\_set1\_pd}}(f_q\!\cdot\!f_p)\mskip-2mu)\mskip-2mu);$}
    \RETURN{$z;$}
    \COMMENT{$\hat{z}$ divided by the product of the norms' ``significands''}
  \end{algorithmic}
\end{algorithm}

\looseness=-1
\Cref{a:ddpsclcs} is the real variant of \cref{a:zdpsclcs}.  Both
algorithms require significantly more operations per iteration than
the simplest scaled dot-product (with a single $\mathtt{fma}$ and two
vector scalings in the real case), since each inlined $\mathtt{d2sum}$
computation requires six vector operations.  In many testing instances
the number of sweeps fell by one or two when these implementations
were employed instead of the simplest ones.  Accuracy of the results
was slightly improved, but with a degraded performance.

\begin{algorithm}[hbtp]
  \caption{$\mathtt{ddpscl}'$: an enhanced vectorized real scaled dot-product.}
  \label{a:ddpsclcs}
  \begin{algorithmic}[1]
    \REQUIRE{$g_q,\ 0<\|g_q\|_F=(e_q,f_q);\quad g_p,\ 0<\|g_p\|_F=(e_p,f_p)$.}
    \ENSURE{$d=\mathop{\mathrm{fl}}(\check{g}_q^T\check{g}_p^{}=g_q^Tg_p^{}/(\|g_q^{}\|_F^{}\|g_p^{}\|_F^{}))$.}
    \STATE{$-\mathsf{e}_p=\mathop{\mathtt{set1}}(-e_p);\quad-\mathsf{e}_q=\mathop{\mathtt{set1}}(-e_q);$}
    \STATE{$\mathsf{s}=\mathop{\mathtt{setzero}}();\quad\mathsf{t}=\mathop{\mathtt{setzero}}();\qquad\mathsf{s}'=\mathop{\mathtt{setzero}}();\quad\mathsf{t}'=\mathop{\mathtt{setzero}}();$}
    \FOR[sequentially]{$\mathtt{i}=0$ \TO $\tilde{m}-1$ \textbf{step} $\mathtt{s}$}
    \STATE{$\mathsf{g}_{\mathtt{i}p}=\mathop{\mathtt{load}}(g_p+\mathtt{i});\quad\mathsf{g}_{\mathtt{i}q}=\mathop{\mathtt{load}}(g_q+\mathtt{i});$}
    \STATE{$\check{\mathsf{g}}_{\mathtt{i}p}=\mathop{\mathtt{scalef}}(\mathsf{g}_{\mathtt{i}p},-\mathsf{e}_p);\quad\check{\mathsf{g}}_{\mathtt{i}q}=\mathop{\mathtt{scalef}}(\mathsf{g}_{\mathtt{i}q},-\mathsf{e}_q);$}
    \COMMENT{division by $\mathsf{2}^{\mathsf{e}_p}$ \&\ $\mathsf{2}^{\mathsf{e}_q}$}
    \STATE{$\mathsf{c}=\mathop{\mathtt{mul\_round}}(\check{\mathsf{g}}_{\mathtt{i}p},\check{\mathsf{g}}_{\mathtt{i}q},\mathtt{\_MM\_FROUND\_TO\_NEG\_INF}|\mathtt{\_MM\_FROUND\_NO\_EXC});$}
    \STATE{$\mathsf{c}'=\mathop{\mathtt{fmsub}}(\check{\mathsf{g}}_{\mathtt{i}p},\check{\mathsf{g}}_{\mathtt{i}q},\mathsf{c});$}
    \COMMENT{$\mathsf{c}+\mathsf{c}'\approx\check{\mathsf{g}}_{\mathtt{i}p}\cdot\check{\mathsf{g}}_{\mathtt{i}q}$, lane-wise, as per G.}
    \STATE{$(\mathsf{s},\mathsf{t})=\mathop{\mathtt{d2sum}}(\mathsf{s},\mathop{\mathtt{add}}(\mathsf{c},\mathsf{t}));$}
    \COMMENT{K.~\&~\cref{e:d2sum} on the value stream}
    \STATE{$(\mathsf{s}',\mathsf{t}')=\mathop{\mathtt{d2sum}}(\mathsf{s}',\mathop{\mathtt{add}}(\mathsf{c}',\mathsf{t}'));$}
    \COMMENT{K.~\&~\cref{e:d2sum} on the error stream}
    \ENDFOR\COMMENT{$g_p$ divided by $2^{e_p}$, $g_q$ by $2^{e_q}$}
    \STATE{$\hat{d}=\mathop{\mathtt{reduce\_add}}(\mathsf{s})+(\mathop{\mathtt{reduce\_add}}(\mathsf{t})+(\mathop{\mathtt{reduce\_add}}(\mathsf{s}')+\mathop{\mathtt{reduce\_add}}(\mathsf{t}')));$}
    \RETURN{$d=\hat{d}/(f_p\cdot f_q);$}
    \COMMENT{division by the product of the norms' ``significands''}
  \end{algorithmic}
\end{algorithm}
\section{Overflow conditions of a real and a complex dot-product}\label{s:SM3}
A real dot-product, $\mathbf{x}^T\mathbf{y}$, of columns $\mathbf{x}$
and $\mathbf{y}$ of length $m$, can be bounded in magnitude using the
triangle inequality as
\begin{equation}
  |\mathbf{x}^T\mathbf{y}|=\left|\sum_{i=1}^m x_i\cdot y_i\right|\le\sum_{i=1}^m|x_i\cdot y_i|=\sum_{i=1}^m|x_i||y_i|=|\mathbf{x}|^T|\mathbf{y}|\le m M^2,
  \label{e:RDP}
\end{equation}
where $\displaystyle M=\max_{1\le i\le m}M_i$ and
$M_i=\max\{|x_i|,|y_i|\}$.  From~\cref{e:RDP} and
\begin{equation}
  |\mathop{\mathrm{fl}}(\mathbf{x}^T\mathbf{y})-\mathbf{x}^T\mathbf{y}|\le m\varepsilon|\mathbf{x}|^T|\mathbf{y}|,
  \label{e:rump}
\end{equation}
assuming no particular order of evaluation, but also no overflow or
underflow when computing
$\mathop{\mathrm{fl}}(\mathbf{x}^T\mathbf{y})$
(see~\cite[Theorem~4.2]{Jeannerod-Rump-13}), it follows
\begin{equation}
  \begin{aligned}
    |\mathop{\mathrm{fl}}(\mathbf{x}^T\mathbf{y})|&=|\mathbf{x}^T\mathbf{y}+(\mathop{\mathrm{fl}}(\mathbf{x}^T\mathbf{y})-\mathbf{x}^T\mathbf{y})|\le|\mathbf{x}^T\mathbf{y}|+|\mathop{\mathrm{fl}}(\mathbf{x}^T\mathbf{y})-\mathbf{x}^T\mathbf{y}|\\
    &\le|\mathbf{x}|^T|\mathbf{y}|(1+m\varepsilon)\le m M^2(1+m\varepsilon).
  \end{aligned}
  \label{e:afldp}
\end{equation}

Formation of the Grammian matrices of order two in the real case,
if~\cref{e:scla} is to be achieved without any scaling, requires
$|\mathop{\mathrm{fl}}(\mathbf{x}^T\mathbf{y})|\le\tilde{\nu}$, so
it suffices to hold
\begin{equation}
  M\le\sqrt{\nu/(4\sqrt{2}m(1+m\varepsilon))}=\tau_m.
  \label{e:RM}
\end{equation}

A complex dot-product, $\mathbf{w}^{\ast}\mathbf{z}$, can be
decomposed into a sum of two real ones, each twice longer than the
column length, as
\begin{equation}
  \begin{aligned}
    \mathbf{w}^{\ast}\mathbf{z}&=\Re(\mathbf{w}^{\ast}\mathbf{z})+\mathrm{i}\Im(\mathbf{w}^{\ast}\mathbf{z})\\
    &=(\Re{\mathbf{w}})^T(\Re{\mathbf{z}})+(\Im{\mathbf{w}})^T(\Im{\mathbf{z}})+\mathrm{i}((\Re{\mathbf{w}})^T(\Im{\mathbf{z}})-(\Im{\mathbf{w}})^T(\Re{\mathbf{z}}))\\
    &=\begin{bmatrix}(\Re{\mathbf{w}})^T&(\Im{\mathbf{w}})^T\end{bmatrix}\begin{bmatrix}\Re{\mathbf{z}}\\\Im{\mathbf{z}}\end{bmatrix}+\mathrm{i}\begin{bmatrix}(\Re{\mathbf{w}})^T&-(\Im{\mathbf{w}})^T\end{bmatrix}\begin{bmatrix}\Im{\mathbf{z}}\\\Re{\mathbf{z}}\end{bmatrix}\\
    &=\mathbf{a}^T\mathbf{b}+\mathrm{i}\,\mathbf{c}^T\mathbf{d},
  \end{aligned}
  \label{e:CDP}
\end{equation}
where $\mathbf{a}$, $\mathbf{b}$, $\mathbf{c}$, and $\mathbf{d}$ stand
for the corresponding real arrays of length $\hat{m}=2 m$.

Let
$\widehat{M}_i=\max\{|\Re{w_i}|,|\Im{w_i}|,|\Re{z_i}|,|\Im{z_i}|\}$
and $\displaystyle\widehat{M}=\max_{1\le i\le m}\widehat{M}_i$.
Similarly to the real case, from~\cref{e:CDP,e:rump} it then follows
\begin{equation}
  \begin{aligned}
    |\mathop{\mathrm{fl}}(\mathbf{a}^T\mathbf{b})|&=|\mathbf{a}^T\mathbf{b}+(\mathop{\mathrm{fl}}(\mathbf{a}^T\mathbf{b})-\mathbf{a}^T\mathbf{b})|\le|\mathbf{a}^T\mathbf{b}|+|\mathop{\mathrm{fl}}(\mathbf{a}^T\mathbf{b})-\mathbf{a}^T\mathbf{b}|\\
    &\le|\mathbf{a}|^T|\mathbf{b}|(1+\hat{m}\varepsilon)\le\hat{m}\widehat{M}^2(1+\hat{m}\varepsilon).
  \end{aligned}
  \label{e:cfldp}
\end{equation}
The same relation holds if $\mathbf{a}$ and $\mathbf{b}$ are fully
replaced by $\mathbf{c}$ and $\mathbf{d}$, respectively.  Since
$\mathop{\mathrm{fl}}(\mathbf{w}^{\ast}\mathbf{z})=\mathop{\mathrm{fl}}(\mathbf{a}^T\mathbf{b})+\mathrm{i}\mathop{\mathrm{fl}}(\mathbf{c}^T\mathbf{d})$,
its magnitude can be bounded above, using~\cref{e:cfldp}, by
\begin{equation}
  |\mathop{\mathrm{fl}}(\mathbf{w}^{\ast}\mathbf{z})|=\sqrt{|\mathop{\mathrm{fl}}(\mathbf{a}^T\mathbf{b})|^2+|\mathop{\mathrm{fl}}(\mathbf{c}^T\mathbf{d})|^2}\le\sqrt{2}\hat{m}\widehat{M}^2(1+\hat{m}\varepsilon).
  \label{e:Cfldp}
\end{equation}
If
$|\mathop{\mathrm{fl}}(\mathbf{w}^{\ast}\mathbf{z})|\le\tilde{\nu}$
is required, then due to~\cref{e:Cfldp} it suffices to hold
\begin{equation}
  \widehat{M}\le\sqrt{\nu/(8\hat{m}(1+\hat{m}\varepsilon))}=\sqrt{\nu/(16m(1+2m\varepsilon))}=\hat{\tau}_m.
  \label{e:CM}
\end{equation}

\looseness=-1
Issues with underflow are ignored for the na\"{\i}ve Jacobi SVD\@.
Let $\tilde{\tau}_m=\tau_m$ in the real, and
$\tilde{\tau}_m=\hat{\tau}_m$ in the complex case.  The
constraints~\cref{e:RM} and~\cref{e:CM} on the magnitudes of (the
components of) the elements of the iteration matrix $G_k$ become
\begin{equation}
  \max\{\|\Re(G_k)\|_{\max},\|\Im(G_k)\|_{\max}\}\le\tilde{\tau}_m.
  \label{e:M}
\end{equation}
This equation establishes a ``safe region'' for the magnitudes of (the
components of) the elements of the iteration matrix, within which it
is guaranteed that both the formation of the Grammians and the
calculation of the Jacobi rotations without the prescaling from
\cref{ss:2.3} will succeed.  Not only that this region is relatively
narrow, but the transformations of the pivot column pairs could cause
the iteration matrix to fall outside it at the beginning of the
following iteration, as shown in \cref{ss:3.1}.

If the iteration matrix $G_k^{}$ is scaled as in~\cref{e:M}, no
element of $\mathop{\mathrm{fl}}(G_{k+1}^{})$ can overflow, due to
\cref{p:t}.  The constraint~\cref{e:M}, if re-evaluated by examining
the magnitudes of the affected (components of) elements, could become
violated, which is acceptable as long as all dot-products remain below
the limit~\cref{e:scla} by magnitude and the assumption of \cref{p:t}
holds.  If it does not hold, or if, once all dot-products for the
current step are obtained, at least one lands above~\cref{e:scla}, the
iteration matrix has to be rescaled according to~\cref{e:M}.  These
observations suffice for a fast implementation of the pointwise
Jacobi-type SVD method, applicable when the input matrix so permits.
\section{Proofs of \cref{l:rt,l:ct}}\label{s:SM4}
See their statements in the main paper.

\begin{proof}[Proof of \cref{l:rt}]
  From
  $g_{ip}'=(g_{ip}^{}\pm g_{iq}^{}\mathop{\mathrm{fl}}(\tan\varphi))\mathop{\mathrm{fl}}(\cos\varphi)$
  it follows
  \begin{equation}
    \mathop{\mathrm{fl}}(g_{ip}')=((g_{ip}^{}\pm g_{iq}^{}\mathop{\mathrm{fl}}(\tan\varphi))(1+\varepsilon_1^{})\mathop{\mathrm{fl}}(\cos\varphi))(1+\varepsilon_2^{}),
    \label{e:rfl}
  \end{equation}
  due to the final and only rounding performed when evaluating the
  fused multiply-add expression, with the relative error
  $\varepsilon_1$, $|\varepsilon_1|\le\varepsilon$.  It can now be
  obtained, by rearranging the above terms and noting that, for the
  multiplication by the cosine, $|\varepsilon_2|\le\varepsilon$,
  \begin{displaymath}
    \begin{aligned}
    \mathop{\mathrm{fl}}(g_{ip}')&=((g_{ip}^{}\pm g_{iq}^{}\mathop{\mathrm{fl}}(\tan\varphi))\mathop{\mathrm{fl}}(\cos\varphi))((1+\varepsilon_1^{})(1+\varepsilon_2^{}))\\
    &=g_{ip}'((1+\varepsilon_1^{})(1+\varepsilon_2^{}))=g_{ip}'(1+\varepsilon_p'),
    \end{aligned}
  \end{displaymath}
  and thus $(1-\varepsilon)^2\le 1+\varepsilon_p'\le(1+\varepsilon)^2$,
  what proves the first part of the first statement of the Lemma.  The
  second part,
  $(1-\varepsilon)^2\le 1+\varepsilon_q'\le(1+\varepsilon)^2$, is
  shown similarly.

  From~\cref{e:rfl}, $|\mathop{\mathrm{fl}}(\tan\varphi)|\le 1$, the
  assumption that $\max\{|g_{ip}^{}|,|g_{iq}^{}|\}\le\nu/2$, and
  the fact that $\nu/2$ is exactly representable in floating-point,
  so $\mathop{\mathrm{fl}}(\nu/2+\nu/2)=\nu$, it follows that
  $|d|(1+\varepsilon_1^{})\le\nu$, for
  $d=g_{ip}^{}\pm g_{iq}^{}\mathop{\mathrm{fl}}(\tan\varphi)$ (or for
  $d=g_{iq}^{}\mp g_{ip}^{}\mathop{\mathrm{fl}}(\tan\varphi)$).  Here,
  monotonicity of the inner addition, the inner multiplication, and
  the (outer) rounding of the $\mathrm{fma}$ operation are relied
  upon.  Multiplying $d(1+\varepsilon_1^{})$ by
  $\mathop{\mathrm{fl}}(\cos\varphi)\le 1$ and the subsequent
  (monotonous) rounding cannot yield a result of a magnitude strictly
  greater than $|d(1+\varepsilon_1^{})|\le\nu$, what proves the
  last statement of the Lemma.
\end{proof}

\begin{proof}[Proof of \cref{l:ct}]
  Assume that $|g_{ip}'|>0$ and
  $|g_{ip}'|/\mathop{\mathrm{fl}}(\cos\varphi)\ge|\Re{g_{iq}^{}}|$
  hold.  Let, from the first equation in~\cref{e:transf},
  $d=g_{ip}'/\mathop{\mathrm{fl}}(\cos\varphi)=a\cdot b+c$, where
  $a=g_{iq}^{}$,
  \begin{displaymath}
    \Re{b}=\mathop{\mathrm{fl}}(\cos\alpha)\mathop{\mathrm{fl}}(\tan\varphi)(1+\varepsilon_7^{}),\quad
    \Im{b}=\mathop{\mathrm{fl}}(\sin\alpha)\mathop{\mathrm{fl}}(\tan\varphi)(1+\varepsilon_8^{}),\quad
    \max\{|\varepsilon_7^{}|,|\varepsilon_8^{}|\}\le\varepsilon,
  \end{displaymath}
  $c=g_{ip}^{}$, and
  $\tilde{d}=\mathop{\mathrm{fma}}(a,b,c)=\mathop{\mathrm{RN}}(d)$, in
  the context of~\cref{e:zfma}.  In this notation,
  $0\ne|d|\ge|\Re{a}|$.  Let
  $\max\{|\varepsilon_1^{}|,|\varepsilon_2^{}|,|\varepsilon_3^{}|,|\varepsilon_4^{}|,|\varepsilon_5^{}|,|\varepsilon_6^{}|\}\le\varepsilon$.
  Then,
  \begin{displaymath}
    \begin{aligned}
      \Re{\tilde{d}}&=(\Re{a}\cdot\Re{b}+(\Re{c}-\Im{a}\cdot\Im{b})(1+\varepsilon_1))(1+\varepsilon_2)=(\Re{d}+(\Re{c}-\Im{a}\cdot\Im{b})\varepsilon_1)(1+\varepsilon_2),\\
      \Im{\tilde{d}}&=(\Re{a}\cdot\Im{b}+(\Im{c}+\Im{a}\cdot\Re{b})(1+\varepsilon_3))(1+\varepsilon_4)=(\Im{d}+(\Im{c}+\Im{a}\cdot\Re{b})\varepsilon_3)(1+\varepsilon_4).
    \end{aligned}
  \end{displaymath}
  After subtracting $\Re{d}$ ($\Im{d}$), rearranging the terms, and
  adding a zero, it follows
  \begin{displaymath}
    \begin{aligned}
      \Re{\tilde{d}}-\Re{d}&=\Re{d}\cdot\varepsilon_2+(\Re{c}-\Im{a}\cdot\Im{b}+(\Re{a}\cdot\Re{b}-\Re{a}\cdot\Re{b}))\varepsilon_1(1+\varepsilon_2),\\
      \Im{\tilde{d}}-\Im{d}&=\Im{d}\cdot\varepsilon_4+(\Im{c}+\Im{a}\cdot\Re{b}+(\Re{a}\cdot\Im{b}-\Re{a}\cdot\Im{b}))\varepsilon_3(1+\varepsilon_4),
    \end{aligned}
  \end{displaymath}
  i.e., after regrouping the terms and extracting $\Re{d}$ ($\Im{d}$)
  from the right hand sides,
  \begin{displaymath}
    \begin{aligned}
      \Re{\tilde{d}}-\Re{d}&=\Re{d}(\varepsilon_1+\varepsilon_2+\varepsilon_1\varepsilon_2)-\Re{a}\cdot\Re{b}\cdot\varepsilon_1(1+\varepsilon_2),\\
      \Im{\tilde{d}}-\Im{d}&=\Im{d}(\varepsilon_3+\varepsilon_4+\varepsilon_3\varepsilon_4)-\Re{a}\cdot\Im{b}\cdot\varepsilon_3(1+\varepsilon_4).
    \end{aligned}
  \end{displaymath}

  Note that $|\mathop{\mathrm{fl}}(\tan\varphi)|\le 1$, so an argument
  similar to the one used in the proof of \cref{p:s2} ensures that
  $\varepsilon_7$ and $\varepsilon_8$ can be ignored (considered to be
  zero) when taking $\max\{|\Re{b}|,|\Im{b}|\}$ with
  $|\mathop{\mathrm{fl}}(\tan\varphi)|=1$.  Due to~\cref{e:prop},
  $\max\{|\Re{b}|,|\Im{b}|\}\le\delta_{\alpha}^+$, regardless of a
  possible inaccuracy of $\mathrm{e}'$ indicated by
  \cref{r:ufl,r:hypot}, since even the components of such a
  pathological $\mathrm{e}'$ are at most unity by magnitude.

  Taking the absolute values of the two previous equations, it follows
  \begin{equation}
    \begin{aligned}
      |\Re{\tilde{d}}-\Re{d}|&\le|\Re{d}||\varepsilon_1^{}+\varepsilon_2^{}+\varepsilon_1^{}\varepsilon_2^{}|+|\Re{a}|\delta_{\alpha}^+|\varepsilon_1^{}|(1+\varepsilon_2^{}),\\
      |\Im{\tilde{d}}-\Im{d}|&\le|\Im{d}||\varepsilon_3^{}+\varepsilon_4^{}+\varepsilon_3^{}\varepsilon_4^{}|+|\Re{a}|\delta_{\alpha}^+|\varepsilon_3^{}|(1+\varepsilon_4^{}).
    \end{aligned}
    \label{e:Ra}
  \end{equation}
  Dividing by $|d|$ and applying again the triangle
  inequality simplifies~\cref{e:Ra} to
  \begin{displaymath}
    \begin{aligned}
      |\Re{\tilde{d}}-\Re{d}|/|d|&\le|\varepsilon_1^{}|+|\varepsilon_2^{}|+|\varepsilon_1^{}||\varepsilon_2^{}|+\delta_{\alpha}^+(|\varepsilon_1^{}|+|\varepsilon_1^{}||\varepsilon_2^{}|),\\
      |\Im{\tilde{d}}-\Im{d}|/|d|&\le|\varepsilon_3^{}|+|\varepsilon_4^{}|+|\varepsilon_3^{}||\varepsilon_4^{}|+\delta_{\alpha}^+(|\varepsilon_3^{}|+|\varepsilon_3^{}||\varepsilon_4^{}|),
    \end{aligned}
  \end{displaymath}
  or, denoting by $\mathfrak{C}$ either $\Re$ or $\Im$ and
  bounding each $|\varepsilon_{\imath}|$, $1\le\imath\le 4$, by
  $|\varepsilon|$,
  \begin{equation}
    \frac{|\mathop{\mathfrak{C}}{\tilde{d}}-\mathop{\mathfrak{C}}{d}|}{|d|}\le\tilde{\epsilon},\quad
    3.000000\,\varepsilon<\tilde{\epsilon}<3.000001\,\varepsilon,
    \label{e:zeps}
  \end{equation}
  where the approximate multiples of $\varepsilon$ are computed by the
  script \cref{f:SM1.2} as \texttt{eps}.  From~\cref{e:zeps} magnitude
  of each component of $\tilde{d}$ can be bound relative to $|d|$ as
  \begin{equation}
    \frac{|\mathop{\mathfrak{C}}\tilde{d}|}{|d|}=\frac{|\mathop{\mathfrak{C}}\tilde{d}-\mathop{\mathfrak{C}}d+\mathop{\mathfrak{C}}d|}{|d|}\le\frac{|\mathop{\mathfrak{C}}\tilde{d}-\mathop{\mathfrak{C}}d|}{|d|}+\frac{|\mathop{\mathfrak{C}}d|}{|d|}\le\tilde{\epsilon}+1.
    \label{e:zcomp}
  \end{equation}
  Since $g_{ip}'=d\mathop{\mathrm{fl}}(\cos\varphi)$, it holds
  \begin{displaymath}
    \mathop{\mathrm{fl}}(\Re{g_{ip}'})=\Re{\tilde{d}}\mathop{\mathrm{fl}}(\cos\varphi)(1+\varepsilon_5^{}),\qquad
    \mathop{\mathrm{fl}}(\Im{g_{ip}'})=\Im{\tilde{d}}\mathop{\mathrm{fl}}(\cos\varphi)(1+\varepsilon_6^{}).
  \end{displaymath}
  Thus, from $\mathop{\mathrm{fl}}(\cos\varphi)>0$
  and~\cref{e:zeps,e:zcomp}, it follows
  \begin{displaymath}
    \begin{aligned}
      |\mathop{\mathrm{fl}}(\Re{g_{ip}'})&-\Re{g_{ip}'}|/|g_{ip}'|=|\Re{\tilde{d}}\mathop{\mathrm{fl}}(\cos\varphi)(1+\varepsilon_5^{})-\Re{d}\mathop{\mathrm{fl}}(\cos\varphi)|/(|d|\mathop{\mathrm{fl}}(\cos\varphi))\\
      &\le(|\Re{\tilde{d}}-\Re{d}|/|d|+|\Re{\tilde{d}}\cdot\varepsilon_5^{}|/|d|)\le\tilde{\epsilon}+(1+\tilde{\epsilon})|\varepsilon_5^{}|\le\tilde{\epsilon}+\varepsilon+\tilde{\epsilon}\varepsilon,\\
      |\mathop{\mathrm{fl}}(\Im{g_{ip}'})&-\Im{g_{ip}'}|/|g_{ip}'|=|\Im{\tilde{d}}\mathop{\mathrm{fl}}(\cos\varphi)(1+\varepsilon_6^{})-\Im{d}\mathop{\mathrm{fl}}(\cos\varphi)|/(|d|\mathop{\mathrm{fl}}(\cos\varphi))\\
      &\le(|\Im{\tilde{d}}-\Im{d}|/|d|+|\Im{\tilde{d}}\cdot\varepsilon_5^{}|/|d|)\le\tilde{\epsilon}+(1+\tilde{\epsilon})|\varepsilon_6^{}|\le\tilde{\epsilon}+\varepsilon+\tilde{\epsilon}\varepsilon,
    \end{aligned}
  \end{displaymath}
  or, with
  $\displaystyle\tilde{\epsilon}'=\tilde{\epsilon}+\varepsilon+\tilde{\epsilon}\varepsilon$,
  $\displaystyle\frac{|\mathop{\mathrm{fl}}(\mathop{\mathfrak{C}}{g_{ip}'})-\mathop{\mathfrak{C}}{g_{ip}'}|}{|g_{ip}'|}\le\tilde{\epsilon}'$.
  Finally, with $\tilde{\epsilon}''=\sqrt{2}\tilde{\epsilon}'$,
  \begin{displaymath}
    \begin{gathered}
      |\mathop{\mathrm{fl}}(g_{ip}')-g_{ip}'|/|g_{ip}'|=\sqrt{(|\mathop{\mathrm{fl}}(\Re{g_{ip}'})-\Re{g_{ip}'}|/|g_{ip}'|)^2+(|\mathop{\mathrm{fl}}(\Im{g_{ip}'})-\Im{g_{ip}'}|/|g_{ip}'|)^2}\le\tilde{\epsilon}'',\\
      5.656854\,\varepsilon<\tilde{\epsilon}''<5.656856\,\varepsilon,
    \end{gathered}
  \end{displaymath}
  where the approximate multiples of $\varepsilon$ are computed by the
  script \cref{f:SM1.2} as \texttt{epp}.  A similar proof is valid for
  $|\mathop{\mathrm{fl}}(g_{iq}')-g_{iq}'|/|g_{iq}'|\le\tilde{\epsilon}''$
  when
  $0\ne|g_{iq}'|/\mathop{\mathrm{fl}}(\cos\varphi)\ge|\Re{g_{ip}^{}}|$.

  \looseness=-1
  Take a look what happens with the ``else'' parts of the claims~1
  and~2 of \cref{l:ct}.  If
  $|g_{ip}'|/\mathop{\mathrm{fl}}(\cos\varphi)<|\Re{g_{iq}^{}}|$,
  then $|g_{ip}'|<|g_{iq}^{}|$, and if
  $|g_{iq}'|/\mathop{\mathrm{fl}}(\cos\varphi)<|\Re{g_{ip}^{}}|$, then
  $|g_{iq}'|<|g_{ip}^{}|$.  In exact arithmetic, the max-norm of the
  iteration matrix could not have been affected by either
  transformation.  When computing in floating-point, from
  $|\Re{a}|/|d|>1$ and~\cref{e:Ra} follows that the relative error
  $|\mathop{\mathfrak{C}}{\tilde{d}}-\mathop{\mathfrak{C}}d|/|d|$ is
  no longer bounded above by a constant expression in $\varepsilon$.
  However, if~\cref{e:Ra} is divided by $|a|$ (not by $|d|$), since
  $\max\{|\Re{d}|,|\Im{d}|\}\le|d|<|\Re{a}|\le|a|$, it follows
  \begin{displaymath}
    |\Re{\tilde{d}}-\Re{d}|/|a|<\tilde{\epsilon},\quad|\Im{\tilde{d}}-\Im{d}|/|a|<\tilde{\epsilon},\quad|\tilde{d}-d|/|a|<\sqrt{2}\tilde{\epsilon},
  \end{displaymath}
  and thus $|\tilde{d}-d|<\sqrt{2}|a|\tilde{\epsilon}$, so $\tilde{d}$
  lies in a disk with the center $d$ and the radius
  $\sqrt{2}|a|\tilde{\epsilon}$.  If
  $|a|(1+\sqrt{2}\tilde{\epsilon})\le\nu$, then
  $|\tilde{d}|<\nu$ since $|d|<|a|$.  The final transformed element
  is $\mathop{\mathrm{fl}}(\tilde{d}\mathop{\mathrm{fl}}(\cos\varphi))$,
  with $\mathop{\mathrm{fl}}(\cos\varphi)\le 1$, so
  $|\mathop{\mathrm{fl}}(\tilde{d}\mathop{\mathrm{fl}}(\cos\varphi))|<\nu$
  (due to the monotonicity argument, applied here component-wise),
  what had to be proven.

  Regarding the last statement of the Lemma, note that from
  $\max\{|\Re{b}|,|\Im{b}|\}\le\delta_{\alpha}^+$ and
  $\max\{|\Re{a}|,|\Im{a}|,|\Re{c}|,|\Im{c}|\}\le\nu/4$ follows
  that neither the inner nor the outer real $\mathrm{fma}$ operations
  in~\cref{e:zfma} can overflow, since the magnitude of the result of
  the inner one cannot be greater than
  $\nu(1+\delta_{\alpha}^+)(1+\varepsilon)/4$, so the result of the
  outer one cannot exceed $\nu h/4$ in magnitude, where
  \begin{displaymath}
    h=\mathop{h}(\varepsilon)=(\delta_{\alpha}^++(1+\delta_{\alpha}^+)(1+\varepsilon))(1+\varepsilon).
  \end{displaymath}
  Solving numerically for $\varepsilon$ such that
  $4-\mathop{h}(\varepsilon)=0$ gives two real roots, but only one,
  $\varepsilon_{\mathfrak{2}}$,
  $6.520087\cdot 10^{-2}<\varepsilon_{\mathfrak{2}}^{}<6.520088\cdot 10^{-2}$,
  in $[0,1/4]$.  For all $\varepsilon$ such that
  $0\le\varepsilon\le\varepsilon_{\mathfrak{2}}^{}$, what includes
  $\varepsilon$ of all standard floating-point datatypes, it holds
  $\mathop{h}(\varepsilon)\le 4$, and thus $\nu h/4\le\nu$.
  Having obtained $\tilde{d}$, multiplying it by
  $\mathop{\mathrm{fl}}(\cos\varphi)\le 1$ cannot increase its
  components' magnitudes, and thus no overflow occurs in any rounding.

  Assume that an underflow occurs in~\cref{e:zfma}, when rounding the
  result of an inner or of an outer real $\mathrm{fma}$.  If it is an
  outer $\mathrm{fma}$, such a computed component (be it real or
  imaginary) of $\tilde{d}$ is subnormal.  If it is an inner
  $\mathrm{fma}$, when the affected component of $\tilde{d}$ is
  computed, it cannot be greater than
  $\nu\delta_{\alpha}^+(1+\varepsilon)/4$ in magnitude.  Also, if
  an underflow occurs when forming either component of
  $\mathop{\mathrm{fl}}(\tilde{d}\mathop{\mathrm{fl}}(\cos\varphi))$,
  each component stays below $\nu$ in magnitude.  It may therefore,
  for the purposes of this part of the proof, be assumed in the
  following that no underflow occurs in any rounding
  of~\cref{e:transf} and thus the conditions of the first, already
  proven part of this Lemma are met.

  From~\cref{e:transf}, \cref{r:ufl,r:hypot}, and
  $\max\{|g_{ip}|,|g_{iq}|\}\le\nu/4$ it follows
  \begin{displaymath}
    |g_{ip}'|=|(g_{ip}^{}+g_{iq}^{}\mathrm{e}')\mathop{\mathrm{fl}}(\cos\varphi)|\le|g_{ip}^{}|+\sqrt{2}|g_{iq}^{}|\le\nu(1+\sqrt{2})/4.
  \end{displaymath}
  If $0\ne|d|\ge|\Re{a}|$, then
  $|\mathop{\mathrm{fl}}(g_{ip}')-g_{ip}'|\le|g_{ip}'|\tilde{\epsilon}''$,
  what implies
  \begin{displaymath}
    |\mathop{\mathrm{fl}}(g_{ip}')|\le|g_{ip}'|(1+\tilde{\epsilon}'')\le\nu(1+\sqrt{2})(1+\tilde{\epsilon}'')/4\le\nu,
  \end{displaymath}
  and similarly for $|\mathop{\mathrm{fl}}(g_{iq}')|$, if applicable.
  Else, $|g_{iq}^{}|\le\nu/(1+\sqrt{2}\tilde{\epsilon})$
  since $\sqrt{2}\tilde{\epsilon}\le 3$.  Therefore,
  $|\mathop{\mathrm{fl}}(g_{ip}')|<\nu$ (and similarly for
  $|\mathop{\mathrm{fl}}(g_{iq}')|$, if applicable), what concludes
  the proof.
\end{proof}
\section{Converting the complex values to and back from the split form}\label{s:SM5}
\Cref{a:zsplit} shows how to transform the customary representation of
complex numbers into the split form.  This processing is sequential
(could also be parallel) on each column of $G$, and the columns are
transformed concurrently.  Repacking of the columns of $\Re{X}$ and
$\Im{X}$ back to $X$, for $X\in\{G,V\}$, as in \cref{a:zmerge}, is
performed in parallel over the columns, and sequentially within each
column.

\begin{algorithm}[hbtp]
  \caption{Splitting a vector $\mathsf{ig}$ of four double complex
  numbers from $g_j$, with their real and imaginary components
  customary interleaved, into two vectors of the packed real and
  imaginary parts, respectively, by a lane permutation $\mathsf{ps}$.}
  \label{a:zsplit}
  \begin{algorithmic}[1]
    \STATE{$\mathtt{\_\_m512i}\ \mathsf{ps}=\mathop{\mathtt{\_mm512\_set\_epi64}}(7,5,3,1,6,4,2,0);$}
    \FOR[sequentially, but could be done in parallel]{$i=1$ \TO $m$ \textbf{step} $4$}
    \STATE{$\mathsf{ig}=\mathop{\mathtt{load}}((G)_{ij});$}
    \COMMENT{$\left[(\Re{g_{rj}},\Im{g_{rj}})_{r=i}^{i+3}\right]$, i.e., interleaved}
    \STATE{$\mathsf{pg}=\mathop{\mathtt{permutexvar}}(\mathsf{ps},\mathsf{ig});$}
    \COMMENT{$\left[(\Re{g_{rj}})_{r=i}^{i+3},(\Im{g_{rj}})_{r=i}^{i+3}\right]$, i.e., split}
    \STATE{$\mathop{\mathtt{\_mm256\_store\_pd}}((\Re{G})_{ij},\mathop{\mathtt{extractf64x4}}(\mathsf{pg},0));$}
    \COMMENT{$\left[(\Re{g_{rj}})_{r=i}^{i+3}\right]$}
    \STATE{$\mathop{\mathtt{\_mm256\_store\_pd}}((\Im{G})_{ij},\mathop{\mathtt{extractf64x4}}(\mathsf{pg},1));$}
    \COMMENT{$\left[(\Im{g_{rj}})_{r=i}^{i+3}\right]$}
    \ENDFOR\COMMENT{output: $\Re{g_j}$ and $\Im{g_j}$}
  \end{algorithmic}
\end{algorithm}

\begin{algorithm}[hbtp]
  \caption{Merging, by a lane permutation $\mathsf{pm}$, of two
    vectors, $\mathsf{r}$ from $\Re{X}$ and $\mathsf{i}$ from
    $\Im{X}$, of the real and the imaginary parts, respectively, of
    four double complex numbers into a vector from $X$ with the
    numbers' parts customary interleaved.}
  \label{a:zmerge}
  \begin{algorithmic}[1]
    \STATE{$\mathtt{\_\_m512i}\ \mathsf{pm}=\mathop{\mathtt{\_mm512\_set\_epi64}}(7,3,6,2,5,1,4,0);$}
    \FOR[sequentially, but could be done in parallel]{$i=1$ \TO $\mathop{\mathrm{\#rows}}(X)$ \textbf{step} $4$}
    \STATE{$\mathtt{\_\_m256d}\ \mathsf{r}=\mathop{\mathtt{load}}((\Re{X})_{ij}),\ \mathsf{i}=\mathop{\mathtt{load}}((\Im{X})_{ij});$}
    \COMMENT{$\left[(\Re{x_{rj}})_{r=i}^{i+3}\right],\ \left[(\Im{x_{rj}})_{r=i}^{i+3}\right]$}
    \STATE{$\mathsf{ri}=\mathop{\mathtt{\_mm512\_insertf64x4}}(\mathop{\mathtt{\_mm512\_zextpd256\_pd512}}(\mathsf{r}),\mathsf{i},1);$}
    \COMMENT{$\left[\mathsf{r},\mathsf{i}\right]$}
    \STATE{$\mathsf{il}=\mathop{\mathtt{permutexvar}}(\mathsf{pm},\mathsf{ri});$}
    \COMMENT{$\left[(\Re{x_{rj}},\Im{x_{rj}})_{r=i}^{i+3}\right]$}
    \STATE{$\mathop{\mathtt{store}}((X)_{ij},\mathsf{il});$}
    \COMMENT{store the values with their parts interleaved}
    \ENDFOR\COMMENT{output: $x_j$}
  \end{algorithmic}
\end{algorithm}
\section{Vectorized non-overflowing computation of the Frobenius norm}\label{s:SM6}
Here, a method is proposed in which the exponent range of all partial
sums of the squares of the input array's elements, as well as of the
final result, is sufficiently widened to avoid obtaining an infinite
value for any expected array length, but the number of significant
digits is unaltered from that of the input's datatype
(\texttt{double}).

The method's operation is conceptually equivalent to that of a
vectorized $\mathbf{x}^T\mathbf{x}$ dot-product, shown in
\cref{a:vdp}, and thus their outputs are generally identical when the
number of lanes $\mathtt{s}$ is same for both, except in the cases of
overflowing (or extreme underflowing to zero) of the results of the
\cref{a:vdp}, while the proposed method returns a finite (or non-zero)
representation, respectively, by design.

\begin{algorithm}[hbtp]
  \caption{A vectorized implementation of a dot-product $\mathbf{x}^T\mathbf{x}$ using $\mathrm{fma}$.}
  \label{a:vdp}
  \begin{algorithmic}[1]
    \STATE{$\mathsf{ssq}=\mathop{\mathtt{setzero}}();$}
    \COMMENT{initially, the partial sums of squares is a vector of zeroes}
    \FOR[update $\mathsf{ssq}$]{$i=1$ \TO $m$ \textbf{step} $\mathtt{s}$}
    \STATE{$\mathsf{x}=\mathop{\mathtt{load}}(x_i);$}
    \COMMENT{$\displaystyle\mathsf{x}=\begin{bmatrix}x_i\!&\!x_{i+1}\!&\!\cdots\!&x_{i+\mathtt{s}-1}\end{bmatrix}$}
    \STATE{$\mathsf{ssq}=\mathop{\mathtt{fmadd}}(\mathsf{x},\mathsf{x},\mathsf{ssq});$}
    \COMMENT{$\mathsf{ssq}=\mathsf{x}^T\mathsf{x}+\mathsf{ssq}$}
    \ENDFOR\COMMENT{output: $\mathsf{ssq}$}
    \STATE{sum-reduce $\mathsf{ssq}$ and return the result;}
    \COMMENT{optionally, pre-sort $\mathsf{ssq}$}
  \end{algorithmic}
\end{algorithm}

The final $\mathsf{ssq}$ could be sorted by the algorithm
from~\cite[Appendix~B]{Bramas-17}, and the sum-reduction order in
\cref{a:vdp} could be taken from \cref{a:rnF} for comparison with the
results of the latter, or the reduction could proceed sequentially.
\subsection{Input, output, constraints, and a data representation}\label{ss:SM6.1}
Let $\mathbf{x}$ be an input array, with all its elements finite.  A
partial sum of their squares $r$ (including the resulting
$\|\mathbf{x}\|_F^2$) is represented as $(e,f)$, where $1\le f<2$ is
the ``fractional'' part of $r$, while
$e=\left\lfloor\lg r\right\rfloor$ is the exponent of its power-of-two
scaling factor, such that $r=2^e f$.  Both $e$ and $f$ are
floating-point quantities, and $0=(-\infty,1)$.  For $r>0$, $e$ is a
finite integral value.  For example, $12=2^3\cdot 1.5=(3,1.5)$ and
$80=2^6\cdot 1.25=(6,1.25)$.

To get
$\displaystyle\|\mathbf{x}\|_F^{}=\sqrt{\|\mathbf{x}\|_F^2}=\sqrt{(e,f)}$
when $e$ is finite and odd, take $f'=2f$, $e'=e-1$, and compute
$\|\mathbf{x}\|_F^{}=(e'/2,\mathop{\mathrm{fl}}(\sqrt{f'}))$,
since
$\mathop{\mathrm{fl}}(\sqrt{2})\le\mathop{\mathrm{fl}}(\sqrt{f'})<2$
and $e'$ is even.  For example,
$\sqrt{12}=\sqrt{(3,1.5)}=((3-1)/2,\mathop{\mathrm{fl}}(\sqrt{2\cdot 1.5}))=(1,\mathop{\mathrm{fl}}(\sqrt{3}))$.
If $e$ is infinite or even, set $f'=f$ and $e'=e$; e.g.,
$\sqrt{80}=\sqrt{(6,1.25)}=(3,\mathop{\mathrm{fl}}(\sqrt{1.25}))$.
Let $e''=e'/2$ and $f''=\mathop{\mathrm{fl}}(\sqrt{f'})$.  The exact
value of $2^{e''}\!f''$ could be greater than $\nu$, but
$\|\mathbf{x}\|_F^{}=(e'',f'')$, for $\mathbf{x}\ne\mathbf{0}$,
remains representable by two finite quantities of the input's
datatype.

\looseness=-1
For example, let
$\mathbf{x}=\nu\mathbf{1}=\begin{bmatrix}\nu&\cdots&\nu\end{bmatrix}^T$
of length $m\ge 2$.  Then,
$\|\mathbf{x}\|_F^{}=\nu\sqrt{m}$ (mathematically), so
$\mathop{\mathrm{fl}}(\|\mathbf{x}\|_F^{})=\infty$, but $e''$ is
finite (as well as $f''$) and constrained only by the requirement that
all operations with finite exponents as arguments have to produce the
exact result, as if performed in integer arithmetic, without any
rounding.  All finite exponents throughout the computation should thus
be at most $\check{\nu}$ in magnitude, where, in single precision,
$\check{\nu}=2^{24}$, and in double
precision\footnote{$\check{\nu}=2^{31}-1$ in double precision with
the AVX512F instruction subset only}, $\check{\nu}=2^{53}$.  For
every partial sum $r$ it has to hold $r<2^{\check{\nu}+1}$, a limit
that could hardly ever be reached.
\subsection{Vectorized iterative computation of the partial sums}\label{ss:SM6.2}
Given $\mathbf{x}$ of length $m\ge 1$ (zero-padded to $\tilde{m}$ as
in~\cref{e:pad0}), let
$|\mathsf{x}_{\mathtt{i}}|=(\mathop{\mathtt{E}}(\mathsf{x}_{\mathtt{i}}),\mathop{\mathtt{F}}(\mathsf{x}_{\mathtt{i}}))=(\mathsf{e}_{\mathtt{i}},\mathsf{f}_{\mathtt{i}})$,
where $\mathtt{i}\ge 0$,
$\mathop{\mathtt{E}}(\mathsf{x})=\mathop{\mathtt{getexp}}(\mathsf{x})$,
$\mathop{\mathtt{F}}(\mathsf{x})$ is given by~\cref{e:F}, and
$\mathsf{x}_{\mathtt{i}}=\begin{bmatrix}x_i\!&\!x_{i+1}\!&\!\cdots\!&\!x_{i+\mathtt{s}-1}\end{bmatrix}^T$
is a vector with $\mathtt{s}$ lanes, loaded from a contiguous subarray
of $\mathbf{x}$ with the one-based indices $i$ such that
$\mathtt{i}\mathtt{s}+1\le i\le(\mathtt{i}+1)\mathtt{s}$.

Let, for $\mathtt{i}\ge 0$,
$\mathsf{r}_{\mathtt{i}}=(\mathsf{e}^{(\mathtt{i})},\mathsf{f}^{(\mathtt{i})})$,
be a pair of vectors representing the
$\mathtt{i}$th partial sums, and let $\mathsf{r}_{\mathtt{0}}$
represent $\mathtt{s}$ zeros as
$(\mathsf{e}^{(\mathtt{0})},\mathsf{f}^{(\mathtt{0})})$.
Conceptually, the following operation has to be defined,
\begin{displaymath}
  \mathsf{r}_{\mathtt{i}+1}=\mathop{\mathrm{fma}}(|\mathsf{x}_{\mathtt{i}}|,|\mathsf{x}_{\mathtt{i}}|,\mathsf{r}_{\mathtt{i}}),
\end{displaymath}
i.e.,
$(\mathsf{e}^{(\mathtt{i}+1)},\mathsf{f}^{(\mathtt{i}+1)})=\mathop{\mathrm{fma}}((\mathsf{e}_{\mathtt{i}},\mathsf{f}_{\mathtt{i}}),(\mathsf{e}_{\mathtt{i}},\mathsf{f}_{\mathtt{i}}),(\mathsf{e}^{(\mathtt{i})},\mathsf{f}^{(\mathtt{i})}))$,
where such a function is implemented in terms of SIMD instructions as
follows.

First, define
$\mathsf{r}_{\mathtt{i}}'=(\mathsf{e}^{\prime(\mathtt{i})},\mathsf{f}^{\prime(\mathtt{i})})$
as a ``non-normalized'' representation of the same values as
$\mathsf{r}_{\mathtt{i}}^{}$, but with each exponent even or
infinite.  To obtain $\mathsf{r}_{\mathtt{i}}'$ from
$\mathsf{r}_{\mathtt{i}}^{}$, $\mathsf{e}^{(\mathtt{i})}$ has to be
converted to a vector $\check{\mathsf{e}}^{(\mathtt{i})}$ of signed
integers and then their least significant bits have to be extracted.
In the AVX512F instruction subset, only a conversion from
floating-point values to 32-bit integers is natively
supported\footnote{With AVX512DQ instructions a double precision
$\mathsf{e}^{(\mathtt{i})}$ can be converted to 64-bit integers.}, so
the integral exponents are obtained as
$\check{\mathsf{e}}_{\mathrm{all}}^{(\mathtt{i})}=\mathop{\mathtt{\_mm512\_cvtpX\_epi32}}(\mathsf{e}^{(\mathtt{i})})$,
$\mathtt{X}\in\{\mathtt{s},\mathtt{d}\}$, where $-\infty$ gets
converted to $\mathtt{INT\_MIN}=-2^{31}$.  The least significant bit
of each exponent is extracted by the bitwise-and operation as
$\check{\mathsf{e}}_{\mathrm{lsb}}^{(\mathtt{i})}=\mathop{\mathtt{\_mm256\_and\_si256}}(\check{\mathsf{e}}_{\mathrm{all}}^{(\mathtt{i})},\mathop{\mathtt{\_mm256\_set1\_epi32}}(1))$,
where a lane of the result is one if and only if the corresponding
exponent is odd, and zero otherwise.  Then,
$\check{\mathsf{e}}_{\mathrm{lsb}}^{(\mathtt{i})}$ is converted to a
floating-point vector
$\hat{\mathsf{e}}^{(\mathtt{i})}=\mathop{\mathtt{cvtepi32}}(\check{\mathsf{e}}_{\mathrm{lsb}}^{(\mathtt{i})})$.
It can now be defined that
$\mathsf{e}^{\prime(\mathtt{i})}=\mathop{\mathtt{sub}}(\mathsf{e}^{(\mathtt{i})},\hat{\mathsf{e}}^{(\mathtt{i})})$
and
$\mathsf{f}^{\prime(\mathtt{i})}=\mathop{\mathtt{scalef}}(\mathsf{f}^{(\mathtt{i})},\hat{\mathsf{e}}^{(\mathtt{i})})$.
This process of computing
$(\mathsf{e}^{\prime(\mathtt{i})},\mathsf{f}^{\prime(\mathtt{i})})$
from $(\mathsf{e}^{(\mathtt{i})},\mathsf{f}^{(\mathtt{i})})$ is a
vectorization of the scalar computation of $(e',f')$ from $(e,f)$,
given in \cref{ss:SM6.1}, but with a different purpose.

With $\mathsf{1}$ being a vector of all ones, and
$\hat{\mathsf{e}}_{\mathtt{i}}=\mathop{\mathtt{scalef}}(\mathsf{e}_{\mathtt{i}},\mathsf{1})$,
observe that the ``normalized'' exponents of
$|\mathsf{x}_{\mathtt{i}}|^{\mathfrak{2}}$, lane-wise, should be
either those from $\hat{\mathsf{e}}_{\mathtt{i}}$ (all even or
infinite) or greater by one, since
$\mathsf{1}\le\mathsf{f}_{\mathtt{i}}^{\mathfrak{2}}<\mathsf{4}$.
Therefore, a provisory, non-normalized exponent vector of
$|\mathsf{x}_{\mathtt{i}}^{}|^{\mathfrak{2}}+\mathsf{r}_{\mathtt{i}}'$
can be taken as
$\mathsf{e}_{\max}=\mathop{\mathtt{max}}(\hat{\mathsf{e}}_{\mathtt{i}},\mathsf{e}^{\prime(\mathtt{i})})$,
in which all exponents are even or infinite since the same holds for
both $\hat{\mathsf{e}}_{\mathtt{i}}$ and
$\mathsf{e}^{\prime(\mathtt{i})}$ by design.

In general, $\hat{\mathsf{e}}_{\mathtt{i}}^{}$ and
$\mathsf{e}^{\prime(\mathtt{i})}$ are not equal.  To compute
$|\mathsf{x}_{\mathtt{i}}^{}|^{\mathfrak{2}}+\mathsf{r}_{\mathtt{i}}'$,
the exponent of each scalar addend has to be equalized such that the
fractional part of the addend having the smaller exponent
$e_{\min}^{}$ is scaled by two to the power of the difference of
$e_{\min}^{}$ and the larger exponent $e_{\max}^{}$.  Having the
exponents equalized to $\mathsf{e}_{\max}$, the computation can
proceed with the scaled fractional parts and the fused multiply-add
operation.

Let
$\hat{\mathsf{e}}_{\mathtt{i}}'=\mathop{\mathtt{max}}(\mathop{\mathtt{sub}}(\hat{\mathsf{e}}_{\mathtt{i}}^{},\mathsf{e}_{\max}^{}),-\mathsf{\infty})$,
with the maximum taken to filter out a possible $\mathtt{NaN}$ as the
result of $-\infty-(-\infty)$, when instead $-\infty$ is desired, and
similarly let
$\mathsf{e}^{\prime\prime(\mathtt{i})}=\mathop{\mathtt{max}}(\mathop{\mathtt{sub}}(\mathsf{e}^{\prime(\mathtt{i})},\mathsf{e}_{\max}^{}),-\mathsf{\infty})$.
Since all lanes of $\hat{\mathsf{e}}_{\mathtt{i}}'$ are even or
infinite, they can be divided by two as
$\hat{\mathsf{e}}_{\mathtt{i}}''=\mathop{\mathtt{scalef}}(\hat{\mathsf{e}}_{\mathtt{i}}',-\mathsf{1})$.
The scaling of $\mathsf{f}_{\mathtt{i}}^{\mathfrak{2}}$ by
$\mathsf{2}^{\hat{\mathsf{e}}_{\mathtt{i}}'}$ is mathematically
equivalent to the scaling of $\mathsf{f}_{\mathtt{i}}^{}$ by
$\mathsf{2}^{\hat{\mathsf{e}}_{\mathtt{i}}''}$ before squaring the
result, but the latter does not require explicit computation of
$\mathsf{f}_{\mathtt{i}}^{\mathfrak{2}}\mathsf{2}^{\hat{\mathsf{e}}_{\mathtt{i}}'}$
and thus has a lower potential for underflow, since the squaring
happens as a part of an $\mathrm{fma}$ computation, and such an
intermediate result is not rounded.  Let
$\hat{\mathsf{f}}_{\mathtt{i}}^{}=\mathop{\mathtt{scalef}}(\mathsf{f}_{\mathtt{i}}^{},\hat{\mathsf{e}}_{\mathtt{i}}'')$,
$\hat{\mathsf{f}}^{\prime(\mathtt{i})}=\mathop{\mathtt{scalef}}(\mathsf{f}^{\prime(\mathtt{i})},\mathsf{e}^{\prime\prime(\mathtt{i})})$,
and define
\begin{displaymath}
  \mathsf{r}_{\mathtt{i}+1}'=(\mathsf{e}^{\prime(\mathtt{i}+1)},\mathsf{f}^{\prime(\mathtt{i}+1)})=(\mathsf{e}_{\max}^{},\mathop{\mathtt{fmadd}}(\hat{\mathsf{f}}_{\mathtt{i}}^{},\hat{\mathsf{f}}_{\mathtt{i}}^{},\hat{\mathsf{f}}^{\prime(\mathtt{i})}))
\end{displaymath}
as a non-normalized representation of the value of $\mathsf{r}_{\mathtt{i}+1}^{}$.
To get its normalized form, let
$\mathsf{f}^{(\mathtt{i}+1)}=\mathop{\mathtt{F}}(\mathsf{f}^{\prime(\mathtt{i}+1)})$,
$\mathsf{e}^{(\mathtt{i}+1)}=\mathop{\mathtt{add}}(\mathsf{e}^{\prime(\mathtt{i}+1)},\mathop{\mathtt{E}}(\mathsf{f}^{\prime(\mathtt{i}+1)}))$,
and
$\mathsf{r}_{\mathtt{i}+1}^{}=(\mathsf{e}^{(\mathtt{i}+1)},\mathsf{f}^{(\mathtt{i}+1)})$.

In the case of a complex input in the form~\cref{e:layout1}, in
each iteration the first $\mathsf{x}_{\mathtt{i}}$ is taken from
$\Re{\tilde{\mathbf{z}}}$, and the second one from
$\Im{\tilde{\mathbf{z}}}$.  After $\mathtt{m}=\tilde{m}/\mathtt{s}$
iterations, this main part of the method terminates, with the final
partial sums $\mathsf{r}_{\mathtt{m}}$ in the normalized form.

\paragraph{Prefetching $\mathsf{x}_{\mathtt{i}}$}
Before the main loop, $\mathsf{x}_{\mathtt{0}}$ is prefetched to the
L1 cache, as well as each $\mathsf{x}_{\mathtt{i}+1}$ at the start of
the $\mathtt{i}$th iteration, using
$\mathop{\mathtt{\_mm\_prefetch}}(\cdots,\mathtt{\_MM\_HINT\_T0})$.
\subsection{Horizontal reduction of the final partial sums}\label{ss:SM6.3}
Now $\mathsf{r}_{\mathtt{m}}$ has to be sum-reduced horizontally.
If $l$ indexes the SIMD lanes of $\mathsf{e}^{(\mathtt{m})}$ and
$\mathsf{f}^{(\mathtt{m})}$, then
\begin{equation}
  \|\mathbf{x}\|_F^2=\sum_{l=1}^{\mathtt{s}}(\mathsf{e}_l^{(\mathtt{m})},\mathsf{f}_l^{(\mathtt{m})})=\sum_{l=1}^{\mathtt{s}}(e_l^{},f_l^{})=(e,f),
  \label{e:nred}
\end{equation}
\subsubsection{Vectorized sorting of the final partial sums}\label{sss:SM6.3.1}
Accuracy of the result $(e,f)$ might be improved by sorting the
$(e_l,f_l)$ pairs lexicographically (i.e., non-decreasingly by the
values they represent) before the summation, such that
\begin{equation}
  (e_i,f_i)\le(e_j,f_j)\iff(e_i<e_j)\vee((e_i=e_j)\wedge(f_i\le f_j)).
  \label{e:vsort}
\end{equation}

If~\cref{e:nred} is evaluated such that the values closest by
magnitude are added together, there is less chance that a relatively
small value gets ignored, i.e., does not affect the result.  A
vectorized method that establishes the ordering~\cref{e:vsort} was
designed as an extension of the AVX512F-vectorized~\cite{Bramas-17}
Batcher's bitonic sort~\cite{Batcher-68} of a double precision vector
to the pair $(\mathsf{e}^{(\mathtt{m})},\mathsf{f}^{(\mathtt{m})})$ of
vectors with the comparison operator of the $i$th and the $j$th lane
given by~\cref{e:vsort}.

In each sorting stage $\ell$, $1\le\ell\le 6$, the code from
\cref{a:bitonic} is executed, where
$\mathsf{e}_{[1]}^{}=\mathsf{e}^{(\mathtt{m})}$ and
$\mathsf{f}_{[1]}^{}=\mathsf{f}^{(\mathtt{m})}$, respectively,
$\mathsf{p}_{\ell}^{}$ is a permutation vector, and
$\mathfrak{b}_{\ell}^{}$ is a bitmask, as given in \cref{t:bitonic}
and defined in~\cite{Bramas-17}.   The vectors
$\mathsf{e}_{[\ell]}^{}$ and $\mathsf{f}_{[\ell]}^{}$, and their
permutations $\mathsf{e}_{[\ell]}'$ and $\mathsf{f}_{[\ell]}'$, are
compared according to~\cref{e:vsort}, resulting in a bitmask
$\mathfrak{m}_{[\ell]}^{\le}$, using which the lane-wise minimums
($\mathsf{e}_{[\ell]}^{\le}$ and $\mathsf{f}_{[\ell]}^{\le}$) and the
maximums ($\mathsf{e}_{[\ell]}^>$ and $\mathsf{f}_{[\ell]}^>$) are
extracted.  Then, the inter-lane exchanges of values in
$\mathsf{e}_{[\ell]}^{}$ as well as in $\mathsf{f}_{[\ell]}^{}$,
according to $\mathfrak{b}_{\ell}^{}$, form the new sequences
$\mathsf{e}_{[\ell+1]}^{}$ and $\mathsf{f}_{[\ell+1]}^{}$ for the next
stage.

\begin{algorithm}[hbtp]
  \caption{The AVX512F compare-and-exchange operation
    from~\cite[Appendix~B]{Bramas-17} extended to a pair
    $(\mathsf{e}_{[\ell]},\mathsf{f}_{[\ell]})$ of double precision
    vectors according to~\cref{e:vsort}.}
  \label{a:bitonic}
  \begin{algorithmic}[1]
    \FOR[with $\mathtt{s}=8$ double precision lanes per vector]{$\ell=1$ \TO $6$}
    \STATE{$\mathsf{e}_{[\ell]}'=\mathop{\mathtt{permutexvar}}(\mathsf{p}_{\ell}^{},\mathsf{e}_{[\ell]}^{});$}
    \COMMENT{permute $\mathsf{e}_{[\ell]}^{}$ w.r.t.\ $\mathsf{p}_{\ell}^{}$}
    \STATE{$\mathsf{f}_{[\ell]}'=\mathop{\mathtt{permutexvar}}(\mathsf{p}_{\ell}^{},\mathsf{f}_{[\ell]}^{});$}
    \COMMENT{permute $\mathsf{f}_{[\ell]}^{}$ w.r.t.\ $\mathsf{p}_{\ell}^{}$}
    \STATE{$\mathfrak{m}_{\ell}^==\mathop{\mathtt{\_mm512\_cmpeq\_pd\_mask}}(\mathsf{e}_{[\ell]}^{},\mathsf{e}_{[\ell]}');$}
    \COMMENT{where is $\mathsf{e}_{[\ell]}^{}=\mathsf{e}_{[\ell]}'$ \ldots}
    \STATE{$\mathfrak{m}_{\ell}^{\mathsf{f}}=\mathop{\mathtt{\_mm512\_mask\_cmple\_pd\_mask}}(\mathfrak{m}_{\ell}^=,\mathsf{f}_{[\ell]}^{},\mathsf{f}_{[\ell]}');$}
    \COMMENT{\ldots\ and $\mathsf{f}_{[\ell]}^{}\le\mathsf{f}_{[\ell]}'$}
    \STATE{$\mathfrak{m}_{\ell}^{\mathsf{e}}=\mathop{\mathtt{\_mm512\_cmplt\_pd\_mask}}(\mathsf{e}_{[\ell]}^{},\mathsf{e}_{[\ell]}');$}
    \COMMENT{where is $\mathsf{e}_{[\ell]}^{}<\mathsf{e}_{[\ell]}'$}
    \STATE{$\mathfrak{m}_{\ell}^{\le}=(\mathtt{\_\_mmask8})\mathop{\mathtt{\_kor\_mask16}}(\mathfrak{m}_{\ell}^{\mathsf{e}},\mathfrak{m}_{\ell}^{\mathsf{f}});$}
    \COMMENT{bitmask of~\cref{e:vsort}}
    \STATE{$\mathsf{e}_{[\ell]}^{\le}=\mathop{\mathtt{mask\_blend}}(\mathfrak{m}_{\ell}^{\le},\mathsf{e}_{[\ell]}',\mathsf{e}_{[\ell]}^{});$}
    \COMMENT{$e$-s of smaller values}
    \STATE{$\mathsf{e}_{[\ell]}^>=\mathop{\mathtt{mask\_blend}}(\mathfrak{m}_{\ell}^{\le},\mathsf{e}_{[\ell]}^{},\mathsf{e}_{[\ell]}');$}
    \COMMENT{$e$-s of larger values}
    \STATE{$\mathsf{f}_{[\ell]}^{\le}=\mathop{\mathtt{mask\_blend}}(\mathfrak{m}_{\ell}^{\le},\mathsf{f}_{[\ell]}',\mathsf{f}_{[\ell]}^{});$}
    \COMMENT{$f$-s of smaller values}
    \STATE{$\mathsf{f}_{[\ell]}^>=\mathop{\mathtt{mask\_blend}}(\mathfrak{m}_{\ell}^{\le},\mathsf{f}_{[\ell]}^{},\mathsf{f}_{[\ell]}');$}
    \COMMENT{$f$-s of larger values}
    \STATE{$\mathsf{e}_{[\ell+1]}^{}=\mathop{\mathtt{mask\_mov}}(\mathsf{e}_{[\ell]}^{\le},\mathfrak{b}_{\ell}^{},\mathsf{e}_{[\ell]}^>);$}
    \COMMENT{lane exchanges $\mathsf{e}_{[\ell]}^{}\to\mathsf{e}_{[\ell+1]}^{}$}
    \STATE{$\mathsf{f}_{[\ell+1]}^{}=\mathop{\mathtt{mask\_mov}}(\mathsf{f}_{[\ell]}^{\le},\mathfrak{b}_{\ell}^{},\mathsf{f}_{[\ell]}^>);$}
    \COMMENT{lane exchanges $\mathsf{f}_{[\ell]}^{}\to\mathsf{f}_{[\ell+1]}^{}$}
    \ENDFOR\COMMENT{output: $(\mathsf{e}_{[7]}^{},\mathsf{f}_{[7]}^{})$}
  \end{algorithmic}
\end{algorithm}

\begin{table}[hbtp]
\begin{displaymath}
  \begin{array}{ccc|ccc}
    \ell&\mathsf{p}_{\ell}\text{ (high-to-low)}&\mathfrak{b}_{\ell}&\ell&\mathsf{p}_{\ell}\text{ (high-to-low)}&\mathfrak{b}_{\ell}\\\hline
    1&[6,7,4,5,2,3,0,1]&(\mathtt{AA})_{16}&4&[0,1,2,3,4,5,6,7]&(\mathtt{F0})_{16}\\
    2&[4,5,6,7,0,1,2,3]&(\mathtt{CC})_{16}&5&[5,4,7,6,1,0,3,2]&(\mathtt{CC})_{16}\\
    3&[6,7,4,5,2,3,0,1]&(\mathtt{AA})_{16}&6&[6,7,4,5,2,3,0,1]&(\mathtt{AA})_{16}
  \end{array}
\end{displaymath}
\caption{The values of the permutation index vector
  $\mathsf{p}_{\ell}$ (from the highest to the lowest lane, i.e., in
  the order of the arguments of
  $\mathop{\mathtt{\_mm512\_set\_epi64}}$) and of the bitmask
  $\mathfrak{b}_{\ell}$ for each stage $\ell$ of the bitonic sort from
  \cref{a:bitonic}, as defined in~\cite{Bramas-17}.}
\label{t:bitonic}
\end{table}

After the sorting has taken place,
$(\mathsf{e}^{(\mathtt{m})},\mathsf{f}^{(\mathtt{m})})$ is redefined
as $(\mathsf{e}_{[7]}^{},\mathsf{f}_{[7]}^{})$, since the represented
values have not changed, but are now in a possibly different order.
In single precision, and generally with a different power-of-two
number of lanes $\mathtt{s}$, \cref{a:bitonic} and \cref{t:bitonic}
have to be reimplemented for the new sorting network.

\paragraph{Sorting $\mathsf{x}_{\mathtt{i}}$}
In \cref{a:rnF}, after $\mathsf{x}_{\mathtt{i}}$ had been loaded, its
absolute values could have been taken and sorted as a double precision
vector by the procedure from~\cite[Appendix~B]{Bramas-17}, before
forming $|\mathsf{x}_{\mathtt{i}}|$.  This way the post-iteration
sorting could have been rendered redundant, at the expense of more
work in the main loop.
\subsubsection{Vectorized pairwise reduction of the final partial sums}\label{sss:SM6.3.2}
For reproducibility of \cref{a:rnF} to depend on $\mathtt{s}$ only,
the summation~\cref{e:nred} should always be performed in the same
fashion.  Let $(e_i^{[0]},f_i^{[0]})=(e_i^{},f_i^{})$, and define
\begin{equation}
  (e_i^{[j]},f_i^{[j]})=(e_{2i-1}^{[j-1]},f_{2i-1}^{[j-1]})+(e_{2i}^{[j-1]},f_{2i}^{[j-1]})
  \label{e:1red}
\end{equation}
for $1\le j\le\lg\mathtt{s}$ and $1\le i\le\mathtt{s}/2^j$.  Then,
$(e,f)=(e_1^{[\lg\mathtt{s}]},f_1^{[\lg\mathtt{s}]})$.

Assuming that $(\mathsf{e}^{(\mathtt{m})},\mathsf{f}^{(\mathtt{m})})$
have been sorted as in~\cref{e:vsort}, for all $j\ge 0$ and $i$, where
$1\le i\le\mathtt{s}/2^j$, it holds
$(e_i^{[j]},f_i^{[j]})\le(e_{i+1}^{[j]},f_{i+1}^{[j]})$, what can be
proven by induction on $j$.\\For $j=0$, the statement is a direct
consequence of the sorting, and for $j>0$, it follows
from~\cref{e:1red}.  Consequently, $e_i^{[j]}\le e_{i+1}^{[j]}$ for
all $j$ and odd $i$, what simplifies the addition of the adjacent
pairs from~\cref{e:1red} to
\begin{equation}
  (e_{2i-1}^{[j-1]},f_{2i-1}^{[j-1]})+(e_{2i}^{[j-1]},f_{2i}^{[j-1]})=(e_{2i}^{[j-1]},2^{e_{2i-1}^{[j-1]}-e_{2i}^{[j-1]}}f_{2i-1}^{[j-1]}+f_{2i}^{[j-1]}),
  \label{e:redj}
\end{equation}
followed by the normalization of the result
$(e_i^{\prime[j]},f_i^{\prime[j]})$.  The reduction~\cref{e:nred}
with this addition operator is vectorized as follows.

Define $\mathsf{c}^{[0]}=\mathsf{c}^{(\mathtt{m})}$, where
$\mathsf{c}\in\{\mathsf{e},\mathsf{f}\}$.  Let
$\mathfrak{m}^{[0]}=(01010101)_2$ be a lane bit-mask with every other
bit set, $\tilde{\mathfrak{m}}^{[0]}=(10101010)_2$ its bitwise
complement, and $\mathfrak{m}^{[j]}$ and $\tilde{\mathfrak{m}}^{[j]}$
bit-masks with the lowest $\mathtt{s}/2^j$ bits taken from
$\mathfrak{m}^{[0]}$ and $\tilde{\mathfrak{m}}^{[0]}$, respectively,
while the higher bits are zero.  Extend the masks analogously if
$\mathtt{s}=16$.

For each $j\ge 1$, extract the odd-indexed and the even-indexed parts
of $\mathsf{c}^{[j-1]}$, as in the right hand side of~\cref{e:1red},
into the $\mathtt{s}/2^j$ contiguous lower lanes of
$\mathsf{a}_{\mathsf{c}}^{[j-1]}$ and
$\mathsf{b}_{\mathsf{c}}^{[j-1]}$, respectively, where
\begin{align*}
  \mathsf{a}_{\mathsf{c}}^{[j-1]}&=\mathop{\mathtt{maskz\_compress}}(\mathfrak{m}^{[j-1]},\mathsf{c}^{[j-1]})=(0,\ldots,0,(c_{2i-1}^{[j-1]})_i),\\
  \mathsf{b}_{\mathsf{c}}^{[j-1]}&=\mathop{\mathtt{maskz\_compress}}(\tilde{\mathfrak{m}}^{[j-1]},\mathsf{c}^{[j-1]})=(0,\ldots,0,(c_{2i}^{[j-1]})_i),
\end{align*}
and $c\in\{e,f\}$.  Form the non-normalized $\mathsf{e}^{\prime[j]}$
as $\mathsf{b}_{\mathsf{e}}^{[j-1]}$ and $\mathsf{f}^{\prime[j]}$,
from~\cref{e:redj}, as
\begin{displaymath}
  \mathop{\mathtt{fmadd}}(\mathop{\mathtt{scalef}}(\mathsf{1},\mathop{\mathtt{max}}(\mathop{\mathtt{sub}}(\mathsf{a}_{\mathsf{e}}^{[j-1]},\mathsf{b}_{\mathsf{e}}^{[j-1]}),-\mathsf{\infty}),\mathsf{a}_{\mathsf{f}}^{[j-1]},\mathsf{b}_{\mathsf{f}}^{[j-1]}).
\end{displaymath}
Normalize the result as
$\mathsf{f}^{[j]}=\mathop{\mathtt{F}}(\mathsf{f}^{\prime[j]})$ and
$\mathsf{e}^{[j]}=\mathop{\mathtt{add}}(\mathsf{e}^{\prime[j]},\mathop{\mathtt{E}}(\mathsf{f}^{\prime[j]}))$.

\looseness=-1
After $\lg\mathtt{s}$ reduction stages the lowest lanes of
$\mathsf{e}^{[\lg\mathtt{s}]}$ and $\mathsf{f}^{[\lg\mathsf{s}]}$
contain $e$ and $f$, respectively.  Finally, $\|\mathbf{x}\|_F^{}$ is
computed from $\|\mathbf{x}\|_F^2$ using scalar operations, as
described in \cref{ss:SM6.1}.  The Frobenius-norm method is
summarized in \cref{a:rnF}.

\begin{algorithm}[hbtp]
  \caption{Vectorized Frobenius norm computation of a one-dimensional
    double precision array, assuming $\mathtt{s}=8$ lanes per vector
    and the AVX-512 instructions.}
  \label{a:rnF}
  \begin{algorithmic}[1]
    \REQUIRE{$\mathbf{x}$ is a properly aligned double precision array with $\tilde{m}$ elements, all finite}
    \ENSURE{a finite approximation of $\|\mathbf{x}\|_F$ is computed, represented as $(e'',f'')$}
    \STATE{let $\mathfrak{1},\mskip-1mu\mathsf{1},\mskip-1mu-\mathsf{1},\mskip-1mu-\mathsf{\infty}$ be vectors of ones (integer, real) and negative ones and infinities;}
    \STATE{let $\mathsf{r}_{\mathtt{0}}^{}=(\mathsf{e}^{(\mathtt{0})},\mathsf{f}^{(\mathtt{0})})=(-\mathsf{\infty},\mathsf{1})$ be a pair of vectors representing $\mathtt{s}$ zeros;}
    \FOR[$\mathsf{r}_{\mathtt{i}+1}=\mathop{\mathrm{fma}}(|\mathsf{x}_{\mathtt{i}}|,|\mathsf{x}_{\mathtt{i}}|,\mathsf{r}_{\mathtt{i}})$ as in \cref{ss:SM6.2}]{$\mathtt{i}=0$ \TO $(\mathtt{m}=\tilde{m}/\mathtt{s})-1$\label{nl:3}}
    \STATE{$\mathsf{x}_{\mathtt{i}}=\begin{bmatrix}x_i\!&\!x_{i+1}\!&\!\cdots\!&\!x_{i+\mathtt{s}-1}\end{bmatrix}^T$, where $\mathtt{i}\mathtt{s}+1\le i\le(\mathtt{i}+1)\mathtt{s}$;}
    \COMMENT{$i$ is one-based}
    \STATE{$\check{\mathsf{e}}_{\mathrm{lsb}}^{(\mathtt{i})}=\mathop{\mathtt{\_mm512\_and\_epi64}}(\mathop{\mathtt{\_mm512\_cvtpd\_epi64}}(\mathsf{e}^{(\mathtt{i})}),\mathfrak{1});$}
    \COMMENT{AVX512DQ}
    \STATE{$\hat{\mathsf{e}}^{(\mathtt{i})}=\mathop{\mathtt{cvtepi64}}(\check{\mathsf{e}}_{\mathrm{lsb}}^{(\mathtt{i})});\quad
      \mathsf{e}^{\prime(\mathtt{i})}=\mathop{\mathtt{sub}}(\mathsf{e}^{(\mathtt{i})},\hat{\mathsf{e}}^{(\mathtt{i})});\quad
      \mathsf{f}^{\prime(\mathtt{i})}=\mathop{\mathtt{scalef}}(\mathsf{f}^{(\mathtt{i})},\hat{\mathsf{e}}^{(\mathtt{i})});$}
    \STATE{$\mathsf{r}_{\mathtt{i}}'=(\mathsf{e}^{\prime(\mathtt{i})},\mathsf{f}^{\prime(\mathtt{i})});$}
    \COMMENT{representation of $\mathsf{r}_{\mathtt{i}}^{}$ where $\mathsf{e}^{\prime(\mathtt{i})}$ has even or infinite values}
    \STATE{$|\mathsf{x}_{\mathtt{i}}|=(\mathop{\mathtt{E}}(\mathsf{x}_{\mathtt{i}}),\mathop{\mathtt{F}}(\mathsf{x}_{\mathtt{i}}))=(\mathsf{e}_{\mathtt{i}},\mathsf{f}_{\mathtt{i}});$}
    \STATE{$\hat{\mathsf{e}}_{\mathtt{i}}=\mathop{\mathtt{scalef}}(\mathsf{e}_{\mathtt{i}},\mathsf{1});\quad
      \mathsf{e}^{\prime(\mathtt{i}+1)}=\mathsf{e}_{\max}=\mathop{\mathtt{max}}(\hat{\mathsf{e}}_{\mathtt{i}},\mathsf{e}^{\prime(\mathtt{i})});$}
    \COMMENT{$\lessapprox$ the exponents of $\mathsf{r}_{\mathtt{i}+1}^{}$}
    \STATE{$\hat{\mathsf{e}}_{\mathtt{i}}'=\mathop{\mathtt{max}}(\mathop{\mathtt{sub}}(\hat{\mathsf{e}}_{\mathtt{i}}^{},\mathsf{e}_{\max}^{}),-\mathsf{\infty});\ \
      \mathsf{e}^{\prime\prime(\mathtt{i})}=\mathop{\mathtt{max}}(\mathop{\mathtt{sub}}(\mathsf{e}^{\prime(\mathtt{i})},\mathsf{e}_{\max}^{}),-\mathsf{\infty});$}
    \COMMENT{differences}
    \STATE{$\hat{\mathsf{e}}_{\mathtt{i}}''=\mathop{\mathtt{scalef}}(\hat{\mathsf{e}}_{\mathtt{i}}',-\mathsf{1});\quad
      \hat{\mathsf{f}}_{\mathtt{i}}^{}=\mathop{\mathtt{scalef}}(\mathsf{f}_{\mathtt{i}}^{},\hat{\mathsf{e}}_{\mathtt{i}}'');\quad
      \hat{\mathsf{f}}^{\prime(\mathtt{i})}=\mathop{\mathtt{scalef}}(\mathsf{f}^{\prime(\mathtt{i})},\mathsf{e}^{\prime\prime(\mathtt{i})});$}
    \STATE{$\mathsf{f}^{\prime(\mathtt{i}+1)}=\mathop{\mathtt{fmadd}}(\hat{\mathsf{f}}_{\mathtt{i}}^{},\hat{\mathsf{f}}_{\mathtt{i}}^{},\hat{\mathsf{f}}^{\prime(\mathtt{i})});\quad
      \mathsf{f}^{(\mathtt{i}+1)}=\mathop{\mathtt{F}}(\mathsf{f}^{\prime(\mathtt{i}+1)});$}
    \COMMENT{$\hat{\mathsf{f}}_{\mathtt{i}}^{\mathfrak{2}}+\hat{\mathsf{f}}^{\prime(\mathtt{i})}$ and normalization}
    \STATE{$\mathsf{e}^{(\mathtt{i}+1)}=\mathop{\mathtt{add}}(\mathsf{e}^{\prime(\mathtt{i}+1)},\mathop{\mathtt{E}}(\mathsf{f}^{\prime(\mathtt{i}+1)}));\quad
      \mathsf{r}_{\mathtt{i}+1}^{}=(\mathsf{e}^{(\mathtt{i}+1)},\mathsf{f}^{(\mathtt{i}+1)});$}
    \COMMENT{exps.\ normalized}
    \ENDFOR\COMMENT{output: $\mathsf{r}_{\mathtt{m}}$\label{nl:14}}
    \STATE{sort $\mathsf{r}_{\mathtt{m}}$ according to~\cref{e:vsort} from \cref{sss:SM6.3.1} by \cref{a:bitonic};}
    \STATE{let $\mathsf{c}^{[0]}=\mathsf{c}^{(\mathtt{m})}$, where $\mathsf{c}\in\{\mathsf{e},\mathsf{f}\}$, and $\mathfrak{m}^{[0]}=(01010101)_2$, $\tilde{\mathfrak{m}}^{[0]}=(10101010)_2$;}
    \FOR[reduction~\cref{e:nred} of $\mathsf{r}_{\mathtt{m}}^{}$ to $(e,f)$ as in \cref{sss:SM6.3.2}]{$j=1$ \TO $\lg{\mathtt{s}}$}
    \STATE{$\mathsf{a}_{\mathsf{c}}^{[j-1]}=\mathop{\mathtt{maskz\_compress}}(\mathfrak{m}^{[j-1]},\mathsf{c}^{[j-1]});$}
    \COMMENT{$\mathfrak{m}^{[j-1]}$-indexed parts of $\mathsf{c}\in\{\mathsf{e},\mathsf{f}\}$}
    \STATE{$\mathsf{b}_{\mathsf{c}}^{[j-1]}=\mathop{\mathtt{maskz\_compress}}(\tilde{\mathfrak{m}}^{[j-1]},\mathsf{c}^{[j-1]});$}
    \COMMENT{$\tilde{\mathfrak{m}}^{[j-1]}$-indexed parts of $\mathsf{c}\in\{\mathsf{e},\mathsf{f}\}$}
    \STATE{$\mathsf{f}^{\prime[j]}=\mathop{\mathtt{fmadd}}(\mathop{\mathtt{scalef}}(\mathsf{1},\mathop{\mathtt{max}}(\mathop{\mathtt{sub}}(\mathsf{a}_{\mathsf{e}}^{[j-1]},\mathsf{b}_{\mathsf{e}}^{[j-1]}),-\mathsf{\infty}),\mathsf{a}_{\mathsf{f}}^{[j-1]},\mathsf{b}_{\mathsf{f}}^{[j-1]});$}
    \COMMENT{\cref{e:redj}}
    \STATE{$\mathsf{f}^{[j]}=\mathop{\mathtt{F}}(\mathsf{f}^{\prime[j]});\quad
      \mathsf{e}^{\prime[j]}=\mathsf{b}_{\mathsf{e}}^{[j-1]};\quad
      \mathsf{e}^{[j]}=\mathop{\mathtt{add}}(\mathsf{e}^{\prime[j]},\mathop{\mathtt{E}}(\mathsf{f}^{\prime[j]}));$}
    \COMMENT{normalization}
    \STATE{shift the bitmasks $\mathfrak{m}^{[j-1]}$ and $\tilde{\mathfrak{m}}^{[j-1]}$ $\mathtt{s}/2^j$ bits to the right to get $\mathfrak{m}^{[j]}$ and $\tilde{\mathfrak{m}}^{[j]}$;}
    \ENDFOR\COMMENT{output: $(\mathsf{e}^{[3]},\mathsf{f}^{[3]})$}
    \STATE{let $(e,f)=(\mathsf{e}_1^{[\lg\mathtt{s}]},\mathsf{f}_1^{[\lg\mathsf{s}]})$ and compute $(e'',f'')$ as in \cref{ss:SM6.1};}
    \COMMENT{scalars\label{nl:24}}
    \RETURN{$(e'',f'')$;}
    \COMMENT{also $(e,f)$ and a double precision value $\mathop{\mathrm{fl}}(2^{e''}\!f'')$ for reference}
  \end{algorithmic}
\end{algorithm}

All operations from \cref{a:rnF}, except at line~\ref{nl:24} and for
the main iteration control, are branch-, division-, and
square-root-free, and of similar complexity.  The main loop at
lines~\ref{nl:3}--\ref{nl:14} requires at minimum $20$ vector
arithmetic operations, compared to only one $\mathtt{fmadd}$ in the
standard dot-product computation, but less than $5\times\mskip-2mu$
and $19\times\mskip-2mu$ slowdown is achieved, as shown in
\cref{f:SM6.3,f:SM6.4}, respectively, for $\tilde{m}=m=2^{30}$.  For
$\mathtt{s}$ fixed, the other loops can be unrolled, and together with
the remaining parts of \cref{a:rnF} require a constant number of
operations.
\subsubsection{Sequential reduction of the final partial sums}\label{sss:SM6.3.3}
Instead of vectorizing it, the horizontal reduction could have been
performed sequentially, from the first to the last partial sum,
assuming $(e_i,f_i)\le(e_{i+1},f_{i+1})$, similarly to~\cref{e:redj}
as
\begin{equation}
  (e_{i+1}',f_{i+1}')=(e_i^{},f_{i}^{})+(e_{i+1}^{},f_{i+1}^{})=(e_{i+1}^{},2^{e_i^{}-e_{i+1}^{}}f_i^{}+f_{i+1}^{}),\quad
  1\le i<\mathtt{s}.
  \label{e:reds}
\end{equation}
If $(e_i^{},f_i^{})>(e_{i+1}^{},f_{i+1}^{})$, these two pairs are
swapped.  Having computed~\cref{e:reds} by scalar operations,
$(e_{i+1}^{},f_{i+1}^{})$ is replaced in-place by
$(e_{i+1}',f_{i+1}')$ and $i$ is incremented.

This reduction variant was used in the numerical testing from
\cref{ss:SM6.4}, since its order of summation had a chance of being
more accurate than that of the vectorized variant from
\cref{sss:SM6.3.2}, with the final partial sums sorted as in
\cref{sss:SM6.3.1}.  Note that the sequential reduction does not
presuppose, but can benefit from, this initial sorting.  Finally, the
reduction's result is $(e_{\mathtt{s}}',f_{\mathtt{s}}')=(e,f)$.
\subsection{Numerical testing}\label{ss:SM6.4}
\looseness=-1
\Cref{s:5} gives a description of the test environment.  The reference
BLAS routine \texttt{DNRM2} was renamed, built for comparison, and
verified by disassembling that it used, when possible, the fused
multiply-add instructions.

For each exponent $\xi\in\{0,1008\}$, 65 double precision test vectors
$\mathbf{x}_{\tau}$, $0\le\tau\le 64$, were generated.  For a fixed
$\xi$, every vector had its elements in the range
$\left[0,2^{\xi}\right]$.  Each element was pseudorandomly generated
by the CPU's $\mathtt{RDRAND}$ facility as a 64-bit quantity, with the
same probability of each bit being a zero or a one.  Any candidate
with the magnitude falling out of the given range was discarded and
generated anew.  Finally, the elements were replaced by their absolute
values to aid the future sorting.

An accurate approximation of $\|\mathbf{x}_{\tau}^{}\|_F^{}$, denoted
by $\|\mathbf{x}_{\tau}'\|_F^{}$, was obtained by computing
$\mathbf{x}_{\tau}^{\prime T}\mathbf{x}_{\tau}'$ in quadruple
(128-bit) precision and rounding the result's square root to double
precision, where $\mathbf{x}_{\tau}'$ has the same elements as
$\mathbf{x}_{\tau}^{}$, but sorted non-decreasingly, to improve
accuracy, and converted to quadruple precision.  Then,
\begin{displaymath}
  \max\left\{\frac{|c_{\tau}^{(\diamond)}-\|\mathbf{x}_{\tau}'\|_F^{}|}{\|\mathbf{x}_{\tau}'\|_F^{}},0\right\}\le\infty
\end{displaymath}
is the relative error of an approximation
$c_{\tau}^{(\diamond)}\approx\|\mathbf{x}_{\tau}^{}\|_F^{}$ computed
by a procedure $\diamond$.  The relative errors are shown in
\cref{f:SM6.1,f:SM6.2} for $\xi=0$ and $\xi=1008$, respectively, where
BLAS $\mathtt{DNRM2}$ refers to the reference Fortran implementation.

\begin{figure}[hbtp]
  \begin{center}
    \includegraphics{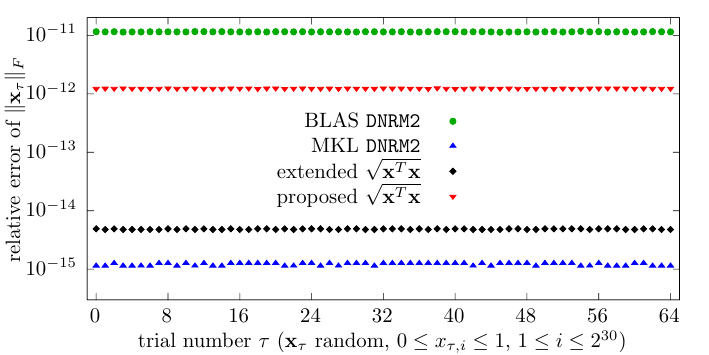}
  \end{center}
  \caption{Relative errors of $\|\mathbf{x}_{\tau}\|_F$ computed by
    several procedures, for $\xi=0$.}
  \label{f:SM6.1}
\end{figure}

\begin{figure}[hbtp]
  \begin{center}
    \includegraphics{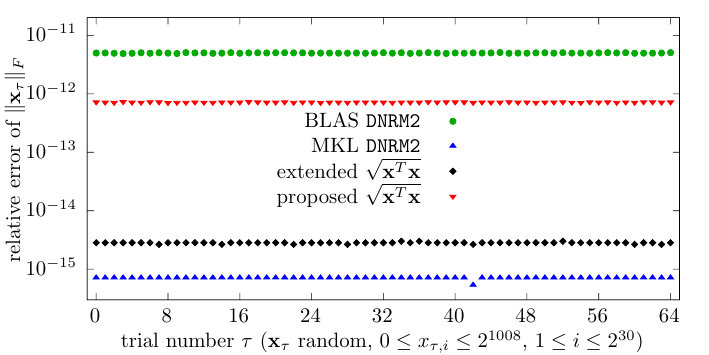}
  \end{center}
  \caption{Relative errors of $\|\mathbf{x}_{\tau}\|_F$ computed by
    several procedures, for $\xi=1008$.}
  \label{f:SM6.2}
\end{figure}

\Cref{f:SM6.1,f:SM6.2} suggest that the reference BLAS routine is
consistently about four orders of magnitude less accurate than the
MKL's $\mathtt{DNRM2}$.  Also, the MKL's routine is significantly more
accurate than computing in extended precision, what indicates that the
MKL does not use the reference algorithm in its published form.

Therefore, the MKL's routine is the most accurate one, but bound to
overflow at least when the actual result is not finitely representable
in double precision, while \cref{a:rnF} (or the ``proposed
$\sqrt{\mathbf{x}^T\mathbf{x}}$'') and the computation of
$\sqrt{\mathbf{x}^T\mathbf{x}}$ in extended precision cannot overflow
for any reasonably-sized $\mathbf{x}$.  However, from
\cref{f:SM6.3,f:SM6.4} it can be concluded that \cref{a:rnF} is
somewhat slower that the extended dot-product, which in turn is
significantly slower than the MKL's $\mathtt{DNRM2}$.  On the
MKL-targeted platforms, its $\mathtt{DNRM2}$ is thus the method of
choice for the Jacobi-type SVD, even if that entails a sporadic
downscaling of the input data.

\begin{figure}[hbtp]
  \begin{center}
    \includegraphics{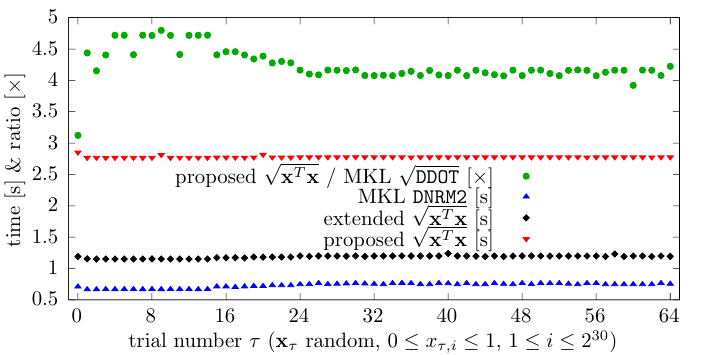}
  \end{center}
  \caption{Performance of computing $\|\mathbf{x}_{\tau}\|_F$ by
    several procedures, for $\xi=0$.}
  \label{f:SM6.3}
\end{figure}

\begin{figure}[hbtp]
  \begin{center}
    \includegraphics{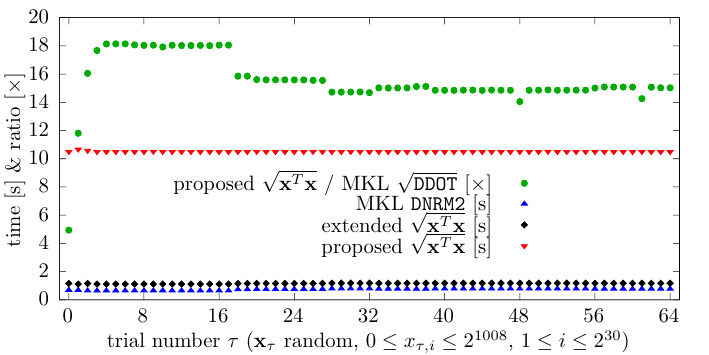}
  \end{center}
  \caption{Performance of computing $\|\mathbf{x}_{\tau}\|_F$ by
    several procedures, for $\xi=1008$.}
  \label{f:SM6.4}
\end{figure}

\paragraph{Vectorized quadruple precision}
An accurate alternative way of computing
$\sqrt{\mathbf{x}^T\mathbf{x}}$ without overflow is to use the SLEEF's
vectorized quad-precision math
library\footnote{\url{https://sleef.org/quad.xhtml}} by modifying
\cref{a:vdp} to convert the loaded double precision vectors to
quadruple precision ones and then call the vectorized quad-fma on
them.  Such an approach gives the results virtually indistinguishable
from a non-vectorized quadruple dot-product, but in the testing
environment, for $\xi=0$, it is $\approx 164\times$ and
$\approx 1.69\times$ slower than the MKL's $\mathtt{DDOT}$ and the
BLAS $\mathtt{DNRM2}$, respectively, maybe because SLEEF does not yet
provide the inlineable versions of its functions for the Intel's
compilers.
\subsection{Conclusion}\label{ss:SM6.5}
When the MKL's $\mathtt{DNRM2}$ and the hardware-supported extended
precision are not available, and the SLEEF's vectorized quad-precision
library happens to be too slow (or unavailable), the proposed
$\sqrt{\mathbf{x}^T\mathbf{x}}$ method can be employed as a fail-safe
option, either for computing $\|\mathbf{x}\|_F$ on its own, or to be
called on the second attempt, after a faster but at least comparably
accurate method overflows.

It is also easy to convert in a portable way the result of other
methods in double, extended, or quadruple precision to the $(e,f)$
representation, which thus can be used for encoding the Frobenius
norms of the columns of the iteration matrix in the Jacobi SVD,
regardless of the chosen method of computing them.  The remaining
parts of the SVD algorithm do not rely on a higher precision of
computation or of data, so a possible loss of it due to the rounding
of $f$ to double precision is acceptable.

As $m$ becomes larger, and/or with an unfavorable distribution of the
elements' magnitudes, such that the huge elements precede the small
ones, at a certain point the partial sums in the proposed algorithm
can become too large to be (significantly) affected by the subsequent
updates, resulting in a too small final norm and hence a big relative
error.  It is worth exploring if, e.g., three vector accumulators
instead of one---for the small, the medium, and the large magnitudes,
in the spirit of the Blue's algorithm~\cite{Blue-78}---could improve
the relative accuracy.  However, such an approach will inevitably
reduce the amount of the vector parallelism, since the present
instructions have to be trebled and masked to affect the appropriate
accumulator only.
\section{Addenda for \cref{s:5}}\label{s:SM7}
\Cref{s:5} from the main paper is here expanded.
\subsection{Addendum for \cref{ss:5.2}}\label{ss:SM7.1}
For the eigenvector matrix $\Phi$ of a Hermitian matrix of order two
holds $|\det\Phi|=c^2+|s|^2=1$, where $c=\cos\varphi$ and
$s=\mathrm{e}^{\mathrm{i}\alpha}\sin\varphi$.  Therefore,
$\delta_{\Phi}^{\text{\texttt{x}}}=||\det(\mathop{\mathrm{fl}}(\Phi))|-1|$
determines how much the real cosine and the real or complex sine,
computed by a routine designated by \texttt{x}, depart from the ones
that would make $\Phi$ unitary.  For a batch $\tau$, let
$\delta_{\tau}^{\text{\texttt{x}}}=\max_i^{}\delta_{\Phi_{\tau,i}}^{\text{\texttt{x}}}$,
$1\le i\le 2^{28}$.  Then, \cref{f:SM7.1} shows that, in the real
case, the batched and the LAPACK-based EVDs do not differ much in this
measure of accuracy, but in the complex case the LAPACK-based ones may
be several orders of magnitude less accurate.  Together with
\cref{f:SM7.2,f:SM7.3}, described below, this further explains
\cref{f:5.2}.

\begin{figure}[hbtp]
  \begin{center}
    \includegraphics{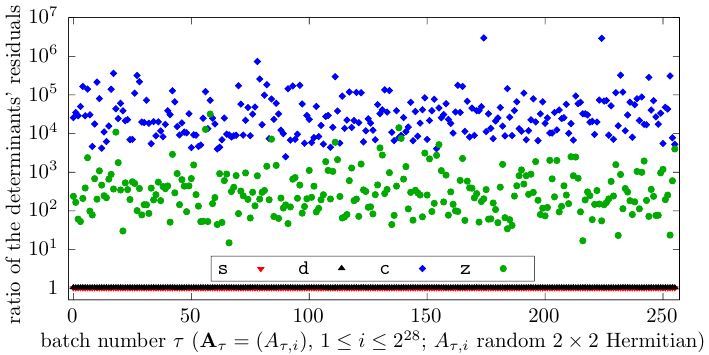}
  \end{center}
  \caption{Per-batch ratios
    $\delta_{\tau}^{\text{\texttt{S}}}/\delta_{\tau}^{\text{\texttt{s}}}$,
    $\delta_{\tau}^{\text{\texttt{D}}}/\delta_{\tau}^{\text{\texttt{d}}}$,
    $\delta_{\tau}^{\text{\texttt{C}}}/\delta_{\tau}^{\text{\texttt{c}}}$,
    and
    $\delta_{\tau}^{\text{\texttt{Z}}}/\delta_{\tau}^{\text{\texttt{z}}}$
    of the determinants' residuals.}
  \label{f:SM7.1}
\end{figure}

If a closer look is taken at the matrix $A_{\max}^{\delta}$ that
caused the greatest determinant's residual with \texttt{CLAEV2}, and
at the computed outputs $\Lambda_{\max}^{\delta}$ and
$U_{\max}^{\delta}$ (all in single precision, but printed out with
$21$ digits after the decimal point for reproducibility),
\begin{displaymath}
  A_{\max}^{\delta}=
  \begin{bmatrix}
    \text{\texttt{A}}&\text{\texttt{B}}\\
    \bar{\text{\texttt{B}}}&\text{\texttt{C}}
  \end{bmatrix},\qquad
  \Lambda_{\max}^{\delta}=
  \begin{bmatrix}
    \text{\texttt{RT1}}&0\\
    0&\text{\texttt{RT2}}
  \end{bmatrix},\qquad
  U_{\max}^{\delta}=
  \begin{bmatrix}
    \text{\texttt{CS1}}&-\overline{\text{\texttt{SN1}}}\\
    \text{\texttt{SN1}}&\hphantom{-}\text{\texttt{CS1}}
  \end{bmatrix},
\end{displaymath}
where
\begin{displaymath}
  \begin{gathered}
    \text{\texttt{A}}=-5.540058702080522136604\cdot 10^{-39},\\
    \text{\fbox{$\text{\texttt{B}}=-1.401298464324817070924\cdot 10^{-45}+1.401298464324817070924\cdot 10^{-45}\mathrm{i}$}},\\
    \text{\texttt{C}}=-5.832059874778063193026\cdot 10^{-39},
  \end{gathered}
\end{displaymath}
and
\begin{displaymath}
  \begin{gathered}
    \text{\texttt{RT1}}=\lambda_1=-5.832059874778063193026\cdot 10^{-39},\\
    \text{\texttt{RT2}}=\lambda_2=-5.540058702080522136604\cdot 10^{-39};\\
    \text{\texttt{CS1}}=\cos\varphi=-4.798947884410154074430\cdot 10^{-6},\\
    \text{\fbox{\fbox{$\text{\texttt{SN1}}=\mathrm{e}^{\mathrm{i}\alpha}\sin\varphi=-1-\mathrm{i}$}}},
  \end{gathered}
\end{displaymath}
a disastrous failure to represent $|\text{\texttt{B}}|$ in the
subnormal range by any value other than
$|\Re{\text{\texttt{B}}}|=|\Im{\text{\texttt{B}}}|$, as explained in
\cref{r:hypot}, becomes the obvious reason for the loss of accuracy.
But why was $\sin\varphi$ computed as unity?  The answer is that the
eigenvalues had to be swapped to make $|\lambda_1|\ge|\lambda_2|$.
Since \texttt{CLAEV2} calls \texttt{SLAEV2} with \texttt{A},
$|\text{\texttt{B}}|$, and \texttt{C}, the latter routine performed
the exchange $\cos\varphi\leftarrow-\sin\varphi$,
$\sin\varphi\leftarrow\cos\varphi$, as if the columns of its, real $U$
were swapped.  Then \texttt{CLAEV2} multiplied
$\bar{\text{\texttt{B}}}/|\text{\texttt{B}}|=\mathrm{e}^{\mathrm{i}\alpha}=-1-\mathrm{i}$
by the new value of $\sin\varphi$, i.e., by unity.  Note that if
\texttt{A} and \texttt{C} changed roles,
$\mathrm{e}^{\mathrm{i}\alpha}$ would have remained $-1-\mathrm{i}$,
but it would have been multiplied by
$\sin\varphi\approx 4.798947884\cdot 10^{-6}$.  Even tough
$\text{\texttt{CS1}}=-1$ then, $||\det{U}|-1|$ would have been small.
\Cref{a:z8jac2} ensures $|\sin\varphi|\le 1/\sqrt{2}$, up to the
rounding errors, so a bogus $\mathrm{e}^{\mathrm{i}\alpha}$ is always
scaled down.

As already known for the LAPACK's EVD, the smaller eigenvalue by
magnitude might not be computed with any relative accuracy, and this
is also the case with the batched EVD\@.  The normwise residuals
$\lambda_F^{\text{\texttt{x}},\tau}=\max_i^{}\|\mathop{\mathrm{fl}}(\Lambda_{\tau,i}^{})-\Lambda_{\tau,i}^{}\|_F^{}/\|\Lambda_{\tau,i}^{}\|_F^{}$,
however, seem stable, and their ratios are shown in \cref{f:SM7.2}.
The values below unity indicate when the batched EVD was less
accurate, while those above unity demonstrate that in a majority of
batches the LAPACK-based EVD was more inaccurate.

\begin{figure}[hbtp]
  \begin{center}
    \includegraphics{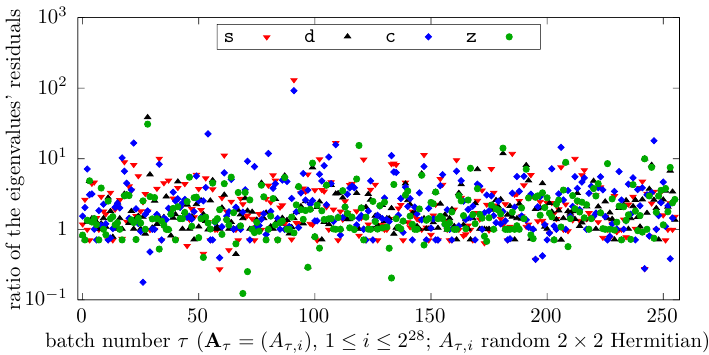}
  \end{center}
  \caption{Per-batch ratios
    $\lambda_F^{\text{\texttt{S}},\tau}/\lambda_F^{\text{\texttt{s}},\tau}$,
    $\lambda_F^{\text{\texttt{D}},\tau}/\lambda_F^{\text{\texttt{d}},\tau}$,
    $\lambda_F^{\text{\texttt{C}},\tau}/\lambda_F^{\text{\texttt{c}},\tau}$,
    and
    $\lambda_F^{\text{\texttt{Z}},\tau}/\lambda_F^{\text{\texttt{z}},\tau}$
    of the eigenvalues' normwise residuals.}
  \label{f:SM7.2}
\end{figure}

A similar conclusion holds for the eigenvalue larger by magnitude,
$\lambda_{\max}$, that is always computed relatively accurate, up to a
modest constant times $\varepsilon$.  \Cref{f:SM7.3} shows the ratios
of the $\max$-residuals,
$\lambda_{\max}^{\text{\texttt{x}},\tau}=\max_i^{}|\mathop{\mathrm{fl}}(\lambda_{\max;\tau,i}^{})-\lambda_{\max;\tau,i}^{}|/|\lambda_{\max;\tau,i}^{}|$.

\begin{figure}[hbtp]
  \begin{center}
    \includegraphics{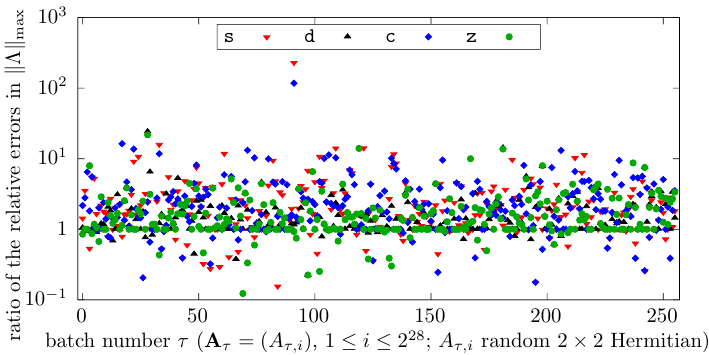}
  \end{center}
  \caption{Per-batch ratios
    $\lambda_{\max}^{\text{\texttt{S}},\tau}/\lambda_{\max}^{\text{\texttt{s}},\tau}$,
    $\lambda_{\max}^{\text{\texttt{D}},\tau}/\lambda_{\max}^{\text{\texttt{d}},\tau}$,
    $\lambda_{\max}^{\text{\texttt{C}},\tau}/\lambda_{\max}^{\text{\texttt{c}},\tau}$,
    and
    $\lambda_{\max}^{\text{\texttt{Z}},\tau}/\lambda_{\max}^{\text{\texttt{z}},\tau}$
    of the eigenvalues' $\max$-residuals.}
  \label{f:SM7.3}
\end{figure}
\subsection{Addendum for \cref{ss:5.3}}\label{ss:SM7.2}
\Cref{f:SM7.4,f:SM7.5} show the number of sweeps until convergence in
the real and the complex case on $\Xi_1^{\mathbb{F}}$ and
$\Xi_2^{\mathbb{F}}$, respectively, while \cref{f:SM7.6,f:SM7.7,f:SM7.8}
correspond to \cref{f:5.3,f:5.4,f:5.5}, respectively, in the real case
on $\Xi_1^{\mathbb{R}}$.

\begin{figure}[hbtp]
  \begin{center}
    \includegraphics{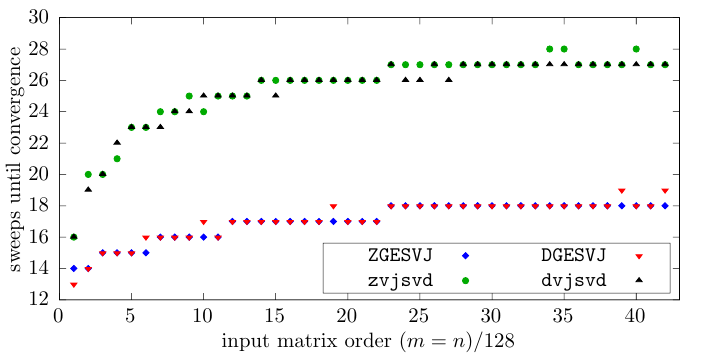}
  \end{center}
  \caption{Number of sweeps until convergence for the LAPACK's
    \texttt{xGESVJ} and the proposed routines (with the $\mathtt{MM}$
    parallel quasi-cyclic pivot strategy), on $\Xi_1^{\mathbb{F}}$.}
  \label{f:SM7.4}
\end{figure}

\begin{figure}[hbtp]
  \begin{center}
    \includegraphics{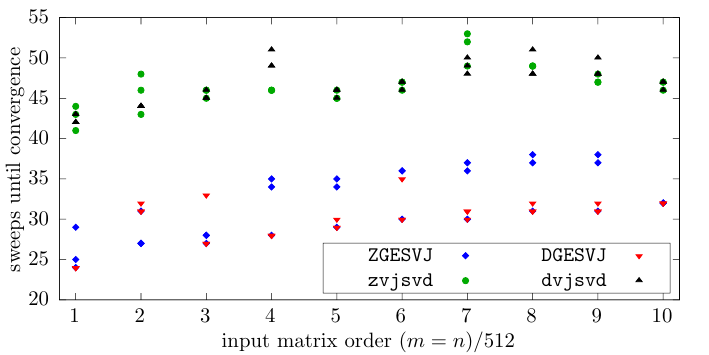}
  \end{center}
  \caption{Number of sweeps until convergence for the LAPACK's
    \texttt{xGESVJ} and the proposed routines (with the $\mathtt{ME}$
    parallel cyclic pivot strategy), on $\Xi_2^{\mathbb{F}}$.}
  \label{f:SM7.5}
\end{figure}

\begin{figure}[hbtp]
  \begin{center}
    \includegraphics{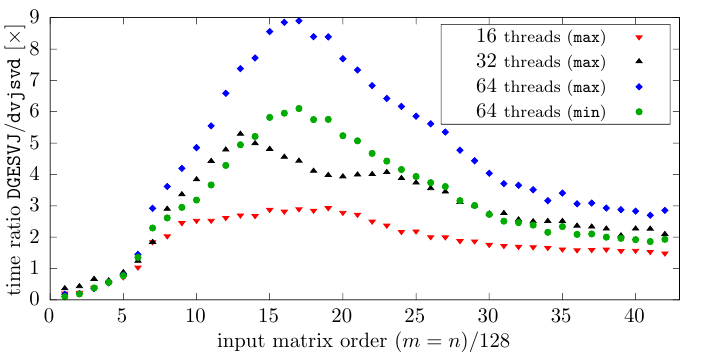}
  \end{center}
  \caption{Run-time ratios of \texttt{DGESVJ} and $\mathtt{dvjsvd}$
    with $\mathtt{MM}$ on $\Xi_1^{\mathbb{R}}$.}
  \label{f:SM7.6}
\end{figure}

\begin{figure}[hbtp]
  \begin{center}
    \includegraphics{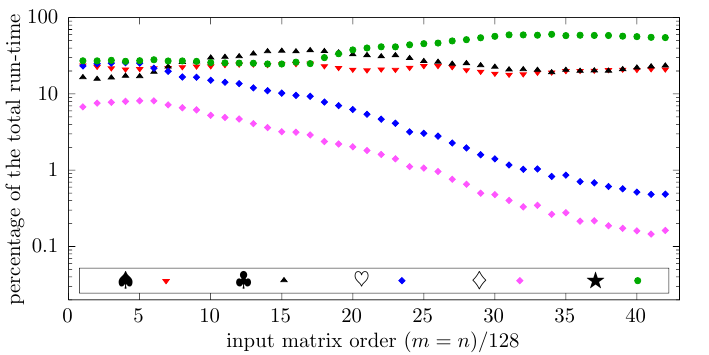}
  \end{center}
  \caption{Breakdown of the run-time of $\mathtt{dvjsvd}$ with
    $\mathtt{MM}$ on $\Xi_1^{\mathbb{R}}$ with $64$ threads.}
  \label{f:SM7.7}
\end{figure}

\begin{figure}[hbtp]
  \begin{center}
    \includegraphics{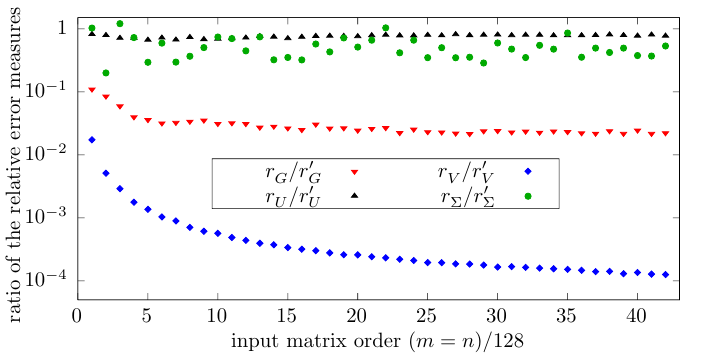}
  \end{center}
  \caption{Ratios of the relative error measures for the SVDs on
    $\Xi_1^{\mathbb{R}}$.}
  \label{f:SM7.8}
\end{figure}
\subsubsection{Performance of several double precision routines}\label{sss:SM7.2.1}
Performance of several sequential double precision MKL routines
(\texttt{DDOT}, \texttt{DROT}) and the similar ones proposed here
($\mathtt{ddpscl}'$, $\mathtt{djrotf}$) were compared in terms of giga
($10^9$) floating-point operations per second, i.e., GFLOP/s.  The
routines' FLOP counts are taken%
\footnote{See \url{https://github.com/venovako/VecJac/blob/master/src/dflops.c} for the testing code.}
as
\begin{equation}
  \mathop{\mathrm{FLOP}}(n)=\begin{cases}
  n, & \text{\texttt{DDOT} ($1$ \texttt{fma} per row of both arrays)},\\
  4n, & \text{\texttt{DROT} ($1$ \texttt{fma}, $1$ \texttt{mul} per row of each array)},\\
  18n+35, & \text{$\mathtt{ddpscl}'$ (\text{see \cref{a:ddpsclcs}})},\\
  4n, & \text{$\mathtt{djrotf}$ ($1$ \texttt{fma}, $1$ \texttt{mul} per row of each array)},
  \end{cases}
  \label{e:FLOP}
\end{equation}
where $n$ is the length of two input arrays of each routine,
$\mathtt{djrotf}$ is the ``fast'' variant of $\mathtt{djrot}$ without
the norm approximation, \texttt{fma} is the fused multiply-add
operation, \texttt{scalef} is the scaling by a power of two, and
\texttt{mul} is the floating-point multiplication.

\looseness=-1
\Cref{f:SM7.9} shows the testing results for $n=128i$, $1\le i\le 42$.
Reliability of timing was ensured by calling each routine with
different random inputs $100000$ times.  Observe that
$\mathtt{ddpscl}'$ is more performant than \texttt{DDOT} since the
former does more arithmetical processing with each input loaded from
the memory (by scaling the arrays' elements in both its
implementations, and by using the compensated summation in the
enhanced one, which was timed), while $\mathtt{djrotf}$ and
\texttt{DROT} are similar performance-wise.

\begin{figure}[hbtp]
  \begin{center}
    \includegraphics{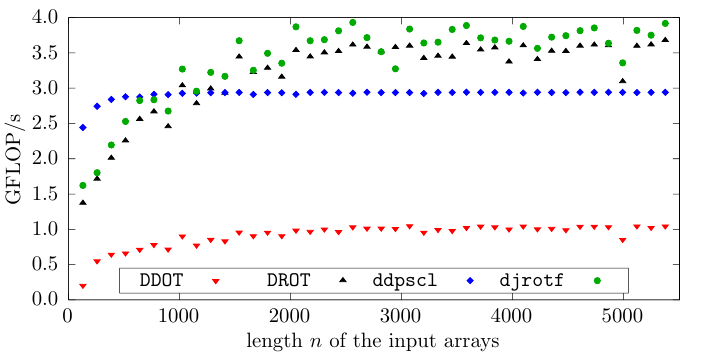}
  \end{center}
  \caption{Performance in GFLOP/s for various double precision routines.}
  \label{f:SM7.9}
\end{figure}

\looseness=-1
It is harder to figure out the exact implementation of complex
arithmetic in the MKL routines and deduce the FLOP counts from that.
The complex routines' execution could be profiled and the retired
floating-point instructions counted, what would be an effort worth
taking if the similar complex routines proposed here were known to be
optimized to the fullest extent possible.  This is therefore left for
future work.
\section{The single precision variants of the parallel Jacobi SVD method}\label{s:SM8}
The single precision implementation is conceptually identical to the
double precision one, with the necessary changes for the datatype and
the associated range of values.  For brevity, the pseudocode of the
single precision routines is omitted and the readers are referred to
the actual source code on GitHub, while bearing in mind that
$\mathtt{s}=16$.

\looseness=-1
Two single precision datasets, $\Xi_3^{\mathbb{R}}$ and
$\Xi_3^{\mathbb{C}}$, were generated similarly to $\Xi_1^{\mathbb{R}}$
and $\Xi_1^{\mathbb{C}}$, respectively (see \cref{ss:5.1}), but with
$\xi_3^{}=-12$.  \Cref{f:SM8.1} corresponds to \cref{f:SM7.4}, showing
that significantly more sweeps were also required for convergence
under parallel than under serial pivot strategies in the single
precision case.  \Cref{f:SM8.2,f:SM8.4}, compared to
\cref{f:5.3,f:SM7.6}, respectively, show similar speedup profiles,
albeit with a bit higher peaks and the larger matrix orders for which
they were attained.  \Cref{f:SM8.3,f:SM8.5} depict the percentages of
run-time of the key routines comparable to those on
\Cref{f:5.4,f:SM7.7}.  \Cref{f:SM8.6,f:SM8.7} show similar relative
error ratios as \cref{f:5.5,f:SM7.8}, respectively.

For the $\mathtt{sdpscl}$ routine in \cref{f:SM8.8} the FLOP count was
taken as $18n+67$, while the counts for the other routines remained
the same as in~\cref{e:FLOP}.  Compared to \cref{f:SM7.9},
\texttt{SROT} and $\mathtt{sdpscl}$ were noticeably less performant
than their double precision counterparts, what remains to be
explained.  Apart from that, the testing results of the single
precision real and complex variants of the SVD method are generally
consistent with those of the double precision real and complex ones,
respectively.

\begin{figure}[hbtp]
  \begin{center}
    \includegraphics{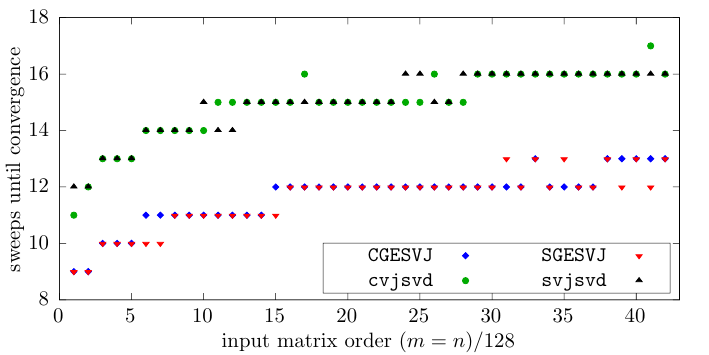}
  \end{center}
  \caption{Number of sweeps until convergence for the LAPACK's
    \texttt{xGESVJ} and the proposed routines (with the $\mathtt{MM}$
    parallel quasi-cyclic pivot strategy), on $\Xi_3^{\mathbb{F}}$.}
  \label{f:SM8.1}
\end{figure}

\begin{figure}[hbtp]
  \begin{center}
    \includegraphics{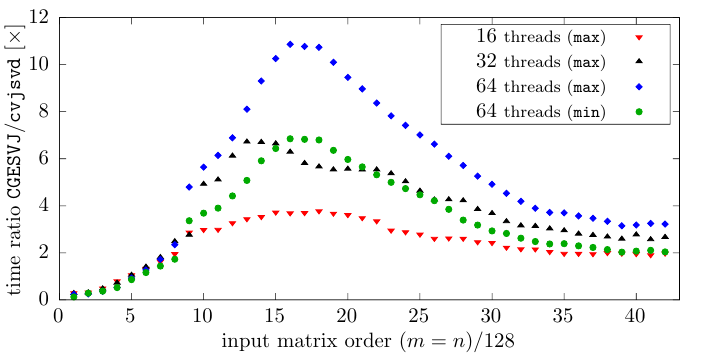}
  \end{center}
  \caption{Run-time ratios of \texttt{CGESVJ} and $\mathtt{cvjsvd}$
    with $\mathtt{MM}$ on $\Xi_3^{\mathbb{C}}$.}
  \label{f:SM8.2}
\end{figure}

\begin{figure}[hbtp]
  \begin{center}
    \includegraphics{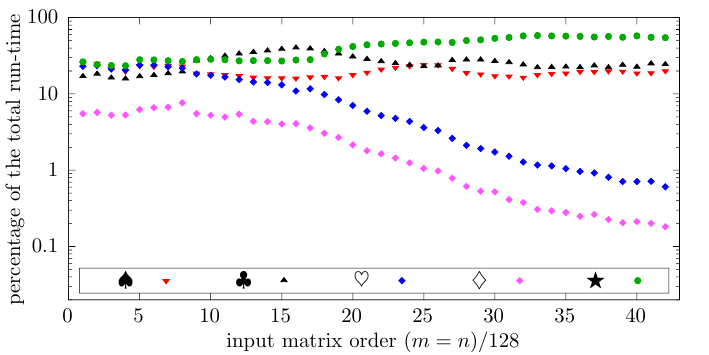}
  \end{center}
  \caption{Breakdown of the run-time of $\mathtt{cvjsvd}$ with
    $\mathtt{MM}$ on $\Xi_3^{\mathbb{C}}$ with $64$ threads.}
  \label{f:SM8.3}
\end{figure}

\begin{figure}[hbtp]
  \begin{center}
    \includegraphics{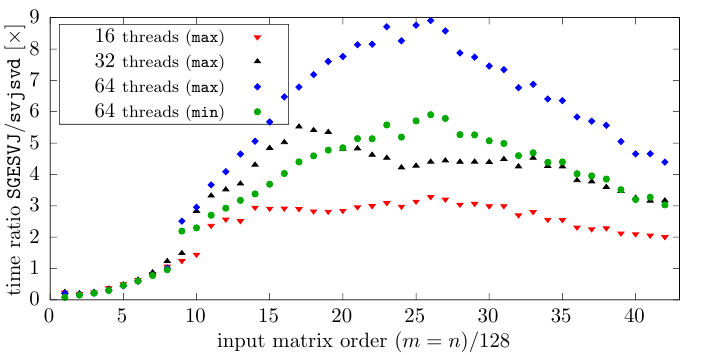}
  \end{center}
  \caption{Run-time ratios of \texttt{SGESVJ} and $\mathtt{svjsvd}$
    with $\mathtt{MM}$ on $\Xi_3^{\mathbb{R}}$.}
  \label{f:SM8.4}
\end{figure}

\begin{figure}[hbtp]
  \begin{center}
    \includegraphics{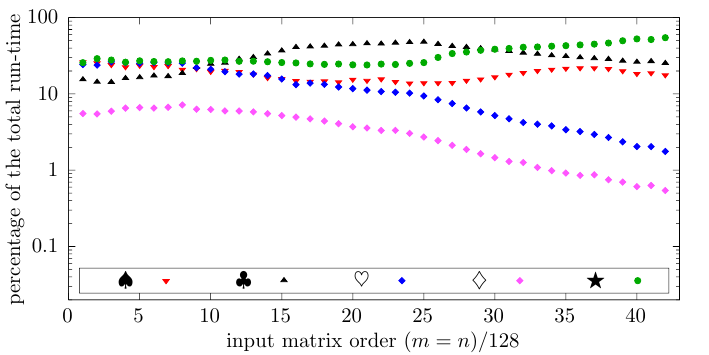}
  \end{center}
  \caption{Breakdown of the run-time of $\mathtt{svjsvd}$ with
    $\mathtt{MM}$ on $\Xi_3^{\mathbb{R}}$ with $64$ threads.}
  \label{f:SM8.5}
\end{figure}

\begin{figure}[hbtp]
  \begin{center}
    \includegraphics{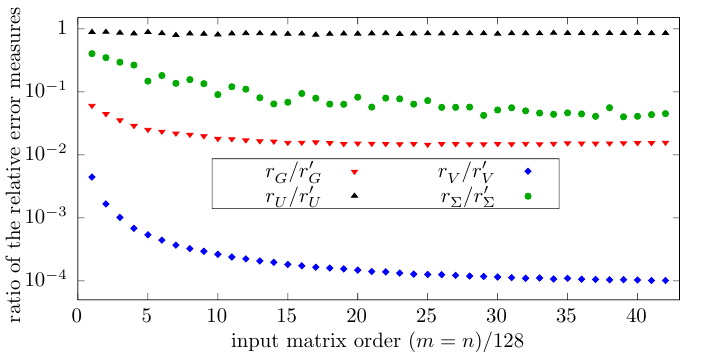}
  \end{center}
  \caption{Ratios of the relative error measures for the SVDs on
    $\Xi_3^{\mathbb{C}}$.}
  \label{f:SM8.6}
\end{figure}

\begin{figure}[hbtp]
  \begin{center}
    \includegraphics{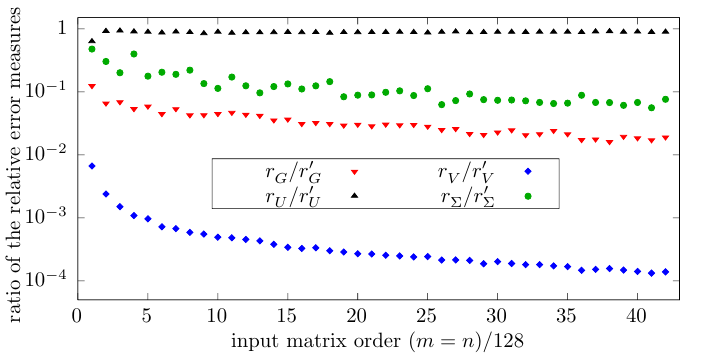}
  \end{center}
  \caption{Ratios of the relative error measures for the SVDs on
    $\Xi_3^{\mathbb{R}}$.}
  \label{f:SM8.7}
\end{figure}

\begin{figure}[hbtp]
  \begin{center}
    \includegraphics{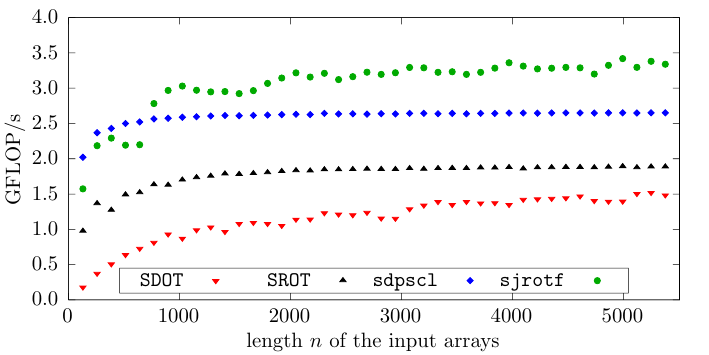}
  \end{center}
  \caption{Performance in GFLOP/s for various single precision routines.}
  \label{f:SM8.8}
\end{figure}
\section{Vectorization of the Gram--Schmidt orthogonalization}\label{s:SM9}
\looseness=-1
The complex Gram--Schmidt orthogonalization is vectorized in
\cref{a:ZGS}, and the real one in \cref{a:DGS}, both shown in double
precision.  The actual implementation of these algorithms also
includes the $\max$-norm estimation of $\mathop{\mathrm{fl}}(g_q')$,
in the same vein as in~\cref{a:zjrot}, but it has been omitted from
their presentation here for brevity.

\begin{algorithm}[hbtp]
  \caption{$\mathtt{zgsscl}$: a vectorized complex Gram--Schmidt orthogonalization.}
  \label{a:ZGS}
  \begin{algorithmic}[1]
    \REQUIRE{$a_{21}'=(\Re{a_{21}'},\Im{a_{21}'});\quad g_p^{}=(\Re{g_p^{}},\Im{g_p^{}}),\ g_q^{}=(\Re{g_q^{}},\Im{g_q^{}});$\\$0<\|g_p^{}\|_F^{}=(e_p^{},f_p^{}),\ 0<\|g_q^{}\|_F^{}=(e_q^{},f_q^{});$}
    \ENSURE{$\mathop{\mathrm{fl}}(g_q')$ from~\cref{e:GS}.}
    \STATE{$\mathsf{e}_q^{}=\mathop{\mathtt{set1}}(e_q^{});\quad-\mathsf{e}_p^{}=\mathop{\mathtt{set1}}(-e_p^{});\quad-\mathsf{e}_q^{}=\mathop{\mathtt{set1}}(-e_q^{});\quad f_{q/p}^{}=f_q^{}/f_p^{};$}
    \STATE{$-\Re{\bm{\psi}}=\mathop{\mathtt{set1}}(-\Re{a_{21}'}\cdot f_{q/p}^{});\quad-\Im{\bm{\psi}}=\mathop{\mathtt{set1}}(\Im{a_{21}'\cdot f_{q/p}^{}});$}
    \COMMENT{$-\bm{\psi}=-a_{21}^{\prime\ast}\cdot f_{q/p}^{};$}
    \FOR[sequentially]{$\mathtt{i}=0$ \TO $\tilde{m}-1$ \textbf{step} $\mathtt{s}$}
    \STATE{$\Re{\mathsf{x}}=\mathop{\mathtt{load}}(\Re{g_p}+\mathtt{i});\quad\Im{\mathsf{x}}=\mathop{\mathtt{load}}(\Im{g_p}+\mathtt{i});$}
    \COMMENT{load a chunk of $g_p$}
    \STATE{$\Re{\mathsf{x}'}=\mathop{\mathtt{scalef}}(\Re{\mathsf{x}},-\mathsf{e}_p);\quad\Im{\mathsf{x}'}=\mathop{\mathtt{scalef}}(\Im{\mathsf{x}},-\mathsf{e}_p);$}
    \COMMENT{$\mathsf{x}'=\mathsf{2}^{-\mathsf{e}_p}\mathsf{x}$}
    \STATE{$\Re{\mathsf{y}}=\mathop{\mathtt{load}}(\Re{g_q}+\mathtt{i});\quad\Im{\mathsf{y}}=\mathop{\mathtt{load}}(\Im{g_q}+\mathtt{i});$}
    \COMMENT{load a chunk of $g_q$}
    \STATE{$\Re{\mathsf{y}'}=\mathop{\mathtt{scalef}}(\Re{\mathsf{y}},-\mathsf{e}_q);\quad\Im{\mathsf{y}'}=\mathop{\mathtt{scalef}}(\Im{\mathsf{y}},-\mathsf{e}_q);$}
    \COMMENT{$\mathsf{y}'=\mathsf{2}^{-\mathsf{e}_q}\mathsf{y}$}
    \STATE{$\Re{\mathsf{y}'}=\mathop{\mathtt{scalef}}(\mathop{\mathtt{fmadd}}(-\Re{\bm{\psi}},\Re{\mathsf{x}'},\mathop{\mathtt{fnmadd}}(-\Im{\bm{\psi}},\Im{\mathsf{x}'},\Re{\mathsf{y}'})),\mathsf{e}_q);$}
    \COMMENT{$\Re{\cref{e:GS}}$}
    \STATE{$\Im{\mathsf{y}'}=\mathop{\mathtt{scalef}}(\mathop{\mathtt{fmadd}}(-\Re{\bm{\psi}},\Im{\mathsf{x}'},\mathop{\mathtt{fmadd}}(-\Im{\bm{\psi}},\Re{\mathsf{x}'},\Im{\mathsf{y}'})),\mathsf{e}_q);$}
    \COMMENT{$\Im{\cref{e:GS}}$}
    \STATE{$\mathop{\mathtt{store}}(\Re{g_q^{}}+\mathtt{i},\Re{\mathsf{y}'});\quad\mathop{\mathtt{store}}(\Im{g_q^{}}+\mathtt{i},\Im{\mathsf{y}'});$}
    \COMMENT{store $g_q'$}
    \ENDFOR\COMMENT{$g_q^{}\to g_q'$, $g_p^{}$ unchanged}
  \end{algorithmic}
\end{algorithm}

\begin{algorithm}[hbtp]
  \caption{$\mathtt{dgsscl}$: a vectorized real Gram--Schmidt orthogonalization.}
  \label{a:DGS}
  \begin{algorithmic}[1]
    \REQUIRE{$a_{21}';\quad g_p^{},\ g_q^{};\quad 0<\|g_q^{}\|_F^{}=(e_q^{},f_q^{}),\ 0<\|g_p^{}\|_F^{}=(e_p^{},f_p^{});$.}
    \ENSURE{$\mathop{\mathrm{fl}}(g_q')$ from~\cref{e:GS}.}
    \STATE{$\mathsf{e}_q^{}=\mathop{\mathtt{set1}}(e_q^{});\ -\mathsf{e}_p^{}=\mathop{\mathtt{set1}}(-e_p^{});\ -\mathsf{e}_q^{}=\mathop{\mathtt{set1}}(-e_q^{});\ -\bm{\psi}=\mathop{\mathtt{set1}}(-a_{21}'(f_q^{}/f_p^{}));$}
    \FOR[sequentially]{$\mathtt{i}=0$ \TO $\tilde{m}-1$ \textbf{step} $\mathtt{s}$}
    \STATE{$\mathsf{x}=\mathop{\mathtt{load}}(g_p+\mathtt{i});\qquad\mathsf{y}=\mathop{\mathtt{load}}(g_q+\mathtt{i});$}
    \COMMENT{scaling by a power of two is \ldots}
    \STATE{$\mathsf{x}=\mathop{\mathtt{scalef}}(\mathsf{x},-\mathsf{e}_p);\qquad\mathsf{y}=\mathop{\mathtt{scalef}}(\mathsf{y},-\mathsf{e}_q);$}
    \COMMENT{\ldots\ faster than full division}
    \STATE{$\mathsf{y}=\mathop{\mathtt{scalef}}(\mathop{\mathtt{fmadd}}(-\bm{\psi},\mathsf{x},\mathsf{y}),\mathsf{e}_q);\qquad\mathop{\mathtt{store}}(g_q+\mathtt{i},\mathsf{y});$}
    \COMMENT{\cref{e:GS}}
    \ENDFOR\COMMENT{$g_q^{}\to g_q'$, $g_p^{}$ unchanged}
  \end{algorithmic}
\end{algorithm}

A test $16\times 16$ matrix for the method with \cref{a:DGS} was
constructed as:
\begin{displaymath}
  G_{ij}=\begin{cases}
  2^{\hat{\eta}-(j-1)-|i-j|}, & 1\le j\le 8,\\
  2^{-51-(j-1)-|i-j|}, & 9\le j\le 16,
  \end{cases}
\end{displaymath}
bearing in mind that $\upsilon=2^{-53}\sqrt{16}=2^{-51}$ (see
\cref{sss:3.5.1}).  Of 128 transformations in the first three
$\mathtt{MM}$ sweeps, 72 were the Gram--Schmidt orthogonalization, as
well as 48 of 68 transformations in the last non-empty sweep, with
$r_G'\approx 1.095015\cdot 10^{-15}$, compared to
$r_G^{}\approx 3.220344\cdot 10^{-16}$ from \texttt{DGESVJ} on $G$
with an extra sweep.
\section{The batched eigendecomposition of Hermitian matrices of order two in CUDA}\label{s:SM10}
The serial code from Listing~\ref{l:1} can easily be converted into a
CUDA kernel\footnote{See
\url{https://github.com/venovako/VecJac/blob/master/cuda/device_code.h}
for the real and the complex kernels in single and double precision.}.
In the lines~\ref{ll:9} and \ref{ll:10} the CUDA integer device
function $\mathtt{min}$ can be used instead of the ternary operators,
and the double precision intrinsics elsewhere instead of the basic
arithmetic operators and functions, e.g., $\mathtt{\_\_fma\_rn}$
instead of $\mathtt{fma}$, $\mathtt{\_\_dsqrt\_rn}$ instead of
$\mathtt{sqrt}$, and $\mathop{\mathtt{\_\_drcp\_rn}}(x)$ instead of
computing $1/x$.

The layout of the input and the output data is the same as in the
vectorized algorithm from \cref{ss:2.4}, apart from the permutation
bit $p$ and $\zeta$, which are returned packed in an integer.  A GPU
thread reads the elements of its matrix $A$ and stores the results at
the offset \texttt{(size\_t)(blockIdx.x) * blockDim.x + threadIdx.x},
where each thread block in a onedimensional execution grid has $64$
threads (two warps).

Since each thread processes its own matrix independently of the other
threads, there are neither intra-block nor inter-block data
dependencies.  A kernel can also be converted into a device function,
to be incorporated into larger kernels, and the data can be read from
and/or be written to the shared instead of the global memory.

\looseness=-1
\Cref{f:SM10.1} shows the timing results (without the data transfers)
of the real and the complex kernels in single and double precision on
an NVIDIA GeForce GTX TITAN X (a Maxwell series GPU, \texttt{sm\_52}
architecture), with the CUDA Toolkit version 11.7.0.  The input
batches consisted of $2^{27}$ Hermitian matrices of order two each.
It is expected that on newer GPU architectures the run times should be
substantially lower.

\begin{figure}[hbtp]
  \begin{center}
    \includegraphics{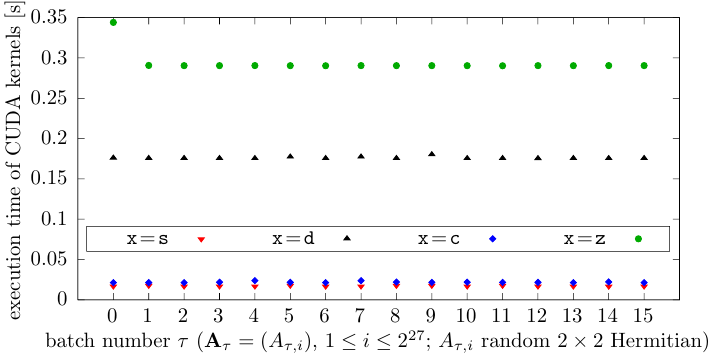}
  \end{center}
  \caption{Run-times of the CUDA batched Hermitian $2\times 2$ EVD kernels.}
  \label{f:SM10.1}
\end{figure}
%

%
\end{document}